\documentclass[11pt]{amsart}

\usepackage{mathtools, amssymb, amsfonts, bbm}
\usepackage{amsthm}
\usepackage{thmtools}
\usepackage[margin=1.0in]{geometry}
\usepackage{enumerate}

\numberwithin{equation}{section}

\usepackage{hyperref}
\usepackage[capitalize]{cleveref}

\newcommand{\1}{\mathbbm{1}}
\newcommand{\R}{\mathbb{R}}

\newcommand{\Q}{\mathbb{Q}}
\newcommand{\C}{\mathbb{C}}
\newcommand{\Z}{\mathbb{Z}}
\newcommand{\N}{\mathbb{N}}
\newcommand{\cB}{\mathcal{B}}
\newcommand{\cD}{\mathcal{D}}

\renewcommand{\phi}{\varphi}
\newcommand{\grad}{\nabla}
\DeclareMathOperator{\Cay}{Cay}
\DeclareMathOperator{\Sch}{Sch}
\DeclareMathOperator{\Orth}{O}

\DeclareMathOperator{\im}{im}
\newcommand*{\EE}{
  \mathop{
    \mathchoice{\vcenter{\hbox{\larger[4]$\mathbb{E}$}}}
               {\kern0pt\mathbb{E}}
               {\kern0pt\mathbb{E}}
               {\kern0pt\mathbb{E}}
  }\displaylimits
}

\DeclarePairedDelimiter{\abs}{\lvert}{\rvert}
\DeclarePairedDelimiter{\norm}{\lVert}{\rVert}
\DeclarePairedDelimiter{\ip}{\langle}{\rangle}

\newcommand{\st}{\colon}

\renewcommand{\b}[1]{\left( #1 \right)}

\theoremstyle{plain}
\newtheorem{theorem}{Theorem}[section]
 
\newtheorem{lemma}[theorem]{Lemma}       
\newtheorem{prob}[theorem]{Problem}
\newtheorem{prop}[theorem]{Proposition}

\newtheorem{obs}[theorem]{Observation}

\theoremstyle{definition}
\newtheorem{defn}[theorem]{Definition}
\newtheorem{exa}[theorem]{Example}

\newtheorem{claim}{Claim}[lemma]
\newenvironment{proofofclaim}{\begingroup\begin{proof}[Proof of claim]}{\end{proof}\endgroup}

\makeatletter

\newcommand{\deferproof}[2]{%
  \expandafter\xdef\csname defer@#1\endcsname{#2}%
  \@for\i:=#2\do{%
    \expandafter\xdef\csname defer@#1@\i\endcsname{\number\value{\i}}%
  }%
}

\newcommand{\restorecounters}[1]{%
  \edef\deferlist{\csname defer@#1\endcsname}%
  \@for\i:=\deferlist\do{%
    \setcounter{\i}{\csname defer@#1@\i\endcsname}%
  }%
}

\makeatother

\newenvironment{proofwithclaims}[1]{
    \deferproof{later}{section,theorem}
    \restorecounters{#1}
    \begingroup\begin{proof}[Proof of \cref{#1}]
    }{
    \restorecounters{later}
    \end{proof}\endgroup
}

\title{An inverse problem on eigenfunction triple products}
\author{Carl Schildkraut and Romain Speciel}
\date{\today}

\begin{document}

\begin{abstract}
	On a connected closed smooth Riemannian manifold, the algebraic structure of the Laplace eigenfunctions, as described by eigenfunction triple products, uniquely determines the geometry. We refine this correspondence by introducing the notion of an $N$-product eigenbasis, which consists of eigenfunctions whose pairwise products may be written as linear combinations of at most $N$ basis elements. We prove that a manifold admits a $2$-product eigenbasis if and only if it is a flat torus. We also prove an analogous result for Laplace eigenvectors of bounded-degree graphs. 
\end{abstract}

\maketitle

\section{Introduction}
\label{sec: intro}

Let $(M,g)$ be a connected closed smooth Riemannian manifold. The metric naturally gives rise to the Laplacian $\Delta=-\textrm{div}\, \grad$, a differential operator whose eigenspaces are pairwise orthogonal and span $L^2(M)=L^2(M, \mathbb{R})$. Fix an orthonormal basis $\{\phi_i\}_{i=0}^\infty$ of $L^2(M)$ consisting of Laplace eigenfunctions ordered such that the corresponding eigenvalues $\lambda_i\geq 0$ are nondecreasing in $i$. Famously, the spectrum $\{\lambda_i\}$ of the Laplacian determines neither the metric $g$ nor the manifold $M$. Indeed, there exist pairs of non-isometric, and even non-homeomorphic, Riemannian manifolds that share the same Laplace spectrum. Consult \cite{Gordon2000} for a thorough survey of the subject. These examples demonstrate that one cannot recover $M$ from the linear map $\Delta\colon C^\infty(M)\to C^\infty(M)$ alone, viewing $C^\infty(M)$ here as an abstract real vector space.

However, in addition to being a vector space, $C^\infty(M)$ admits an algebra structure. From the perspective of an eigenbasis, the multiplication may be encoded by the triple product constants:

\begin{defn}
	The \textit{triple product constants} (or \textit{structure constants}) corresponding to an orthonormal basis $\{\phi_i\}_{i=0}^\infty$ of $L^2(M)$ are the coefficients $c_{ijk}$ given by
	\[
		c_{ijk}=\int_M\phi_i\phi_j\phi_k \,d\mathrm{vol}_g,
	\]
	where $d\mathrm{vol}_g$ denotes the volume form associated to the metric $g$.
\end{defn}

\noindent Since
\[
	\phi_i\phi_j=\sum_k c_{ijk}\,\phi_k
\]
by orthonormality, the multiplicative structure of $C^\infty(M)$ is encoded by the triple product constants. The triple product constants have been studied in a range of contexts spanning from number theory to geometric analysis. We provide at the start of \cref{sec: examples} an account of this past work.

While $M$ cannot be recovered from its spectrum alone, the triple product constants provide enough additional information to complete the reconstruction.

\begin{theorem}
\label{thm: recover M from triple prod}
    $(M,g)$ is uniquely determined, up to isometry, by its spectrum together with the triple product constants in some orthonormal eigenbasis.
\end{theorem}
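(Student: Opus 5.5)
The plan is to reconstruct $(M,g)$ in three stages. First, use the data to rebuild an abstract algebra isomorphic to $C^\infty(M)$. Then recover $M$ as its space of characters, and finally recover $g$ from the Laplacian.

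Suppose $(M,g)$ and $(M',g')$ have orthonormal eigenbases $\{\phi_i\}$ and $\{\phi'_i\}$ with the same eigenvalues $\lambda_i$ and the same constants $c_{ijk}$. Define the linear map $T\colon L^2(M)\to L^2(M')$ by $T\phi_i=\phi'_i$. It is unitary and intertwines the two Laplacians, $T\Delta=\Delta' T$.

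\textbf{Step 1: $T$ preserves smoothness.} By elliptic regularity and Sobolev embedding, $C^\infty(M)=\bigcap_s \mathrm{Dom}(\Delta^s)$. In the eigenbasis this is exactly the space of sums $\sum a_i\phi_i$ whose coefficients satisfy $\sum |a_i|^2(1+\lambda_i)^{s}<\infty$ for every $s$. This description involves only the spectrum, so $T$ restricts to a bijection $C^\infty(M)\to C^\infty(M')$.

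\textbf{Step 2: $T$ is multiplicative.} For basis elements, $T(\phi_i\phi_j)=\sum_k c_{ijk}\phi'_k=\phi'_i\phi'_j$. For general smooth $f=\sum a_i\phi_i$ and $h=\sum b_j\phi_j$, the product is computed in $L^2$ by
\[
\langle fh,\phi_k\rangle=\sum_{i,j}a_ib_jc_{ijk}.
\]
This double series converges absolutely: $\phi_k$ is smooth, so its pairing with $fh$ is continuous, and the coefficients of $f,h$ decay rapidly while Weyl's law gives $|c_{ijk}|\le \|\phi_i\|_\infty\|\phi_j\|_2\|\phi_k\|_2\le C(1+\lambda_i)^{n/4}$. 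Hence the coefficients of $fh$ are determined by the data, and $T(fh)=T(f)T(h)$. It also follows that $T1=\pm1$; the sign is $+$ because $T1=(T1)^2$.

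\textbf{Step 3: recover $M$.} The map $T$ is a unital algebra isomorphism $C^\infty(M)\to C^\infty(M')$. Every nonzero real character of $C^\infty(M)$ is evaluation at a point. The standard argument: if a character $\chi$ is not any point evaluation, then each point $p$ has some $f_p$ with $\chi(f_p)=0$ and $f_p(p)\ne0$. By compactness finitely many $f_{p}^2$ sum to a positive function, which is invertible, yet $\chi$ kills that sum, a contradiction. Pulling back characters by $T$ therefore gives a bijection $\psi\colon M'\to M$ with $Tf=f\circ\psi$. This $\psi$ is a homeomorphism for the weak topologies induced by smooth functions. It is a diffeomorphism because $f\circ\psi$ is smooth for every smooth $f$, and the same holds for $\psi^{-1}$.

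\textbf{Step 4: recover $g$.} The intertwining relation gives $\Delta'(f\circ\psi)=(\Delta f)\circ\psi$, so $\psi$ carries $\Delta_g$ to $\Delta_{g'}$. The metric is read off from the Laplacian through the carré du champ identity
\[
2g(\grad f,\grad h)=f\Delta h+h\Delta f-\Delta(fh),
\]
with the sign set by the convention $\Delta=-\mathrm{div}\,\grad$. Taking $f,h$ to be local coordinates recovers $g^{ij}$, hence $g$. Therefore $\psi^*g=g'$ and $\psi$ is an isometry.

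I expect the main obstacle to be Step 2, in its analytic form. One must justify that the formal product computed from the $c_{ijk}$ agrees with the true product for arbitrary smooth functions and not just finite linear combinations. The plan is to get this from the polynomial bounds above together with the rapid decay of coefficients. A cleaner alternative is to work only with the finite-span algebra: it is dense in $C^0$ and closed under products, since $\phi_i\phi_j$ is the $L^2$-limit of its expansion. Then extend $T$ by continuity in the sup norm. That route requires showing that $T$ is isometric for sup norms on finite combinations, using the $C^*$ spectral radius $\|f\|_\infty=\lim\|f^{2m}\|_2^{1/2m}$, and the $L^2$ norms of powers are computed from the data alone.
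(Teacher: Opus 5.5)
Your proposal is correct and follows the same three-stage route as the paper. You recover $C^\infty(M)$ as an algebra from the rapid-decay characterization of smooth functions together with the triple products, then recover $M$ from the characters (equivalently, maximal ideals) of that algebra, and finally recover $g$ from $2\langle\nabla f,\nabla h\rangle = f\Delta h + h\Delta f - \Delta(fh)$. You are more careful than the paper in two places: you justify that the triple products compute products of arbitrary smooth functions, and you prove the character lemma directly instead of citing Nestruev. Using local coordinates in place of the B\'erard--Besson--Gallot embedding is an inessential variation.
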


\noindent We record a proof of this folklore result in Appendix \ref{sec: reconstruction}. \cref{thm: recover M from triple prod} suggests the following geometric inverse problems: how can one read off the shape of $M$ from the triple product constants? How does the algebraic structure of the eigenfunctions connect to the geometry of the underlying space? In this paper, we provide some results towards answering these questions. To this end, we introduce the following definitions to describe the multiplicative properties of a Laplace eigenbasis $\{\phi_i\}_{i=0}^\infty$ of $L^2(M)$.

\begin{defn}
     Call $\{\phi_i\}$ a \textit{finite product eigenbasis} if the product of any two basis elements may be written as a finite linear combination of basis elements. Equivalently, $\{\phi_i\}$ is a finite product eigenbasis if its algebraic span forms an algebra under pointwise multiplication.
\end{defn}

\begin{defn}
	  Call $\{\phi_i\}$ an \textit{$N$-product eigenbasis} if the product of any two basis elements may be written as a linear combination of at most $N$ basis elements.
\end{defn}

\noindent If a manifold admits a finite product eigenbasis, then all its Laplace eigenbases will be finite product bases. However, the same is not true for $N$-product bases. Indeed, on a manifold which admits an $N$-product eigenbasis, another poorly chosen eigenbasis could have products which require arbitrarily many terms to be written as a sum. (One example is the square torus.) We provide background and motivation for these definitions, and outline several key examples, in \cref{sec: examples}.

Our first main result is a complete characterization of flat tori in terms of the algebraic structure of their eigenfunctions. We establish the following theorem.

\begin{theorem}
\label{thm: 2 product eigenbasis iff flat torus}
	$(M,g)$ admits a 2-product eigenbasis if and only if it is a flat torus.
\end{theorem}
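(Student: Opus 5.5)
For the forward direction I would write the flat torus as $M=\R^n/\Lambda$ and use the standard real eigenbasis built from the dual lattice $\Lambda^*$. Choose a set $S\subset\Lambda^*$ of representatives of $(\Lambda^*\setminus\{0\})/\{\pm1\}$, and take the normalized functions $1$, $\cos(2\pi\ip{\xi,x})$ and $\sin(2\pi\ip{\xi,x})$ for $\xi\in S$. Product-to-sum formulas such as $\cos a\cos b=\tfrac12(\cos(a-b)+\cos(a+b))$ express each pairwise product through the frequencies $\xi\pm\eta$. Each such frequency corresponds, up to sign, to a single basis element, so every product involves at most two basis elements. Products with the constant and degenerate cases like $\xi=\pm\eta$ only reduce the count. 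This direction is routine.

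For the converse, suppose $\{\phi_i\}$ is a 2-product eigenbasis. The first step is to look at squares. Since $\int\phi_i^2=1$, the constant $\phi_0$ must occur in $\phi_i^2$, so $\phi_i^2=c+a\psi_i$ for a single eigenfunction $\psi_i$ with eigenvalue $\mu_i$. Applying $\Delta(f^2)=2f\Delta f-2\abs{\grad f}^2$ gives
\[
\abs{\grad\phi_i}^2=\lambda_i c+\tfrac{2\lambda_i-\mu_i}{2}\,a\psi_i .
\]
Both $\phi_i^2$ and $\abs{\grad\phi_i}^2$ therefore lie in the two-dimensional span of $1$ and $\psi_i$. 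Next I would pin down $\mu_i$, using the maximum principle at the extrema of $\phi_i$ and comparing $\phi_i^2$ with $\abs{\grad\phi_i}^2$ at critical points. The aim is to show $\mu_i=4\lambda_i$, so that
\[
\abs{\grad\phi_i}^2+\lambda_i\phi_i^2\equiv\text{const}.
\]
This identity is the hallmark of $\cos(2\pi\ip{\xi,x})$. The case $\psi_i$ constant would force $\phi_i^2$ constant, which is incompatible with $\int\phi_i=0$ unless $\phi_i$ is locally constant; connectedness rules that out.

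The second step is to produce a partner for each nonconstant $\phi_i$. I want an eigenfunction $\tilde\phi_i$ with eigenvalue $\lambda_i$ such that $f_i=\phi_i+\sqrt{-1}\,\tilde\phi_i$ has constant modulus, mimicking $\cos$ and $\sin$. The partner should be the other term appearing in the products $\phi_i\phi_j$ for suitable $j$. More concretely, I would expand $\phi_i\phi_j=\alpha\chi+\beta\chi'$ and square both sides. Comparing the resulting two-term expansions of $\phi_i^2\phi_j^2$, using associativity of the algebra spanned by the basis, should show that the basis behaves like a hypergroup in which every product has at most two terms. I would then show that the complexified algebraic span is the group algebra of an abelian group $\Gamma$ of constant-modulus eigenfunctions $f_\gamma$ closed under multiplication, with $\lambda(\gamma)$ satisfying the parallelogram law. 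That law follows from $\Delta(fg)=f\Delta g+g\Delta f-2\ip{\grad f,\grad g}$ together with the constant-modulus identities.

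The last step is to finish geometrically. Since $\abs{f_\gamma}$ is constant, $d\log f_\gamma$ is a closed purely imaginary $1$-form. From $\Delta f_\gamma=\lambda f_\gamma$ and $\abs{f_\gamma}\equiv1$ one gets $\abs{d\log f_\gamma}^2=\lambda$ and $\mathrm{div}(d\log f_\gamma)=0$. The parallelogram law makes $\ip{d\log f_\gamma,d\log f_\eta}$ constant, so finitely many generators give harmonic $1$-forms with constant pairwise inner products. Using density of the span in $L^2$, they span $T^*M$ pointwise and give a global parallel coframe, so $M$ is flat. A compact flat manifold with a parallel coframe has trivial holonomy, hence is a flat torus by Bieberbach. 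I expect the main obstacle to be the middle step: extracting the group structure, and especially the constant-modulus partner, from the purely combinatorial 2-term condition. The squaring and associativity bookkeeping there, including coefficient signs and the possible coincidences $\chi=\chi'$, is where the argument could break, so I would attempt it first.
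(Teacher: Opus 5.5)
Your torus direction is fine, and your first step is the paper's \cref{lem: gradient norm}. To fix $c$, and hence $\mu_i=4\lambda_i$, you also need that $\psi_i$ is itself a basis element, so that the critical-point argument applied to $\psi_i$ makes its range symmetric. Your last step is also sound, and would even make \cref{sec: flat manifolds and the NPP} unnecessary. Constant-modulus eigenfunctions defined on $M$ itself and closed under multiplication give closed $1$-forms $d\theta_\gamma$ on $M$ with constant Gram matrix. That yields a global parallel coframe and trivial holonomy directly. One small fix there: pointwise spanning of $T^*M$ does not follow from $L^2$-density. Use $C^1$-convergence of eigenfunction expansions of smooth functions, or the Bérard--Besson--Gallot embedding.

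The genuine gap is the middle step. It carries the entire distinction between tori and other flat manifolds, and you give no mechanism for it. The identity $\abs{\grad\phi}^2=\lambda(1-\phi^2)$ produces the partner $\pm\sqrt{1-\phi^2}$ only locally. In general the sign can be glued across the critical hypersurfaces $\{\phi=\pm1\}$ only on a double cover of $M$.

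A concrete example is the Klein bottle $\R^2/\langle (x,y)\mapsto(x+1,y),\,(x,y)\mapsto(-x,y+\tfrac12)\rangle$. The eigenfunction $\cos(2\pi x)$ satisfies this identity. However, the only smooth $\tilde\phi$ on $\R^2$ with $\cos^2(2\pi x)+\tilde\phi^2=1$ are $\pm\sin(2\pi x)$, and neither descends.

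So global partners, and a fortiori your group $\Gamma$ on $M$, cannot come from single-function identities. Your own final step shows that $\Gamma$ already encodes trivial holonomy. It must therefore be extracted from the full $2$-product structure, and ``squaring and associativity'' is not yet an argument.

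The paper's own combinatorial analysis of the auxiliary graphs $H_\varphi$ in the manifold case shows how nontrivial this is. \cref{lem:comp-labels} does not exclude that every component other than the $\1$-component is a bi-infinite path. In that case no basis element $\psi$ satisfies $\varphi^2+\psi^2=\1$, i.e.\ the partner is not in the basis.

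The paper sidesteps global partners by splitting the proof into two stages. First, it builds $\Psi$ only on the universal cover (\cref{lem: tilde phi}). It then shows that $\cos(k\alpha)$ lifts a basis element for every $k$ (\cref{lem: angles are harmonic}). Applying non-stationary phase to $\cos(k\alpha)\cos(k\beta)$ as $k\to\infty$ shows that $\ip{\grad\alpha,\grad\beta}$ is constant (\cref{lem: constant angle inner product}). This is a purely local statement, and it gives flatness. Second, nontrivial holonomy is excluded by Fourier analysis on the Bieberbach torus cover: \cref{lem:lower bound on N} shows that an $N$-product eigenbasis forces $2\abs{H}\leq N$, so $N=2$ gives $H$ trivial.

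To make your plan work, you need either a real argument producing partners on $M$ itself for a spanning family of basis elements, or this two-stage structure: flatness locally, then a separate holonomy bound.
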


\noindent The proof proceeds in two steps. To begin, in \cref{sec: 2PP implies flat}, we exploit the structure of 2-product bases to compute the curvature of the underlying manifold and show

\begin{prop}
\label{prop: 2 product eigenbasis implies flatness}
	If $(M,g)$ admits a 2-product eigenbasis, then it is flat.
\end{prop}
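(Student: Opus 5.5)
The plan is to show that every element of a $2$-product eigenbasis satisfies a rigid first-order identity, namely that $|\nabla\phi|^2$ is a function of $\phi$ alone. From this I would derive Hessian constraints, and then use the Bochner formula together with the fact that the basis spans $L^2(M)$ to force the curvature to vanish.

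\emph{Step 1: the square of a basis element.} Fix $\phi=\phi_i$ with $\lambda=\lambda_i>0$, and write $V=\mathrm{vol}(M)$. Since $\int_M\phi^2=1$ and $\phi_0=V^{-1/2}$ is a basis element, the expansion of $\phi^2$ contains $\phi_0$ with nonzero coefficient. Hence the $2$-product property gives
\[
  \phi^2=\tfrac1V+b\,\psi
\]
for a single basis eigenfunction $\psi$, with eigenvalue $\mu$ say. Apply $\Delta$ and use $\Delta(\phi^2)=2\lambda\phi^2-2|\nabla\phi|^2$, which holds in the paper's sign convention $\Delta=-\mathrm{div}\,\grad$. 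This yields
\[
  |\nabla\phi|^2=\Big(\lambda-\tfrac{\mu}{2}\Big)\phi^2+\tfrac{\mu}{2V}=:A-c\,\phi^2 .
\]
Evaluating at a maximum and at a minimum of $\phi$ shows that $c>0$, so $A>0$, and that $\max\phi^2=\min\phi^2=A/c$, i.e.\ the extreme values of $\phi$ are $\pm\sqrt{A/c}$. So $\phi$ is isoparametric-like: $\Delta\phi=\lambda\phi$ and $|\nabla\phi|^2=A-c\phi^2$.

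\emph{Step 2: Hessian identities and Bochner.} Differentiating the identity from Step 1 gives $\mathrm{Hess}\,\phi(\nabla\phi,\cdot)=-c\phi\,\nabla\phi^\flat$. Next I would insert $\Delta\phi=\lambda\phi$ and $|\nabla\phi|^2=A-c\phi^2$ into Bochner's formula
\[
  \tfrac12(-\Delta)|\nabla\phi|^2=|\mathrm{Hess}\,\phi|^2-\langle\nabla\phi,\nabla\Delta\phi\rangle+\mathrm{Ric}(\nabla\phi,\nabla\phi).
\]
The left side equals $-c|\nabla\phi|^2+c\lambda\phi^2$, and this gives a pointwise expression for $|\mathrm{Hess}\,\phi|^2+\mathrm{Ric}(\nabla\phi,\nabla\phi)$ as a polynomial in $\phi$.

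The model case is the flat torus, where $\phi=\cos(k\cdot x)$ and $\mathrm{Hess}\,\phi=-\phi\,k k^{T}$ has rank one. I would aim to prove the analogous statement in general:
\[
  \mathrm{Hess}\,\phi=-c\,\phi\,\frac{\nabla\phi\otimes\nabla\phi}{|\nabla\phi|^2}
\]
on the open dense set $\{\nabla\phi\neq0\}$, with $c=\lambda$, i.e.\ $\mu=4\lambda$. To get there I would bring in a second relation: apply the $2$-product property to $\phi_i\phi_j$ and polarize as in Step 1. This expresses $\langle\nabla\phi_i,\nabla\phi_j\rangle$ through at most two eigenfunctions. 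Integrating these relations against suitable test functions should pin down $\mu=4\lambda$ and the trace and rank information on the Hessian. Once the displayed formula holds, the unit field $\nabla\phi/|\nabla\phi|$ is parallel wherever it is defined, so $R(X,Y)\nabla\phi=0$ there.

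\emph{Step 3: conclusion, and the main obstacle.} Since $\{\phi_i\}$ spans $L^2(M)$, the gradients $\nabla\phi_i(x)$ span $T_xM$ at every point $x$. Otherwise some unit vector $v\in T_xM$ would satisfy $\partial_v\phi_i(x)=0$ for all $i$, contradicting density of the span in $C^1$; I would obtain this density via Sobolev or heat-kernel approximation. Combining this with $R(\cdot,\cdot)\nabla\phi_i=0$ on open dense sets, and using continuity of $R$, gives $R\equiv0$, i.e.\ $M$ is flat.

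I expect the main obstacle to be Step 2: upgrading the scalar identity $|\nabla\phi|^2=A-c\phi^2$ to the full rank-one Hessian formula. Bochner alone only controls $|\mathrm{Hess}\,\phi|^2+\mathrm{Ric}$, so the mixed-product identities $\phi_i\phi_j=a\phi_k+b\phi_l$ must supply the missing information. My first attempt would be to compute $|\mathrm{Hess}\,\phi|^2$ independently, as $\tfrac12\langle\nabla\phi,\nabla|\nabla\phi|^2\rangle$-type expressions plus integration by parts against powers of $\phi$. Then I would compare the result with the Cauchy–Schwarz lower bound $|\mathrm{Hess}\,\phi|^2\ge\mathrm{Hess}\,\phi(\nu,\nu)^2$ for $\nu=\nabla\phi/|\nabla\phi|$, where the right side equals $c^2\phi^2$ by Step 2. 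Equality in that bound is exactly the rank-one statement.
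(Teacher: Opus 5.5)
Your Step 1 is the paper's \cref{lem: gradient norm}, and pinning down $c=\lambda$ needs no test functions. Apply your own extremum argument to $\psi=(\phi^2-1/V)/b$: it is itself a basis element, so its range is symmetric about $0$, which forces $A/c=2/V$ and hence $\mu=4\lambda$. Step 3 is also fine, since the rank-one formula does make $\nabla\phi/\abs{\nabla\phi}$ parallel, and spanning gradients then give $R\equiv 0$.

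The genuine gap is Step 2. It carries the entire content of the proposition, and you give no mechanism for it. Write $\nu=\nabla\phi/\abs{\nabla\phi}$. With $c=\lambda$, your Bochner identity reads $\abs{\mathrm{Hess}\,\phi}^2+\mathrm{Ric}(\nabla\phi,\nabla\phi)=\lambda^2\phi^2$. The transnormal identity already gives $\mathrm{Hess}\,\phi(\nu,\nu)=-\lambda\phi$ and $\mathrm{Hess}\,\phi(\nu,e)=0$ for $e\perp\nu$. So all you get for free is $\mathrm{Ric}(\nabla\phi,\nabla\phi)\le 0$. The rank-one formula is equivalent to $\mathrm{Ric}(\nabla\phi,\nabla\phi)=0$, i.e.\ to the level sets of $\phi$ being totally geodesic rather than merely minimal.

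Your suggested independent computation cannot reach this. The quantity $\tfrac12\langle\nabla\phi,\nabla\abs{\nabla\phi}^2\rangle=\mathrm{Hess}\,\phi(\nabla\phi,\nabla\phi)$ sees only the normal--normal entry. Integrating $\abs{\mathrm{Hess}\,\phi}^2$ by parts against functions of $\phi$ just reproduces a weighted Bochner formula, with $\mathrm{Ric}$ entering through the commutation of derivatives.

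In fact nothing built from $\phi$ and $\R[\phi]$ alone can work. Take $(\R/2\pi\Z)^3$ with $g=d\theta^2+e^{f(\theta)}dx^2+e^{-f(\theta)}dy^2$ and $f=\epsilon\sin\theta$. The functions $\cos m\theta,\sin m\theta$ are eigenfunctions forming a $2$-product family. The function $\phi=\cos\theta$ satisfies $\Delta\phi=\phi$, $\abs{\nabla\phi}^2=1-\phi^2$ and $\phi^2=\tfrac12+\tfrac12\cos2\theta$. Yet $\mathrm{Hess}\,\theta$ restricted to the tori $\{\theta\}\times T^2$ equals $\tfrac12\partial_\theta g_\theta\neq 0$, and $\mathrm{Ric}(\partial_\theta,\partial_\theta)=-\tfrac12 f'(\theta)^2$, so this manifold is not flat.

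The missing input must therefore come from products $\phi\psi$ with $\psi\notin\R[\phi]$, and you need to know precisely how these decompose. Your polarization $2\langle\nabla\phi_i,\nabla\phi_j\rangle=(\lambda_i+\lambda_j)\phi_i\phi_j-\Delta(\phi_i\phi_j)$ is exactly the starting point of the paper's second proof (\cref{lem:const-inner}). It only becomes useful once one knows two further facts, apart from the exceptional case $\phi^2+\psi^2=1$:
\begin{itemize}
\item $2\phi\psi=\psi_1+\psi_{-1}$ with coefficients $\pm1$;
\item $2\psi_1\psi_{-1}=T_2(\phi)+T_2(\psi)$ (\cref{lem:doubling}).
\end{itemize}
Proving these requires the full analysis of the auxiliary graphs $H_\varphi$: the $\1$-component, primitivity, and the classification of components. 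They then show directly that the angle between $\nabla\phi$ and $\nabla\psi$ is locally constant, with no Hessian statement needed.

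The first proof reaches the same kind of conclusion differently. It lifts to the universal cover and writes $\phi=\cos\alpha$ with $\alpha$ harmonic and $\abs{\nabla\alpha}^2=\lambda$. It shows every $\cos(k\alpha)$ is the lift of a basis element. It then uses that $\cos(k\alpha)\cos(k\beta)$ is killed by a quadratic polynomial in $k^{-2}\Delta$, together with nonstationary phase as $k\to\infty$, to get $\langle\nabla\alpha,\nabla\beta\rangle$ constant (\cref{lem: constant angle inner product}). The $\alpha_i$ are then local coordinates with constant metric coefficients.

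Some such two-function mechanism is what your Step 2 lacks. As written, the proposal reduces the proposition to an equivalent statement without proving it.
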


\noindent The main technical step consists of extracting the structure of the nodal set of the eigenfunctions in a 2-product eigenbasis, and exploiting this structure to build high frequency eigenfunctions whose gradients align in a single direction. These gradients are then used to construct local frames from which we deduce flatness.

In \cref{sec:graph,sec:graph-detail}, we give a second proof of \cref{prop: 2 product eigenbasis implies flatness} which is less geometric in nature.

In \cref{sec: flat manifolds and the NPP}, we study flat manifolds and show that their algebraic structure corresponds to the smallest $N$ such that $M$ admits an $N$-product eigenbasis. More precisely, we obtain

\begin{prop}
\label{prop: n product bases and holonomy}
	Let $(M,g)$ be a flat manifold, and set $\nu(M)=\inf\{N : \textup{$M$ admits an $N$-product eigenbasis}\}$. Denoting the holonomy group corresponding to $M$ by $H$, we have
	\[
	    2\abs{H}\leq\nu(M) \leq 4\abs{H}.
	\]
\end{prop}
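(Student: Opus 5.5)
We would work throughout on the universal cover. By Bieberbach, $M=\R^n/\Gamma$ with translation lattice $\Lambda\subset\Gamma$, and $H=\Gamma/\Lambda\subset \Orth(n)$ is finite. Laplace eigenfunctions on $M$ are exactly the $\Gamma$-invariant eigenfunctions on the torus $T=\R^n/\Lambda$. For $\xi\in\Lambda^*$ we set
\[
f_\xi(x)=\sum_{[\gamma]\in\Gamma/\Lambda} e^{2\pi i\langle \xi,\gamma x\rangle}.
\]
This function is $\Gamma$-invariant, its Fourier support lies in the orbit $H\xi$ (the coefficients are phases coming from the translation parts of $\gamma$), and it has eigenvalue $4\pi^2|\xi|^2$. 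The nonzero functions among $\operatorname{Re} f_\xi$ and $\operatorname{Im} f_\xi$, with one representative for each class of orbits $\{H\xi,-H\xi\}$, span $L^2(M)$. Gram--Schmidt is only needed inside the span of a single pair, so the result is an orthonormal eigenbasis $\mathcal B$ in which every element has Fourier support contained in $H\xi\cup(-H\xi)$.

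\textbf{Upper bound.} Take two elements of $\mathcal B$, supported on $\pm H\xi$ and $\pm H\eta$. Their product is $\Gamma$-invariant, and its Fourier support is contained in $\{\pm g\xi\pm h\eta\}$. Up to the action of $H$ and the global sign, this set is covered by the orbits of $\xi+h\eta$ and $\xi-h\eta$ for $h\in H$, so it meets at most $2|H|$ orbit classes. A $\Gamma$-invariant function supported in one orbit class $\pm H\zeta$ lies in the span of $\operatorname{Re} f_\zeta$ and $\operatorname{Im} f_\zeta$. We would check this by showing that the space of $\Gamma$-invariant functions with that support is at most 2-dimensional: the coefficients on $H\zeta$ are determined by the coefficient at $\zeta$, because the stabilizer acts through a character. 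Such a function therefore uses at most 2 elements of $\mathcal B$, and the product uses at most $4|H|$. Hence $\nu(M)\le 4|H|$.

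\textbf{Lower bound.} Let $\{\phi_i\}$ be an arbitrary eigenbasis. We pick one basis element $\phi$ whose Fourier support contains some $\xi$ with trivial stabilizer in $H$. Since $\phi$ is real and $\Gamma$-invariant, its support then contains all of $H\xi\cup(-H\xi)$. Next we pick a second basis element $\psi$ whose support contains a frequency $\eta$ of much larger norm, chosen generic relative to $\xi$. The product $\phi\psi$ has Fourier coefficients at the points $a+b$. Looking at the frequencies $a+h\eta$ for $a\in\pm H\xi$ and fixed $h$, the eigenvalues are
\[
|a+h\eta|^2=|\xi|^2+|\eta|^2+2\langle a,h\eta\rangle .
\]
For generic $\eta$ the quantities $\langle \xi, g\eta\rangle$ are pairwise distinct and nonzero, so when $-1\notin H$ we obtain $2|H|$ distinct eigenvalues. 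Genericity and $|\eta|\gg|\xi|$ prevent cancellation between contributions from different points of the two supports. Components of $\phi\psi$ in distinct eigenspaces need distinct basis elements, which gives $\nu(M)\ge 2|H|$.

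\textbf{Main obstacle.} The case $-1\in H$ is where we expect the real difficulty. There $\pm H\xi=H\xi$, and the counting above only yields $|H|$ distinct eigenvalues. We would try to show that in this case each eigenspace component of $\phi\psi$ cannot be a multiple of a single basis element. The plan is to vary $\psi$ over the (at least two) basis elements of the eigenspace of $\eta$ and use orthonormality: if each component of every $\phi\psi$ were proportional to one basis element, the relative phases of $f_{\zeta}$ and $f_{-\zeta}$ would be forced to be compatible for two independent choices of $\psi$. That would contradict linear independence inside the 2-dimensional span of $\operatorname{Re} f_\zeta$ and $\operatorname{Im} f_\zeta$. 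We would also need to handle orbits on which $f_\xi$ or its imaginary part vanishes identically, and choose $\xi$ and $\eta$ to avoid them, which generic choices should allow.
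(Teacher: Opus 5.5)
Your upper bound is correct and amounts to the paper's argument. The paper computes $u_\xi u_\eta=\frac1{|H|}\sum_h e^{2\pi i\langle\eta,b_h\rangle}u_{\xi+A_h^*\eta}$ for the projected exponentials and then pays a factor $4$ to pass to real and imaginary parts; that is the same count as your $2|H|$ orbit classes times $2$. Your lower-bound strategy is also the paper's: a frequency $\xi$ with full orbit, a generic $\eta$, and $2|H|$ distinct eigenvalues among the frequencies of $\phi\psi$.

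The genuine gap is your ``main obstacle.'' You leave the case $-\mathrm{Id}\in H$ open, and the plan you sketch for it cannot succeed, because the inequality is false there. For the orbifold quotient $\R/\langle x\mapsto x+1,\ x\mapsto -x\rangle$, the basis $\cos(2\pi kx)$ is a $2$-product basis although the point group has order $2$, so $2|H|=4$. Moreover, when $-\mathrm{Id}\in H$ one has $\overline{f_\zeta}=f_{-\zeta}\in\C f_\zeta$. Then $\mathrm{Re} f_\zeta$ and $\mathrm{Im} f_\zeta$ are proportional, and the two independent basis elements your plan relies on need not exist. The missing idea is that this case never occurs for a manifold. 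If $\gamma\in\Gamma$ has linear part $-\mathrm{Id}$, then $\gamma(x)=-x+b$ and $\gamma^2=\mathrm{id}$, which contradicts torsion-freeness of $\Gamma$ (a consequence of the action being free). Hence $H_1:=H\cup(-H)$ is a group of order exactly $2|H|$, and the paper runs the whole lower-bound argument with $H_1$.

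Even with $-\mathrm{Id}\notin H$, your genericity conditions need adjusting.
\begin{itemize}
\item Trivial stabilizer of $\xi$ in $H$ is not enough. A reflection $h\in H$ with $h\xi=-\xi$ collapses $\pm H\xi$ to $|H|$ points. You need trivial stabilizer in $H_1$, which is available because each $-h\neq\mathrm{Id}$ fixes only a proper subspace.
\item ``$\langle\xi,g\eta\rangle$ pairwise distinct and nonzero'' does not make the $2|H|$ numbers $\pm\langle\xi,g\eta\rangle$ distinct. You also need $\langle\xi,g\eta\rangle\neq-\langle\xi,g'\eta\rangle$. This is the paper's condition $\langle(h_1^*-h_2^*)\xi_0,\eta_0\rangle\neq0$ for distinct $h_1,h_2\in H_1$.
\item Non-cancellation does not come from $|\eta|\gg|\xi|$; largeness plays no role, since lattice points of the large sphere can sit close to $h\eta$. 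The real mechanism is that both Fourier supports lie on spheres. A coincidence $a'+b=a+h\eta$ with $a'\in\operatorname{supp}\hat\phi$ and $b\in\operatorname{supp}\hat\psi$ forces $\langle a'-a,\,a+h\eta\rangle=0$. For each of the finitely many $a'\neq a$ in the support of $\phi$, this is an affine-hyperplane condition on $\eta$. So $\eta$ must be chosen after $\phi$ is fixed and must be generic with respect to its entire Fourier support, not just $\xi$.
\end{itemize}
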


\noindent This result is obtained by carefully studying the multiplicative properties of eigenfunctions of Bieberbach manifolds. \cref{thm: 2 product eigenbasis iff flat torus} follows immediately after combining Propositions~\ref{prop: 2 product eigenbasis implies flatness}~and~\ref{prop: n product bases and holonomy}.

\subsection{Graphs}

There is a well-studied analogy between the spectral theory of Riemannian manifolds and that of bounded-degree graphs. 
To give a handful of references: graphs constructed by discretization procedures can be used to study the Laplace eigenvalues of manifolds as in \cite{Buser, Mantuano}; problems on manifolds can be directly reduced to problems on graphs, or vice versa, as in \cite{AminiCohenSteiner, BuserCubic}; and examples \cite{ColboisColin, ColboisGirouard} and techniques \cite{AnantharamanMonk, LetrouitMachado} can be transferred from graphs to manifolds.

Given a graph $G$, we define its \emph{adjacency matrix} $A_G\in\R^{V(G)\times V(G)}$ such that the $uv$ entry of $A_G$ is $1$ if $u$ and $v$ are connected by an edge and $0$ otherwise. Its \emph{degree matrix} $D_G\in\R^{V(G)\times V(G)}$ is a diagonal matrix whose $vv$ entry is the degree of $v$ in $G$, and its \emph{Laplacian matrix} is $\Delta_G=D_G-A_G$. Like in the manifold setting, $\Delta_G$ is self-adjoint and positive semi-definite. The constant function is an eigenfunction\footnote{To keep terminology mostly consistent between manifolds and graphs, we will generally identify vectors in $\R^{V(G)}$ with functions $V(G)\to\R$, and speak of eigenvectors as eigenfunctions.} of $\Delta_G$ with eigenvalue zero, and if $G$ is connected all other eigenvalues are strictly positive. Enumerating an eigenbasis $\phi_1,\ldots,\phi_n$ for $\Delta_G$, we can define the triple product constants
\[c_{ijk}=\frac1{\abs{V(G)}}\sum_{v\in V(G)}\phi_i(v)\phi_j(v)\phi_k(v).\]
These triple product constants display the same reconstruction result as \cref{thm: recover M from triple prod}:

\begin{theorem}
\label{thm: graph reconstruction}
    A graph $G$ is determined up to isomorphism by its Laplace spectrum together with the triple product constants $\{c_{ijk}\}_{i,j,k=1}^n$ in some orthonormal eigenbasis.\footnote{The proof of \cref{thm: graph reconstruction} we present works for weighted graphs as well, but we only focus on simple unweighted graphs for the remainder of the article.}
\end{theorem}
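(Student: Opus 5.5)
The plan is to recover the algebra $\R^{V(G)}$ with pointwise multiplication, together with the operator $\Delta_G$, as abstract data. From that data I will read off the vertex set as the set of minimal idempotents, and then read off the edges from the matrix of $\Delta_G$ in the idempotent basis. The data give us an abstract real vector space $W$ with basis $e_1,\dots,e_n$, standing for $\phi_1,\dots,\phi_n$. On $W$ we put the inner product that makes the $e_i$ orthonormal, with the normalization matching the one used in $c_{ijk}$. We define a bilinear product by
\[
e_i\cdot e_j=\sum_k c_{ijk}\,e_k,
\]
and a linear operator $L$ by $Le_i=\lambda_ie_i$.

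The first step is to check that the map $\Phi\colon W\to\R^{V(G)}$ sending $e_i\mapsto\phi_i$ is an algebra isomorphism intertwining $L$ with $\Delta_G$. Since $\{\phi_k\}$ is orthonormal with respect to $\ip{f,g}=\frac1{\abs{V(G)}}\sum_v f(v)g(v)$, we have the expansion $\phi_i\phi_j=\sum_k c_{ijk}\phi_k$. So $\Phi$ is multiplicative and bijective, and it intertwines $L$ with $\Delta_G$ by construction. (If the eigenbasis is normalized in the plain $\ell^2$ sense instead, the identity constant changes, but everything else goes through.) Consequently $W$ is a commutative, associative, semisimple algebra isomorphic to $\R^n$ with coordinatewise product.

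The second step recovers the vertices intrinsically. In $\R^{V(G)}$ the nonzero idempotents are exactly the indicator functions $\1_S$ with $S\neq\emptyset$. The minimal ones are the $\1_{\{v\}}$, minimal in the sense that $pq\in\{0,p\}$ for every idempotent $q$. So the set $P$ of minimal idempotents of $W$ corresponds bijectively to $V(G)$ through $\Phi$, and this set is determined by the $c_{ijk}$ alone. To be concrete, $P$ is the finite solution set of $x\cdot x=x$ after we discard the non-minimal solutions. No choice enters here, so the bijection is canonical.

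The third step recovers the edges. For distinct $p=\Phi^{-1}(\1_{\{u\}})$ and $q=\Phi^{-1}(\1_{\{v\}})$, the coefficient of $p$ in $Lq$ is $(\Delta_G)_{uv}$, which equals $-1$ if $uv$ is an edge and $0$ otherwise. This coefficient can be computed purely from the data, because $P$ is a basis of $W$. Hence $G$ is the graph on $P$ in which $p\sim q$ exactly when the $p$-coefficient of $Lq$ is nonzero. Two graphs with the same data therefore yield isomorphic reconstructions, and each reconstruction is isomorphic to its own graph via $\Phi$, so the two graphs are isomorphic.

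The only point needing care is the second step, namely that minimal idempotents are intrinsic to the abstract algebra. This holds because the defining conditions use only the multiplication. I expect no real obstacle beyond stating it cleanly. For weighted graphs the same argument recovers the weights $-(\Delta_G)_{uv}$.
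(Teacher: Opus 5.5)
Your proposal is correct and is essentially the paper's proof. The paper likewise builds the abstract algebra with product $v_iv_j=\sum_k c_{ijk}v_k$, identifies the vertices with the nonzero idempotents $w$ satisfying $wq\in\{0,w\}$ for every idempotent $q$, and recovers the Laplacian by expressing the eigenbasis in this idempotent basis; its change-of-basis formula in terms of $B^\intercal$ and $\Lambda$ is exactly your matrix of $L$ in the basis $P$, and your account is slightly more explicit about why the construction is canonical and how the normalization of $c_{ijk}$ enters.
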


We will need to define a few graph classes of interest to state our main result and surrounding examples in the graph setting.

\begin{defn}
    Let $\Gamma$ be a finitely generated group and let $S\subset \Gamma\setminus\{\operatorname{id}\}$ be a generating set satisfying $s^{-1}\in S$ for all $s\in S$. 
    \begin{itemize}
        \item The \emph{Cayley graph} $\Cay(\Gamma,S)$ of $\Gamma$ with generating set $S$ is the graph with vertex set $\Gamma$ and edge set $\{(g,gs):x\in \Gamma,s\in S\}$.

        \item If $H$ is a subgroup of $\Gamma$, the \emph{Schreier coset graph} $\Sch(\Gamma,H,S)$ is the graph with vertex set $\{Hg:g\in \Gamma\}$ and edges connecting $Hg$ to $Hgs$ for each $s\in S$.

        \item If $\Gamma$ is abelian and $T\subset \Gamma$, the \emph{Cayley sum graph} $\Cay^+(\Gamma,T)$ is the graph with vertex set $\Gamma$ and edge set $\{(x,t-x):x\in \Gamma,t\in T\}$.\footnote{This construction produces an undirected graph even if $T$ is not symmetric.}
    \end{itemize}
    Cayley sum graphs and Schreier coset graphs potentially have self-loops and multi-edges.
\end{defn}

Cayley graphs on abelian groups can perhaps be considered natural analogues of tori; like tori, they are acted on simply transitively by an abelian group of ``translation'' automorphisms.
One cannot think of Cayley sum graphs in quite the same way, but they can be represented as Schreier coset graphs on ``nearly abelian'' (generalized dihedral) groups, as follows.
Given an abelian group $\Gamma$, form a semidirect product $\Gamma\rtimes\Z/2\Z$ where the $\Z/2\Z$ factor acts on $\Gamma$ by negation. Writing $r$ for the unique non-identity element of the subgroup $\Z/2\Z$, we have
\[\Cay^+(\Gamma,T)\cong\Sch(\Gamma\rtimes\Z/2\Z,\Z/2\Z,rT).\]
(Note that every element in $r\Gamma$ is an involution in $\Gamma\rtimes\Z/2\Z$, so the set $rT$ is symmetric for any $T\subset\Gamma$.)
In this sense, Cayley sum graphs loosely analogize some flat manifolds with holonomy group of order $2$.

Our second main result analogizes \cref{thm: 2 product eigenbasis iff flat torus}. 

\begin{theorem}
\label{thm: 2PP iff Cayley}
    For every positive integer $d$, there exists some positive integer $f(d)$ such that the following holds:

    Let $G$ be a connected graph with maximum degree at most $d$ on at least $f(d)$ vertices. Suppose that $G$ admits a $2$-product eigenbasis and also that $\abs{V(G)}$ is odd.
    Then $G$ is either a Cayley graph, or a Cayley sum graph with symmetric generating set, on an abelian group.
\end{theorem}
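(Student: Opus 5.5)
Here is the plan. The $2$-product condition says that for any two basis eigenfunctions $\phi_i,\phi_j$ there are indices $k,l$ and scalars with $\phi_i\phi_j=a\phi_k+b\phi_l$. I would first extract a rigid combinatorial structure from this, then build a group action out of it.

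\textbf{Step 1: basis functions have essentially constant modulus.} Normalize so that $\frac1{\abs{V(G)}}\sum_v\phi_i(v)^2=1$.
\begin{itemize}
\item Since $\phi_0$ is constant, $\phi_i^2=\phi_0+b\phi_l$ for some $l$ and scalar $b$.
\item Iterating, $\phi_i^4=\phi_0+2b\phi_l+b^2\phi_l^2$, and $\phi_l^2$ again has at most two terms. So every power of $\phi_i$ lies in a finite-dimensional span that is controlled by the basis.
\item I would show that $\phi_i$ takes at most a bounded number of distinct values. The key point is that $\phi_i^2$ takes only the values $1+b\phi_l(v)$. The eigenvalue equation, combined with maximum degree $d$, forces the value sets to be small: for a function with few values, the Laplacian relation $\sum_{u\sim v}(\phi(v)-\phi(u))=\lambda\phi(v)$ pins down the combinatorics.
\item The target is that each $\phi_i$ takes values in $\{\pm c_i\}$, or in $\{c,c\omega,\dots\}$ after pairing real and imaginary parts. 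Real-valued eigenfunctions of the torus look like $\cos$ and $\sin$ of characters, so $\cos\cdot\cos$ gives two terms.
\item The natural statement is therefore that pairs $\{\phi_i,\phi_{i'}\}$ combine into $\chi_i=\phi_i+\sqrt{-1}\phi_{i'}$ with $\abs{\chi_i}\equiv1$ and $\chi_i\chi_j=\chi_k$.
\end{itemize}
Oddness of $\abs{V(G)}$ rules out real $\pm1$ characters of order two, which would break this pairing. The bound $f(d)$ is used to discard small exceptional configurations, where a few values could coincide by accident.

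\textbf{Step 2: the characters form a group.} Once $\{\chi_i\}$ is closed under multiplication and conjugation and consists of unimodular functions, it is a finite abelian group $\widehat\Gamma$ of functions on $V(G)$.
\begin{itemize}
\item It spans all functions, so the map $v\mapsto(\chi(v))_\chi$ embeds $V(G)$ into the dual $\Gamma$.
\item Counting shows this map is a bijection, so $V(G)\cong\Gamma$.
\end{itemize}

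\textbf{Step 3: recover the edges.} The adjacency operator $A=D-\Delta$ is diagonalized by the functions $\phi_i$, hence by the span of each pair $\{\phi_i,\phi_{i'}\}$. However, it need not diagonalize $\chi_i$ itself, because the two functions in a pair share an eigenvalue but $\chi$ and $\bar\chi$ could be mixed. I expect two cases:
\begin{itemize}
\item If $A\chi=\mu_\chi\chi$ for all $\chi$, then $A$ commutes with translations by $\Gamma$. Then $A$ is convolution by a $0/1$ symmetric kernel, and $G=\Cay(\Gamma,S)$.
\item Otherwise $A$ intertwines $\chi$ with $\bar\chi$, meaning $A\chi=\mu_\chi\bar\chi$. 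Then $A$ commutes with the twisted action and is of the form $f\mapsto\sum_t f(t-x)$, which gives $\Cay^+(\Gamma,T)$.
\end{itemize}
Regularity of $G$ comes from $A$ applied to the constant $\chi$. A mixed situation, with some characters of each type, has to be excluded by multiplicativity: compare $A(\chi\psi)$ with $A\chi$ and $A\psi$ evaluated against the $0/1$ kernel. I would also need to check that $T$ can be taken symmetric.

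\textbf{Expected main obstacle.} Step 1 is the hardest part: proving that each $\phi_i$ is, up to pairing, a unimodular character, using only the two-term product identities and bounded degree. My first attempt would be to look at level sets. Let $\phi_i$ attain its maximum modulus at $v$. The identities $\phi_i\phi_j=a\phi_k+b\phi_l$, taken over all $j$ and evaluated across $v$'s neighbourhood, together with the eigen-relations, should force $\abs{\phi_i}$ to be constant on neighbourhoods; connectivity then makes it constant everywhere. Large size, $\abs{V(G)}\ge f(d)$, enters by preventing eigenvalue multiplicities smaller than needed for a degenerate configuration.
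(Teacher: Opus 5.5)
Your endgame matches the paper's. The target of your Step~1 (basis elements pair into unimodular functions $\chi=\phi+\sqrt{-1}\,\phi'$ closed under multiplication) is \cref{lem:ab-gp}, and your Steps~2--3 correspond to the bijection $\xi\colon V(G)\to\widehat\Gamma$ and the kernel argument at the end.

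The gap is Step~1, which carries all the content, and your sketch of it points the wrong way. The claim that each $\phi_i$ takes boundedly many values is false: on an odd cycle $C_n$, $\cos(2\pi x/n)$ belongs to a $2$-product eigenbasis and takes $(n+1)/2$ values. Likewise $\abs{\phi_i}$ is not constant; only $\abs{\phi_i+\sqrt{-1}\,\phi_{i'}}$ is. So a maximum-modulus and connectivity argument aimed at $\abs{\phi_i}$ cannot work, and you never say how the partner $\phi_{i'}$ would be found.

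The hypotheses on $d$ and $f(d)$ serve the opposite purpose: they produce an eigenfunction with \emph{many} values. The paper does this in \cref{prop:find-varied}. Bounded degree and large $n$ force at least $\log_d(n-1)$ distinct eigenvalues, via a diameter bound. By contrast, eigenfunctions with fewer than $k$ values respect a partition into fewer than $k$ parts, and such functions can realize only boundedly many eigenvalues (in terms of $d,k$).

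This input is indispensable. Join a single vertex to the hypercube $Q_k$ with $k\geq 2$, an instance of \cref{ex:graph-join}. The result is connected, has $2^k+1$ (odd) vertices, and admits a $2$-product eigenbasis all of whose elements take at most three values. It is not regular, so it is neither a Cayley nor a Cayley sum graph.

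Given $\varphi\in\cB$ with $k=\abs{\im\varphi}$ large, the missing structural idea is to encode multiplication by $\varphi$ as a labelled graph $H_\varphi$ on $\cB$. This graph has maximum degree $2$, so its components are paths and cycles.
\begin{itemize}
\item The component of $\1$ is forced to be $T_0(\varphi),\dots,T_{k-1}(\varphi)$. Hence $\int T_j(\varphi)=0$ for $1\le j<k$, and a moment argument confines $\cD(\varphi)$ to a few discrete cosine distributions.
\item Restricting to primitive $\varphi$ (\cref{lem:primitive}), the other components are $2k$-cycles with labels $\pm1$. The parity of $\abs{V(G)}$ then pins down $\cD(\varphi)=\cD_{2k}^{\mathrm{odd}}$ (\cref{lem:comp-labels}).
\item Concretely, $\varphi=\sin(2\pi\alpha_\varphi)$ with $\alpha_\varphi$ valued in $\frac1k\Z/\Z$, and its partner is $T_{k-1}(\varphi)=\pm\cos(2\pi\alpha_\varphi)$.
\item Solving the recurrence $2\varphi\psi_j=\pm(\psi_{j+1}-\psi_{j-1})$ around each cycle (\cref{lem:cyc-components-strong}) yields the closure $\chi_i\chi_j=\chi_k$ and the identity $\{0\}\cup\cB\cup(-\cB)=\operatorname{trig}(\Gamma)$.
\end{itemize}
None of this comes out of local eigenvalue relations at a maximum.

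Step~3 also needs adjusting. The two functions in a pair need not share an eigenvalue: in a Cayley sum graph, the cosine and sine of a character have adjacency eigenvalues $\pm\sigma$. In general a single $\chi$ satisfies $A\chi=a_\chi\chi+b_\chi\bar\chi$ with both coefficients nonzero, so there is no clean dichotomy among characters for multiplicativity to act on. The paper instead writes $\Delta_{uv}=f(\xi(u)-\xi(v))+g(\xi(u)+\xi(v))$. Since $\abs{\Gamma}$ is odd, the pair $(\xi(u)-\xi(v),\xi(u)+\xi(v))$ ranges over all pairs. The $\{-1,0\}$-valued off-diagonal entries then force $f$ to be constant on nonzero arguments or $g$ to be constant.
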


The proof of \cref{thm: 2PP iff Cayley} is composed of three stages.
Much of the first stage can be\footnote{For reasons of clarity, we often restrict ourself to the two settings of interest, as this simplifies some of the proof ingredients.} run in the generality of an arbitrary orthonormal $2$-product eigenbasis $\cB$ of the space of $L^2$ functions over an arbitrary (finite or infinite) probability space $(X,\mu)$.
This generality covers both the graph setting discussed here and the problem in manifolds discussed earlier in this section.
The idea is to define, for each $\phi\in\cB$, an auxiliary graph whose vertex set is the set $\cB$ and where two vertices $\psi_1$ and $\psi_2$ are connected by an edge if the triple product constant corresponding to $(\phi,\psi_1,\psi_2)$ is nonzero.
Studying the structure of this graph allows us to understand both the structure of our chosen $\phi\in\cB$ and the relationship between $\phi$ and other elements of $\cB$.
We state the relevant lemmas in \cref{sec:graph-aux} and prove them in \cref{sec:graph-detail}.

Since this first stage applies also in the manifold setting, we use the results we obtain here to, in \cref{sec:comp-manifold}, provide a second proof of \cref{prop: 2 product eigenbasis implies flatness}. 

This first stage requires a technical assumption: namely, the chosen eigenfunction $\phi$ must take sufficiently many distinct values.
In the second stage of the proof of \cref{thm: 2PP iff Cayley}, we show that such an assumption holds for some such $\phi$ if $\abs{V(G)}$ is large in terms of the maximum degree of $G$. (It turns out that such a statement is sufficient to obtain a similar, albeit quantitatively weaker, property for enough other eigenfunctions $\psi$ in a $2$-product eigenbasis.)
Such a conclusion is potentially of independent interest, and does not require any algebraic conditions on the basis, so we state it here.

\begin{prop}\label{prop:find-varied}
    Let $d,k,n$ be positive integers, and suppose that
    \[d<(\log\log n)^{1/(2k)}.\]
    Let $G$ be a connected graph on $n$ vertices of maximum degree at most $d$.
    Every eigenbasis of $G$ contains an eigenfunction which takes at least $k$ distinct values.
\end{prop}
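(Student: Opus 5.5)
We argue by contraposition: suppose $G$ has an eigenbasis $\{\phi_i\}$ in which every eigenfunction takes at most $k-1$ distinct values. We will show that $G$ then has only boundedly many distinct Laplace eigenvalues, in terms of $d$ and $k$. A diameter argument then bounds $n$ doubly exponentially, which contradicts the hypothesis $d^{2k}<\log\log n$.

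\textbf{Step 1: each eigenvalue comes from a small integer matrix.} Fix $\phi$ in the basis with $\Delta_G\phi=\lambda\phi$, and list its distinct values as $a_1,\dots,a_m$ with $m\le k-1$. For each $j$, pick one vertex $v_j$ with $\phi(v_j)=a_j$. The eigenvalue equation at $v_j$ reads
\[\deg(v_j)\,a_j-\sum_{l=1}^m c_{jl}\,a_l=\lambda a_j,\]
where $c_{jl}$ is the number of neighbours $u$ of $v_j$ with $\phi(u)=a_l$. Each $c_{jl}$ is a nonnegative integer, and each row satisfies $\sum_l c_{jl}\le d$.

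Thus the vector $a=(a_1,\dots,a_m)$ satisfies $(D'-C)a=\lambda a$, where $D'=\operatorname{diag}(\deg v_j)$ and $C=(c_{jl})$. This holds even if some $a_j=0$, since that row of the equation is still valid. The vector $a$ is nonzero because $\phi\neq0$. Hence $\lambda$ is an eigenvalue of an $m\times m$ integer matrix whose entries lie in $[-d,d]$.

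There are at most $\sum_{m\le k-1}(2d+1)^{m^2}\le k(2d+1)^{k^2}$ such matrices, and each has at most $k$ eigenvalues. Since the basis contains an eigenfunction for every eigenvalue of $\Delta_G$, the number $s$ of distinct eigenvalues of $\Delta_G$ satisfies $s\le k^2(2d+1)^{k^2}$.

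\textbf{Step 2: few eigenvalues force small diameter.} Next we use the standard fact that a connected graph of diameter $D$ has at least $D+1$ distinct Laplacian eigenvalues. To see this, note that $I,\Delta_G,\dots,\Delta_G^D$ are linearly independent: the $(u,v)$ entry of $\Delta_G^t$ vanishes when $\operatorname{dist}(u,v)>t$ and is nonzero when $\operatorname{dist}(u,v)=t$. The minimal polynomial of $\Delta_G$ therefore has degree at least $D+1$. Hence $D\le s-1$.

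With maximum degree $d\ge 2$, a ball of radius $D$ has at most $1+d+\dots+d^D\le d^{D+1}$ vertices, so $n\le d^{s}$. This gives
\[\log\log n\le \log s+\log\log d\lesssim k^2\log(2d+1)+2\log k+\log\log d,\]
which should be at most $d^{2k}$ for all $d\ge2$, $k\ge1$. The case $d=1$ is trivial, since then $n\le2$ and the hypothesis $1<(\log\log n)^{1/(2k)}$ cannot hold. This contradicts the assumption and proves the proposition.

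\textbf{Main obstacle.} The structural content is routine. What needs care is the final numerical comparison, namely checking that $k^2\log(2d+1)+O(\log k)<d^{2k}$ holds uniformly, including small cases such as $d=2$, $k=1,2$. If a direct check is awkward, I would tighten the count in Step 1. For example, the eigenvector equation is invariant under permuting the values, and every $\lambda$ lies in $[0,2d]$, so the matrix count can be reduced. I would also use the bound $n\le 1+d\frac{(d-1)^D-1}{d-2}$ in place of $d^{D+1}$. For $k=1$ the statement is immediate anyway: the constant function is a single value, and any nonconstant eigenfunction takes at least $2$ values once $n\ge2$.
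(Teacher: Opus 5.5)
Your argument is correct. Its two-stage architecture is the paper's: first bound, in terms of $d$ and $k$ alone, the number of eigenvalues that admit an eigenfunction with fewer than $k$ values; then force at least $D+1$ distinct eigenvalues via the minimal-polynomial/diameter argument. Where you differ is the counting step.

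The paper enumerates partitions $\mathcal P$ into $\ell$ parts and the subsets $\mathcal F\subset\mathcal L_\ell$ of linear constraints they realize. This gives at most $2^{\ell(d+1)^\ell}$ possible sets $\Lambda(\mathcal P)$. It then needs a separate determinant argument, which uses finiteness of $\Lambda(\mathcal P)$, to show $\abs{\Lambda(\mathcal P)}\le\ell$.

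You instead pick one representative vertex per level set. This throws away most constraints but keeps a square system, so $\lambda$ is directly an eigenvalue of one of at most $(2d+1)^{m^2}$ integer matrices. The bound of at most $m$ eigenvalues per matrix is then automatic.

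Your route is simpler and quantitatively much stronger. Your count is singly exponential in $k^2\log d$ rather than doubly exponential in $k$. So the same proof gives the conclusion whenever $2\log k+k^2\log(2d+1)+\log\log d<\log\log n$, i.e.\ for $k$ up to roughly $\sqrt{\log\log n/\log d}$ rather than $\log\log\log n/(2\log d)$. The paper's bookkeeping records exactly which eigenvalues each partition supports, but that extra information is never used.

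The numerical comparison you flagged holds comfortably. For fixed $k$, the gap $d^{2k}-\bigl(2\log k+k^2\log(2d+1)+\log\log d\bigr)$ is increasing in $d\ge 2$. At $d=2$ the bracketed quantity is about $1.2$, $7.5$, $16.3$ for $k=1,2,3$, against $4$, $16$, $64$, and thereafter $4^k$ dominates.
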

\deferproof{prop:find-varied}{section,theorem}

The last stage in the proof is to use the information extracted in the first stage to construct an associated abelian group $\Gamma$ with $\abs{\Gamma}=\abs{V(G)}$. 
This $\Gamma$ is the dual group of a group whose elements are constructed directly from the eigenfunctions $\varphi$.
We then prove that $G$ is a Cayley graph or Cayley sum graph on $\Gamma$. 

\subsection{Future work}

These results several further lines of investigation, which we record below. A natural extension of \cref{prop: 2 product eigenbasis implies flatness} asks if a manifold which admits an $N$-product eigenbasis,  with $N\geq 3$, must be flat. This question is to be answered affirmatively by a forthcoming paper of Josef Greilhuber and the second author, using a different set of techniques than those presented here. However, more generally, admitting a finite product eigenbasis seems indicative of an underlying algebraic structure. A simple extension of the argument to be presented in \cref{ex: Lie groups}, which also appears as \cite[Lemma~9]{LinMendesRadeschi2025}, shows that all manifolds which are finitely covered by a homogeneous space (i.e., a space with transitive isometry group) admit a finite product eigenbasis. However, complete classification seems subtle, and we pose this as an open problem. 

\begin{prob}\label{prob:fin-prod}
	Classify all manifolds $(M,g)$ which admit finite product bases.
\end{prob}

\noindent Note that this question appears in a different context as Question~3 in \cite{LinMendesRadeschi2025}.

In the graph setting, there seems not to be a natural analogue of a finite product eigenbasis, but we may still pose an analogue of the flatness question in the presence of an $N$-product eigenbasis.

\begin{prob}\label{prob:N-prod-graph}
    Let $G$ be an undirected simple connected graph admitting an $N$-product eigenbasis.
    Suppose that $\abs{V(G)}$ is sufficiently large in terms of both $N$ and the maximum degree of $G$.
    Show that $G$ is a Schreier coset graph on some group $\Gamma$ with an abelian normal subgroup of index at most $F(N)$.
\end{prob}

\noindent See \cref{ex:cayley-gen} for some explanation of the graphs described in \cref{prob:N-prod-graph}.

There is also the more modest question of removing the additional assumptions in \cref{thm: 2PP iff Cayley}, which may be tractable. (See \cref{ex:graph-join} for an explanation of some of the intricacies of this question.)

\begin{prob}\label{prob:2-prod-graph}
    Classify the connected graphs admitting a $2$-product eigenbasis.
\end{prob}

\vspace{1em}

\noindent \textbf{Acknowledgements.} The authors thank Benjamin Church, Josef Greilhuber, and Zeev Rudnick for helpful conversations, and Jacob Fox and Rafe Mazzeo for their guidance. Several AI models contributed through mathematical conversations, in particular to check whether \cref{prob:N-prod-graph} was plausible and to investigate \cref{exa: twisted torus}. CS was supported by a NSF Graduate Research Fellowship Program under Grant No.~DGE-2146755.

\vspace{3em}

\section{Background, Motivation, and Examples}
\label{sec: examples}

Eigenfunction triple products were originally studied in number theory due to their connection to the theory of $L$-functions. For example, Watson's formula explicitly relates the triple product constants on arithmetic hyperbolic surfaces to special values of $L$-functions \cite{Watson2008}. Work by Sarnak \cite{Sarnak1994}, Bernstein and Reznikov \cite{BernsteinReznikov1999, BernsteinReznikov2004}, and Reznikov \cite{Reznikov2001} establishes upper and lower bounds on the exponential decay of triple products in this context. These have since been employed to conclude results on subconvexity for $L$-functions \cite{BernsteinReznikov2010} and on quantum unique ergodicity \cite{BisainHumphriesMandelshtamWalshWang2024}.

In the analytic and geometric setting, the study of eigenfunction triple products was initiated by Zelditch in \cite{Zelditch2012}. In the case of compact real-analytic manifolds, he obtained exponentially decaying bounds on the triple product constants. These bounds, given in terms of the maximal analytic Grauert tube radius, mirror those which appear in the number theoretic setting. They have since been refined, as in \cite{CharronPagano2025}, for example. Beyond the real-analytic setting, work by Steinerberger \cite{Steinerberger2019} investigates the spectral resolution of products of eigenfunctions. Lu, Sogge, and Steinerberger \cite{LuSoggeSteinerberger2019} prove that products of low-frequency eigenfunctions can be efficiently approximated by a comparatively low-dimensional spectral subspace. In other words, triple product constants are small outside of a controllable finite window. Wyman \cite{Wyman2022} refined this picture by showing that the constants must concentrate, in the $\ell^2$ sense, around a model distribution.

As discussed in \cref{sec: intro}, our work addresses the inverse direction to these results. Instead of assuming structure on the underlying space, such as real-analytic or arithmetic, and describing the behavior of the triple product constants, we attempt to recover information about the manifold from the triple product constants. To motivate this approach, we discuss below three examples: tori (which admit $N$-product bases), compact Lie groups (which admit finite product bases), and hyperbolic surfaces (which do not admit finite product bases).

\begin{exa}[Tori]
\label{ex: torus}

	Let $\Lambda\subset \mathbb{R}^n$ be a full rank lattice and set $M=\mathbb{R}^n/\Lambda$ to be the corresponding flat torus. In the complex Fourier basis $e_\xi(x)=e^{2\pi i\langle \xi,x\rangle}$, with $\xi$ in the dual lattice $\Lambda^*$, the product of two basis elements is exactly a basis element. Passing to the associated real basis of sines and cosines, the product of any two basis elements lies in the span of at most two basis elements. Since these are all eigenfunctions and they span $L^2(M)$, we conclude that flat tori admit a $2$-product eigenbasis.

	This phenomenon depends on the choice of basis. Indeed, not all eigenbases on the torus are $2$-product bases. On highly degenerate tori, such as the one corresponding to the square lattice, the eigenspaces can have arbitrarily large multiplicity. By choosing an orthonormal basis within each eigenspace which mixes many Fourier modes, products of the basis eigenfunctions may be arranged to spread across a growing number of eigenspaces.
\end{exa}

\begin{exa}[Compact Lie groups]
\label{ex: Lie groups}

Let $G$ be a compact Lie group equipped with a bi-invariant Riemannian metric. We show that $G$ admits a finite product eigenbasis.

Let $\widehat{G}$ denote the set of equivalence classes of irreducible unitary representations of $G$, and for each $\pi\in\widehat{G}$ fix an orthogonal basis $\{e_i^\pi\}$ of the representation space $V_\pi$. The associated matrix coefficients
\[
\pi_{ij}(g) = \langle e_i^\pi,\pi(g)\,e_j^\pi\rangle, \qquad 1\leq i,j\leq \dim V_\pi,
\]
span a subspace $\mathcal{E}_\pi\subset C^\infty(G)$ of dimension $(\dim V_\pi)^2$, and the Peter--Weyl theorem provides the orthogonal decomposition
\[
L^2(G) =\overline{\bigoplus_{\pi\in\widehat{G}}\, \mathcal{E}_\pi}.
\]
\noindent Bi-invariance of the metric means that $\Delta$ commutes with both the left- and the right-regular representation of $G$. Schur's lemma thus forces $\Delta$ to act on every $\pi\in\widehat{G}$ by a non-negative scalar $\lambda_\pi$, and hence $\Delta\,\pi_{ij} = \lambda_\pi\,\pi_{ij}$ for all $i,j$. Distinct irreducibles may share the same eigenvalue, so the eigenspaces of $\Delta$ are the finite direct sums
\[
E_\lambda = \bigoplus_{\pi\,:\,\lambda_\pi = \lambda} \mathcal{E}_\pi,
\]
and every eigenfunction is a finite linear combination of matrix coefficients.

It remains to verify that $E_\lambda \cdot E_\mu$ is contained in a finite sum of eigenspaces. To this end, it suffices to consider the pointwise product of two matrix coefficients $\pi_{ij}$ and $\rho_{kl}$, with $\pi,\rho\in\widehat{G}$. Equipping $V_\pi\otimes V_\rho$ with the orthogonal basis $\{e_i^\pi\otimes e_k^\rho\}$, we have
\[
\pi_{ij}(g)\,\rho_{kl}(g) \;=\; (\pi\otimes\rho)_{(i,k),(j,l)}(g),
\]
so $\pi_{ij}\,\rho_{kl}$ is itself a matrix coefficient of the tensor product representation $\pi\otimes\rho$. This representation decomposes as a finite direct sum of irreducibles as
\[
\pi\otimes\rho \;\cong\; \bigoplus_{\sigma\in\widehat{G}} m_{\pi\rho}^{\sigma}\,\sigma, \qquad m_{\pi\rho}^{\sigma}\in\mathbb{Z}_{\ge 0},
\]
where the coefficients $m_{\pi\rho}^{\sigma}$ are zero for all but finitely many $\sigma$. Expanding $\pi_{ij}\,\rho_{kl}$ relative to a basis of matrix coefficients of the constituent irreducibles exhibits it as a finite sum of eigenfunctions of $\Delta$. We conclude that every eigenbasis of $G$ is a finite product eigenbasis.
\end{exa}

The same mechanism applies on any compact homogeneous space $G/H$ by restricting the analysis above to the subspace of $H$-invariant functions; see, for example, Lemma 9 in Section 2 of \cite{LinMendesRadeschi2025}. In the model case of the round sphere $\mathbb{S}^n = \mathrm{SO}(n+1)/\mathrm{SO}(n)$, the eigenspaces are the spaces of spherical harmonics, and the triple product constants of eigenfunctions are precisely the Clebsch--Gordan coefficients. Furthermore, a straightforward argument shows that a finite quotient of a space which admits a finite product eigenbasis itself admits such a basis. This shows that any manifold covered by a compact homogeneous space, such as the Klein bottle, admits a finite product eigenbasis.

\begin{exa}[Hyperbolic surfaces]
\label{ex: hyperbolic manifolds} Let $M=\Gamma\backslash \mathbb{H}$ be a closed arithmetic hyperbolic surface and choose an orthonormal Laplace eigenbasis $\{\phi_i\}$. Fix a nonconstant Laplace eigenfunction $\phi$ and define
\[
	c_i=\langle \phi^2,\phi_i\rangle.
\]
Then, by the work of \cite{BernsteinReznikov1999} and \cite{Reznikov2001}, there exists constants $c$ and $C$ such that
\[
	c\,\abs{\log\epsilon}\leq \sum c_i^2\,e^{(\frac{\pi}{2}-\epsilon)\sqrt{|4\lambda_i-1|}}\leq C\,\abs{\log\epsilon}^3
\]
as $\epsilon\to 0$. In particular, infinitely many of the coefficients $c_i$ are nonzero. This immediately implies that no eigenbasis of $M$ can be a finite product eigenbasis.

Recent work of Adve gives a complementary inverse perspective. The main theorem of \cite{Adve2025} can be viewed as a range characterization result, in terms of the conformal bootstrap equations, for the map which sends a hyperbolic surface to its spectrum together with its triple products.
\end{exa}

\begin{exa}[Abelian Cayley graphs]
\label{ex:cayley-ab}
    Like tori, Cayley graphs on abelian groups admit $2$-product eigenbases. Let $\Gamma$ be an abelian group and $S\subset\Gamma$ be a symmetric subset. The Laplacian matrix of $\Cay(\Gamma,S)$ has a complex eigenbasis consisting of
    \[e_\xi(x)=e^{2\pi i\langle\xi,x\rangle}\]
    for $\xi\in\widehat{\Gamma}=\operatorname{Hom}(\Gamma,\R/\Z)$ in the dual group of $\Gamma$. 
    Passing to the associated real basis of sines and cosines gives a $2$-product eigenbasis.
\end{exa}

\begin{exa}[Cayley sum graphs]
\label{ex:cayley-sum}
    For $\Gamma$ abelian and $S\subset\Gamma$ satisfying $S=-S$, the Cayley sum graphs $G:=\Cay^+(\Gamma,S)$ also admit $2$-product eigenbases. Letting $\Delta$ be the Laplacian matrix of $G$, we have
    \begin{align*}
        \Delta e_\xi
        &=\abs{S}e_\xi-\sum_{s\in S}e^{2\pi i\langle\xi,s\rangle}e_{-\xi}\\
        \Delta e_{-\xi}
        &=-\sum_{s\in S}e^{-2\pi i\langle\xi,s\rangle}e_{\xi}+\abs{S}e_{-\xi}.
    \end{align*}
    Since $S=-S$, the sum $\sum_{s\in S}e^{2\pi i\langle\xi,s\rangle}$ is real.
    These two identities imply that the elements of the real basis of sines and cosines associated to $\{e_\xi:\xi\in\widehat\Gamma\}$ is an eigenbasis of $G$.
    As in \cref{ex:cayley-ab}, this is a $2$-product eigenbasis.
\end{exa}

\begin{exa}[Nearly abelian Schreier graphs]
\label{ex:cayley-gen}
    Let $\Gamma$ be a finite group all of whose irreducible representations have dimension at most $d$. Let $S\subset\Gamma$ be a symmetric generating set and let $H$ be a subgroup.
    An argument analogous to that of \cref{ex: Lie groups} shows that the Schreier coset graph $\Sch(\Gamma,H,S)$ admits a $4d^3$-product eigenbasis. 

    First, we produce a $d^3$-product eigenbasis formed of complex vectors.
    Indeed, the Laplacian $\Delta$ of the Schreier coset graph $\Sch(\Gamma,H,S)$ can be written as
    \[\sum_{s\in S}(\rho_{H\backslash\Gamma}(1)-\rho_{H\backslash\Gamma}(s)),\]
    where $\rho_{H\backslash\Gamma}$ is the permutation representation of $\Gamma$ on the right cosets $H\backslash\Gamma$. 
    For each irreducible representation $\pi$ of $\Gamma$, the $\pi$-isotypic component of $\R^{H\backslash\Gamma}$ is preserved by $\Delta$. Take an eigenbasis of $\Delta$ which refines the isotypic decomposition of $\R^{H\backslash\Gamma}$. (Unlike in \cref{ex: Lie groups}, $\Delta$ need not act by a scalar on the isotypic components.) We may enforce that the eigenbasis we choose is preserved under complex conjugation.

    If $\varphi$ and $\psi$ are eigenvectors of $\Delta$ in isotypic components for the irreducible representations $\pi$ and $\rho$ of $\Gamma$, respectively, the vertexwise product $\varphi\psi$ is contained within the sum of the $\sigma$-isotypic components of $\R^{H\backslash\Gamma}$ for those $\sigma$ which appear in the decomposition of the tensor product $\pi\otimes\rho$. 
    Writing $\pi\otimes\rho\cong\bigoplus_{\sigma}m_{\pi\rho}^\sigma\sigma$,
    the dimension of the sum of these $\sigma$-isotypic components is at most
    \[\sum_{\sigma:m_{\pi\rho}^\sigma>0}(\dim\sigma)^2\leq d\sum_{\sigma:m_{\pi\rho}^\sigma>0}\dim\sigma\leq d\cdot\dim(\pi\otimes\rho)\leq d^3.\]
    We conclude that $\varphi\psi$ can be written as a linear combination of at most $d^3$ basis elements. Finally, it can be explicitly computed that replacing a complex conjugate pair $(\varphi,\overline\varphi)$ of eigenfunctions by $(\Re\varphi,\Im\varphi)$ transforms our $d^3$-product complex eigenbasis to a $4d^3$-product real eigenbasis. (See also the proof of \cref{lem:upper bound on N}.)

    We remark that a (qualitatively) equivalent characterization of such groups $\Gamma$ is given by results of Ito (see \cite[Proposition~1.1]{IsaacsPassman}) and Isaacs and Passman \cite[Theorem~D]{IsaacsPassman}. 
    If a finite group has an abelian normal subgroup of index $d$, then each of its irreducible representations have dimension at most $d$.
    Conversely, there is some function $f(d)$ such that, if all of the irreducible representations of $\Gamma$ have dimension at most $d$, then $\Gamma$ has an abelian normal subgroup of index at most $f(d)$.\footnote{Isaacs and Passman state this result for abelian subinvariant subgroups rather than abelian normal subgroups.
    However, any abelian subgroup $H\leq\Gamma$ of bounded index $N$ can be made into an abelian normal subgroup of index at most $N!$ by intersecting the conjugates $gHg^{-1}$ for $g$ ranging through a set of left coset representatives of $H$.
    So, qualitatively, all such conditions are equivalent.}
    This explains the conjectural description we gave in \cref{prob:N-prod-graph} of graphs admitting an $N$-product eigenbasis.
\end{exa}

\begin{exa}[Graph joins]\label{ex:graph-join}
    Let $G_1$ and $G_2$ be two graphs each admitting $N$-product eigenbases.
    Let $G$ be the graph join of $G_1$ and $G_2$, formed by connecting every vertex of $G_1$ to every vertex of $G_2$ by an edge. 

    Consider the basis of $\R^{V(G_1)\sqcup V(G_2)}$ consisting of the all-ones vector $\varphi_0$, the vector $\varphi_1$ which assigns $\abs{V(G_2)}$ to those vertices in $G_1$ and $-\abs{V(G_1)}$ to those vertices in $G_2$, and the vectors formed by taking nontrivial eigenvectors of $\Delta_{G_1}$ and $\Delta_{G_2}$ and padding them by zeros on the other graph. One can check that this is an orthogonal eigenbasis of $\R^{V(G_1)\sqcup V(G_2)}$ with respect to the Laplacian $\Delta_G$, and that it admits a $\max(2,N+1)$-product eigenbasis.

    Some examples of graphs admitting $3$-product bases that can be obtained in this manner are wheel graphs (formed by adding a universal vertex to a cycle) and complete graphs with one edge removed.
    Using that Cayley graphs over $(\Z/2\Z)^d$ hypercube admit $1$-product bases, this construction also produces some examples of graphs admitting $2$-product bases which do not fall into the classification given by \cref{thm: 2PP iff Cayley}, such as the complete bipartite graph $K_{n_1,n_2}$ with $n_i=2^{k_i}$ and the complete graph $K_n$ with one edge removed for $n=2^k+2$.
\end{exa}

\vspace{3em}

\section{Deducing Flatness From 2-Product Bases}
\label{sec: 2PP implies flat}

We now present the proof of \cref{prop: 2 product eigenbasis implies flatness}. To begin, we study the structure of a single eigenfunction within a $2$-product eigenbasis. In \cref{lem: gradient norm}, we record the useful fact that the gradient of the elements of such a basis have a rigid structure. We apply this in Lemmas \ref{lem: level sets are minimal} and \ref{lem: tilde phi} to construct a companion eigenfunction on the universal cover of $M$, from which we extract a phase. Then, we show in \cref{lem: angles are harmonic}  that we can generate infinitely many eigenfunctions along this phase, and in \cref{lem: constant angle inner product} that therefore the phase functions must form a local basis exhibiting flatness. The proof of the proposition follows.

Let us therefore consider a manifold $(M,g)$ which we suppose to admit a $2$-product eigenbasis. For simplicity, suppose $M$ has volume $1$. Fix an eigenfunction $\phi$ with eigenvalue $\lambda>0$ in the distinguished basis. Without loss of generality, we assume $\phi$ is scaled so as to have $L^\infty$-norm equal to $1$.

\begin{lemma}\label{lem: gradient norm}
    The eigenfunction $\phi$ satisfies
    \begin{equation}
    \label{eq: norm phi squared}
        |\grad \phi|^2=\lambda (1-\phi^2).
    \end{equation}
\end{lemma}

\begin{proof}
    The function $\phi^2$ is everywhere nonnegative and a sum of at most two basis elements, so we can write $\phi^2=c+\psi$ for a constant $c=\norm{\phi}_2^2$ and an eigenfunction $\psi$ with eigenvalue $\mu$. Compute
    \begin{equation}
    \label{eq: phi2 decom}
        \mu(\phi^2-c)=\mu \psi=\Delta(c+\psi)=\Delta \phi^2=2\phi\Delta \phi-2\abs{\grad \phi}^2=2\lambda \phi^2-2\abs{\grad \phi}^2.
    \end{equation}
    Evaluating at a local extremum $p$ of $\phi$, where $\grad\phi(p)=0$, we obtain
    \begin{equation}
    \label{eq: phi2 max}
        0=(2\lambda-\mu)\phi(p)^2+c\mu.
    \end{equation}
    In particular, this holds at any global maximum or minimum of $\phi$, so the range of $\phi$ is exactly $[-1,1]$. The function $\psi=\phi^2-c$ is also an eigenfunction, with range $[-c,1-c]$; since $\psi$ is a scaled basis element, its range must be symmetric about $0$ by the same reasoning. This forces $c=1/2$. Applying \eqref{eq: phi2 max} at a global maximum of $\phi$ now gives
    \[0=(2\lambda-\mu)+\tfrac12\mu,\]
    so $\mu=4\lambda$. Substituting $\mu=4\lambda$ and $c=1/2$ into \eqref{eq: phi2 decom} concludes the proof.
\end{proof}

The relation \eqref{eq: norm phi squared} implies that $\phi$ is \emph{transnormal}, meaning that the norm of its gradient is a function of its value. Combined with $\Delta \phi=\lambda\phi$, this makes $\phi$ \emph{isoparametric}, meaning that it is transnormal and, furthermore, its Laplacian is a function of its value. The level sets of such functions, originally studied by Cartan in \cite{Cartan1939}, exhibit highly regular geometry. In our case, we obtain the following consequence.

\begin{lemma}
\label{lem: level sets are minimal}
	The level sets of $\phi$ are minimal smooth hypersurfaces of codimension $1$.
\end{lemma}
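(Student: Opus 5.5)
The plan is to straighten out $\phi$ by an angle variable. \cref{lem: gradient norm} gives $\abs{\grad\phi}^2=\lambda(1-\phi^2)$, and the proof of that lemma shows the range of $\phi$ is exactly $[-1,1]$. We treat separately the regular levels $t\in(-1,1)$ and the two extremal levels $t=\pm1$.

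\emph{Regular levels.} For $t\in(-1,1)$ we have $\abs{\grad\phi}^2=\lambda(1-t^2)>0$ on $\{\phi=t\}$. So $t$ is a regular value, and $\{\phi=t\}$ is a smooth closed embedded hypersurface of codimension $1$. To see that it is minimal, set $\theta=\arccos\phi$ on the open set $U=\{\abs{\phi}<1\}$. There
\[\abs{\grad\theta}^2=\frac{\abs{\grad\phi}^2}{1-\phi^2}=\lambda .\]
With the convention $\Delta=-\mathrm{div}\,\grad$, we compute
\[\Delta\cos\theta=\mathrm{div}(\sin\theta\,\grad\theta)=\cos\theta\,\abs{\grad\theta}^2+\sin\theta\,\mathrm{div}\grad\theta .\]
Comparing with $\Delta\phi=\lambda\phi=\lambda\cos\theta$ gives $\sin\theta\,\mathrm{div}\grad\theta=0$. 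Since $\sin\theta\neq0$ on $U$, we get $\mathrm{div}\grad\theta=0$ there. The mean curvature of the level sets of $\theta$, which are the level sets of $\phi$, is, up to sign, $\mathrm{div}\bigl(\grad\theta/\abs{\grad\theta}\bigr)=\lambda^{-1/2}\,\mathrm{div}\grad\theta=0$. So every regular level set is minimal. Note also that $\theta/\sqrt\lambda$ has unit gradient, so it is a local distance function. Its gradient lines are unit-speed geodesics, orthogonal to all level sets.

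\emph{Extremal levels.} This is the main obstacle: $\{\phi=\pm1\}$ is the critical set, and we must show it is a smooth hypersurface rather than a lower-dimensional focal variety. I would first do a pointwise Hessian computation at a point $p$ with $\phi(p)=1$, writing $H=\nabla^2\phi(p)$. Expanding both sides of $\abs{\grad\phi}^2=\lambda(1-\phi^2)$ to second order at $p$ gives $\abs{Hv}^2=-\lambda\,v^{T}Hv$ for all $v$, so $H^2=-\lambda H$. Hence every eigenvalue of $H$ is $0$ or $-\lambda$. On the other hand $\mathrm{tr}\,H=-\Delta\phi(p)=-\lambda$, so $H$ has exactly one nonzero eigenvalue. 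This rank-one Hessian is the signature of a codimension-one critical set.

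To upgrade this to a global statement, I would use the structure theory of transnormal functions (Wang's theorem), which says the focal set $\{\phi=1\}$ is a smooth closed submanifold and that the regular levels near it are tubes of radius $r=\theta/\sqrt\lambda$ around it. Suppose the focal set had codimension $k\ge2$. Then the mean curvature of the tube of radius $r$ would behave like $(k-1)/r\to\infty$ as $r\to0$, contradicting minimality of the regular levels. Consistently with this, the Morse--Bott normal Hessian would have rank $k$, contradicting rank one. Hence $k=1$, so $\{\phi=1\}$ is a smooth hypersurface.

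It is then minimal as a limit of minimal hypersurfaces. Concretely, the unit normal flow $\exp(r\nu)$ carries it to the regular levels, and the mean curvatures of these levels, which are all $0$, converge to its mean curvature as $r\to0$. The level $\{\phi=-1\}$ is handled identically by replacing $\phi$ with $-\phi$.

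If I wanted to avoid citing Wang, I would instead prove smoothness of $\{\phi=1\}$ directly. Near $p$, follow the unit-speed gradient geodesics of $\theta/\sqrt\lambda$ backwards from a nearby regular level. Using that all eigenvalues of $H$ except one vanish, one checks that these geodesics land on a smooth graph over the $0$-eigenspace of $H$. This local argument is where I expect most of the technical work to lie.
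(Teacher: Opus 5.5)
Your argument is correct and follows the paper's route. For the regular levels you use the same divergence computation (your $\mathrm{div}\,\grad\theta=0$ is the paper's $\mathrm{div}\,\nu=0$), and for the critical set you use Wang's Theorem~A to get the tube structure around $\{\phi=\pm1\}$. The differences are that you exclude codimension $k\ge 2$ by the $(k-1)/r$ blow-up of the mean curvature of thin tubes (or by the rank of the Hessian), where the paper uses Stokes to show the regular levels have constant volume, and that you also prove the critical levels are minimal by a limiting argument along the normal flow, which the paper leaves implicit.
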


\begin{proof}
	We first establish the claim away from the critical set $\{\grad \phi=0\}$. There, set $\nu =\grad \phi/\abs{\grad \phi}$ and compute
	\begin{equation}
	\label{eq: div}
		\textrm{div}\, \nu =\frac{-\lambda \phi}{\abs{\grad \phi}}-\Big\langle\grad \phi, \frac{\grad\abs{\grad \phi}}{\abs{\grad \phi}^2}\Big\rangle.
	\end{equation}
	By \cref{lem: gradient norm},
    \[
        \grad\b{|\grad \phi|^2}=\lambda\grad \b{1-\phi^2}=-2\lambda \phi\,\grad \phi\quad \implies\quad 
        \grad\abs{\grad \phi}=\frac{-\lambda \phi\,\grad \phi}{\abs{\grad \phi}},
    \]
    from which $\textrm{div}\,\nu=0$ follows after plugging back in to \eqref{eq: div}. Note that this is precisely the condition that the regular level sets are minimal. A simple application of Stokes' theorem then yields that the regular level sets have constant $(n-1)$-volume.

    To handle the critical set, we apply a result of Wang characterizing the level sets of transnormal functions. By \cite[Theorem A]{Wang1987}, the critical level sets of $\phi$ are smooth and the regular level sets are \emph{tubes} over the critical level sets, that is, images of spheres under the exponential map in the normal direction away from the critical level set. Since the regular level sets have constant volume, these tubes must reduce to two exponentiated copies of the critical level set, and the critical level set must therefore have codimension $1$, completing the proof.
\end{proof}

\cref{lem: level sets are minimal} provides a rigid geometric description of the critical level sets, which we now leverage to construct from $\phi$ a new eigenfunction with the same eigenvalue. Denote by $\widetilde M$ the universal cover of $M$, equipped with the natural pullback metric, and by $\Phi$ the lift of $\phi$ to $\widetilde M$.

\begin{lemma}
\label{lem: tilde phi}
	There exist a Laplace eigenfunction $\Psi$ on $\widetilde M$ with eigenvalue $\lambda$ such that $\Phi^2+\Psi^2=1$.
\end{lemma}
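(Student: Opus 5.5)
We aim to write $\Phi=\cos\theta$ for a function $\theta$ on $\widetilde M$ and set $\Psi=\sin\theta$. The identity of \cref{lem: gradient norm} says $\abs{\grad\phi}^2=\lambda(1-\phi^2)$, so formally $\theta=\arccos\Phi$ has $\abs{\grad\theta}^2=\lambda$. Moreover $\Delta\cos\theta=\sin\theta\,\Delta\theta+\cos\theta\abs{\grad\theta}^2$ (with $\Delta=-\mathrm{div}\,\grad$). Combined with $\Delta\Phi=\lambda\Phi$, this gives $\sin\theta\,\Delta\theta=0$, so $\theta$ is harmonic away from the critical set. Then
\[
\Delta\sin\theta=-\cos\theta\,\Delta\theta+\sin\theta\abs{\grad\theta}^2=\lambda\sin\theta,
\]
so $\Psi=\sin\theta$ is an eigenfunction with eigenvalue $\lambda$ and $\Phi^2+\Psi^2=1$. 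The plan is to make this rigorous by constructing $\theta$ globally on $\widetilde M$ as a smooth function with $\abs{\grad\theta}\equiv\sqrt\lambda$.

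\textbf{Step 1: define $\theta$ locally.} On the regular set $\{\abs{\Phi}<1\}$, set $\theta=\arccos\Phi\in(0,\pi)$. This is smooth there, satisfies $\abs{\grad\theta}=\sqrt\lambda$, and is harmonic by the computation above.

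\textbf{Step 2: continue $\theta$ across the critical set.} The critical set is $\{\Phi=\pm1\}$, since $\grad\phi=0$ exactly there by \cref{lem: gradient norm}. By \cref{lem: level sets are minimal}, each component of this set is a smooth two-sided codimension-one hypersurface, and nearby level sets are its two parallel copies. Near such a hypersurface $\Sigma\subset\{\Phi=1\}$, use the signed distance $s$ to $\Sigma$. Because $\Phi$ is transnormal, $\Phi$ is a function of $s$ alone, and the ODE $\Phi'(s)^2=\lambda(1-\Phi^2)$ together with smoothness forces $\Phi=\cos(\sqrt\lambda\, s)$. Hence $\theta=\sqrt\lambda\, s$ is the smooth extension of $\pm\arccos\Phi$: the branch of $\arccos$ flips sign across $\Sigma$. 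Near $\{\Phi=-1\}$ the analogous extension is $\pi+\sqrt\lambda\, s$.

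\textbf{Step 3: globalize.} Gluing these local choices defines $\theta$ up to the ambiguity $\theta\mapsto\pm\theta+2\pi k$. Equivalently, $d\theta$ is a well-defined-up-to-sign closed $1$-form; more precisely, the unit normal field $\grad\theta/\sqrt\lambda$ is determined up to a sign that flips across each critical hypersurface. The local $\pm$ sign choices form a $\Z/2$ local system, which is trivial on the simply connected $\widetilde M$. So we obtain a global smooth unit-speed-scaled vector field $X$ with $X=\grad\theta$ locally. It is closed, hence exact on $\widetilde M$, which gives a global smooth $\theta$ with $\cos\theta=\Phi$. This is why we pass to the universal cover.

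\textbf{Step 4: conclude.} $\theta$ is smooth everywhere with $\abs{\grad\theta}^2=\lambda$. From $\Delta\cos\theta=\lambda\cos\theta$ we get $\sin\theta\,\Delta\theta=0$. Since $\{\sin\theta\neq0\}$ is dense (the critical set is a hypersurface), continuity gives $\Delta\theta\equiv0$. The computation above then shows $\Psi=\sin\theta$ is an eigenfunction with eigenvalue $\lambda$.

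\textbf{Main obstacle.} The main obstacle is Step 2: showing that $\Phi$ really has the form $\cos(\sqrt\lambda\,\mathrm{dist})$ near the critical hypersurfaces, so that $\theta$ extends smoothly rather than as $\abs{s}$. I would first try Wang's tube description, which is used in \cref{lem: level sets are minimal}, together with the fact that integral curves of $\grad\Phi/\abs{\grad\Phi}$ are geodesics (a standard fact for transnormal functions). This reduces the question to the one-variable ODE along normal geodesics.
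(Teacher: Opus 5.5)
Your argument is correct, and its global skeleton matches the paper's. Locally, one can choose the sign of $\sqrt{1-\Phi^2}$ smoothly across the codimension-one critical set. These local choices form a double cover (your $\Z/2$ local system), which is trivial on $\widetilde M$. The eigenvalue equation is then checked off the critical set and extended by continuity.

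The difference lies in the local ingredients. The paper shows that the Hessian at a critical point satisfies $H^2=-\lambda\phi(p)H$ with trace $-\lambda$. So $\phi$ is Morse--Bott with normal form $\phi=1-y_n^2$, and $\sqrt{1-\phi^2}=y_n\sqrt{2-y_n^2}$ is smooth; this needs only that the critical set is a smooth hypersurface. You instead use the metric normal form $\Phi=\cos(\sqrt\lambda\,s)$. That needs the nearby level sets to be parallel to $\Sigma$, but in exchange it immediately yields the phase $\theta$ with $\abs{\grad\theta}^2=\lambda$ and $\Delta\theta=0$. This is essentially the content of the paper's next lemma, and it spares you the direct computation of $\Delta\sqrt{1-\phi^2}$.

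The obstacle you flag can be discharged from your Step 1 alone. The function $u=\arccos(\Phi)/\sqrt\lambda$ vanishes exactly on $\{\Phi=1\}$ and has $\abs{\grad u}=1$ off it near $\Sigma$. Comparing $u$ along a minimizing geodesic to $\{\Phi=1\}$, and along the unit-speed flow of $-\grad u$, gives $u=\mathrm{dist}(\cdot,\{\Phi=1\})=\abs{s}$ near $\Sigma$.

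Two minor points. First, with $\Delta=-\mathrm{div}\,\grad$ the correct identities are $\Delta\cos\theta=\abs{\grad\theta}^2\cos\theta-\sin\theta\,\Delta\theta$ and $\Delta\sin\theta=\abs{\grad\theta}^2\sin\theta+\cos\theta\,\Delta\theta$. Your signs are flipped, which is harmless since $\Delta\theta=0$. Second, integrating your closed form requires a routine choice of constant plus an open--closed argument to get $\cos\theta=\Phi$ globally. The paper avoids this step by only choosing a sign of the square root rather than a real-valued phase.
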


\begin{proof}
	Away from the critical set $\{\phi=\pm 1\}=\{\grad \phi =0\}$, the function $1-\phi^2$ is smooth and positive, so its square root admits two smooth branches $\pm\sqrt{1-\phi^2}$. We view these branches as sections of the trivial bundle $M\times \R$, and describe how to glue them together across the critical set.

    By \cref{lem: level sets are minimal}, the critical set is a smooth codimension $1$ hypersurface. To compute the Hessian $H(p)$ of $\phi$ at a critical point $p$, fix normal coordinates $(x_1,\dots, x_n)$ about $p$. Differentiate \eqref{eq: norm phi squared} to obtain
    \[
        2\sum_{k}(\partial_k\phi)(\partial_{ik}\phi)=\partial_i|\grad \phi|^2=\lambda\partial_i(1-\phi^2)=-2\lambda \phi (\partial_i\phi).
    \]
    Differentiating once more yields
    \[
        \sum_k (\partial_{jk}\phi)(\partial_{ik}\phi)+(\partial_k\phi)(\partial_{ijk}\phi)=-\lambda\big((\partial_i\phi)(\partial_j\phi)+\phi\,\partial_{ij}\phi\big),
    \]
    which we evaluate at $p$, where $\grad \phi(p)=0$, to deduce
    \[
        \sum_k (\partial_{jk}\phi)(\partial_{ik}\phi)=-\lambda \phi(p)(\partial_{ij}\phi),
    \]
    so $H(p)^2=-\lambda \phi(p)H(p)$. Assuming $\phi(p)=1$, each eigenvalue of $H(p)$ is therefore either $0$ or $-\lambda$, and since $\textrm{tr}(H(p))=-\Delta\phi(p)=-\lambda$, exactly one eigenvalue is $-\lambda$. The function $\phi$ is thus Morse--Bott near $p$.

    By the Morse--Bott Lemma, we may choose local coordinates $(y_1,\dots, y_n)$ near $p$ in which $\phi(y)=1-y_n^2$. Then
	\[
        \sqrt{1-\phi^2}=\sqrt{1-(1-y_n^2)^2}=y_n\sqrt{2-y_n^2},
    \]
	which is smooth near $y_n=0$, so we smoothly may attach the two branches of $\sqrt{1-\phi^2}$ across the critical set by tracking the sign of $y_n$. The case $\phi(p)=-1$ is analogous.

    We therefore obtain a double cover of $M$ on which we may smoothly chose a sign for the square root. By passing to the universal cover and picking such a sign, we obtain a function $\Psi$ on $\widetilde M$, and, by construction, $\Phi^2+\Psi^2=1$.

    It remains to verify that $\Psi$ is an eigenfunction with eigenvalue $\lambda$. Applying \cref{lem: gradient norm} once more,
    \[
        \Delta \sqrt{1-\phi^2}=\frac{|\grad \phi|^2}{(1-\phi^2)^{3/2}}-\frac{\lambda \phi^2}{\sqrt{1-\phi^2}}=\lambda \sqrt{1-\phi^2}
    \]
    away from the critical set of $\phi$, and the result follows after extending by continuity to the critical set.
\end{proof}

We may therefore write $\Phi=\cos(\alpha)$ with a phase function $\alpha=(\Phi,\Psi)\colon \widetilde M\to \mathbb{S}^1\subset \R^2$.

\begin{lemma}
\label{lem: angles are harmonic}
    The function $\alpha$ satisfies $|\grad \alpha|^2=\lambda$ and $\Delta\alpha =0$. Furthermore, for every $k\geq 0$, the function $\cos (k\alpha)$ is the lift of an element of the fixed $2$-product eigenbasis on $M$.
\end{lemma}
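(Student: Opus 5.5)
Both claims about $\alpha$ are local. I will work in a small open set $U\subset\widetilde M$ on which $\alpha$ lifts to a real-valued angle, also denoted $\alpha$, so that $\Phi=\cos\alpha$ and $\Psi=\sin\alpha$.

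\textbf{Harmonicity and gradient norm.} Away from $\{\Psi=0\}$ we have $\grad\Phi=-\sin\alpha\,\grad\alpha$. By \cref{lem: gradient norm}, $\abs{\grad\Phi}^2=\lambda(1-\Phi^2)=\lambda\sin^2\alpha$, which gives $\abs{\grad\alpha}^2=\lambda$. For the Laplacian, use $\Delta=-\mathrm{div}\,\grad$ and the chain rule:
\[
\Delta\cos\alpha=\cos\alpha\,\abs{\grad\alpha}^2+\sin\alpha\,\Delta\alpha=\lambda\cos\alpha+\sin\alpha\,\Delta\alpha .
\]
Since $\Delta\Phi=\lambda\Phi$, this forces $\sin\alpha\,\Delta\alpha=0$, so $\Delta\alpha=0$ off $\{\Psi=0\}$. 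The set $\{\Psi=0\}$ is the lift of the smooth hypersurface $\{\phi=\pm1\}$ from \cref{lem: level sets are minimal}, so it has empty interior. Both identities then extend to all of $U$ by continuity. (Alternatively, apply the same computation to $\Psi=\sin\alpha$ on $\{\Phi=0\}$, since the two sets cannot both vanish.)

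\textbf{The functions $\cos(k\alpha)$.} Given $\abs{\grad\alpha}^2=\lambda$ and $\Delta\alpha=0$, the chain rule gives $\Delta\cos(k\alpha)=k^2\lambda\cos(k\alpha)$. It also shows that $\cos(k\alpha)=T_k(\Phi)$, with $T_k$ the Chebyshev polynomial, is the lift of $T_k(\phi)$. So each $T_k(\phi)$ is an eigenfunction on $M$ with eigenvalue $k^2\lambda$. It remains to show that it is, up to scaling, an element of the fixed basis. I will argue by induction on $k$. The cases $k=0,1$ are immediate. For $k=2$, the proof of \cref{lem: gradient norm} gives $\phi^2=\tfrac12+\psi$ with $\psi$ a basis element, so $\cos 2\alpha=2\Phi^2-1$ lifts $2\psi$.

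For the inductive step, suppose $\phi_k:=T_k(\phi)$ is a scaled basis element, which we may take to have $L^\infty$ norm $1$ since $T_k$ maps $[-1,1]$ onto $[-1,1]$. Then
\[
\phi\,\phi_k=\tfrac12\bigl(T_{k+1}(\phi)+T_{k-1}(\phi)\bigr).
\]
Here $T_{k\pm1}(\phi)$ are eigenfunctions with distinct eigenvalues $(k\pm1)^2\lambda$, and both are nonzero because $\phi$ attains every value in $[-1,1]$. The product $\phi\,\phi_k$ of two basis elements is a combination of at most two basis elements, while its components in the eigenspaces for $(k+1)^2\lambda$ and $(k-1)^2\lambda$ are both nonzero. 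Since the basis refines the eigenspace decomposition, each of these components must be a multiple of a single basis element. By the induction hypothesis, $T_{k-1}(\phi)$ is already a scaled basis element, and the remaining term $T_{k+1}(\phi)$ must then be a multiple of a single basis element as well. This completes the induction.

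\textbf{Main obstacle.} I expect the only delicate point to be justifying the computations at the critical set and across branch choices of $\alpha$. The fact that $\alpha$ is $\mathbb{S}^1$-valued rather than real-valued is handled by working locally, and the critical set is handled by the density argument above.
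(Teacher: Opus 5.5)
Your proof is correct and follows the same route as the paper: you get $\abs{\grad\alpha}^2=\lambda$ and $\Delta\alpha=0$ from $\Phi=\cos\alpha$ via \cref{lem: gradient norm}, then induct using $2\cos\alpha\cos(k\alpha)=\cos((k+1)\alpha)+\cos((k-1)\alpha)$, where your remark that the two summands have distinct eigenvalues and are both nonzero is exactly what the paper's brief appeal to the $2$-product property leaves implicit. One small slip: with $\Delta=-\mathrm{div}\,\grad$ the chain rule gives $\Delta\cos\alpha=\abs{\grad\alpha}^2\cos\alpha-\sin\alpha\,\Delta\alpha$ rather than $+\sin\alpha\,\Delta\alpha$, but this does not affect the conclusion $\sin\alpha\,\Delta\alpha=0$.
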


\begin{proof}
    Compute
    \[
        |\nabla\alpha|^2\,\sin^2(\alpha)=|\nabla \Phi|^2=\lambda(1-\Phi^2)=\lambda \sin^2(\alpha),
    \]
    so $|\nabla\alpha|^2=\lambda$ wherever $\sin(\alpha)\neq 0$, and the identity extends by continuity. Then
    \[
        \lambda \cos(\alpha)=\Delta\cos(\alpha)=|\grad\alpha|^2\cos(\alpha)-\sin(\alpha)\,\Delta\alpha=\lambda \cos(\alpha)-\sin(\alpha)\,\Delta\alpha,
    \]
    forcing $\Delta \alpha=0$. It then follows that $\Delta\cos(k\alpha)=k^2\lambda \cos(k\alpha)$ for every $k\geq 0$.
    
    We prove the second claim by induction. When $k=0$, the claim is immediate. When $k=1$, we have $\cos(k\alpha)=\cos(\alpha)=\Phi$, which is the lift of $\phi$ to $\widetilde M$. Then, by the identity
    \[
        2\cos(\alpha) \cos( k\alpha) = \cos((k+1)\alpha)+\cos((k-1)\alpha)
    \]
    together with the fact that the eigenfunctions lie in a $2$-product eigenbasis, the result follows.
\end{proof}

We must now make use of our $2$-product eigenbasis, beyond squaring a single element, in order to deduce information about the metric. To this end, write the lift of two non-constant elements of the distinguished basis as $\cos(\alpha)$, with eigenvalue $\lambda$, and $\cos(\beta)$, with eigenvalue $\mu$, on $\widetilde M$. We then have:

\begin{lemma}
\label{lem: constant angle inner product}
    The inner product $\langle \grad \alpha, \grad \beta\rangle$ is constant on $\widetilde M$.
\end{lemma}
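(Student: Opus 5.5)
The idea is to exploit the $2$-product property once more, now for the product of two different basis elements, and to identify the two eigenfunctions that appear with $\cos(\alpha\pm\beta)$. Write $g=\langle\grad\alpha,\grad\beta\rangle$ and $F=\cos(\alpha)\cos(\beta)=\tfrac12\big(\cos(\alpha+\beta)+\cos(\alpha-\beta)\big)$ on $\widetilde M$.

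First, I record what the Laplacian does to $\cos(\alpha\pm\beta)$. By \cref{lem: angles are harmonic}, $\Delta\alpha=\Delta\beta=0$, $|\grad\alpha|^2=\lambda$ and $|\grad\beta|^2=\mu$. The chain rule then gives
\[\Delta\cos(\alpha\pm\beta)=|\grad(\alpha\pm\beta)|^2\cos(\alpha\pm\beta)=(\lambda+\mu\pm2g)\cos(\alpha\pm\beta),\]
and hence
\[\Delta F=(\lambda+\mu)F-2g\sin\alpha\sin\beta .\]
So it suffices to show that $\cos(\alpha+\beta)$ is a genuine Laplace eigenfunction (or the lift of one). Its "eigenvalue function" $\lambda+\mu+2g$ would then be a constant, since $\cos(\alpha+\beta)$ is real-analytic and not identically zero, and so $g$ would be constant.

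Second, I use the basis. Since the product of the two basis elements is a combination of at most two basis elements, we have $F=a\,U_1+b\,U_2$, where $U_1,U_2$ are lifts of basis elements with eigenvalues $\nu_1,\nu_2$. If either is nonconstant, \cref{lem: gradient norm,lem: tilde phi,lem: angles are harmonic} apply to it, so after normalization $U_i=\cos\gamma_i$ with $\gamma_i$ harmonic and $|\grad\gamma_i|^2=\nu_i$. The span of $U_1,U_2$ is $\Delta$-invariant and contains $F$, $\Delta F$ and $\Delta^2F$. It therefore contains $g\sin\alpha\sin\beta$, and $(\Delta-\nu_1)(\Delta-\nu_2)F=0$. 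Since $F$ takes the value $1$ exactly where $\cos\alpha=\cos\beta=\pm1$, normalizations give $|a|+|b|\ge1$. Comparing $L^2$ norms on $M$ gives $a^2\|U_1\|^2+b^2\|U_2\|^2=\|F\|^2$. I also expect to use $\int\phi\psi\,U_i\ne0$ to pin down $a=b=\tfrac12$ once the $\gamma_i$ are identified.

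The main obstacle is deducing $\{\gamma_1,\gamma_2\}=\{\alpha+\beta,\alpha-\beta\}$ (up to sign and additive constants) from the identity
\[\cos\alpha\cos\beta=a\cos\gamma_1+b\cos\gamma_2 .\]
I would first try a local analytic argument at a point $p$ where $\alpha(p)=\beta(p)=0$ modulo $2\pi$. I have not checked that such a point exists. If it does not, I would work instead at a maximum of $|F|$, which lies on the intersection of the critical hypersurfaces of $\cos\alpha$ and $\cos\beta$. At such a point $F$ attains its maximum $1$, so both $\cos\gamma_i$ must equal $1$ there (with $a+b=1$). The gradients of $\gamma_i$ at $p$ are then determined by the Hessian of $F$ at $p$, which equals $-(\grad\alpha\otimes\grad\alpha+\grad\beta\otimes\grad\beta)$. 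This rank-two form must equal $-(a\,\grad\gamma_1\otimes\grad\gamma_1+b\,\grad\gamma_2\otimes\grad\gamma_2)$. Matching fourth-order terms (using the quartic Taylor terms of $\cos$) should then force $\grad\gamma_i(p)=\grad\alpha(p)\pm\grad\beta(p)$ and $a=b=\tfrac12$. Since harmonic functions with constant gradient norm are determined by this data along the flow, analytic continuation would give $\gamma_{1,2}=\alpha\pm\beta$ globally. If the Taylor comparison proves too delicate, the fallback is to use the family $\cos(k\alpha)\cos(\beta)$, $k\ge1$, which are all products of basis elements by \cref{lem: angles are harmonic}. The expansion $\Delta(\cos k\alpha\cos\beta)=(k^2\lambda+\mu)\cos k\alpha\cos\beta-2kg\sin k\alpha\sin\beta$ has its $g$-term growing linearly in $k$. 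Comparing it with a span of two eigenfunctions for large $k$ should also force $g$ to be constant.
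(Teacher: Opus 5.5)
You have the right reduction: if $\cos(\alpha+\beta)$ were the lift of an eigenfunction with eigenvalue $\nu$, then $(\nu-\lambda-\mu-2g)\cos(\alpha+\beta)=0$, and $g$ would be constant. This follows from unique continuation and continuity of $g$; real-analyticity is neither available for a merely smooth metric nor needed. But the step you call the main obstacle, $\{\gamma_1,\gamma_2\}=\{\alpha+\beta,\alpha-\beta\}$, is never proved, and the local route you sketch breaks down in three places.

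First, a point with $F=\pm1$ need not exist. A maximum of $|F|$ lies on the intersection of the critical sets only if $\grad\alpha$ and $\grad\beta$ are independent there. On $\R/\Z$ with the standard basis, $F=\cos(2\pi x)\sin(4\pi x)$ has $\sup|F|=4/(3\sqrt3)<1$, attained where neither factor is critical.

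Second, even granting such a point, comparing Taylor coefficients of $\cos\alpha\cos\beta=a\cos\gamma_1+b\cos\gamma_2$ beyond second order brings in the Hessians and third derivatives of $\alpha,\beta,\gamma_i$. These are unknown (their vanishing is essentially the flatness being proved), so the quartic terms of $\cos$ cannot be matched in isolation.

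Third, the globalization by analytic continuation has no basis for a smooth metric. Moreover, $\cos(\alpha+\beta)$ is not known to satisfy any equation to which unique continuation applies.

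Your fallback is closer to a working argument, but it scales only one phase. The two waves in $\cos(k\alpha)\cos\beta=\tfrac12\bigl(\cos(k\alpha+\beta)+\cos(k\alpha-\beta)\bigr)$ have the same leading symbol $k^2\lambda$, with $g$ entering only at order $k$. Their phase difference $2\beta$ does not oscillate as $k\to\infty$, so nonstationary phase cannot separate them and leading order says nothing about $g$. At next order you get at best a transport equation for $g$ along $\grad\alpha$, under an unproved assumption on how the eigenvalues $\nu_i$ scale, and this is not carried out.

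The missing idea is to scale both phases. By \cref{lem: angles are harmonic}, $u_k=\cos(k\alpha)\cos(k\beta)$ is a product of two basis elements. It is therefore annihilated by a quadratic $P_k(k^{-2}\Delta)$ whose normalized coefficients have a nonzero subsequential limit $P$. Pair with $\chi e^{-ik(\alpha+\beta)}$. The other three phases differ from $k(\alpha+\beta)$ by $-2k(\alpha+\beta)$, $-2k\alpha$ and $-2k\beta$, all with nonvanishing gradient on $\{\grad(\alpha+\beta)\neq0\}$. Nonstationary phase kills them and leaves $P(\abs{\grad(\alpha+\beta)}^2)=0$. This nontrivial algebraic equation forces $\abs{\grad(\alpha+\beta)}^2=\lambda+\mu+2g$ to be constant, which is how the paper's proof goes.
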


\begin{proof}
    Set
    \[
        u_k\coloneqq \cos(k\alpha)\cos(k\beta).
    \]
    By \cref{lem: angles are harmonic}, $\cos(k\alpha)$ and $\cos(k\beta)$ are lifts of elements of our $2$-product eigenbasis, so there exists coefficients $a_k,b_k, c_k\in\R$, not all zero, such that
    \[
        \big(a_k(k^{-2}\Delta)^2+b_k(k^{-2}\Delta)+c_k\big)u_k=0.
    \]
    Rescale these coefficients so that $\max(|a_k|,|b_k|,|c_k|)=1$, and pass to a subsequence to obtain polynomials
    \[
        P_k(t)=a_kt^2+b_kt+c_k\to P(t)=at^2+bt+c\neq 0
    \]
    for which
    \[
        P_k(k^{-2}\Delta)u_k=0.
    \]

    Now, for any harmonic function $\theta\in C^\infty(U)$ defined on an open set $U\subset \widetilde M$, write $q=|\grad\theta|^2$ and compute that
    \[
        (k^{-2}\Delta)(e^{ik\theta})=qe^{ik\theta},\qquad (k^{-2}\Delta)^2(e^{ik\theta})=\big(q^2-2ik^{-1}\langle \nabla q,\nabla\theta\rangle+k^{-2}\Delta q\big)e^{ik\theta},
    \]
    and hence
    \[
        e^{-ik\theta}P_k(k^{-2}\Delta)(e^{ik\theta})=Q_k(\theta)\coloneqq a_k\big(q^2-2ik^{-1}\langle \nabla q,\nabla\theta\rangle+k^{-2}\Delta q\big)+b_kq+c_k.
    \]
    Note that $Q_k(\theta)\to P(q)$ in $C^\infty_\textrm{loc}(U)$ as $k\to\infty$.
    
    We apply this reasoning to $u_k$. Expand $u_k$ as
    \[
        u_k=\frac{1}{4}\big(e^{ik\theta_+}+e^{-ik\theta_+}+e^{ik\theta_-}+e^{-ik\theta_-}\big),
    \]
    where $\theta_\pm=\alpha\pm\beta$. By \cref{lem: angles are harmonic}, both $\theta_\pm$ are harmonic, and $\theta_+\pm\theta_-$ is either $2\alpha$ or $2\beta$, both of which have non-vanishing gradient.  Set $q_\pm=\abs{\grad \theta_\pm}^2$, fix the open set $U=\{q_+\neq 0\}\subset \widetilde M$, choose a bump function $\chi\in C_c^\infty(U)$, and write
    \begin{align*}
        0&=\int_{\widetilde M} \chi e^{-ik\theta_+}P_k(k^{-2}\Delta)u_k\\
        &=\frac{1}{4}\sum_{\sigma\in \{\pm\theta_\pm\}}\int_{\widetilde M} \chi e^{-ik\theta_+}P_k(k^{-2}\Delta)(e^{ik\sigma})\\
        &=\frac{1}{4}\sum_{\sigma\in \{\pm\theta_\pm\}}\int_{\widetilde M} \chi e^{ik(\sigma-\theta_+)}Q_k(\sigma).
    \end{align*}
    Take $k\to\infty$. The term with $\sigma=\theta_+$ converges to $\int\chi P(q_+)$. In each of the remaining terms, the gradient of the phase $\sigma-\theta_+\in\{-2\theta_+, -2\alpha, -2\beta \}$ does not vanish, while $Q_k(\sigma)$ converges smoothly on compact sets to $P(|\nabla\sigma|^2)$, so these terms vanish in the limit by the principle of nonstationary phase. Since $\chi$ was arbitrary, $P(q_+)=0$ pointwise on $U$, and so $q_+$ is constant on $U$ since it is continuous and either zero or the root of a quadratic. Continuity then extends this constancy to all of $\widetilde M$. The result now follows from
    \[
        q_+=|\grad\alpha|^2+|\grad\beta|^2+2\langle\grad\alpha, \grad\beta\rangle=\lambda+\mu+2\langle\grad\alpha, \grad\beta\rangle,
    \]
    where we have once again applied \cref{lem: angles are harmonic}.
\end{proof}

The phase functions therefore provide a means to deduce flatness. We exploit this to complete the proof.

\begin{proof}[Proof of \cref{prop: 2 product eigenbasis implies flatness}]
    By the embedding theorem of \cite{BerardBessonGallot1994}, sufficiently many elements of any orthogonal eigenbasis of $L^2(M)$ embed $M$ into Euclidean space. Fixing $p\in M$, we may therefore choose elements $\phi_1, \dots, \phi_k$ of the distinguished basis such that $\{\grad \phi_i(p)\}_{i=1}^k$ is a basis of $T_pM$. Lifting to $\widetilde M$ and choosing a lift $\tilde p$ of $p$, we obtain corresponding phase functions $\alpha_1,\dots, \alpha_k$ such that $\{\grad \alpha_i(\tilde p)\}_{i=1}^k$ forms a basis of $T_{\tilde p}\widetilde M$. By \cref{lem: constant angle inner product}, the metric coefficients $\langle\grad \alpha_i, \grad \alpha_j\rangle$ are constant in a neighborhood of $\tilde p$, so the metric is flat near $p$. Since $p$ was arbitrary, $\widetilde{M}$ is flat, and hence $M$ is flat.
\end{proof}

\vspace{3em}

\section{\texorpdfstring{$N$}{N}-Product Bases on Flat Manifolds}
\label{sec: flat manifolds and the NPP}

We turn our attention to the study of $N$-product bases on flat manifolds. We begin by recalling some useful background, following \cite{Charlap1986}, which provides a more general introduction. We then prove \cref{prop: n product bases and holonomy} and immediately apply it to complete the proof of \cref{thm: 2 product eigenbasis iff flat torus}.

Every flat manifold is of the form $M\cong \R^n/\Gamma$, where $\Gamma$ is a Bieberbach group \cite[Chapter~II,~Corollary~5.1]{Charlap1986}. 
By Bieberbach's first theorem \cite[Chapter~I,~Theorem~3.1]{Charlap1986}, every Bieberbach group fits into a short exact sequence
\begin{equation}
\label{eq: Bieberbach spectral sequence}
    0\to \Lambda\to \Gamma\to H\to 0,
\end{equation}
where $\Lambda$ is a full-rank lattice of translations in $\R^n$ and where $H$, the \textit{holonomy group} of $M$, is identified with a finite subgroup of $\Orth_n(\R)$. From \eqref{eq: Bieberbach spectral sequence}, we see that the torus $T\coloneqq \R^n/\Lambda$ is a finite Riemannian cover of $M$ with deck group $H$. For each $h\in H$, choose a lift $A_hx+b_h$ to the affine group over $\R^n$, where $A_h\in O_n(\R)$ denotes the linear part and $b_h\in \R^n$ denotes the shift. By abuse of notation, we also write $h$ for the linear map $x\mapsto A_hx+b_h$. Note that no element can have $A_h=-\operatorname{Id}$, since $\Gamma$ must be torsion-free.

The space $L^2(M)$ is identified with the $H$-invariant subspace of $L^2(T)$, which we denote by $L^2(T)^H$. In particular, the lift to $T$ of any eigenfunction of $M$ is an $H$-invariant eigenfunction of $T$. Letting $\xi$ range over the dual lattice $\Lambda^*$ and writing 
\[
    e_\xi(x)=e^{2\pi i\langle\xi,x\rangle}
\]
as in \cref{ex: torus}, we may decompose any such lift in the Fourier basis $\{e_\xi\}_{\xi\in\Lambda^*}$ of $L^2(T)$.

To establish \cref{prop: n product bases and holonomy}, we split the claim into the lower and the upper bound. Recall that $\nu(M)\coloneqq\inf\{N : \textup{$M$ admits an $N$-product eigenbasis}\}$.

\begin{lemma}\label{lem:upper bound on N}
    A compact flat manifold $M$ with holonomy group $H$ admits a $4\abs{H}$-product eigenbasis.
\end{lemma}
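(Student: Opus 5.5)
The plan is to build the basis directly from the Fourier basis of the covering torus $T=\R^n/\Lambda$ by averaging over $H$. Let $P\colon L^2(T)\to L^2(T)^H$ be the averaging projection $Pf=\frac1{\abs{H}}\sum_{h\in H}f\circ h$. A direct computation gives
\[
e_\xi(A_hx+b_h)=e^{2\pi i\langle \xi,b_h\rangle}e_{A_h^{T}\xi}(x).
\]
Hence $Pe_\xi$ is a linear combination of the $e_\zeta$ with $\zeta$ in the orbit $H\cdot\xi$ of $\xi$ under the linear action $\xi\mapsto A_h^T\xi$. This action preserves $\Lambda^*$ and $\abs{\xi}$, so $Pe_\xi$ is an eigenfunction with eigenvalue $4\pi^2\abs{\xi}^2$.

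\textbf{Step 1: a complex orthogonal eigenbasis.} For $\xi,\xi'$ in the same orbit, $Pe_{\xi'}$ is a unimodular multiple of $Pe_\xi$, since $P(f\circ h)=Pf$. Functions from different orbits are orthogonal because their Fourier supports are disjoint. The $Pe_\xi$ span $L^2(T)^H$, since $P$ is onto. So picking one representative per orbit and discarding those with $Pe_\xi=0$ yields a complex orthogonal eigenbasis $\{f_{\mathcal O}\}$ of $L^2(M)$, indexed by the surviving orbits $\mathcal O$. This basis is closed under conjugation up to unimodular scalars, because $\overline{Pe_\xi}=Pe_{-\xi}$.

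\textbf{Step 2: the complex basis is $\abs{H}$-product.} Take $f_{\mathcal O_1}f_{\mathcal O_2}$. Its Fourier support lies in $\{\xi_1+\xi_2:\xi_1\in\mathcal O_1,\ \xi_2\in\mathcal O_2\}$.
\begin{itemize}
\item This set is the image of $\mathcal O_1\times\mathcal O_2$, which has at most $\abs{H}^2$ elements.
\item $H$ acts diagonally on $\mathcal O_1\times\mathcal O_2$, and every diagonal orbit has size at least $\abs{\mathcal O_1}$. So there are at most $\abs{\mathcal O_2}\le\abs{H}$ diagonal orbits.
\item The map $(\xi_1,\xi_2)\mapsto\xi_1+\xi_2$ is $H$-equivariant. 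So the sums meet at most $\abs{H}$ orbits in $\Lambda^*$.
\end{itemize}
Since the product is $H$-invariant, its expansion in $\{f_{\mathcal O}\}$ uses only those orbits. Hence it involves at most $\abs{H}$ basis elements.

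\textbf{Step 3: passing to a real basis.} Conjugation pairs orbit $\mathcal O$ with $-\mathcal O$.
\begin{itemize}
\item If $\mathcal O=-\mathcal O$, then $\overline{f_{\mathcal O}}=c f_{\mathcal O}$ with $\abs{c}=1$. Multiplying $f_{\mathcal O}$ by a square root of $\bar c$ makes it real, and we keep it as a single basis element.
\item If $\mathcal O\neq-\mathcal O$, we choose one orbit of the pair, normalise its representative so that $f_{-\mathcal O}=\overline{f_{\mathcal O}}$, and replace the pair by $\Re f_{\mathcal O}$ and $\Im f_{\mathcal O}$. These are orthogonal and have equal norms, since $\int f_{\mathcal O}^2=0$ by disjoint Fourier supports. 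Both are eigenfunctions with the same eigenvalue.
\end{itemize}
After normalising, this gives a real orthonormal eigenbasis.

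A product of two real elements is a combination of at most two complex products among $f_{\mathcal O_1}f_{\mathcal O_2}$ and $f_{\mathcal O_1}\overline{f_{\mathcal O_2}}$, the latter being up to a unimodular scalar a product $f_{\mathcal O_1}f_{-\mathcal O_2}$ of complex basis elements. Each complex product involves at most $\abs{H}$ complex basis elements by Step 2. Each complex basis element lies in the span of at most $2$ real ones. The total is at most $2\cdot\abs{H}\cdot 2=4\abs{H}$.

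I expect the main obstacle to be bookkeeping rather than anything deep. The orbit count in Step 2 must be done via $H$-equivariance of the sum map, not a naive $\abs{H}^2$ bound. The phases must be normalised consistently in Step 3, especially for self-conjugate orbits where $Pe_\xi$ may only be real after a phase rotation.
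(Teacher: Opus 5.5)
Your proposal is correct and is essentially the paper's argument: average the characters $e_\xi$ over $H$, show the resulting complex eigenbasis is $\abs{H}$-product, and then pass to real and imaginary parts at the cost of a factor $4$. For the $\abs{H}$ bound, the paper uses the identity $u_\xi u_\eta=P(e_\xi u_\eta)=\frac{1}{\abs{H}}\sum_{h\in H}e^{2\pi i\ip{\eta,b_h}}u_{\xi+A_h^*\eta}$ rather than your equivariant orbit count, but the two are equivalent.

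There is one small slip: if $\overline{f_{\mathcal O}}=cf_{\mathcal O}$ with $\abs{c}=1$, you must multiply $f_{\mathcal O}$ by a square root of $c$, not of $\bar c$, to make it real. Otherwise your explicit treatment of self-conjugate orbits is more careful than the paper's.
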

\begin{proof}
We naturally have a projection
\[
    P\colon L^2(T;\C)\longrightarrow L^2(T;\C)^H\cong L^2(M;\C),
        \qquad
        Pf=\frac{1}{\abs{H}}\sum_{h\in H}f\circ h,
\]
which we employ to define $u_\xi\coloneqq Pe_\xi$. Since $u_{A_h^*\xi}$ is a multiple of $u_\xi$ by a complex number with unit length, these functions depend up to scaling only on the $H$-orbit of $\xi$. A standard check reveals that choosing one $\xi$ from each orbit, discarding zeros, and normalizing, gives a complex eigenbasis of $L^2(M;\C)$. We then compute
\[
    u_\xi u_\eta=P(e_\xi)u_\eta=P(e_\xi u_\eta)=\frac{1}{\abs{H}}\sum_{h\in H}e^{2\pi i\ip{\eta, b_h}} P(e_{\xi+A_h^*\eta})=\frac{1}{\abs{H}}\sum_{h\in H}e^{2\pi i\ip{\eta, b_h}} u_{\xi+A_h^*\eta},
\]
which expresses the product of two basis elements as the sum of at most $\abs{H}$ basis elements.

We convert this observation into the desired conclusion by noting that, since $\overline{u_\xi}=u_{-\xi}$, this basis is stable under conjugation, so taking real and imaginary parts gives a basis over $\R$. A product of two real basis elements expands into four complex products, hence into at most $4\abs{H}$ elements of the complex basis. As the product is real, its complex support is invariant under conjugation, so passing back to the real basis costs no further terms and the result follows.
\end{proof}

The lower bound on $\nu(M)$ is slightly more technical.

\begin{lemma}
\label{lem:lower bound on N}
    If a compact flat manifold with holonomy group $H$ admits an $N$-product eigenbasis, then $2\abs{H}\leq N$.
\end{lemma}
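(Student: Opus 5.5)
The plan is to pass to the torus cover $T=\R^n/\Lambda$ and count distinct eigenvalues. Let $\{\phi_i\}$ be an $N$-product eigenbasis of $M$ and lift each $\phi_i$ to an $H$-invariant eigenfunction on $T$. Each basis element lies in a single eigenspace, so any product $\phi_i\phi_j$ needs at least as many basis elements as there are distinct eigenvalues $4\pi^2\abs{\zeta}^2$ among the frequencies $\zeta$ at which $\phi_i\phi_j$ has a nonzero Fourier coefficient. It therefore suffices to exhibit two basis elements $\phi,\psi$ whose product has nonzero Fourier mass on at least $2\abs{H}$ distinct spheres $\{\abs{\zeta}=r\}$.

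I will not use $\phi^2$: there the values $\abs{\xi\pm A_h^*\xi}^2$ collide for $h$ and $h^{-1}$. I use two different elements instead.

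\textbf{Choosing $\xi$ and $\eta$.} The lifts span $L^2(T)^H$, and each $u_\xi=Pe_\xi\neq 0$ for $\xi$ with trivial stabilizer (such $\xi$ exist since $H$ is a finite group of isometries acting faithfully). Hence every such $\xi\in\Lambda^*$ lies in the Fourier support of some basis element. Fix such a $\xi$ in $\operatorname{supp}\widehat\phi$, and pick $\eta$ in the support of some $\psi$, avoiding a finite union of proper algebraic subsets of $\Lambda^*\otimes\R$. This is possible because $\Lambda^*$ is Zariski dense and each support is $H$- and $(-1)$-invariant. The conditions on $\eta$ are:
\begin{enumerate}[(i)]
\item the $2\abs{H}$ numbers $\abs{\xi}^2+\abs{\eta}^2\pm2\ip{\xi,A_h^*\eta}$, $h\in H$, are pairwise distinct. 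The maps $\eta\mapsto\ip{\xi,A_h^*\eta}$ are distinct linear forms, since the $A_h\xi$ are distinct, and $\ip{\xi,A_h^*\eta}\ne-\ip{\xi,A_{h'}^*\eta}$ generically, because $A_h\xi\neq -A_{h'}\xi$. The latter holds since otherwise $A_{h'}^{-1}A_h$ fixes $-\xi$ up to sign, which is excluded by choosing $\xi$ generic with respect to the reflections.
\item no unexpected additive coincidences $\xi'+\eta'=\xi+A_h^*\eta$ occur with $\xi'\in\operatorname{supp}\widehat\phi$ and $\eta'\in\operatorname{supp}\widehat\psi$.
\end{enumerate}

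\textbf{The main obstacle: non-cancellation.} The Fourier coefficient of $\phi\psi$ at $\xi\pm A_h^*\eta$ is $\sum\widehat\phi(\xi')\widehat\psi(\eta')$ over all representations $\xi'+\eta'=\xi\pm A_h^*\eta$, and these could cancel. The supports are finite, since they lie on the spheres of the two eigenvalues, but $\operatorname{supp}\widehat\psi$ is not under my control.

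My first attempt is a vertex/extremality argument. Choose a generic linear functional $\ell$ and let $\xi$ be the $\ell$-maximizer in $\operatorname{supp}\widehat\phi$. Then, for each $h$, work with $\ell\circ A_h^{*}$ on $\operatorname{supp}\widehat\psi$. Genericity makes the maximizers unique, so the frequency $\xi+A_h^*\eta_h$ (with $\eta_h$ the corresponding maximizer) has a unique representation and its coefficient is a single nonzero product. This requires re-choosing $\eta$ as the extremal point for each $h$; the $H$-equivariance $\widehat\psi(A_h^*\eta)=e^{2\pi i\ip{\eta,b_h}}\widehat\psi(\eta)$ shows that these extremal points are exactly the $H$-orbit of one $\eta$. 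Minimizing instead of maximizing gives the sums $\xi-A_h^*\eta$.

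To keep these extremal frequencies on distinct spheres, I would take $\psi$ to be a basis element of very large eigenvalue, using the infinitude of the spectrum. Its support then contains points with trivial stabilizer that are far from the finitely many bad loci relative to $\xi$. I expect this combination — extremality for non-cancellation together with genericity for distinct norms — to be the delicate step. The fallback is to use both the $\ell$-maximizer and the $(-\ell)$-maximizer, which the $(-1)$-symmetry of the supports makes consistent.

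Granting this, $\phi\psi$ has nonzero components in at least $2\abs{H}$ distinct eigenspaces, so $N\ge 2\abs{H}$.
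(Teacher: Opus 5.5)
Your reduction, your target frequencies $\xi+g\eta$ with $g\in H_1=\langle H,-\mathrm{Id}\rangle$, and your condition (i) all match the paper. The gap is the non-cancellation step. You rightly flag it as the crux, but you leave it open, and the extremality argument you sketch cannot close it. If $\eta_h$ maximizes $\ell\circ A_h^*$ on the $H$-invariant set $\operatorname{supp}\widehat\psi$, then $A_h^*\eta_h$ is just the $\ell$-maximizer $\eta_\ell$. So $\xi+A_h^*\eta_h=\xi+\eta_\ell$ for every $h$, and you certify one frequency, not $2\abs{H}$. If you instead use $\xi+\eta_h$, then $\xi$ and $\eta_h$ maximize different functionals, and the vertex argument says nothing about other pairs summing to that frequency. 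In general, extremality only certifies the vertices $\xi_\ell+\eta_\ell$ of the Minkowski sum of the supports and their $H_1$-images, and these all have the same norm because $H_1\subset\Orth_n(\R)$. For the Klein bottle with both supports single $H_1$-orbits (axis-parallel rectangles), the Minkowski sum is a rectangle whose four vertices lie on one circle. The frequencies $\xi-\eta$, $\xi\pm r\eta$ that you need are not vertices. Taking $\psi$ of large eigenvalue, or using $\pm\ell$, does not change this. Your condition (ii) also cannot be imposed by choosing $\eta$, since $\psi$, and hence its support, is determined only after $\eta$ is.

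The missing idea is that all you need to know about $\operatorname{supp}\widehat\psi$ is that it lies on the sphere $\{\abs{\zeta}=\abs{\eta}\}$. Suppose $\xi'+\eta'=\xi+g\eta$ with $\abs{\xi'}=\abs{\xi}$ and $\abs{\eta'}=\abs{g\eta}$. Polarization gives $\ip{\xi',\eta'}=\ip{\xi,g\eta}$, hence $\ip{\xi'-\xi,\xi+g\eta}=0$, a condition involving only the finite set $\operatorname{supp}\widehat\phi$. So the order of choices should be:
\begin{enumerate}
\item Fix $\phi$ and $\xi$ first.
\item Choose $\eta\in\Lambda^*$ satisfying (i) and avoiding the finitely many affine hyperplanes $\ip{\xi'-\xi,\xi+g\eta}=0$, for $\xi'\in\operatorname{supp}\widehat\phi\setminus\{\xi\}$ and $g\in H_1$. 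These are genuine hyperplanes since $\xi'\neq\xi$.
\item Only then let $\psi$ be any basis element with $\widehat\psi(\eta)\neq0$.
\end{enumerate}
The coefficient of $\phi\psi$ at $\xi+g\eta$ is then the single product $\widehat\phi(\xi)\widehat\psi(g\eta)\neq0$, and (i) places these frequencies on $2\abs{H}$ distinct spheres. Finally, both $\abs{H_1}=2\abs{H}$ and your claim $A_h\xi\neq-A_{h'}\xi$ for generic $\xi$ require $-\mathrm{Id}\notin H$. This holds because $\Gamma$ is torsion-free (an isometry with linear part $-\mathrm{Id}$ has a fixed point), and you should state it explicitly.
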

\begin{proof}
    Fix an $N$-product eigenbasis throughout. Let $H_1$ be the group generated by $H$ and $-\operatorname{Id}$, so that $\abs{H_1}=2\abs{H}$.
    For $h\in H_1$, let $h^*$ be the dual action on $\Lambda^*$.
    
    For $h_1,h_2\in H_1$, the equation $(h_1^*-h_2^*)\xi=0$ cuts out a hyperplane in the dual vector space $(\R^n)^*=\Lambda^*\otimes_\Z\R$. 
    Since $H_1$ is finite and $\Lambda^*$ is cocompact in $(\R^n)^*$, we can find some $\xi_0\in\Lambda^*$ whose $H_1$-orbit has full size $\abs{H_1}$.
    The function $\text{Re}(u_{\xi_0})$ is an eigenfunction on $M$ whose Fourier coefficient at $\xi_0$ is nonzero. Find some basis element $\varphi$ whose Fourier coefficient at $\xi_0$ is also nonzero, and write it as
    \[\varphi=\sum_{\xi\in\Xi}a_\xi e_\xi\]
    for some finite set $\Xi\ni\xi_0$ and some nonzero $a_\xi\in\C$.
    Since $\varphi$ is to be an eigenfunction, we must have $\norm{\xi}=\norm{\xi_0}$ for every $\xi\in\Xi$; that is, the Fourier support of $\varphi$ lies entirely in the sphere of radius $\norm{\xi_0}$.

    Now that we have fixed $\varphi$ and $\xi_0$, choose some $\eta_0\in\Lambda^*$ subject to the following constraints:
    \begin{enumerate}
        \item For each choice of $h_1,h_2\in H_1$ distinct, we have $\langle(h_1^*-h_2^*)\xi_0,\eta_0\rangle\neq 0$.

        \item For each $\xi\in\Xi\setminus\{\xi_0\}$ and each $h\in H_1$, we have $\langle\xi-\xi_0,\xi_0+h^*\eta_0\rangle\neq0$.
    \end{enumerate}
    Those $\eta_0$ which fail to satisfy (1) or (2) lie in finitely many hyperplanes, and so $\eta_0$ can be chosen appropriately.
    Similarly to our choice of $\varphi$, let
    \[\psi=\sum_{\eta\in\Sigma}c_\eta e_\eta\]
    be a basis element whose coefficient $c_{\eta_0}$ of $e_{\eta_0}$ is nonzero. Note that every $\eta\in\Sigma$ satisfies $\norm{\eta}=\norm{\eta_0}$.

    Our main claim is the following: if $(\xi,\eta,h)\in\Xi\times\Sigma\times H_1$ is such that $\xi+\eta=\xi_0+h^*\eta_0$, then $\xi=\xi_0$. Indeed, under this condition we have
    \[\langle\xi_0,h^*\eta_0\rangle=\frac12\left(\norm{\xi_0+h^*\eta_0}^2-\norm{\xi_0}^2-\norm{h^*\eta_0}^2\right)=\frac12\left(\norm{\xi+\eta}^2-\norm{\xi}^2-\norm{\eta}^2\right)=\langle\xi,\eta\rangle,\]
    and so
    \[\langle\xi-\xi_0,\xi_0+h^*\eta_0\rangle=\langle\xi,\xi+\eta\rangle-\langle\xi_0,\xi_0+h^*\eta_0\rangle=\norm{\xi}^2+\langle\xi,\eta\rangle-\norm{\xi_0}^2-\langle\xi_0,h^*\eta_0\rangle=0.\]
    This contradicts condition (2) of our choice of $\eta_0$ when $\xi\neq \xi_0$.

    Now, consider the product $\varphi\psi$. For each $\kappa\in\Lambda^*$, we have
    \begin{equation}\label{eq:fourier-product}
    \langle\varphi\psi,e_\kappa\rangle=\sum_{\substack{\xi\in\Xi,\ \eta\in\Sigma\\\xi+\eta=\kappa}}a_\xi c_\eta.
    \end{equation}
    By our main claim, the only contribution to this sum when $\kappa=\xi_0+h^*\eta_0$ is $a_{\xi_0}c_{h^*\eta_0}$. Since $\psi$ descends to a real function on $M$ and is $H$-invariant, we have
    \[c_{h^*\eta}=e^{2\pi i \ip{\eta, b_h}}c_\eta\quad\text{for }h\in H\qquad \text{and}\qquad \overline{c_{\eta}}=c_{-\eta}.\]
    In particular, $c_{h^*\eta_0}\neq 0$ for each $h\in H_1$.   
    This implies that the inner product in \eqref{eq:fourier-product} is nonzero for $\kappa=\xi_0+h^*\eta_0$ for each $h\in H_1$. In particular, for each $h\in H_1$, the decomposition of $\varphi\psi$ into our eigenbasis contains some eigenfunction with eigenvalue $4\pi^2\norm{\xi_0+h^*\eta_0}^2$.

    What remains to show is that, as $h\in H_1$ varies, $4\pi^2\norm{\xi_0+h^*\eta_0}^2$ takes on $\abs{H_1}$ different values. Equivalently, we must show that $\langle\xi_0,h^*\eta_0\rangle$ takes $\abs{H_1}$ different values. Indeed, using the fact that $H_1\leq\Orth_n(\R)$, this is exactly assumption (1) on $\eta_0$. Expressing $\varphi\psi$ in the basis therefore requires at least $\abs{H_1}=2\abs{H}$ terms, and hence $2\abs{H}\leq N$.
\end{proof}

Together, \cref{lem:upper bound on N,lem:lower bound on N} imply \cref{prop: n product bases and holonomy} directly.

We may now finally complete the proof of \cref{thm: 2 product eigenbasis iff flat torus}. In the previous section, we showed \cref{prop: 2 product eigenbasis implies flatness}; i.e., that if $(M,g)$ admits a $2$-product eigenbasis then it is flat.
\cref{lem:lower bound on N} then implies that the holonomy group $H$ of $M$ satisfies $2\abs{H}\leq \nu(M) \leq 2$. So $H$ must be trivial, hence $M$ is a torus, as claimed. The converse was established in \cref{ex: torus}.

\begin{exa}
\label{exa: twisted torus}
    The lower bound given in \cref{lem:lower bound on N} need not be sharp.
    Let $\Lambda\subset\R^3=\text{span}(e_1,e_2,e_3)$ be the direct sum of the hexagonal lattice in $\R^2=\text{span}(e_1,e_2)$ and $\Z e_3$. Set $\rho$ to be the rotation by $2\pi/3$ around the $e_3$ axis, which preserves the hexagonal lattice, and take $\Gamma$ to be generated by $\Lambda$ and $\rho x+\tfrac13e_3$. This is a Bieberbach group, and $M=\R^3/\Gamma$ is a closed flat $3$-manifold with holonomy $H\cong\Z/3$.
    
    We claim that $M$ admits no $6$-product eigenbasis, hence $7\leq \nu(M)\leq 12$, and outline the proof here. Suppose for contradiction that it did. Repeating the argument of \cref{lem:lower bound on N} shows that every basis element has Fourier support in a single $H_1$ orbit $\{\pm \xi, \pm \rho \xi, \pm\rho^2\xi\}$, for some $\xi\in \Lambda^*$.
    Indeed, if the Fourier support of a basis element met two such orbits, we could apply the same argument as in the lemma, tracking both orbits, to contradict the $6$-product assumption. We conclude that each basis element can meet at most one orbit.

    When $\xi$ is not on the $e_3$ axis, the real functions with Fourier support in $H_1\cdot \xi$ are precisely $\{\textup{Re}(au_\xi)\st a\in \C\}$. It now follows that our basis must intersect this space at exactly two elements, thus singling out two orthogonal lines in this space. Denote the angle these two lines form relative to $\textup{Re}(u_\xi)$ by $c(\xi)\in \R/(\pi/2)\Z$.

    Next, observe that $c(\rho\xi)=c(\xi)-2\pi\ip{\xi,e_3}/3\mod \pi/2$, simply since $u_{\rho\xi}$ is obtained by rotating $u_\xi$. Furthermore, after expanding $u_\xi u_\eta$ and replicating once again an argument similar to \cref{lem:lower bound on N}, one deduces that $c(\xi+\eta)=c(\xi)+c(\eta)$. Indeed, for generic $\xi$ and $\eta$, the six nonzero orbit components of the product lie in distinct eigenspaces, so the $6$-product assumption forces the $\xi+\eta$ component to lie on a line of phase $c(\xi)+c(\eta)$.
    We are now prepared to obtain our contradiction.

    Fix $0\neq\eta\in\Lambda^*$ horizontal, and define $\kappa:=\rho\eta - \eta$ and $\xi_k=\eta+ke_3$ for $k\in\{0,1\}$. Observe $\rho\xi_k=\xi_k+\kappa$ and apply the above properties of $c$ to conclude
    \[
        c(\kappa)=c(\rho\xi_k)-c(\xi_k)=-2\pi k/3\mod \pi/2.
    \]
    However, this cannot hold for both $k=0$ and $k=1$, and the claim follows.
\end{exa}

\vspace{3em}

\section{2-product bases in graphs}\label{sec:graph}

In this section, we prove \cref{thm: 2PP iff Cayley}; we will delay some computations to the following section.
Some portions of this proof will also apply to the manifold setting; we will use this to provide a second proof of \cref{prop: 2 product eigenbasis implies flatness}.

\subsection{An auxiliary graph}\label{sec:graph-aux}

Let $\mathcal A$ be an $\R$-algebra equipped with an inner product $\langle\cdot,\cdot\rangle$ and let $\cB\subset\mathcal A$ be an orthogonal set of at most countable cardinality.
The case of interest to us will be when $\cB$ is an eigenbasis of the Laplacian either on a surface (in which case $\mathcal A$ is an infinite-dimensional Hilbert space) or on a finite graph $G$ (in which case $\mathcal A\cong\R^{V(G)}$).
Suppose that the product of any two elements of $\cB$ can be written as a linear combination of at most $N$ elements of $\cB$. 
Pick any $\varphi\in\cB$. Construct a labeled graph $H_\varphi$ with vertex set $\cB$ and in which $\psi,\psi'\in\cB$ are connected by an edge of label $\langle4\varphi\psi,\psi'\rangle$ if this inner product is nonzero. 
(Vertices may have self-loops, which contribute $1$ to their degree.)
This graph has maximum degree at most $N$. 

We will make essential use of the fact that the structure of graphs with maximum degree (at most) $2$ cannot be that complicated. 

\begin{obs}\label{obs:max-deg-2}
    Every connected component of a graph with self-loops (but no multi-edges) and maximum degree at most $2$ (on a finite or infinite number of vertices) must be of one of the following forms:
    \begin{enumerate}
        \item An isolated vertex.
        \item A vertex with a single self-loop.
        \item A finite path with no self-loops.
        \item A finite path with a self-loop at one end.
        \item A finite path with a self-loop at each end.
        
        \item A finite cycle.
        \item A path which is infinite in one direction and has no self-loop at the finite end.
        \item A path which is infinite in one direction and has a self-loop at the finite end.
        \item A bi-infinite path.
    \end{enumerate}
\end{obs}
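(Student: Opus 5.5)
The plan is to reduce to a standard classification of connected graphs of maximum degree at most $2$, with self-loops allowed and multi-edges forbidden. The key quantity is the degree count. A self-loop contributes $1$ to the degree of its vertex. A vertex of degree $2$ therefore has either two distinct neighbors, or one neighbor plus a self-loop. A vertex of degree $1$ has either exactly one neighbor or only a self-loop.

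First, dispose of the components containing a vertex with no non-loop edges. Such a vertex is isolated within its component. If it has no self-loop, we are in case (1); if it has a self-loop, we are in case (2). From now on, assume every vertex of the component has at least one neighbor other than itself.

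Next, consider the simple graph $C'$ obtained by deleting all self-loops from the component $C$. Since $C'$ has the same vertex set and non-loop edges as $C$, it is connected, simple, and of maximum degree at most $2$. The usual argument then classifies $C'$. Start at any vertex and walk along edges without backtracking. Each vertex has at most two neighbors, so the walk is forced at every step. Either the walk returns to an earlier vertex, or it continues forever.

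If it returns, the returning vertex must be the starting vertex. Any other earlier vertex would then have three incident edges: its predecessor, its successor, and the new edge. This produces a cycle, which uses up both edge-slots at every vertex on it. By connectivity, the cycle is all of $C'$. Because $C$ has no multi-edges, this cycle has length at least $3$.

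Otherwise, extend the walk maximally in both directions from the starting vertex. Each direction either terminates at a vertex of $C'$-degree $1$ or continues forever. The resulting path is closed under taking neighbors, so by connectivity it equals $C'$. Thus $C'$ is a finite path, a one-way infinite path, or a bi-infinite path.

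Finally, add the self-loops back. A self-loop may only sit at a vertex whose $C'$-degree is at most $1$. On a cycle or a bi-infinite path every vertex has $C'$-degree $2$, so these carry no loops; this gives cases (6) and (9). On a one-way infinite path, only the finite endpoint can carry a loop, giving cases (7) and (8). On a finite path with at least two vertices, only the two endpoints can carry loops, giving cases (3), (4), and (5).

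The only step needing any care is the non-backtracking walk argument, specifically the claim that the walk can only return to its starting vertex and that the result exhausts the component. Even this is routine.
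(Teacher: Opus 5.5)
Your argument is correct. The paper records this as an observation with no proof, and your approach is the routine check it leaves to the reader: delete the loops, classify the connected simple max-degree-$2$ remainder $C'$ as a path or cycle, then reinsert loops only at vertices of $C'$-degree at most $1$. Two small tidy-ups are worth making: your first dichotomy should also allow the walk to stop at a vertex of $C'$-degree $1$ (your ``otherwise'' paragraph already handles this), and when you extend in both directions you should note that the two rays cannot meet, since a meeting would produce a cycle that, by your previous paragraph, is all of $C'$ and would have forced the first walk to return.
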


\noindent By contrast, there is no similar characterization of the connected components of a graph of maximum degree at most $3$, say.
This is why we are unable to prove an analogue of \cref{thm: 2PP iff Cayley} for $N$-product bases for any $N>2$.

Now, suppose that the multiplicative identity $\1\in\mathcal A$ lies in $\cB$, and choose $\varphi\in\cB$ distinct from $\1$.
We distinguish the connected component of $H_\varphi$ containing $\1$ as a vertex, which we term the \emph{$\1$-component}. Since $\1\cdot\varphi=\varphi\in\cB$, the vertex $\1$ has no self-loop and has degree $1$ in $H_\varphi$, connected by an edge to $\varphi$. This means that the $\1$-component is of type (3), (4), or (7) in the above.
In our two settings of interest, our starting point will be to understand the $\1$-component.

We now specialize further.
If $\mathcal A=L^2(M)$ for some Riemannian manifold $(M,g)$ of unit volume and $\cB$ is a $2$-product eigenbasis for the Laplacian on $M$, we say that $(\mathcal A,\cB)$ is a \emph{$2$-product manifold datum}. If $\mathcal A=\R^{V(G)}$ for some connected graph $G$ with an odd number of vertices\footnote{This assumption is presumably not essential. However, it substantially simplifies many of the arguments and much of the casework in the following two assumptions, and so we make use of it whenever is convenient.} and $\cB$ is a $2$-product eigenbasis for the Laplacian on $G$, we say that $(\mathcal A,\cB)$ is a \emph{$2$-product odd graph datum}.
In both cases, we normalize the elements of $\cB$ so that $\1\in\cB$ and so that each other element of $\cB$ has $L^2$ norm $1/\sqrt2$.
There are some additional choices to be made; namely, any element $\psi\in\cB$ can be replaced by its negation. For $\varphi\neq\psi$, this negates all the labels of edges of $H_\varphi$ incident to $\psi$. (Negating $\varphi$ negates the labels of edges of $H_\varphi$ not incident to $\varphi$.) We allow ourselves to negate basis elements whenever it is convenient, and generally consider each basis element to be identified only up to sign.

In both settings, it will be useful to understand the values that eigenfunctions may take. 
An important parameter will be the image size $\abs{\im f}$. (This is either a positive integer or $\infty$.) 
Given a graph $G$ and a function $f\colon V(G)\to\R$, the \emph{distribution} $\cD(f)$ of $f$ is the probability distribution of $f(v)$ where $v$ is a uniformly random vertex of $G$. 
Given a compact unit-volume Riemannian manifold $(M,g)$ and a function $f\colon M\to\R$, its \emph{distribution} $\cD(f)$ is the pushforward by $f$ of the probability measure on $M$ arising from the volume form. 
We define some probability distributions of interest: for a positive integer $n$, let $\cD_n^{\mathrm{even}}$ (resp.\ $\cD_n^{\mathrm{odd}}$) be the distribution of $\cos(\pi 2t/n)$ (resp.\ $\cos(\pi(2t+1)/n)$) for $t$ uniform in $\Z/n\Z$.
Let $\cD^{\mathrm{cont}}$ be the distribution of $\cos(2\pi x)$ where $x\sim\operatorname{Unif}(\R/\Z)$.

\subsection{Proof outline}\label{sec:graph-outline}

We now outline the proof of \cref{thm: 2PP iff Cayley} (and the corresponding second proof of \cref{prop: 2 product eigenbasis implies flatness}), stating the main lemmas and deferring their proofs to the following section.

Select any $\varphi\in\cB\setminus\{\1\}$. Our main aim will be to understand the structure of the graph $H_\varphi$; we will use this understanding to obtain general information about the elements of $\cB$.
First, we understand the $\1$-component and extract information about $\varphi$.
Let $T_n$ denote the degree-$n$ Chebyshev polynomial of the first kind, so that $T_n(\cos x)=\cos(nx)$ for each $x$.

\begin{lemma}\label{lem:1-comp}
    Suppose $(\mathcal A,\cB)$ is a $2$-product manifold datum or a $2$-product odd graph datum.
    Select $\varphi\in\cB\setminus\{\1\}$ and suppose that $\abs{\im\varphi}\geq 7$.
    
    In the manifold case, the $\1$-component of $H_\varphi$ is a uni-infinite path consisting (up to sign) of $T_m(\varphi)$ for each $m$. Moreover, $\cD(\varphi)=\cD^{\mathrm{cont}}$.

    In the graph case, the $\1$-component of $H_\varphi$ is a finite path, possibly with a self-loop at one end. 
    Up to sign, the vertices in the path are $T_0(\varphi),T_1(\varphi),\ldots,T_{k-1}(\varphi)$, where $k=\abs{\im\varphi}$. 
    Moreover, one of the following holds:
    \begin{enumerate}[(a)]
        \item $T_k(\varphi)=0$ identically, and $\cD(\varphi)=\cD_{2k}^{\mathrm{odd}}$.
        \item $T_k(\varphi)=T_{k-1}(\varphi)$ identically, and $\cD(\varphi)=\cD_{2k-1}^{\mathrm{even}}$.
        \item $T_k(\varphi)=-T_{k-1}(\varphi)$ identically, and $\cD(\varphi)=\cD_{2k-1}^{\mathrm{odd}}$.
    \end{enumerate}
\end{lemma}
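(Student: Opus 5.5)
I will work entirely inside the auxiliary graph $H_\varphi$, using only the normalization of $\cB$ ($\1\in\cB$, all other elements of $L^2$-norm $1/\sqrt2$) and \cref{obs:max-deg-2}.

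\textbf{Step 1: the first vertices of the path.} The vertex $\1$ has a single neighbour, $\varphi$. By the $2$-product property, $\varphi^2=\langle\varphi^2,\1\rangle\1+c\,\psi_2$ for some $\psi_2\in\cB$. The constant term is $\norm{\varphi}_2^2=1/2$. I want to show $\varphi^2=\tfrac12(\1+\psi_2)$, i.e.\ $c=\tfrac12$ up to sign; then $\psi_2=2\varphi^2-1=T_2(\varphi)$.

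\emph{Manifold case.} \cref{lem: gradient norm} gives this directly, since its proof shows $\varphi^2-\tfrac12$ is a multiple of an eigenfunction with the same norm profile.

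\emph{Graph case.} Here I would argue via norms and symmetry of ranges. Elements of $\cB$ have the prescribed $L^2$ norm. Write $\varphi^2=\tfrac12\1+c\psi_2$, and also $\varphi\psi_2=a\varphi+b\psi_3$, where the coefficient $a=\langle\varphi\psi_2,\varphi\rangle\cdot 2$ is forced by symmetry of triple products. Comparing norms then constrains the coefficients. That alone does not force $\norm{\varphi}_\infty$ to match.

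\textbf{Step 2: induction along the path.} I would proceed inductively. Suppose the path so far is $\1,\varphi,\psi_2,\dots,\psi_m$, with $\psi_j=T_j(\varphi)$ up to sign. Then $\varphi\psi_m$ has a neighbour $\psi_{m-1}$ with label $\langle4\varphi\psi_m,\psi_{m-1}\rangle=4\langle\psi_m,\varphi\psi_{m-1}\rangle$. This label is determined by the previous step, using the symmetry of triple products. So $\varphi\psi_m=\tfrac12\psi_{m-1}+r\psi_{m+1}$ with at most one other term.

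The key normalization claim is $r=\tfrac12$, which would give $\psi_{m+1}=2\varphi\psi_m-\psi_{m-1}=T_{m+1}(\varphi)$ by the Chebyshev recurrence. To get it, I use the fact that the self-adjointness labels are symmetric. Summing squared labels at each vertex equals $4^2\norm{\varphi\psi}^2$ up to normalization, and $\norm{\varphi\psi_m}_2^2=\langle\varphi^2,\psi_m^2\rangle$ can be computed from the already-established expansions. The induction then needs the identity $\langle\varphi^2,T_m(\varphi)^2\rangle=\tfrac14$, i.e.\ the joint moments of $\varphi$ agree with those of $\cos$. I would carry along as induction hypothesis that $\mathbb E[T_i(\varphi)T_j(\varphi)]=\tfrac12\delta_{ij}$ for $i,j\le m$ (and $1$ if both are $0$). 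Orthonormality of the $\psi_j$ gives exactly this, so the moments of $\cD(\varphi)$ up to degree $2m$ match those of $\cD^{\mathrm{cont}}$.

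Alternatively, the path may terminate at $\psi_m$ (a self-loop or an endpoint), which forces $T_{m+1}(\varphi)\in\{0,\pm T_m(\varphi)\}$.

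\textbf{Step 3: termination and the distribution.} In the manifold case I would use that $\varphi$ has infinite image. Then the polynomial relation $T_{m+1}(\varphi)\in\{0,\pm T_m(\varphi)\}$ is impossible, so the path is uni-infinite. All moments agree with those of $\cD^{\mathrm{cont}}$, and since the support is compact, the distributions are equal.

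In the graph case the path is finite, with $k'$ vertices, and ends with one of the three relations (a), (b), (c). Each relation is a polynomial of degree $k'$ vanishing on $\im\varphi$, with roots exactly at $\cos(\pi(2t+1)/2k')$, $\cos(2\pi t/(2k'-1))$, or $\cos(\pi(2t+1)/(2k'-1))$ respectively. These roots are distinct and number $k'$. Meanwhile $1,\varphi,\dots,T_{k'-1}(\varphi)$ are orthogonal, hence linearly independent, which forces $\abs{\im\varphi}\ge k'$. Together these give $k'=k$. The orthogonality relations then determine the weights through the discrete orthogonality of Chebyshev polynomials on those nodes, which yields the stated distributions $\cD_{2k}^{\mathrm{odd}},\cD_{2k-1}^{\mathrm{even}},\cD_{2k-1}^{\mathrm{odd}}$.

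\textbf{Main obstacle.} I expect the hardest step to be the coefficient normalization $r=\tfrac12$ in Steps 1 and 2 in the graph case. The worry is that an a priori third neighbour, or an edge back to an earlier vertex, might appear. I would exclude this using the degree bound and the cycle/path classification of \cref{obs:max-deg-2}. The hypothesis $\abs{\im\varphi}\ge7$ should rule out small exceptional configurations, such as short cycles where $T_j(\varphi)$ coincides with an earlier vertex up to sign.
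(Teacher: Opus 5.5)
Your proposal has a genuine gap at its central step, the normalization $r=\tfrac12$ in Step 2. The same gap is why your graph-case Step 1 stays open. The identity you need, $\ip{\varphi^2,T_m(\varphi)^2}=\tfrac14$, involves a moment of degree $2m+2$. Orthonormality of $T_0(\varphi),\dots,T_m(\varphi)$ only controls moments up to degree $2m$. Concretely, for $m\ge2$ that hypothesis gives
\[\ip{\varphi^2,T_m(\varphi)^2}=\tfrac14+\tfrac18\int T_{2m+2}(\varphi),\]
and $\int T_{2m+2}(\varphi)=0$ is equivalent to $\norm{T_{m+1}(\varphi)}_2^2=\tfrac12$, which is what you are trying to prove. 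So the induction is circular.

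Structurally, the $\1$-component of $H_\varphi$ is always the sequence of orthogonal polynomials of $\cD(\varphi)$. Its labels $a_j$ are the three-term recurrence coefficients, which can be essentially arbitrary for a symmetric distribution. Hence no argument using only products $\varphi\cdot\psi$ and norms can force $a_j=1$.

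The missing idea is to apply the $2$-product property to products of two \emph{other} members of the $\1$-component. With $2\varphi\varphi_j=a_{j-1}\varphi_{j-1}+a_j\varphi_{j+1}$, the paper expands
\[2a_1\varphi_2\varphi_j=a_{j-1}a_{j-2}\varphi_{j-2}+(a_{j-1}^2+a_j^2-2)\varphi_j+a_ja_{j+1}\varphi_{j+2}.\]
The middle coefficient must vanish, so $a_{j-1}^2+a_j^2=2$. An analogous expansion of $2a_1a_2\varphi_3^2$ gives $a_1=a_4$. Together these yield $a_1=\dots=a_{\ell-2}=1$ once $\ell\ge6$, which is exactly where $\abs{\im\varphi}\ge7$ is used.

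In the manifold case your route could instead be rescued via \cref{lem: gradient norm}. That lemma shows a multiple of a basis element has $L^2$ norm $1/\sqrt2$ exactly when its sup norm is $1$, and $T_{m+1}(\varphi)$ has range $[-1,1]$. But that is not the argument you wrote, and it has no graph analogue.

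Your termination step also has a gap. The interior argument says nothing about the final labels. The paper needs separate expansions of $2\varphi_2\varphi_{\ell-1}$, $2\varphi_2\varphi_\ell$ and $2\varphi_3\varphi_{\ell-1}$ to get $(a_{\ell-1},a_\ell)\in\{(1,0),(\sqrt2,0),(1,1),(1,-1)\}$; in particular a self-loop label is not a priori $\pm1$.

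Your assertion that the path ends with $T_{m+1}(\varphi)\in\{0,\pm T_m(\varphi)\}$ omits the case $(\sqrt2,0)$. There the last vertex is $T_{k-1}(\varphi)/\sqrt2$, $T_k(\varphi)=T_{k-2}(\varphi)$, and $\cD(\varphi)=\cD_{2k-2}^{\mathrm{even}}$. This genuinely occurs, e.g.\ for $\varphi=\cos(2\pi x/n)$ on an even cycle $C_n$. There $T_{n/2}(\varphi)=(-1)^x$ has $L^2$ norm $1$, and your claimed $r=\tfrac12$ fails at the last step. This case is ruled out only by \cref{obs:div} together with the oddness of $\abs{V(G)}$, a hypothesis your proposal never uses.
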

\deferproof{lem:1-comp}{section,theorem}

Next, we understand the other components of $H_\varphi$.
We first do this ``qualitatively.'' 

\begin{lemma}\label{lem:large-comp}
    Suppose $(\mathcal A,\cB)$ is a $2$-product manifold datum or a $2$-product odd graph datum.
    Select $\varphi\in\cB\setminus\{\1\}$ and let $k=\abs{\im\varphi}$. Suppose that $k\geq 7$.

    In the manifold case, all components of $H_\varphi$ are infinite.

    In the graph case, all components of $H_\varphi$ have at least $k/3$ vertices. Moreover, each component of $H_\varphi$ has some vertex of image size at least $(k/5)^{1/3}$.
\end{lemma}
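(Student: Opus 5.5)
The plan is to exploit the fact that the span of a connected component of $H_\varphi$ is closed under multiplication by $\varphi$, and to compare the dimension of that span with the number of values $\varphi$ takes on the support of a single element. Let $C$ be a component of $H_\varphi$ and $W=\operatorname{span}(C)$. By definition of the edges, for every $\psi\in C$ the product $\varphi\psi$ is a linear combination of the neighbours of $\psi$ in $H_\varphi$. Hence $\varphi W\subseteq W$, and so $p(\varphi)\psi\in W$ for every polynomial $p$. The functions $p(\varphi)\psi$ span exactly the functions of the form $g(\varphi)\psi$, where $g$ is arbitrary on the set $\varphi(\operatorname{supp}\psi)$. Therefore
\[
\abs{C}\ \ge\ \dim W\ \ge\ \abs{\varphi(\operatorname{supp}\psi)}\qquad\text{for every }\psi\in C.
\]

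\emph{Manifold case.} A nonzero Laplace eigenfunction $\psi$ cannot vanish on an open set, by unique continuation, so its support is open and dense. By \cref{lem:1-comp}, $\cD(\varphi)=\cD^{\mathrm{cont}}$, so $\varphi$ is nonconstant and smooth. It therefore takes infinitely many values on any open set, in particular on a suitable open subset of $\operatorname{supp}\psi$. Hence $W$ is infinite-dimensional and $C$ is infinite.

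\emph{Graph case, first claim.} I need $\abs{\varphi(\operatorname{supp}\psi)}\ge k/3$, and for this I will use the $2$-product property of $\psi^2$. Write $\psi^2=\tfrac12\1+\chi$, where $\chi$ is a multiple of a single basis element. By \cref{lem:1-comp}, $T_m(\varphi)\in\cB$ up to sign for $0\le m\le k-1$. It follows that the measure $\nu$ on $\im\varphi$, given by pushing forward the weight $\psi^2$ along $\varphi$, satisfies $\int T_m\,d\nu=0$ for all $1\le m\le k-1$ except possibly one index $m_0$. In each of the cases (a)–(c) of \cref{lem:1-comp}, $\im\varphi$ is an explicit set of $k$ Chebyshev-type nodes. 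On such nodes the matrix $(T_m(x))_{0\le m<k,\ x\in\im\varphi}$ is an invertible discrete cosine transform. So $\nu$ is determined by these moments, and it must equal $\cD(\varphi)$ reweighted by an affine function $a+bT_{m_0}(x)$.

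The support of $\nu$ is then all nodes except those where $a+bT_{m_0}(x)=0$. The step I expect to be the main obstacle is the explicit count showing that, on the nodes $\cos(\pi j/n)$, the level set $T_{m_0}(x)=\mathrm{const}$ contains at most about $2k/3$ of the $k$ nodes. This reduces to counting $j$ with $\cos(\pi m_0 j/n)$ equal to a fixed value, i.e.\ $m_0j\equiv\pm c\pmod{2n}$, which amounts to a gcd computation in each of the three cases. Granting this, $\abs{\varphi(\operatorname{supp}\psi)}\ge k/3$ and hence $\abs{C}\ge k/3$.

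\emph{Graph case, second claim.} Suppose every vertex of $C$ has image size at most $r$. Pick any $\psi\in C$ and write $\varphi\psi=a\psi_1+b\psi_2$ with $\psi_1,\psi_2\in C$. On $\operatorname{supp}\psi$ we have $\varphi=(a\psi_1+b\psi_2)/\psi$. The numerator takes at most $r^2$ values and the denominator at most $r$ values, so $\abs{\varphi(\operatorname{supp}\psi)}\le r^3$. Combined with the lower bound above, this gives $r^3\ge k/3\ge k/5$. If the node-counting only gives a slightly weaker fraction, the constant $5$ leaves room to absorb it. Hence some vertex of $C$ has image size at least $(k/5)^{1/3}$.
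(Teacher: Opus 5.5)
Your proposal is correct. The manifold case is essentially the paper's argument, with one wording fix: a nonconstant smooth function need not take infinitely many values on every open set. What you actually need still holds, because $\{\psi\neq 0\}$ is dense, so $\varphi(M)\subseteq\overline{\varphi(\{\psi\neq0\})}$, and finitely many values there would force $\varphi$ to be constant. The graph case follows a genuinely different route from the paper.

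\emph{The count you deferred.} It goes through, but only just. In case (a) we have $T_{m_0}(\cos(\pi j/(2k)))=\cos(\pi m_0 j/(2k))$ for odd $j\in[1,2k-1]$. So a nonempty level set has the form $\{j: m_0 j\equiv\pm s \pmod{4k}\}$. Since $j\mapsto -j$ swaps the two halves of the odd residues mod $4k$, its size is at most the number of odd residues $j$ with $m_0 j\equiv s\pmod{4k}$. That number is at most $g=\gcd(m_0,4k)$, and at most $g/2$ when $4k/g$ is odd. Since $g\le m_0<k$, we have $4k/g\ge 5$, so the level set has at most $2k/3$ points; the worst case is $4k/g=6$. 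The parity restriction is essential here, since the bare bound $g$ would allow $4k/5$. In cases (b) and (c) the level sets have at most $\gcd(m_0,2k-1)\le(2k-1)/3$ points, because $2k-1$ is odd. The bound $2k/3$ is attained (take $3\mid k$, $m_0=2k/3$, level $1/2$). So the first claim needs exactly this bound, and the slack in the constant $5$ only helps the second claim.

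\emph{The reweighting step.} This is also fine: $\int T_mT_{m_0}\,d\cD(\varphi)=\tfrac12\delta_{m,m_0}$ for $1\le m,m_0\le k-1$, because $m+m_0\le 2k-2<n$. More directly, the nonconstant part of $\psi^2$ is either a multiple of $T_{m_0}(\varphi)$ or orthogonal to every function of $\varphi$.

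\emph{Comparison with the paper.} The paper proves the size bound (\cref{lem:large-comp-spec}) by a pure degree count. If a component has $\ell<k/3$ vertices, then the functions $T_j(\varphi)\psi_1$ for $0\le j\le\ell$ are linearly dependent. After \cref{lem:no-zero-product} excludes zero products, this gives $Q(\varphi)\psi_1=0$ with $\deg Q\le\ell$, and also $2\psi_1^2=1+cT_m(\varphi)$ with $m\le 2\ell-1$. Hence $\varphi$ satisfies a nonzero polynomial of degree at most $3\ell-1$. That argument never looks at where the values of $\varphi$ lie. The paper then gets the image-size claim separately, from the $L^\infty$/small-support argument of \cref{lem:varied-or-large}.

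You instead prove the stronger per-vertex bound $\abs{\varphi(\{\psi\neq0\})}\ge k/3$ for every $\psi$, with no constraint on $m_0$, using the explicit node sets from \cref{lem:1-comp}. This single inequality yields both claims at once, improves the exponent to $(k/3)^{1/3}$, and makes \cref{lem:no-zero-product,lem:varied-or-large} unnecessary for this lemma. The price is a sharp arithmetic count tied to the exact $2$-product distributions. The paper's softer argument, by contrast, is the kind the authors note would carry over, with worse constants, to the $N$-product setting.
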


We then do this ``quantitatively.''
To do so, it is helpful to make an additional assumption on $\varphi$:

\begin{defn}\label{def:primitive}
    An element $\varphi\in\cB$ is \emph{primitive} if, whenever $\psi\in\cB$ is such that $\varphi\in\R[\psi]$, we have $\psi\in\R[\varphi]$.
\end{defn}

We will need a strong form of the existence of primitive eigenfunctions.

\begin{lemma}\label{lem:primitive}
    Suppose $(\mathcal A,\cB)$ is a $2$-product manifold datum or a $2$-product odd graph datum.
    For every $\varphi\in\cB$, there exists some primitive $\varphi'\in\cB$ for which $\varphi\in\R[\varphi']$.
\end{lemma}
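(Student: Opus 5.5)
We will argue by an ascending chain: define a relation on $\cB$ by $\varphi\preceq\psi$ if $\varphi\in\R[\psi]$. This relation is reflexive and transitive. If $\varphi\preceq\psi$ and $\psi\preceq\varphi$, we regard $\varphi$ and $\psi$ as equivalent. A primitive element is then exactly one admitting no strictly larger element above it in this preorder. So it suffices to show that every chain
\[
\varphi=\varphi_0\prec\varphi_1\prec\varphi_2\prec\cdots
\]
of strict inclusions $\R[\varphi_i]\subsetneq\R[\varphi_{i+1}]$ terminates. Once that holds, starting from $\varphi$ and repeatedly replacing the current element by a strictly larger one whenever it fails to be primitive must stop, at some primitive $\varphi'$ with $\varphi\in\R[\varphi']$. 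The case $\varphi=\1$ is degenerate; for it any $\varphi'$ works once some primitive element exists, and the chain argument applied to any nonconstant element supplies one.

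In the graph case, termination follows from a monotone finite invariant, so we use the image size. If $\varphi\in\R[\psi]$ then $\varphi=p(\psi)$ is a function of $\psi$, hence $\abs{\im\varphi}\le\abs{\im\psi}$. We claim the inequality is strict when the inclusion of algebras is strict. Indeed, if $\abs{\im\varphi}=\abs{\im\psi}$, the map $p$ is injective on $\im\psi$. By Lagrange interpolation on the finite set $\im\varphi$, the function $\psi$ is then a polynomial in $\varphi$, so $\psi\in\R[\varphi]$, contradicting strictness. Image sizes are bounded by $\abs{V(G)}$, so chains terminate. This step needs no $2$-product structure at all.

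In the manifold case, image sizes are infinite and interpolation fails, so we need another invariant. First note that $\varphi\in\R[\psi]$ with $\varphi,\psi$ eigenfunctions forces the following: by the analogue of \cref{lem:1-comp} (and \cref{lem: angles are harmonic}), the algebra $\R[\psi]$ is spanned by the $T_m(\psi)=\cos(m\beta)$. These are pairwise orthogonal eigenfunctions with distinct eigenvalues $m^2\mu$, where $\psi$ has eigenvalue $\mu$. Since $\varphi$ is an eigenfunction, it must be a multiple of a single $T_m(\psi)$, with eigenvalue $m^2\mu$, and strictness of the inclusion means $m\ge2$. So along a strict chain the eigenvalues strictly \emph{decrease} by factors $m_i^2\ge4$. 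They remain positive, since nonconstant elements have positive eigenvalue, and the spectrum is discrete with no accumulation at $0$: the smallest positive eigenvalue $\lambda_1$ bounds everything below. Hence any strict chain has length at most $\log_4(\lambda/\lambda_1)$ and terminates.

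The main obstacle is justifying, in the manifold setting, that $\varphi\in\R[\psi]$ forces $\varphi=\pm T_m(\psi)$. We do this using the phase description $\psi=\cos\beta$ on $\widetilde M$ from \cref{lem: tilde phi,lem: angles are harmonic}. With it, polynomials in $\psi$ decompose orthogonally into the distinct eigenspaces of the $\cos(m\beta)$, and an eigenfunction lying in $\R[\psi]$ must be concentrated in one of them. In the graph case we avoid this entirely through the interpolation argument, though if one wants more structure the analogous statement follows from \cref{lem:1-comp} when $\abs{\im\psi}\ge7$.
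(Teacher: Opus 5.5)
Your proof is correct. In the graph case it is essentially the paper's argument. The paper observes that $\dim\R[\varphi_i]$ strictly increases along the chain and is bounded by $\abs{V(G)}$, and your invariant $\abs{\im\varphi}$ is that same dimension, so your interpolation step re-derives what the dimension count gives directly.

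In the manifold case you take a different route. The paper also uses \cref{lem:1-comp} to write $\varphi_i=\pm T_{m_i}(\varphi_{i+1})$ with $m_i\geq 2$, but then argues topologically. It covers $M$ by finitely many connected sets on which $\varphi_0$ varies by at most $1$, and notes that $T_p$ with $p=m_0\cdots m_k$ large sweeps out all of $[-1,1]$ on every short interval. It concludes that $\varphi_k(M)$ would be a short interval, contradicting $\cD(\varphi_k)=\cD^{\mathrm{cont}}$.

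Your spectral descent has eigenvalues dropping by factors $m_i^2\geq 4$ and bounded below by the spectral gap. It is shorter, gives the explicit bound $\log_4(\lambda/\lambda_1)$ on chain length, and is a sharpened form of the Rayleigh-quotient alternative the paper mentions right after its proof.

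Its only cost is where you get $\Delta T_m(\psi)=m^2\mu T_m(\psi)$. You take it from the phase construction of \cref{lem: tilde phi} and \cref{lem: angles are harmonic}, which rests on Wang's theorem and the Morse--Bott analysis. The second proof in \cref{sec:comp-manifold}, which relies on this lemma, is meant to use only \cref{lem: gradient norm} from \cref{sec: 2PP implies flat}. This is easy to fix. From $\abs{\grad\psi}^2=\mu(1-\psi^2)$ one gets $\Delta f(\psi)=\mu\bigl(\psi f'(\psi)-(1-\psi^2)f''(\psi)\bigr)$. Chebyshev's equation $(1-x^2)T_m''-xT_m'+m^2T_m=0$ then gives the eigenvalue formula directly on $M$.

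Also, \cref{lem:1-comp} already makes the $\pm T_m(\psi)$ distinct elements of $\cB$. So the identification $\varphi=\pm T_m(\psi)$ needs no eigenvalue argument, and the formula is only needed for the descent.
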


For primitive $\varphi$, we can understand the components of $H_\varphi$ quite precisely.

\begin{lemma}\label{lem:comp-labels}
    Suppose $(\mathcal A,\cB)$ is a $2$-product manifold datum or a $2$-product odd graph datum.
    Select $\varphi\in\cB\setminus\{\1\}$ primitive and let $k=\abs{\im\varphi}$. Suppose that $k\geq 258$.

    In the manifold case, each component besides the $\1$-component has all edge labels equal to $1$ and is a bi-infinite path or uni-infinite path with no self loop. The latter case may occur in at most one component, and in such a case the vertex $\psi$ at the end of the uni-infinite path satisfies $\varphi^2+\psi^2=\1$.

    In the graph case, all components besides the $\1$-component are $2k$-cycles with edge labels alternately $1,-1$. Moreover, $k$ is odd and $\cD(\varphi)=\cD_{2k}^{\mathrm{odd}}$.
\end{lemma}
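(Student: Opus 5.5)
Our approach is to analyse a non-$\1$-component $C$ of $H_\varphi$ through the three-term recurrence that the edges encode, and then to use the information from \cref{lem:1-comp} and \cref{lem:large-comp}. Since every vertex has degree at most $2$, each $\psi\in C$ satisfies
\[
4\varphi\psi=\sum_{\psi'\sim\psi}\ell(\psi,\psi')\psi',
\]
where $\ell$ denotes the edge label. Applying orthogonality to the identity $\langle 4\varphi\psi,\psi'\rangle=\langle\psi,4\varphi\psi'\rangle$ shows that the labels are symmetric. The first step is to pin down the label values. We compute $\langle(4\varphi)^2\psi,\psi\rangle$ in two ways:
\begin{itemize}
\item via the path, it equals the sum of the squares of the labels at $\psi$ (plus the loop term), times $\|\psi\|^2=1/2$;
\item via $\varphi^2=\tfrac12+\tfrac12T_2(\varphi)$, it equals $8\|\psi\|^2+8\langle T_2(\varphi)\psi,\psi\rangle$.
\end{itemize}
In the graph case, take the explicit distribution from \cref{lem:1-comp}. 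We expect orthogonality of $\psi^2$ (a combination of at most two basis elements, namely $\1$ and one more) against $T_2(\varphi)$ to force $\langle T_2(\varphi)\psi,\psi\rangle=0$ except in finitely many degenerate configurations. If that holds, the squared labels at each interior vertex sum to $4$.

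The second step shows that the labels are $\pm1$ and that the vertices are Chebyshev-like. Along a path $\psi_0,\psi_1,\dots$ in $C$, iterating the recurrence writes $\psi_j=p_j(\varphi)\psi_0+q_j(\varphi)\psi_1$ with polynomial coefficients $p_j,q_j$. The natural comparison is with $e^{\pm i m x}$-type sequences, for which $\psi_{j\pm1}$ are $T_j$-combinations of $\psi_0$ and $\psi_1$.

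Here primitivity enters. Suppose some vertex $\psi\in C$ lies in $\R[\varphi]$, or that a label pattern makes $\psi_0$ a polynomial in a "square root" of $\varphi$. Then $\varphi$ would be a polynomial in another basis element not itself in $\R[\varphi]$, which contradicts \cref{def:primitive}. I would therefore use primitivity to exclude the non-generic label configurations, namely labels $\neq\pm1$ and self-loops other than the $\1$-component's.

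In the graph case, finiteness forces $C$ to be a cycle or a finite path. The relation $T_k(\varphi)=0$ or $T_k(\varphi)=\pm T_{k-1}(\varphi)$ from \cref{lem:1-comp} then determines the period of the recurrence. We must show that only case (a) survives, with alternating labels $1,-1$ and cycle length $2k$, and that $k$ is odd. The parity of $|V(G)|$ enters at this point. Summing the cycle sizes gives
\[
|V(G)|=k+2k\cdot(\#\text{cycles}),
\]
so $k$ must be odd. To rule out cases (b) and (c), I would show that they force either a component of the wrong size (contradicting \cref{lem:large-comp}) or a vertex lying in $\R[\varphi]$.

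In the manifold case, the components are infinite by \cref{lem:large-comp}, and the same recurrence with labels $1$ gives paths. If $C$ is a uni-infinite path ending at $\psi$ with no loop, then
\[
4\varphi\psi=\psi_1,\qquad 4\varphi\psi_1=\psi+\psi_2,
\]
and so on. Comparing $\psi^2$ via its two-term expansion and using \cref{lem: gradient norm}-type extremum arguments should yield $\varphi^2+\psi^2=\1$. Uniqueness of such a component follows because any two such $\psi$ would coincide up to sign.

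The main obstacle is the quantitative exclusion of degenerate label configurations using only $k\ge 258$. I would first try to bound the number of exceptional vertices, those where $\langle T_2(\varphi)\psi,\psi\rangle\neq0$, by a constant times a small power of $k$. The point is that each such vertex forces $\psi^2$ to share an eigen-direction with $T_2(\varphi)$. Combined with the lower bound $(k/5)^{1/3}$ on the image size from \cref{lem:large-comp}, this should make the threshold $258$ sufficient.
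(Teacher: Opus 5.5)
There is a genuine gap at the center of your plan: nothing in it forces the edge labels to be $\pm1$. Your first step gives only $a_{j-1}^2+a_j^2=2$ at $2$-internal vertices. With the paper's labels, which are the coefficients in $2\varphi\psi$, this sum is $2$, not $4$. No exceptional vertices need counting here: in $2T_2(\varphi)\psi_0=a_{-1}a_{-2}\psi_{-2}+(a_{-1}^2+a_0^2-2)\psi_0+a_0a_1\psi_2$ the outer coefficients are nonzero, so the $2$-product property kills the middle one. But this still permits labels alternating $a,\sqrt{2-a^2}$. Primitivity cannot remove these. Its only role is to exclude self-loops off the $\1$-component, since a loop at $\psi$ gives $T_2(\psi)=c\varphi$ and hence $\psi\in\R[\varphi]$. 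The missing idea is the next-order expansion at a $3$-internal vertex,
\[2T_3(\varphi)\psi_0=a_{-3}a_{-2}a_{-1}\psi_{-3}+a_{-1}(a_{-2}^2-1)\psi_{-1}+a_0(a_1^2-1)\psi_1+a_0a_1a_2\psi_3.\]
Here again the outer coefficients are nonzero, which forces $a_{-2}^2=a_1^2=1$. Leaf edges need a separate computation. Expand $\norm{2\varphi\psi}^2=\int(1+T_2\varphi)(1+T_2\psi)$ and likewise for $\psi_1$. Then use that $2\int T_2\varphi\,T_2\psi\in\{-1,0,1\}$, because $T_2\varphi$ and $T_2\psi$ are proportional when not orthogonal and both have norm $1/\sqrt2$ (\cref{lem:T2-not-orth}). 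This gives $a_0^2\in\{1,2\}$, with $T_2\psi=-T_2\varphi$ or $T_2\psi_1=T_2\varphi$ respectively. That computation, not an extremum argument, is what yields $\varphi^2+\psi^2=\1$ at the end of a uni-infinite path in the manifold case. It also rules out the label $\sqrt2$ there, since that would force $\psi_1=\pm\varphi$.

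In the graph case the global classification is also missing. \cref{lem:large-comp} cannot exclude cases (b) and (c) of \cref{lem:1-comp}. It only gives a lower bound $k/3$ on component sizes, and the components arising in those cases have size $2k-1$, which satisfies it. The argument that works runs as follows.
\begin{enumerate}[(i)]
\item For a component $C$ with $\ell$ vertices, the characteristic polynomial $Q$ of multiplication by $\varphi$ on $\operatorname{span}(C)$ kills every $\psi\in C$.
\item Five label patterns survive the local analysis: cycles with all labels $1$ or one label $-1$, and paths with zero, one or two end labels $\sqrt2$. For these, $Q$ is $T_\ell-1$, $T_\ell+1$, $U_\ell$, $T_\ell$, $T_\ell-T_{\ell-2}$ respectively. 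Each divides some $T_n-b$ with $b\in\{-1,0,1\}$.
\item \cref{lem:no-zero-product}(2), which is where $k\ge258$ enters, upgrades $T_n(\varphi)\psi=b\psi$ on $C$ to $T_n(\varphi)=b$ identically.
\item A dimension count gives $\ell\le 2k$ for cycles and $\ell\le k$ for paths.
\item Evaluating at a vertex where $\varphi$ equals $\cos(\pi/(2k))$, $\cos(2\pi/(2k-1))$ or $\cos(\pi/(2k-1))$ (in cases (a), (b), (c) respectively) pins down $(n,b)$.
\end{enumerate}
In cases (b) and (c) every non-$\1$ component then has size $2k-1$. But \cref{obs:div} gives $2k-1\mid\abs{V(G)}=k+(2k-1)c$, which is impossible. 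In case (a) you are left with $2k$-cycles and with paths on $k$ vertices having a $\sqrt2$ end label. The latter are excluded by the leaf computation: it gives $\psi_1^2=\varphi^2$, hence $\varphi\,T_2(\psi_0)=0$. Then either $T_2(\psi_0)=0$, contradicting the odd order of $V(G)$, or \cref{lem:no-zero-product}(1) gives $T_3(\varphi)=0$. Once all this is in place, your count $\abs{V(G)}=k+2kc$ for the parity of $k$ is fine.
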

\deferproof{lem:comp-labels}{section,theorem}

Now that we have understood the structure of $H_\varphi$, our proofs in the two settings diverge.
In the manifold setting, we aim for an analogue of \cref{lem: constant angle inner product}, and so we must understand the inner product $\ip{\grad \varphi,\grad\psi}$ for $\varphi,\psi\in\mathcal B$.
Once we have done this, we use an argument similar to that in \cref{sec: 2PP implies flat} to obtain flatness.

In the graph setting, the flavor of argument is quite different. 
The conclusion $\cD(\varphi)=\cD_{2k}^{\mathrm{odd}}$ is enough to tell us that, for each primitive $\varphi\in\mathcal B$ with large enough image size, there is some $\tilde\varphi\in\mathcal B$ for which $\varphi^2+\tilde\varphi^2=\1$ identically. (In fact, $\tilde\varphi=T_{k-1}\varphi$.)
Similarly to how we used \cref{lem: tilde phi} to construct a map from a cover of $M$ to $\mathbb S^1$, we can use the pair $(\varphi,\tilde\varphi)$ to define a map $V(G)\to\mathbb S^1$. We use these maps to generate an abelian group $\Gamma$, and eventually prove that $G$ is a Cayley or Cayley sum graph on the dual group $\widehat\Gamma$.

It turns out that, in both settings, the following computation is useful.

\begin{lemma}\label{lem:doubling}
    Suppose that $\varphi\neq\psi$ are elements of $\mathcal B$ with $\abs{\im\varphi},\abs{\im\psi}\geq 13$, and suppose that
    \[2\varphi\psi=\psi_1+\psi_{-1}\]
    for some distinct $\psi_1,\psi_{-1}\in\cB\cup-\cB$. Then
    \[2\psi_1\psi_{-1}=T_2(\varphi)+T_2(\psi).\]
\end{lemma}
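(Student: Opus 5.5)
The plan is to square the defining relation and compare the result with the known form of squares of basis elements. From \cref{lem:1-comp} (applied to $\varphi$ and to $\psi$, which is legitimate since their image sizes are at least $13\geq 7$), and with the normalization $\norm{\chi}_2^2=1/2$ for $\chi\in\cB\setminus\{\1\}$, we have $T_2(\chi)=2\chi^2-\1\in\pm\cB$ and
\[
\chi^2=\tfrac12\bigl(\1+T_2(\chi)\bigr)\qquad\text{for }\chi\in\{\varphi,\psi\}.
\]
Squaring $2\varphi\psi=\psi_1+\psi_{-1}$ then gives
\[
\psi_1^2+\psi_{-1}^2+2\psi_1\psi_{-1}=4\varphi^2\psi^2=\1+T_2(\varphi)+T_2(\psi)+T_2(\varphi)T_2(\psi).
\]
If the same description of squares holds for $\psi_{\pm1}$, then $\psi_1^2+\psi_{-1}^2=\1+\tfrac12\bigl(T_2(\psi_1)+T_2(\psi_{-1})\bigr)$. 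In that case the claim is equivalent to the ``doubled'' identity
\[
D:=2\psi_1\psi_{-1}-T_2(\varphi)-T_2(\psi)=T_2(\varphi)T_2(\psi)-\tfrac12\bigl(T_2(\psi_1)+T_2(\psi_{-1})\bigr)\overset{?}{=}0,
\]
which is the algebraic shadow of $\cos2a\cos2b=\tfrac12(\cos2(a+b)+\cos2(a-b))$.

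The first step is therefore to bound the image sizes of $\psi_{\pm1}$ from below, say $\abs{\im\psi_{\pm1}}\geq 7$, so that \cref{lem:1-comp} applies to them. I would do this by contradiction. If $\psi_1$ took few values, then $\psi_1^2$, and hence $\psi_1$ itself, would lie in a low-degree Chebyshev relation, by cases (a)–(c) of \cref{lem:1-comp} or its analogue for small image. Substituting $\psi_1=2\varphi\psi-\psi_{-1}$ and comparing supports in $H_\varphi$ would then force $\psi$ into the $\1$-component of $H_\varphi$. That component consists only of Chebyshev polynomials of $\varphi$, and the threshold $13=2\cdot 6+1$ should be exactly what makes this impossible.

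With that in hand, the second step is to show $D=0$ by a support count. The left expression for $D$ lies in the span of at most four basis elements: the at most two constituents of $\psi_1\psi_{-1}$, together with $T_2(\varphi)$ and $T_2(\psi)$. The right expression lies in the span of the at most two constituents of $T_2(\varphi)T_2(\psi)$ together with $T_2(\psi_{\pm1})$. I would compute the inner products of $D$ with each candidate basis element, using only triple products that can be read off from the graphs $H_\varphi$ and $H_\psi$:
\begin{itemize}
\item the edges $\psi\sim\psi_{\pm1}$ in $H_\varphi$, with labels $\pm2$;
\item the edges $T_2(\varphi)\sim\1$ and $T_2(\varphi)\sim T_1(\varphi)$ in $H_\varphi$;
\item the fact that $\varphi$ and $\psi$ lie in different components of the relevant auxiliary graph, because $\abs{\im}\geq 13$ rules out $\psi\in\R[\varphi]$ via the path structure.
\end{itemize}
For instance, $\ip{2\psi_1\psi_{-1},T_2(\varphi)}=\ip{\psi_1,2T_2(\varphi)\psi_{-1}}$. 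This is evaluated by expanding $T_2(\varphi)\psi_{-1}=2\varphi(\varphi\psi_{-1})-\psi_{-1}$, writing $2\varphi\psi_{-1}=\psi+\psi_{-2}$ from the path structure of $H_\varphi$ near $\psi$, and then using orthogonality. The expected outcome is that the coefficients of $T_2(\varphi)$ and $T_2(\psi)$ in $2\psi_1\psi_{-1}$ are exactly $1$, and that the remaining constituents of $2\psi_1\psi_{-1}$, at most zero of them since it is a $2$-product, vanish.

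The main obstacle is this last bookkeeping. We need to know that $T_2(\varphi)\neq\pm T_2(\psi)$ and that neither coincides with a constituent of $\psi_1\psi_{-1}$ in a way that breaks the count. Both facts should again reduce to the path and cycle structure of $H_\varphi$ (\cref{obs:max-deg-2}) together with the hypothesis $\varphi\neq\psi$ and the image-size bound of $13$.
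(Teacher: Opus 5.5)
Your closing computation is the right one and is the paper's: with $2\varphi\psi_{\pm1}=\psi+a_{\pm1}\psi_{\pm2}$ you get $\ip{2\psi_1\psi_{-1},T_2(\varphi)}=\ip{2\varphi\psi_1,2\varphi\psi_{-1}}$. Then you swap $\varphi$ and $\psi$ and invoke the $2$-product property. The gap is that you never justify the one step where the hypothesis $\abs{\im\varphi},\abs{\im\psi}\geq 13$ is actually needed. Expanding gives $\ip{2\varphi\psi_1,2\varphi\psi_{-1}}=\tfrac12+a_1a_{-1}\ip{\psi_2,\psi_{-2}}$. So ``using orthogonality'' requires $\psi_2\neq\pm\psi_{-2}$, i.e.\ the component of $H_\varphi$ containing $\psi$ must not be the $4$-cycle $\psi,\psi_1,\psi_2,\psi_{-1}$. \cref{obs:max-deg-2} allows exactly such a component, and if it occurred the coefficient of $T_2(\varphi)$ would be $1\pm a_1a_{-1}$ rather than $1$. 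The paper excludes it with \cref{lem:large-comp}: every component of $H_\varphi$ has at least $k/3>4$ vertices when $k\geq 13$, and is infinite in the manifold case. The same is needed in $H_\psi$ for the coefficient of $T_2(\psi)$. This is the missing idea, and it is where the number $13$ comes from.

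Instead you spend the hypothesis on a first step that is false. Take the cycle $C_{303}$ with its sine--cosine basis, which is a $2$-product odd graph datum. There $\varphi=\cos(2\pi x/303)$ and $\psi=\cos(200\pi x/303)$ each take $152$ values. Yet $2\varphi\psi=\cos(2\pi x/3)+\cos(198\pi x/303)$, and $\psi_1=\cos(2\pi x/3)$ takes only $2$ values. That step is also unnecessary: $\psi_{\pm1}^2=\tfrac12(\1+T_2(\psi_{\pm1}))$ is just the definition of $T_2$, and your inner-product computation never uses the distribution of $\psi_{\pm1}$.

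Some smaller points:
\begin{itemize}
\item Your bullet asserting $\psi\notin\R[\varphi]$ is also false: $\psi=T_2(\varphi)$, $\psi_1=T_3(\varphi)$, $\psi_{-1}=\varphi$ satisfies all the hypotheses. Nothing in the argument depends on it.
\item The edge labels $\psi\sim\psi_{\pm1}$ in $H_\varphi$ are $\pm1$, not $\pm2$.
\item Your ``main obstacle'' $T_2(\varphi)\neq\pm T_2(\psi)$ follows at once by integrating your squared identity: $1=\int(\psi_1+\psi_{-1})^2=\int 4\varphi^2\psi^2=1+\ip{T_2(\varphi),T_2(\psi)}$. So the two are orthogonal, and two unit coefficients on distinct basis elements already exhaust a $2$-product.
\end{itemize}
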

\begin{proof}
    Since $\varphi\neq\psi$, neither $\psi_1$ nor $\psi_{-1}$ are constant. We conclude that
    \begin{align*}
        2\varphi\psi_1&=\psi+a_1\psi_2\\
        2\varphi\psi_{-1}&=\psi+a_{-1}\psi_{-2}
    \end{align*}
    for some basis elements $\psi_2,\psi_{-2}\not\in\{\psi,-\psi\}$ and some $a_1,a_{-1}\in\R$ (not necessarily nonzero). If $\psi_2=\pm\psi_{-2}$, then $2\varphi\psi_2$ has nonzero inner product with the orthogonal basis elements $\psi_1$ and $\psi_{-1}$. 
    We conclude that the connected component of $H_\varphi$ containing $\psi$ consists only of $\{\varphi,\psi_1,\psi_{-1},\psi_2\}$. Using \cref{lem:large-comp}, this contradicts our assumption $\abs{\im\varphi}\geq 13$. We conclude that
    \[\int (2\psi_1\psi_{-1})T_2(\varphi)=\int(2\psi_1\psi_{-1})(2\varphi^2)=\int(\psi+a_1\psi_2)(\psi+a_{-1}\psi_{-2})=\int\psi^2=\frac12.\]
    In particular, $T_2(\varphi)$ is one of the terms in the basis decomposition of $2\psi_1\psi_{-1}$. The symmetry of $\varphi$ and $\psi$ in the lemma statement allows us to conclude also that $T_2(\psi)$ is also a term in the basis decomposition of $2\psi_1\psi_{-1}$. The result follows.
\end{proof}

\subsection{Obtaining flatness in the manifold setting}\label{sec:comp-manifold}

Given the previous lemmas, a short computation is enough to prove \cref{prop: 2 product eigenbasis implies flatness} in the manifold setting. 
This proof is mostly independent of the proof we gave in \cref{sec: 2PP implies flat}, except that we refer to the computation of \cref{lem: gradient norm}.

\begin{lemma}\label{lem:const-inner}
    For each $\varphi,\psi\in\cB\setminus\{\1\}$ for which $\varphi$ is primitive, the quantity
    \[
        \bigg\langle\frac{\grad \varphi}{\abs{\grad \varphi}},\frac{\grad \psi}{\abs{\grad \psi}}\bigg\rangle
    \]
    is locally constant away from the critical sets of $\varphi$ and $\psi$.
\end{lemma}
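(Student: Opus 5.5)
The plan is to reduce the claim to the constancy of $\ip{\grad a,\grad b}$ for local phase functions $a,b$ with $\varphi=\cos a$ and $\psi=\cos b$. This mirrors \cref{lem: constant angle inner product}, but it will use the combinatorial structure of $H_\varphi$ from \cref{lem:comp-labels} in place of the nonstationary phase argument.

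\textbf{Preliminaries.} Since $\varphi,\psi$ are nonconstant eigenfunctions on a connected manifold, they are continuous and nonconstant, so $\abs{\im\varphi}=\abs{\im\psi}=\infty$. Hence \cref{lem:1-comp}, \cref{lem:comp-labels} and \cref{lem:doubling} all apply. With the normalization $\norm{\cdot}_2^2=1/2$, the proof of \cref{lem: gradient norm} gives $\norm{\varphi}_\infty=1$ and $\abs{\grad\varphi}^2=\lambda(1-\varphi^2)$, and likewise for $\psi$ with eigenvalue $\mu$. The same proof shows that $\varphi^2=\tfrac12(\1+T_2(\varphi))$ with $T_2(\varphi)\in\cB$ up to sign. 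Away from the critical sets we have $\abs{\varphi},\abs{\psi}<1$, so on a small ball we can write $\varphi=\cos a$ and $\psi=\cos b$ with smooth $a,b$ and $\sin a,\sin b\neq0$. Then $\grad\varphi=-\sin a\,\grad a$ and $\abs{\grad a}^2=\lambda$, and similarly for $b$. Consequently the quantity in the lemma equals $\pm\ip{\grad a,\grad b}/\sqrt{\lambda\mu}$, with a locally constant sign. It therefore suffices to show that $\ip{\grad a,\grad b}$ is locally constant.

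\textbf{Case split via \cref{lem:comp-labels}.} Take the component of $H_\varphi$ containing $\psi$.
\begin{enumerate}
\item If it is the $\1$-component, then $\psi=\pm T_m(\varphi)$. The gradients are parallel, so the normalized inner product is $\pm1$ and locally constant.
\item If $\psi$ is the endpoint of the exceptional uni-infinite path, then $\varphi^2+\psi^2=\1$. Hence $\psi\grad\psi=-\varphi\grad\varphi$, and the normalized inner product is $-\operatorname{sign}(\varphi\psi)$, again locally constant.
\item Otherwise both edges at $\psi$ carry label $1$. Since coefficients are $\ip{4\varphi\psi,\psi'}$ under our normalization, this means $2\varphi\psi=\psi_1+\psi_{-1}$ for distinct $\psi_{\pm1}\in\cB$.
\end{enumerate}

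\textbf{Main case.} \cref{lem:doubling} gives $2\psi_1\psi_{-1}=T_2(\varphi)+T_2(\psi)=2\varphi^2+2\psi^2-2$. Squaring the relation $2\varphi\psi=\psi_1+\psi_{-1}$ and subtracting twice this identity yields
\[(\psi_1-\psi_{-1})^2=4\varphi^2\psi^2-4\varphi^2-4\psi^2+4=4(1-\varphi^2)(1-\psi^2)=4\sin^2a\,\sin^2b.\]
So on the ball, after possibly replacing $b$ by $-b$, which is a locally constant choice by continuity and nonvanishing of the sines, we get $\psi_1=\cos(a-b)$ and $\psi_{-1}=\cos(a+b)$. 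Now apply \cref{lem: gradient norm} to $\psi_{-1}$, a basis element with eigenvalue $\mu_{-1}$: $\abs{\grad\psi_{-1}}^2=\mu_{-1}(1-\psi_{-1}^2)$. Where $\sin(a+b)\neq0$ this says $\abs{\grad(a+b)}^2=\mu_{-1}$. Expanding gives $\lambda+\mu+2\ip{\grad a,\grad b}=\mu_{-1}$, which is constant. By continuity this extends across the nowhere-dense set where $\sin(a+b)=0$; alternatively, use $\psi_1$ there.

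\textbf{Main obstacle.} I expect the delicate step to be the sign and branch bookkeeping. One must check that the identification $\{\psi_1,\psi_{-1}\}=\{\cos(a-b),\cos(a+b)\}$ is consistent on a connected neighborhood, including at points where $\psi_1=\psi_{-1}$ or where $\sin(a\pm b)$ vanishes. I would handle this by working on the dense open set where all relevant sines are nonzero, and then extending the constant value by continuity.
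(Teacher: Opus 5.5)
Your proof is correct. Its skeleton is the paper's: the same three-way split from \cref{lem:comp-labels}, and the same use of \cref{lem:doubling} to obtain $(\psi_1-\psi_{-1})^2=4(1-\varphi^2)(1-\psi^2)$. The split is parallel gradients in the $\1$-component, parallel gradients when $\varphi^2+\psi^2=\1$, and $2\varphi\psi=\psi_1+\psi_{-1}$ otherwise.

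Where you differ is the last step of the main case. The paper works with global functions. The product rule for $\Delta(\varphi\psi)$ gives $8\ip{\grad\varphi,\grad\psi}=-(\alpha-\beta)(\psi_1-\psi_{-1})$, with $\alpha,\beta$ the eigenvalues of $\psi_{\pm1}$. Dividing by $\abs{\grad\varphi}^2\abs{\grad\psi}^2=\tfrac{\lambda\mu}{4}(\psi_1-\psi_{-1})^2$ shows the squared cosine is the constant $(\alpha-\beta)^2/(16\lambda\mu)$. This avoids phases and branch choices, but that first identity silently uses $\alpha+\beta=2(\lambda+\mu)$.

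You instead pass to local phases, identify $\psi_{\pm1}=\cos(a\mp b)$, and apply \cref{lem: gradient norm} to the neighbor $\psi_{-1}$ to read off $\abs{\grad(a+b)}^2=\mu_{-1}$. This needs no relation among the eigenvalues. Doing the same with $\psi_1$ and using the parallelogram law actually proves $\mu_1+\mu_{-1}=2(\lambda+\mu)$. Your route also determines the cosine itself, not just its square.

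The bookkeeping you flagged is harmless. On your ball $\psi_1-\psi_{-1}=\pm2\sin a\sin b$ never vanishes, so the sign is constant and $\psi_1\neq\psi_{-1}$ there.

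For points with $\sin(a+b)=0$, rely on the continuity extension. That set is $\{\psi_{-1}=\pm1\}$, which has empty interior: $\psi_{-1}\equiv\pm1$ on an open set would give $0=\Delta\psi_{-1}=\pm\mu_{-1}$ there. Your fallback of ``using $\psi_1$ there'' would on its own give the constant $(\lambda+\mu-\mu_1)/2$. That matches your value only once the eigenvalue relation is known.
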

\begin{proof}
    If $\varphi=\psi$, the conclusion is immediate.
    Let $\lambda$ and $\mu$ be the eigenvalues corresponding to $\varphi$ and $\psi$, respectively.
    
    First, suppose that we can write
    \[2\varphi\psi=\psi_1+\psi_{-1}\]
    for some $\psi_1,\psi_{-1}\in\cB\cup-\cB$.
    Let $\alpha$ and $\beta$ be the eigenvalues corresponding to $\psi_1$ and $\psi_{-1}$, respectively. We compute
    \[8\langle\nabla \varphi,\nabla \psi\rangle=4(\lambda+\mu)\varphi\psi-2(\alpha\psi_1+\beta\psi_{-1})=(\alpha+\beta)(\psi_1+\psi_{-1})-2(\alpha\psi_1+\beta \psi_{-1})=-(\alpha-\beta)(\psi_1-\psi_{-1}).\]
    Now, we have by \cref{lem:doubling} that
    \[(\psi_1-\psi_{-1})^2=(\psi_1+\psi_{-1})^2-4\psi_1\psi_{-1}=(2\varphi\psi)^2-2(2\varphi^2+2\psi^2-2)=4(1-\varphi^2)(1-\psi^2).\]
    Therefore, we compute using \cref{lem: gradient norm} that
    \[\left\langle\frac{\grad\varphi}{\abs{\grad\varphi}},\frac{\grad\psi}{\abs{\grad\psi}}\right\rangle^2=\frac{64\langle\nabla\varphi,\nabla\psi\rangle^2}{64\abs{\nabla\varphi}^2\abs{\nabla\psi}^2}=\frac{(\alpha-\beta)^2(\psi_1-\psi_{-1})^2}{64\lambda\mu(1-\varphi^2)(1-\psi^2)}=\frac{(\alpha-\beta)^2(\psi_1-\psi_{-1})^2}{16\lambda\mu(\psi_1-\psi_{-1})^2}.\]
    The conclusion follows.

    Now, by \cref{lem:comp-labels}, the only remaining case is when $\varphi^2+\psi^2=\1$. In this case, $0=\nabla\varphi^2+\nabla\psi^2=2\varphi\nabla\varphi+2\psi\nabla\psi$.
    Thus, at each point of $M$, $\nabla\varphi$ and $\nabla\psi$ are parallel. The conclusion thus follows here as well.
\end{proof}

With \cref{lem:const-inner} in hand, we conclude \cref{prop: 2 product eigenbasis implies flatness} in a very similar way to the  proof given in \cref{sec: 2PP implies flat}.

\begin{proof}[Second proof of \cref{prop: 2 product eigenbasis implies flatness}]
    By the embedding theorem of \cite{BerardBessonGallot1994}, sufficiently many elements of any orthogonal eigenbasis of $L^2(M)$ embed $M$ into Euclidean space.
    Letting $\phi_1,\ldots,\phi_\ell$ be some such elements, we can find $\phi_1',\ldots,\phi_\ell'$ primitive such that $\R[\phi_i]\subset\R[\phi_i']$ for each $i$, so that $\phi_1',\ldots,\phi_\ell'$ embed $M$ into Euclidean space as well.

    Fix $p\in M$ and suppose without loss of generality that, with $n=\dim M$, the vectors $\grad\phi_1',\dots, \grad \phi_n'$ are linearly independent at $p$. They must remain independent on a neighborhood $U$ of $p$. By \cref{lem: gradient norm}, we may define $\alpha_i=\arccos(\phi_i')$ and compute that
    \[
        \grad \alpha_i=-\frac{\grad\phi_i'}{\sqrt{1-(\phi_i')^2}}=-\sqrt{\lambda_i'}\frac{\grad\phi_i'}{\abs{\grad\phi_i'}},
    \]
    where $\Delta\phi_i'=\lambda_i'\phi_i'$. The functions $\alpha_i$ therefore form local coordinates on $U$. By \cref{lem:const-inner}, the coefficients $\ip{\grad\alpha_i,\grad\alpha_j}$ are locally constant, hence the metric is flat on $U$. Since $p$ was arbitrary, the result follows.
\end{proof}

\subsection{Finding eigenfunctions with large image size}
\label{sec:find-varied}

We now turn our attention to the graph setting.
In this subsection, we prove \cref{prop:find-varied}. 

\begin{proofwithclaims}{prop:find-varied}
    Suppose $G$ has $n$ vertices and maximum degree $d$.
    We say a function $\varphi\colon V(G)\to\R$ \emph{respects} a partition $\mathcal P$ of $V(G)$ if $\varphi$ is constant on each part of $\mathcal P$. Any function with image size strictly less than $k$ respects some partition into $k-1$ parts. Given a partition $\mathcal P$ of $V(G)$, let $\Lambda(\mathcal P)$ be the set of $\lambda$ for which $G$ admits an eigenfunction with eigenvalue $\lambda$ respecting $\mathcal P$. For any $\mathcal P$, the set $\Lambda(\mathcal P)$ is finite. 
    We aim to understand the sets $\Lambda(\mathcal P)$; to do so, it is helpful to have an algebraic description of how $\Lambda(\mathcal P)$ can be computed. 

    Suppose $\mathcal P$ has $\ell$ parts for some integer $\ell$, and enumerate the parts $P_1,\ldots,P_\ell$. Let $\chi\colon V(G)\to[\ell]$ be the map such that $v\in P_{\chi(v)}$. Let $\varphi$ be an eigenfunction of some eigenvalue $\lambda$ respecting $\mathcal P$, and for each $1\leq i\leq\ell$ let $x_i$ be the value $\varphi$ takes on $P_i$. The conditions on $x_i$ for $\varphi$ to be an eigenfunction are exactly
    \begin{equation}\label{eq:x-describe}
    \lambda x_{\chi(v)}=\sum_{u\in N(v)}(x_{\chi(v)}-x_{\chi(u)})
    \end{equation}
    for $v\in V(G)$.
    Consider the set  
    \[\mathcal L_\ell:=\left\{\lambda x_i-\sum_{j=1}^\ell a_j(x_i-x_j):\begin{array}{l}
    i\in[\ell]\\
    a_1,\ldots,a_\ell\in\mathbb Z_{\geq 0}\\
    a_1+\cdots+a_\ell\leq d
    \end{array}\right\}\]
    of polynomials in $\lambda,x_1,\ldots,x_\ell$. For each partition $\mathcal P$ and enumeration $P_1,\ldots,P_\ell$ of the parts of $\mathcal P$, there is some subset $\mathcal F\subset\mathcal L_\ell$ such that
    \begin{equation}\label{eq:Lambda-computation}
    \Lambda(\mathcal P)=\big\{\lambda\in\R:\text{there exists }(x_1,\ldots,x_\ell)\in\R^\ell\setminus\{0\}\text{ such that }p(\lambda;x_1,\ldots,x_\ell)=0\text{ for all }p\in\mathcal F\big\}.
    \end{equation}

    We can also upper-bound the cardinality of the size of each individual $\Lambda(\mathcal P)$.

    \begin{claim}\label{cl:LambdaP-ct}
        As $\mathcal P$ varies among all partitions of $V(G)$ into exactly $\ell$ parts, the set $\Lambda(\mathcal P)$ takes at most $2^{\ell(d+1)^\ell}$ values.
    \end{claim}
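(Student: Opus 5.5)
The plan is to show that $\Lambda(\mathcal P)$ is a function of a subset of the fixed finite set $\mathcal L_\ell$, and then count subsets. The identity \eqref{eq:Lambda-computation} already says this; what remains is to justify it carefully and bound $\abs{\mathcal L_\ell}$.

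\textbf{Step 1: the subset $\mathcal F$.} Fix a partition $\mathcal P$ into exactly $\ell$ parts and choose any enumeration $P_1,\ldots,P_\ell$. The set $\Lambda(\mathcal P)$ does not depend on this choice, so choosing one enumeration per partition is harmless. For each vertex $v$, let $a_j(v)$ be the number of neighbours of $v$ lying in $P_j$, counted with multiplicity if there are multi-edges. Then $a_j(v)\in\Z_{\geq0}$ and $\sum_j a_j(v)=\deg v\leq d$, and \eqref{eq:x-describe} is exactly the vanishing of
\[\lambda x_{\chi(v)}-\sum_{j=1}^\ell a_j(v)\big(x_{\chi(v)}-x_j\big)\in\mathcal L_\ell.\]
Let $\mathcal F=\mathcal F(\mathcal P)$ be the set of these polynomials as $v$ ranges over $V(G)$.

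Every part $P_i$ is nonempty. Hence a function respecting $\mathcal P$ corresponds bijectively to a vector $(x_1,\ldots,x_\ell)$, and the function is nonzero exactly when this vector is nonzero. A function respecting $\mathcal P$ is an eigenfunction with eigenvalue $\lambda$ if and only if all polynomials in $\mathcal F$ vanish at $(\lambda;x)$. This establishes \eqref{eq:Lambda-computation}, so $\Lambda(\mathcal P)$ is determined by the subset $\mathcal F\subseteq\mathcal L_\ell$.

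\textbf{Step 2: counting.} The number of values taken by $\Lambda(\mathcal P)$ is at most the number of subsets of $\mathcal L_\ell$, which is $2^{\abs{\mathcal L_\ell}}$. An element of $\mathcal L_\ell$ is specified by an index $i\in[\ell]$ and a tuple $(a_1,\ldots,a_\ell)$ with each $a_j\in\{0,\ldots,d\}$. Therefore
\[\abs{\mathcal L_\ell}\leq \ell(d+1)^\ell,\]
and the number of values is at most $2^{\ell(d+1)^\ell}$, as claimed.

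There is no real obstacle here. The only point needing care is that the vertices in a single part may impose different equations. This causes no difficulty, because $\mathcal F$ is allowed to be an arbitrary subset of $\mathcal L_\ell$.
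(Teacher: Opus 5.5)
Your proposal is correct and follows the paper's proof: you observe via \eqref{eq:Lambda-computation} that $\Lambda(\mathcal P)$ is determined by a subset $\mathcal F\subseteq\mathcal L_\ell$, count the $2^{\abs{\mathcal L_\ell}}$ possible subsets, and bound $\abs{\mathcal L_\ell}\leq\ell(d+1)^\ell$. The differences are cosmetic: you explicitly justify \eqref{eq:Lambda-computation}, which the paper only asserts, and you bound the coefficient tuples directly by $(d+1)^\ell$ instead of going through $\binom{d+\ell}{d}$.
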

    \begin{proofofclaim}
        We have
        \begin{equation}\label{eq:L-size-bound}
        \abs{\mathcal L_\ell}\leq\ell\binom{d+\ell}{d}\leq\ell(d+1)^\ell,
        \end{equation}
        since there are $\ell$ choices of $i$ and $\binom{d+\ell}{d}$ choices of $a_1,\ldots,a_\ell\geq0$ summing to at most $d$.
        Equations \eqref{eq:Lambda-computation} and \eqref{eq:L-size-bound} are enough to prove (ii), as there are $2^{\abs{\mathcal L_\ell}}$ subsets $\mathcal F\subset \mathcal L_\ell$.
    \end{proofofclaim}

    \begin{claim}\label{cl:lambdaP-size}
        If $\mathcal P$ is a partition of $V(G)$ into $\ell$ parts, then $\abs{\Lambda(\mathcal P)}\leq\ell$.
    \end{claim}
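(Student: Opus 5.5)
The idea is to reduce to the quotient matrix of the partition. Fix a partition $\mathcal P=\{P_1,\ldots,P_\ell\}$ and consider the subspace $W_{\mathcal P}\subset\R^{V(G)}$ of functions respecting $\mathcal P$; it has dimension exactly $\ell$, with basis the indicators $\1_{P_1},\ldots,\1_{P_\ell}$. Every $\lambda\in\Lambda(\mathcal P)$ comes with a nonzero eigenvector $\varphi\in W_{\mathcal P}$ of $\Delta_G$ with eigenvalue $\lambda$.

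The key step is to pick, for each $\lambda\in\Lambda(\mathcal P)$, one such witness $\varphi_\lambda\in W_{\mathcal P}$. Eigenvectors of the symmetric matrix $\Delta_G$ with distinct eigenvalues are orthogonal, hence linearly independent. So the family $\{\varphi_\lambda\}_{\lambda\in\Lambda(\mathcal P)}$ is a linearly independent subset of the $\ell$-dimensional space $W_{\mathcal P}$. Therefore $\abs{\Lambda(\mathcal P)}\leq\dim W_{\mathcal P}=\ell$.

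There is essentially no obstacle. The only points to check are that $\dim W_{\mathcal P}=\ell$, which holds because the parts are nonempty and disjoint, and that distinct eigenvalues give independent eigenvectors, which is standard linear algebra and does not even need orthogonality. Note that the eigenvalue equation \eqref{eq:x-describe} need not define an $\ell\times\ell$ matrix, since $\mathcal P$ is not assumed equitable. The argument above sidesteps this entirely, because it never uses a quotient operator, only the ambient eigenvectors lying in $W_{\mathcal P}$.
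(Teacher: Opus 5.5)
Your proof is correct, and it takes a genuinely different and simpler route than the paper's.

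\textbf{The paper's route.} The paper stays inside the algebraic encoding it built for \cref{cl:LambdaP-ct}. It says $\lambda\in\Lambda(\mathcal P)$ exactly when the linear system $\mathcal F\subset\mathcal L_\ell$ in $(x_1,\ldots,x_\ell)$ has a nonzero solution. Equivalently, the coefficient matrix has rank less than $\ell$; in each row of that matrix exactly one entry is linear in $\lambda$ and the rest are constants. So $\Lambda(\mathcal P)$ is the common zero set of the $\ell\times\ell$ minors, which are polynomials of degree at most $\ell$ in $\lambda$. The paper then uses the a priori finiteness of $\Lambda(\mathcal P)$, which holds because it sits inside the spectrum. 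This guarantees that some minor is a nonzero polynomial, and hence has at most $\ell$ roots.

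\textbf{Your route.} You work directly in the $\ell$-dimensional space $W_{\mathcal P}$ of $\mathcal P$-respecting functions. You use only that eigenvectors with distinct eigenvalues are linearly independent.

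\textbf{Comparison.} Your version needs neither the encoding through $\mathcal L_\ell$ nor the separate finiteness input; it reproves finiteness along the way. The paper's version exhibits $\Lambda(\mathcal P)$ explicitly as the real zero set of a nonzero polynomial of degree at most $\ell$ determined by $\mathcal F$, in the same language as the counting in \cref{cl:LambdaP-ct}. However, the remainder of the proof of \cref{prop:find-varied} only uses the bound $\abs{\Lambda(\mathcal P)}\leq\ell$, so nothing is lost by your shortcut.

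\textbf{One presentational fix.} Your opening sentence announces a reduction to ``the quotient matrix of the partition.'' You rightly never use that matrix, since it only exists for equitable partitions, so drop that sentence.
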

    \begin{proofofclaim}
       The set $\mathcal L_\ell$ consists of linear polynomials in the $x_i$. 
       We have $\lambda\in\Lambda(\mathcal P)$ if and only if some prescribed collection $\mathcal F$ of these linear polynomials cuts out a nonzero subspace of $\R^\ell$. 
       Form an $\ell\times\abs{\mathcal F}$ matrix whose rows are the coefficients of each polynomial in $\mathcal F$ (some of which are constants and some of which are linear forms in $\lambda$). 
       We have $\lambda\in\Lambda(\mathcal P)$ if and only if this matrix has rank strictly less than $\ell$; that is, if and only if each of the $\ell\times\ell$ submatrices have determinant $0$. 
       The set $\Lambda(\mathcal P)$ is thus cut out by some polynomials of degree at most $\ell$. Since $\Lambda(\mathcal P)$ is finite, these polynomials may not all be zero. 
       Therefore there is at least one such nonzero polynomial, and so $\abs{\Lambda(\mathcal P)}\leq\ell$.
    \end{proofofclaim}

    Claims~\ref{cl:LambdaP-ct}~and~\ref{cl:lambdaP-size} provide an upper bound on the number of eigenvalues which admit eigenfunctions with image size strictly less than $k$. 

    \begin{claim}\label{cl:num-eig}
        A connected graph $G$ on $n$ vertices of maximum degree at most $d$ admits at least $\log_d(n-1)$ distinct eigenvalues.
    \end{claim}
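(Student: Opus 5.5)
The plan is to use the standard fact that the number of distinct eigenvalues of a graph Laplacian exceeds its diameter, and then bound the diameter from below using the degree bound. For any polynomial $p$ of degree $m$, the matrix $p(\Delta_G)$ has $uv$ entry equal to zero whenever $\operatorname{dist}(u,v)>m$. This holds because $\Delta_G=D_G-A_G$ and $(\Delta_G)^j$ is supported on pairs at distance at most $j$.

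Let $\lambda_1,\ldots,\lambda_r$ be the distinct eigenvalues of $\Delta_G$. Since $\Delta_G$ is symmetric, the minimal polynomial $\prod_i(t-\lambda_i)$ has degree $r$ and kills $\Delta_G$. The key step is to show that, for vertices $u,v$ with $\operatorname{dist}(u,v)=D$, the $uv$ entry of $\Delta_G^D$ is nonzero. It equals $(-1)^D$ times the number of shortest paths from $u$ to $v$. Indeed, expanding $(D_G-A_G)^D$, any term using a diagonal factor $D_G$ contributes only walks with at most $D-1$ edge steps, which cannot reach $v$. Only the $(-A_G)^D$ term survives, and its $uv$ entry counts walks of length $D$ from $u$ to $v$, all of which are geodesics.

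From this, the powers $I,\Delta_G,\ldots,\Delta_G^D$ are linearly independent. Order them by degree and use the entry of a pair at distance exactly $j$: that entry vanishes for all lower powers and is nonzero for $\Delta_G^j$, giving a triangular structure. Hence the minimal polynomial has degree at least $D+1$, so $r\geq D+1$.

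Next, bound the diameter. Fix a vertex $u$. Since $G$ is connected, every vertex lies within distance $D$ of $u$. The number of vertices at distance exactly $j\geq1$ is at most $d(d-1)^{j-1}\leq d^j$. Therefore
\[n\leq 1+d+d^2+\cdots+d^D.\]
If $d\geq 2$, this gives $n-1\leq d\frac{d^D-1}{d-1}$. In particular $n-1\leq d^{D+1}$ for $d\geq2$, since $\frac{d(d^D-1)}{d-1}\leq d^{D+1}$ when $d\geq 2$. Then $r\geq D+1\geq\log_d(n-1)$, as desired.

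The degenerate cases need separate handling. If $d=1$, then $n\leq 2$, and $\log_1$ is meaningless; presumably the later application only needs $d\geq 2$, and the $d=1$ case can be interpreted or excluded trivially. If $n=1$ or $2$, the bound is trivial since $\log_d(n-1)\leq 0$.

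I expect no real obstacle. The one point requiring care is the argument that lower-order terms cannot contribute to the $uv$ entry at exact distance $D$. This is what makes $\Delta_G$ behave like $A_G$ here even though $D_G$ is not scalar.
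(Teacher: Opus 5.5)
Your proof is correct and follows essentially the same route as the paper. Both arguments use that the minimal polynomial of $\Delta_G$, whose degree is the number of distinct eigenvalues, must have degree larger than the diameter, since the $uv$ entry of $\Delta_G^D$ is nonzero at distance exactly $D$ while lower powers vanish there. Both then apply a ball-growth bound. The only differences are cosmetic. You justify why the $D_G$ terms cannot contribute, which the paper merely asserts. You also use the sharp diameter bound $D\le r-1$ with a cruder ball estimate, while the paper does the reverse; both arrive at $n-1\le d^{r}$.
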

    \begin{proofofclaim}
        Suppose such a graph admits only $M$ distinct eigenvalues. Then its Laplacian matrix $\Delta$ (as it is Hermitian and thus diagonalizable) satisfies some polynomial equation
        \[\Delta^m+b_{m-1}\Delta^{m-1}+\cdots+b_1\Delta+b_0=0.\]
        of some degree $m\leq M$. This implies that the diameter of $G$ is at most $M$: if $u$ and $v$ are vertices connected by a path of length $m$ but no shorter path, then $(\Delta^j)_{uv}=0$ for all $0\leq j<m$ but $(-1)^m(\Delta^m)_{uv}>0$. 
        The claim now follows from the fact that the ball of radius $M$ around a particular vertex $v$ can contain at most $d^M+1$ vertices. 
        So $d^M+1\geq n$, as desired.
    \end{proofofclaim}

    We may now prove \cref{prop:find-varied}. 
    Since $G$ is connected, we may assume $d\geq 2$.
    By Claims~\ref{cl:LambdaP-ct}~and~\ref{cl:lambdaP-size}, the number of eigenvalues of $G$ which admit functions with image size less than $k$ is at most
    \begin{align*}
    \sum_{\ell=1}^{k-1}\ell 2^{\ell(d+1)^\ell}
    &\leq
    2(k-1)2^{(k-1)(d+1)^{k-1}}\\
    &<\frac1{\log d}d^{k-1}\exp\left(d^{2(k-1)}\right)\\
    &=\frac1{\log d}\exp\left(d^{2k-2}+(k-1)\log d\right)<\frac1{\log d}\exp\left(d^{2k-1}\right).
    \end{align*}
    Since we are given that $d<(\log\log n)^{1/2k}$, this quantity is less than $\log(n-1)/\log d$, and thus by \cref{cl:num-eig} is less than the number of distinct eigenvalues of $\Delta$.
    We conclude that, for some eigenvalue $\lambda$ of $\Delta$, every eigenfunction of $\Delta$ with eigenvalue $\lambda$ takes at least $k$ distinct values.
\end{proofwithclaims}

\subsection{Constructing a group from the graph setting}\label{sec:graph-gp}

Suppose that $G$ is a graph with a $2$-product eigenbasis $\mathcal B$ and that $\abs{V(G)}$ is odd. 
Let $\cB_0$ be the set of primitive elements of $\cB$ with image size at least $2^{14}$.
\cref{lem:comp-labels} implies that, if $\varphi\in\cB_0$, then $k$ is odd and $\cD(\varphi)=\cD_{2k}^{\mathrm{odd}}$.
The support of $\cD_{2k}^{\mathrm{odd}}$ is exactly the set $\{\sin(2\pi \ell/k):\ell\in\Z/k\Z\}$.
In particular, there exists some $\alpha_\varphi\colon V(G)\to\frac1k(\Z/k\Z)\subset\R/\Z$ for which
\[\varphi(v)=\sin(2\pi\alpha_\varphi(v)).\]
These $\alpha_\varphi$ (as $\varphi$ varies over all such eigenfunctions) generate (by pointwise addition) an abelian group $\Gamma$ of functions $V(G)\to\R/\Z$, each of which takes rationals with odd denominator as values. 
The group $\Gamma$, as it is generated by finitely many torsion elements of $(\R/\Z)^{V(G)}$, is finite.

Let $\widehat\Gamma$ be the Pontryagin dual of $\Gamma$, viewed as the set of group homomorphisms $\Gamma\to\R/\Z$. The evaluation pairing $\Gamma\times V(G)\to\R/\Z$ gives us a mapping $\xi\colon V(G)\to\widehat\Gamma$ defined by $(\xi(v))(\gamma)=\gamma(v)$.
Consider the functions
\[\operatorname{trig}(\Gamma):=\big\{v\mapsto f(2\pi\gamma(v)):\gamma\in\Gamma,f\in\{\cos,\sin,-\cos,-\sin\}\big\}.\]

The main additional property we need to show is the following.
\begin{lemma}\label{lem:ab-gp}
    Suppose that $\cB$ contains an element with image size at least $2^{45}$.
    Then we have
    \[\{0\}\cup\cB\cup(-\cB)=\operatorname{trig}(\Gamma).\]
    Moreover, $\xi$ is a bijection $V(G)\to\widehat\Gamma$.
\end{lemma}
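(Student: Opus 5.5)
The plan has three stages. First, show that multiplication by an element of $\cB_0$ moves along $\operatorname{trig}(\Gamma)$ by translation. Second, use the hypothesis on image size to show that every element of $\cB$ is reached this way. Third, conclude by a dimension count using characters of $\Gamma$.

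\emph{Translation step.} Fix $\varphi\in\cB_0$ with $\varphi=\sin(2\pi\alpha_\varphi)$ and $k=\abs{\im\varphi}$. Let $\psi\in\cB$ satisfy $\psi=\pm f(2\pi\gamma)$ for some $\gamma\in\Gamma$ and $f\in\{\cos,\sin\}$. By \cref{lem:comp-labels}, the component of $H_\varphi$ containing $\psi$ is a $2k$-cycle with labels alternately $\pm1$, so $2\varphi\psi=\psi_1\pm\psi_{-1}$ with $\psi_{\pm1}\in\cB\cup-\cB$. On the other hand, the product-to-sum formulas give
\[2\varphi\psi=g_1+g_{-1},\qquad g_{\pm1}\in\operatorname{trig}(\{\gamma\pm\alpha_\varphi\}).\]
\cref{lem:doubling} gives $2\psi_1\psi_{-1}=T_2(\varphi)+T_2(\psi)$, and a direct trigonometric computation gives the same value for $2g_1g_{-1}$. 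So at each vertex $\{\psi_1(v),\pm\psi_{-1}(v)\}$ and $\{g_1(v),g_{-1}(v)\}$ are the two roots of the same quadratic, and they agree pointwise up to a vertex-dependent swap.

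Ruling out these swaps is the step I expect to be the main obstacle. I would first try to exclude them using that $\psi_1$ and $\psi_{-1}$ are eigenfunctions with distinct eigenvalues, combined with the rigidity of going around the whole $2k$-cycle. Iterating along the cycle, the $j$-th vertex must agree pointwise with $\operatorname{trig}(\gamma+j\alpha_\varphi)$, and the cycle closes after $2k$ steps with exactly the Chebyshev relations of \cref{lem:1-comp}(a). A vertex where the swap occurred would propagate inconsistently around the cycle, and I would derive the contradiction from the distribution $\cD_{2k}^{\mathrm{odd}}$ forced on each vertex. The conclusion of this step is that the neighbours of $\psi$ in $H_\varphi$ are exactly $\pm\operatorname{trig}(\gamma\pm\alpha_\varphi)$.

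\emph{Reaching all of $\cB$.} Let $\phi_*$ be an element of image size at least $2^{45}$, and let $\varphi_0$ be a primitive element with $\phi_*\in\R[\varphi_0]$, given by \cref{lem:primitive}. Then $\abs{\im\varphi_0}\ge 2^{45}$, so $\varphi_0\in\cB_0$. By \cref{lem:large-comp}, every component of $H_{\varphi_0}$ contains a vertex $\chi$ of image size at least $(2^{45}/5)^{1/3}\ge 2^{14}$. Applying \cref{lem:primitive} to $\chi$ gives a primitive $\chi'\in\cB_0$ with $\chi\in\R[\chi']$. Since $\chi$ lies in the $\1$-component of $H_{\chi'}$, \cref{lem:1-comp} shows it is $\pm T_m(\chi')$, i.e.\ $\pm\operatorname{trig}(m\alpha_{\chi'})$. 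The translation step then carries membership in $\operatorname{trig}(\Gamma)$ to the whole component. Hence $\cB\cup-\cB\subset\operatorname{trig}(\Gamma)$.

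\emph{Reverse inclusion and the bijection.} The translation step shows that the set $\{\gamma:\operatorname{trig}(\gamma)\setminus\{0\}\subset\cB\cup-\cB\}$ is closed under adding the generators $\pm\alpha_\varphi$, and it contains $0$ because $\1\in\cB$. So this set is all of $\Gamma$, which gives $\{0\}\cup\cB\cup(-\cB)=\operatorname{trig}(\Gamma)$.

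For the bijection, note that $\Gamma$ has odd order, so $\gamma\neq-\gamma$ whenever $\gamma\neq0$. The nonzero elements of $\operatorname{trig}(\Gamma)$ up to sign are then $\1$ together with one cosine and one sine for each pair $\{\gamma,-\gamma\}$, giving $\abs{\Gamma}$ functions. These are elements of the orthogonal set $\cB$, so $\abs{\Gamma}=\abs{\cB}=\abs{V(G)}$. In particular the functions $v\mapsto e^{2\pi i\gamma(v)}=e^{2\pi i\,\xi(v)(\gamma)}$ for $\gamma\in\Gamma$ are linearly independent on $V(G)$. If $\xi(V(G))$ were a proper subset of $\widehat\Gamma$, these $\abs{\Gamma}$ characters restricted to a set of fewer than $\abs{\Gamma}$ points would be linearly dependent, a contradiction. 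So $\xi$ is surjective, and since $\abs{\widehat\Gamma}=\abs{\Gamma}=\abs{V(G)}$ it is a bijection.
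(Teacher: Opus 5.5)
\textbf{There is a genuine gap in your translation step.} This step is where the lemma is actually decided. The product-to-sum identity and \cref{lem:doubling} give only pointwise information, and pointwise information cannot exclude swaps.

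The relations $2\varphi\psi_j=\pm(\psi_{j+1}-\psi_{j-1})$ around a $2k$-cycle are separate scalar recurrences at each vertex. If $\psi_0=\cos(2\pi\gamma)$, then for \emph{any} $\epsilon\colon V(G)\to\{\pm1\}$, the functions $\cos(2\pi(\epsilon\gamma+j\alpha_\varphi))$ ($j$ even) and $\sin(2\pi(\epsilon\gamma+j\alpha_\varphi))$ ($j$ odd) satisfy every one of them. They also close up after $2k$ steps, because $k\alpha_\varphi=0$. So a swap does not ``propagate inconsistently''; it just reverses the orientation of the cycle at that vertex.

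Your other suggested tools are not available either:
\begin{itemize}
\item \cref{lem:comp-labels} gives $\cD_{2k}^{\mathrm{odd}}$ only for primitive elements of large image size, not for the vertices of the cycle.
\item Nothing is known about how $\Delta$ acts on $g_{\pm1}$, so the eigenvalues of $\psi_{\pm1}$ give no leverage.
\end{itemize}

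The root problem is that $\cos(2\pi\gamma)$ determines $\gamma$ only up to a pointwise sign. The missing ingredient is the sine/cosine asymmetry of \cref{lem:trig-on-odd}.

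The paper never propagates from a given $\gamma$. Instead:
\begin{itemize}
\item \cref{lem:cyc-components-strong} solves the recurrence pointwise to write the whole cycle as $\operatorname{trig}(\alpha+j\alpha_\varphi)$ for a single phase $\alpha$. It applies \cref{lem:doubling} only at one vertex of image size at least $13$, which also avoids your unverified hypothesis $\abs{\im\psi}\geq 13$.
\item \cref{lem:basis-to-trig,lem:trig-to-basis} then identify $\alpha$ with an element of $\Gamma$, up to a shift by a multiple of $1/4$. They match a sine-type member of the cycle against the sine of an already-known odd-denominator phase, coming from a Chebyshev polynomial of a primitive element or from the induction hypothesis. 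They then use injectivity of $x\mapsto\sin(2\pi x)$ on $Q$.
\end{itemize}

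Your step also omits the case where $\psi$ lies in the $\1$-component, which \cref{lem:comp-labels} does not describe.

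\textbf{Your final count is also incomplete.} The phrase ``giving $\abs{\Gamma}$ functions'' presumes that distinct pairs $\{\gamma,-\gamma\}$ yield distinct functions. The case $\cos(2\pi\gamma)=\cos(2\pi\gamma')$ with $\gamma'=\pm\gamma$ only pointwise is the same sign ambiguity again, and this is what \cref{lem:gamma-geq-N} is for.

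Within your character framework this closes quickly once both inclusions are known. For $0\neq\delta\in\Gamma$, the functions $\cos(2\pi\delta)$ and $\sin(2\pi\delta)$ lie in $\{0\}\cup\cB\cup-\cB$. They cannot equal $\pm\1$, since $\delta\neq0$ takes values in $Q$. Hence both are orthogonal to $\1$, and $\sum_{v\in V(G)}e^{2\pi i\delta(v)}=0$. So the functions $v\mapsto e^{2\pi i\gamma(v)}$ for $\gamma\in\Gamma$ are pairwise orthogonal on $V(G)$, giving $\abs{\Gamma}\leq\abs{V(G)}$. Meanwhile $\cB\subset\operatorname{trig}(\Gamma)$ separates points, which makes $\xi$ injective as in \cref{lem:gamma-leq-N}.

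That route is shorter than the paper's collision analysis. However, it only becomes available after both inclusions are established, and those depend on the translation step above.
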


Our proof of \cref{lem:ab-gp} proceeds in a few steps.
We first need the following simple lemma about the set of rationals with odd denominator.

\begin{lemma}\label{lem:trig-on-odd}
    Let $Q=\{x\in\Q/\Z:x\text{ has odd denominator}\}$. Then we have
    \begin{enumerate}
        \item The function $x\mapsto\sin(2\pi x)$ is injective on $Q$.

        \item The images $\sin(2\pi Q)$ and $\cos(2\pi Q)$ are disjoint.

        \item The images $\cos(2\pi Q)$ and $-\cos(2\pi Q)$ are disjoint.
    \end{enumerate}
\end{lemma}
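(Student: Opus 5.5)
The plan is to reduce all three parts to elementary facts about when sines and cosines of rational multiples of $2\pi$ coincide, and then use the fact that an element $x\in Q$ with $x=a/m$, $m$ odd, cannot be moved to its ``reflection'' partners without creating a factor of $2$ or $4$ in the denominator. Recall that $\sin(2\pi x)=\sin(2\pi y)$ iff $x\equiv y$ or $x+y\equiv \tfrac12 \pmod 1$; that $\cos(2\pi x)=\cos(2\pi y)$ iff $x\equiv \pm y$; and that $\sin(2\pi x)=\cos(2\pi y)$ iff $\sin(2\pi x)=\sin(2\pi(\tfrac14-y))$, i.e.\ $x\equiv\tfrac14-y$ or $x+\tfrac14-y\equiv\tfrac12$.

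For (1), suppose $x,y\in Q$ with $\sin(2\pi x)=\sin(2\pi y)$ and $x\neq y$. Then $x+y\equiv\tfrac12$. But $Q$ is a subgroup of $\Q/\Z$ (odd denominators are closed under addition), so $x+y\in Q$, while $\tfrac12\notin Q$; contradiction. For (2), an equality $\sin(2\pi x)=\cos(2\pi y)$ forces $x+y\equiv\tfrac14$ or $x-y\equiv\tfrac14$; again $x\pm y\in Q$ but $\tfrac14$ has denominator $4$, so this is impossible. For (3), $\cos(2\pi x)=-\cos(2\pi y)=\cos(2\pi(y+\tfrac12))$ forces $x\equiv\pm(y+\tfrac12)$, i.e.\ $x\mp y\equiv\pm\tfrac12$, again contradicting that $Q$ is a subgroup avoiding $\tfrac12$.

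The only step needing any care is justifying the trigonometric coincidence criteria, which follow from the sum-to-product identities $\sin A-\sin B=2\cos\tfrac{A+B}{2}\sin\tfrac{A-B}{2}$ and $\cos A-\cos B=-2\sin\tfrac{A+B}{2}\sin\tfrac{A-B}{2}$: vanishing of a factor pins down $A\pm B$ modulo $2\pi$ as stated. Since everything reduces to the observation that $Q$ is a group containing no element of even order, I expect no genuine obstacle; the main thing is to phrase the residues mod $1$ correctly (e.g.\ $\tfrac12\equiv-\tfrac12$, $\tfrac14$ vs.\ $\tfrac34$), noting that both $\tfrac14$ and $\tfrac34$ have denominator $4$ and so lie outside $Q$.
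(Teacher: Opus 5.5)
Your proof is correct and is essentially the paper's argument. Both reduce each coincidence to a congruence $x\pm y\equiv 1/2$ or $1/4 \pmod 1$, which is impossible because $Q$ is a subgroup of $\Q/\Z$ with no element of even order; the only differences are that the paper deduces (3) from (1), by forcing $\sin(2\pi x)=-\sin(2\pi y)$ and hence $x-y=1/2$, whereas you read (3) off directly from the cosine coincidence criterion, and you state explicitly the closure of $Q$ under addition that the paper uses tacitly.
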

\begin{proof}
    If $x,y\in\R/\Z$ satisfy $\sin(2\pi x)=\sin(2\pi y)$ while $x\neq y$, then $\cos(2\pi x)=-\cos(2\pi y)$, and so $\cos(2\pi(x+y))=-1$. This implies that (in $\R/\Z$) $x+y=1/2\not\in Q$, which proves (1).

    If $x,y\in\R/\Z$ satisfy $\sin(2\pi x)=\cos(2\pi y)$, then we have $y=\pm(x-1/4)$. We conclude that $x-y=1/4\not\in Q$ or $x+y=1/4\not\in Q$, which proves (2). 

    If $x,y\in Q$ satisfy $\cos(2\pi x)=-\cos(2\pi y)$, then $x\neq y$. By (1), this implies $\sin(2\pi x)\neq\sin(2\pi y)$, so $\sin(2\pi x)=-\sin(2\pi y)$. We conclude that $x-y=1/2\not\in Q$, which proves (3).    
\end{proof}

Our first major step towards \cref{lem:ab-gp} is a precise description the eigenfunctions around each cycle in $H_\varphi$ for primitive $\varphi$.

\begin{lemma}\label{lem:cyc-components-strong}
    Let $\varphi\in\cB$ be primitive and satisfy $k:=\abs{\im\varphi}\geq2^{14}$.
    Let $\psi_0,\psi_1,\ldots,\psi_{2k-1}$ be a $2k$-cycle component of $H_\varphi$.
    Then there exists some $\alpha\colon V(G)\to\R/\Z$ for which, up to choices of sign,
    \[\psi_j=\begin{cases}\cos(2\pi(\alpha+j\alpha_\varphi))&\text{if $j$ is even}\\\sin(2\pi(\alpha+j\alpha_\varphi))&\text{if $j$ is odd}.\end{cases}\]
\end{lemma}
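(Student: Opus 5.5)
The plan is to reduce the statement to a pointwise second-order recurrence along the cycle and then solve that recurrence with trigonometric identities. I will use the normalization $\norm{\psi}_2^2=1/2$ and the edge labels $\pm1$ given by \cref{lem:comp-labels}. After renegating basis elements as allowed, the cycle structure of $H_\varphi$ says that for every $j$ (indices mod $2k$)
\[2\varphi\psi_j=\psi_{j+1}+\epsilon_j\psi_{j-1},\qquad \epsilon_j\in\{\pm1\},\]
with the signs alternating in the pattern forced by the labels. Evaluating at a vertex $v$ and writing $a=\alpha_\varphi(v)$, so that $\varphi(v)=\sin(2\pi a)$, this is a linear recurrence of order two in $j$. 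Hence the whole sequence $(\psi_j(v))_j$ is determined by the pair $(\psi_0(v),\psi_1(v))$. The target functions satisfy the same kind of recurrence, because of the identities
\begin{align*}
2\sin(2\pi a)\cos(2\pi(x+ja))&=\sin(2\pi(x+(j+1)a))-\sin(2\pi(x+(j-1)a)),\\
2\sin(2\pi a)\sin(2\pi(x+ja))&=\cos(2\pi(x+(j-1)a))-\cos(2\pi(x+(j+1)a)).
\end{align*}
So it suffices to find, at each vertex $v$, a value $\alpha(v)$ with $\psi_0(v)=\cos(2\pi\alpha(v))$ and $\psi_1(v)=\pm\sin(2\pi(\alpha(v)+a))$. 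After that, induction along the cycle, with the sign choices $\psi_j\mapsto-\psi_j$, gives the formula for every $j$. The closing condition around the cycle is automatic, since $2k\alpha_\varphi\in\Z$ and the sequence is already determined.

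To pin down $\psi_0(v),\psi_1(v)$, I will use \cref{lem:doubling} exactly as in the proof of \cref{lem:const-inner}. First I rotate the indexing of the cycle so that $\psi_0$ is a vertex of image size at least $(k/5)^{1/3}\geq 13$, which exists by \cref{lem:large-comp} since $k\geq 2^{14}$. The element $\psi_0$ differs from $\varphi$, because $\varphi$ lies in the $\1$-component, and the neighbours $\psi_{\pm1}$ are distinct because the component is a $2k$-cycle. Then \cref{lem:doubling} gives $2\psi_1\psi_{-1}=T_2(\varphi)+T_2(\psi_0)$. Combined with $\psi_1+\epsilon\psi_{-1}=2\varphi\psi_0$, this yields
\[(\psi_1-\epsilon\psi_{-1})^2=4(1-\varphi^2)(1-\psi_0^2).\]
A point to check is the consistency of $\epsilon$ with the sign convention in \cref{lem:doubling}; if needed I negate $\psi_{-1}$ first. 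This identity shows $1-\psi_0^2\geq0$ pointwise, because $1-\varphi^2=\cos^2(2\pi a)>0$: odd-denominator points never give $\sin=\pm1$. So I can write $\psi_0(v)=\cos(2\pi x)$ for some $x\in\R/\Z$. Solving the two equations then gives
\[\psi_1(v)=\sin(2\pi a)\cos(2\pi x)\pm\cos(2\pi a)\sin(2\pi x)=\sin(2\pi(\pm x+a)).\]
Replacing $x$ by $-x$ where necessary, which does not change $\psi_0(v)$, I define $\alpha(v)$ to be this choice of $x$. The choice is purely pointwise, so no continuity or consistency across vertices is needed.

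The main obstacle I expect is the sign bookkeeping. Three sign sources interact: the alternating labels $1,-1$ from \cref{lem:comp-labels}, the global choices of sign for each basis element, and the pointwise sign ambiguity in $\psi_1-\epsilon\psi_{-1}$. All three must be absorbed into a single statement that holds ``up to choices of sign'', with the same $\alpha$ for all $j$. I would handle this by first fixing global signs of the $\psi_j$ so that the recurrence reads exactly like the two trigonometric identities above, alternating between the $\cos\to\sin$ and $\sin\to\cos$ forms. The only remaining freedom is then the pointwise choice of $x$ versus $-x$, which is settled at $j=1$, and the induction on $j$ propagates the formula with no further choices.
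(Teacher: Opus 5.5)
Your argument is correct and uses the same ingredients as the paper's proof. These are the alternating recurrence from \cref{lem:comp-labels}, an anchor $\psi_0$ of image size at least $13$ from \cref{lem:large-comp}, \cref{lem:doubling} applied at $\psi_0$, and $\varphi(v)^2\neq1$ because $\alpha_\varphi$ takes odd-denominator values.

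The difference is in how the recurrence is solved. The paper sets $\beta_j=\psi_j$ or $i\psi_j$ and writes the general pointwise solution $\beta_j=z_1e^{2\pi ij\alpha_\varphi}+z_2e^{-2\pi ij(\alpha_\varphi+1/2)}$. It then uses the real/imaginary pattern of the $\beta_j$, together with the oddness of $k$, to force $z_1=\overline{z_2}$, and applies \cref{lem:doubling} to get $4z_1z_2=1$. You instead use \cref{lem:doubling} in the form $(\psi_1+\psi_{-1})^2=4(1-\varphi^2)(1-\psi_0^2)$, with the paper's normalization $2\varphi\psi_0=\psi_1-\psi_{-1}$. This is the identity from the proof of \cref{lem:const-inner}. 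From it you read off $|\psi_0|\le1$ and $\psi_1=\sin(2\pi(\alpha_\varphi\pm x))$ directly, and finish by uniqueness of solutions of a second-order recurrence. Your route stays over $\R$ and needs no parity argument beyond $\cos(2\pi\alpha_\varphi)\neq0$. The paper's closed form yields all $\psi_j$ at once without matching initial data.

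Two small bookkeeping points. First, the uniform form $2\varphi\psi_j=\psi_{j+1}+\epsilon_j\psi_{j-1}$ you first write cannot hold for every $j$. It would make every edge label $+1$, while the product of labels around the cycle is $(-1)^k=-1$. The normalization you settle on at the end, matching your two trigonometric identities, is the correct one and is exactly the paper's. Second, after rotating the indexing so that the anchor is $\psi_0$, you need to rotate back. An odd shift swaps the $\cos$/$\sin$ parities; as the paper notes, this is absorbed, up to sign, by replacing $\alpha$ with $\alpha+j\alpha_\varphi+1/4$ for a suitable $j$.
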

\begin{proof}
    By \cref{lem:large-comp}, some $\psi_j$ has image size at least $(k/5)^{1/3}\geq 13$. 
    In fact, we may assume without loss of generality that $\abs{\im\psi_0}\geq13$.
    Then, we may cyclically shift our indexing until $\psi_0$ has image size at least $13$, apply the result in this case, and then shift back, which necessitates replacing $\alpha$ by $\alpha+j\alpha_\varphi$ or $\alpha+j\alpha_\varphi+1/4$ for some $j$.

    By \cref{lem:comp-labels}, we find that we can write
    \begin{align*}
        2\varphi\psi_j&=\psi_{j-1}-\psi_{j+1}&\text{if $j$ is odd;}\\
        2\varphi\psi_j&=-\psi_{j-1}+\psi_{j+1}&\text{if $j$ is even.}
    \end{align*}
    Writing $\beta_j=\psi_j$ if $j$ is even and $\beta_j=i\psi_j$ if $j$ is odd, we find that
    \begin{equation}\label{eq:beta-rec}
        2i\varphi\beta_j=\beta_{j+1}-\beta_{j-1}
    \end{equation}
    for every $j$. Write $\tilde\varphi=\cos(2\pi\alpha_\varphi)$. The characteristic polynomial $x^2-2i\varphi x-1$ factors as
    \[(x-(\tilde\varphi+i\varphi))(x-(-\tilde\varphi+i\varphi))=\left(x-e^{2\pi i\alpha_\varphi}\right)\left(x-e^{-2\pi i(\alpha_\varphi+1/2)}\right).\]
    Write $e(x)=e^{2\pi ix}$ for notational simplicity.
    Solving the recurrence \eqref{eq:beta-rec},\footnote{We are solving this recurrence, as a recurrence relation in $\C$, as it is found upon evaluation at each $v\in V(G)$, and then stitching the resulting data back together in the form of functions from $V(G)$.} we find that, for some $z_1,z_2\colon V(G)\to\C$, we have
    \begin{equation}\label{eq:beta-solve}
    \beta_j=z_1e(j\alpha_\varphi)+z_2e(-j(\alpha_\varphi+1/2)).
    \end{equation}
    The fact that $\beta_j$ is real for $j$ even and imaginary for $j$ odd implies that $\beta_j=(-1)^j\overline{\beta_j}$ for each $j$. Rearranging and multiplying through by $e(j\alpha_\varphi)$, we obtain
    \[z_1-\overline{z_2}=e(j(2\alpha_\varphi-1/2))(\overline{z_1}-z_2).\]
    If $z_1(v)\neq\overline{z_2(v)}$ for some $v\in V(G)$, then $e(j(2\alpha_\varphi(v)-1/2))$ is independent of $j$. Evaluating at $j=0$ and $j=k$ and using that $k\alpha_\varphi(v)=0$, we obtain that $e(0)=e(k/2)$, which does not hold since $k$ is odd. We conclude that $z_1=\overline{z_2}$.

    We now evaluate \eqref{eq:beta-solve} at $j=-1,0,1$:
    \begin{align*}
        \psi_{-1}&=-i\beta_{-1}=iz_2e(\alpha_\varphi)-iz_1e(-\alpha_\varphi)\\
        \psi_0&=\beta_0=z_1+z_2\\
        \psi_1&=-i\beta_1=iz_2e(-\alpha_\varphi)-iz_1e(\alpha_\varphi).
    \end{align*}
    Applying \cref{lem:doubling} to the identity $2\varphi\psi_0=\psi_1-\psi_{-1}$, we have
    \begin{align*}
    \1-\varphi^2
    &=\psi_1\psi_{-1}+\psi_0^2\\
    &=-(z_1^2+z_2^2)+z_1z_2(e(-2\alpha_\varphi)+e(2\alpha_\varphi))+(z_1+z_2)^2\\
    &=z_1z_2(2+e(-2\alpha_\varphi)+e(2\alpha_\varphi))\\
    &=z_1z_2(2\cos(2\pi\alpha_\varphi))^2=4z_1z_2(\1-\varphi^2).
    \end{align*}
    By \cref{lem:trig-on-odd}(2) and the fact that $\alpha_\varphi$ takes only values with odd denominator, we cannot have $\varphi(v)^2=1$ for any $v\in V(G)$. We conclude that $4z_1z_2=\1$ identically. Combined with $z_1=\overline{z_2}$, we have that there is some $\alpha\colon V(G)\to\R/\Z$ for which $z_1=\frac12e(\alpha)$ and $z_2=\frac12e(-\alpha)$. The result now follows from \eqref{eq:beta-solve}.
\end{proof}

We first show that $\cB_0$ is enough to generate all of the eigenfunctions.

\begin{lemma}\label{lem:basis-to-trig}
    Suppose that $\cB$ contains an element of image size at least $2^{45}$.
    Then $\cB\subset\operatorname{trig}(\Gamma)$.
\end{lemma}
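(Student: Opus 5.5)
The plan is to show that every $\psi\in\cB$ lies in $\operatorname{trig}(\Gamma)$ by locating $\psi$ inside a cycle component of $H_\varphi$ for a suitable primitive $\varphi\in\cB_0$, and then applying \cref{lem:cyc-components-strong}.

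The first step is to produce a primitive element of large image size. Let $\phi_*\in\cB$ have image size at least $2^{45}$. By \cref{lem:primitive} there is a primitive $\varphi\in\cB$ with $\phi_*\in\R[\varphi]$. Then $\abs{\im\varphi}\geq\abs{\im\phi_*}\geq 2^{45}\geq 2^{14}$, so $\varphi\in\cB_0$. Write $k=\abs{\im\varphi}$.

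Next we treat $\psi\in\cB$ according to the component of $H_\varphi$ containing it. By \cref{lem:comp-labels}, every component of $H_\varphi$ other than the $\1$-component is a $2k$-cycle with alternating labels.
\begin{enumerate}[(i)]
\item If $\psi$ lies in one of these cycles, \cref{lem:cyc-components-strong} gives $\alpha\colon V(G)\to\R/\Z$ with $\psi=\pm\cos(2\pi(\alpha+j\alpha_\varphi))$ or $\pm\sin(2\pi(\alpha+j\alpha_\varphi))$.
\item If $\psi$ lies in the $\1$-component, then $\psi=\pm T_m(\varphi)$ for some $m<k$ by \cref{lem:1-comp}. Since $\varphi=\sin(2\pi\alpha_\varphi)=\cos(2\pi(\alpha_\varphi-1/4))$, we get $T_m(\varphi)=\cos(2\pi m(\alpha_\varphi-1/4))$. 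This is $\pm\cos$ or $\pm\sin$ of $2\pi m\alpha_\varphi$, and $m\alpha_\varphi\in\Gamma$. So $\psi\in\operatorname{trig}(\Gamma)$.
\end{enumerate}
Case (i) only helps if $\alpha\in\Gamma$, up to adding multiples of $\alpha_\varphi$ and shifts by $1/4$ that are absorbed into the sign/$\sin$/$\cos$ choice.

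The remaining task, and the main obstacle, is to show that $\alpha\in\Gamma$. Some element $\psi_j$ of the cycle has image size at least $(k/5)^{1/3}\geq 2^{14}$ by \cref{lem:large-comp}, using $k\geq 2^{45}$; the exponent $45$ is presumably chosen for exactly this. By \cref{lem:primitive}, $\psi_j\in\R[\varphi']$ for some primitive $\varphi'$. Then $\abs{\im\varphi'}\geq 2^{14}$, so $\varphi'\in\cB_0$ and $\alpha_{\varphi'}\in\Gamma$.

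By step (ii) applied to $\varphi'$, the function $\psi_j$ lies in the $\1$-component of $H_{\varphi'}$, so $\psi_j=\pm T_m(\varphi')$, which is $\pm\cos$ or $\pm\sin$ of $2\pi m\alpha_{\varphi'}$. We also have $\psi_j=\pm\cos$ or $\pm\sin$ of $2\pi(\alpha+j\alpha_\varphi)$. To compare the two phases pointwise:
\begin{itemize}
\item values of $\alpha_{\varphi'}$ and $\alpha_\varphi$ have odd denominators;
\item the value of $\alpha$ may a priori not, so first show that $\alpha$ takes odd-denominator values (modulo a possible $1/4$ shift), for instance because $\psi_j$ takes values in $\pm\cos(2\pi Q)\cup\pm\sin(2\pi Q)$;
\item then \cref{lem:trig-on-odd} forces the two phases to agree up to sign.
\end{itemize}
This yields $\alpha+j\alpha_\varphi=\pm m\alpha_{\varphi'}$, possibly shifted by $1/4$. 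The pointwise sign ambiguity is the delicate part. I would resolve it with the injectivity statement in \cref{lem:trig-on-odd}(1) after normalizing $\psi_j$ to a $\sin$ form, which makes the phase a single global function. That gives $\alpha\in\Gamma$ up to the $1/4$ shift, and hence every $\psi$ in the cycle lies in $\operatorname{trig}(\Gamma)$.
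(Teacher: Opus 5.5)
\paragraph{The gap.} Your outline matches the paper's proof up to the last step: a primitive $\varphi$ with $\abs{\im\varphi}\geq 2^{45}$, the $\1$-component handled through $T_m(\varphi)$, cycle components handled by \cref{lem:cyc-components-strong}, and a cycle vertex of image size at least $2^{14}$ written as $\pm T_m(\varphi')$ with $\varphi'\in\cB_0$. The gap is the step you flag as delicate, and the resolution you propose does not work.

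A single function $\psi_j$ only gives a pointwise identity. After normalizing to sine form it reads $\sin(2\pi x)=\sin(2\pi c)$, where $x$ is $\alpha+j\alpha_\varphi$ up to a constant shift and $c$ is $Q$-valued ($Q$ the odd-denominator rationals of \cref{lem:trig-on-odd}). This only says $x(v)\in\{c(v),\tfrac12-c(v)\}$, and the choice may vary with $v$; in cosine form the ambiguity is $x(v)\in\{\pm c(v)\}$.

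\cref{lem:trig-on-odd}(1) cannot be applied, because $x$ is not known to be $Q$-valued. The values of $\psi_j$ only place $x(v)$ in $Q+\{0,\tfrac14,\tfrac12,\tfrac34\}$, with a shift that may depend on $v$. So your step ``show $\alpha$ takes odd-denominator values modulo a $1/4$ shift'' yields only a pointwise shift. No normalization of one function can fix this, since a sine (resp.\ cosine) determines an angle only up to $x\mapsto\tfrac12-x$ (resp.\ $x\mapsto-x$). A mixed choice would make the complementary function $\cos(2\pi x)$ equal to $\epsilon\cos(2\pi c)$ for a non-constant sign $\epsilon\colon V(G)\to\{\pm1\}$, and nothing in your argument rules that out.

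\paragraph{The missing idea.} You need both the cosine and the sine of the same phase, and the cycle supplies them.

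First, index the cycle so that the large-image vertex is $\psi_0=\cos(2\pi\alpha)$, as the paper does. Since $k$ is odd and $k\alpha_\varphi=0$, the antipodal vertex is $\psi_k=\sin(2\pi\alpha)$.

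Second, show that both $\psi_0$ and $\psi_k$ lie in $\{\pm\cos(2\pi m\alpha'),\pm\sin(2\pi m\alpha')\}$ as global identities, where $\psi_0=\pm T_m(\psi')$ and $\alpha'=\alpha_{\psi'}$; this is what the paper uses. For $\psi_k$ one can argue as follows. The function $\psi_k^2=1-\psi_0^2$ is a nonconstant element of $\R[\psi']$. By contrast, if $\chi$ is a vertex of a $2k'$-cycle of $H_{\psi'}$, then $T_i(\psi')\chi$ lies in the span of the two vertices at distance $i$ from $\chi$, so $\chi^2\perp T_i(\psi')$ for $0<i<k'$. Hence $\psi_k$ lies in the $\1$-component of $H_{\psi'}$. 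Comparing with $\psi_k^2=1-\psi_0^2$ via \cref{lem:trig-on-odd} then forces its phase to be $\pm m\alpha'$.

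Finally, $e^{2\pi i\alpha}=\psi_0+i\psi_k$ is then globally equal to $e^{2\pi i(\pm m\alpha'+t)}$ for a constant $t\in\{0,\tfrac14,\tfrac12,\tfrac34\}$. This places every vertex of the cycle in $\operatorname{trig}(\Gamma)$.
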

\begin{proof}
    Let $\varphi$ be a primitive basis element with $k:=\abs{\im\varphi}\geq 2^{45}$.
    (Such an element of $\cB$ can be found by applying \cref{lem:primitive} to any basis element with image size at least $2^{45}$.)
    Let $\psi$ be any basis element. 
    If $\psi$ lies in the $\1$-component of $H_\varphi$, then $\psi=T_m(\varphi)$ for some $m$, and so
    \[\psi\in\{\pm\cos(2\pi m\alpha_\varphi),\pm\sin(2\pi m\alpha_\varphi)\}\subset\operatorname{trig}(\Gamma).\]

    Otherwise, \cref{lem:comp-labels} implies that $\psi$ lies in a $2k$-cycle of $H_\varphi$. 
    Let $\psi_0$ be an element of this cycle with $\abs{\im\psi_0}\geq 2^{14}$, guaranteed to exist by \cref{lem:large-comp} since $k\geq 2^{45}$. 
    By \cref{lem:cyc-components-strong}, there is some $\alpha\colon V(G)\to\R/\Z$ and some $0\leq j<2k$ for which
    \[\psi_0=\cos(2\pi\alpha);\ \psi_k=\sin(2\pi\alpha);\ \psi\in\{\cos(2\pi(\alpha+j\alpha_\varphi)),\sin(2\pi(\alpha+j\alpha_\varphi)):0\leq j<k\}.\]
    By \cref{lem:primitive}, there exists some primitive $\psi'\in\cB$ for which $\psi_0\in\R[\psi']$; we have $\psi_0=\pm T_m(\psi')$ for some $m$, and $\abs{\im\psi'}\geq\abs{\im\psi_0}\geq258$. 
    Write $\alpha'=\alpha_{\psi'}$, so that $\alpha'$ has image in $Q$.
    We have
    \[\psi_0,\psi_k\in\{\pm\cos(2\pi m\alpha'),\pm\sin(2\pi m\alpha')\}.\]
    Applying \cref{lem:trig-on-odd}(1) to whichever of $\{\psi_0,\psi_k\}$ is $\sin(2\pi m\alpha')$ or $-\sin(2\pi m\alpha')$, we conclude that $\alpha=m\alpha'+t$ for some $t\in\{0,1/4,1/2,3/4\}$. Thus
    \[\psi=\cos(2\pi(m\alpha'+j\alpha_\varphi+t'))\]
    for some $t,t'\in\{0,1/4,1/2,3/4\}$ and some $0\leq j<k$, which is enough to imply the result.  
\end{proof}

Next we show that the elements of $\operatorname{trig}(\Gamma)$ are basis elements.

\begin{lemma}\label{lem:trig-to-basis}
    For every $\varphi_1,\ldots,\varphi_m\in\cB_0$ and every $c_1,\ldots,c_m\in\Z$, the functions
    \[\sin\left(2\pi\sum_{i=1}^mc_i\alpha_{\varphi_i}\right)\quad\text{and}\quad\cos\left(2\pi\sum_{i=1}^mc_i\alpha_{\varphi_i}\right)\]
    are, up to sign, basis elements or the constant-$0$ function.
\end{lemma}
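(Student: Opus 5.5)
The plan is to induct on $\sum_i\abs{c_i}$, showing that for $\gamma=\sum_ic_i\alpha_{\varphi_i}$ the pair $\{\cos(2\pi\gamma),\sin(2\pi\gamma)\}$ consists, up to sign, of basis elements or zero. The base case $\gamma=0$ gives $\cos=\1$ and $\sin=0$. For a single $\varphi\in\cB_0$ with $k=\abs{\im\varphi}$, the $\1$-component of $H_\varphi$ is the path $T_0(\varphi),\dots,T_{k-1}(\varphi)$ by \cref{lem:1-comp}. Since $\varphi=\sin(2\pi\alpha_\varphi)=\cos(2\pi(\alpha_\varphi-1/4))$, we have $T_m(\varphi)=\cos(2\pi m(\alpha_\varphi-1/4))$, which is $\pm\cos(2\pi m\alpha_\varphi)$ or $\pm\sin(2\pi m\alpha_\varphi)$ depending on the parity of $m$. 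The complementary function is $T_{k-m}(\varphi)$ up to sign, because $k\alpha_\varphi=0$ and $k$ is odd, so it is also a basis element. Hence all multiples $c\,\alpha_\varphi$ are covered.

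For the inductive step, suppose $\gamma$ is covered, and without loss of generality assume $\cos(2\pi\gamma)$ and $\sin(2\pi\gamma)$ are both nonzero, so they are basis elements up to sign. Fix $\varphi=\varphi_i\in\cB_0$. The aim is to show $\gamma\pm\alpha_\varphi$ is covered. Let $\psi=\pm\cos(2\pi\gamma)$ be the basis element. If $\psi$ lies in the $\1$-component of $H_\varphi$, then $\psi=T_m(\varphi)$, so $\gamma$ is $\pm m\alpha_\varphi$ up to a quarter shift, which we can confirm using \cref{lem:trig-on-odd}. In that case the claim reduces to the single-generator case.

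Otherwise, \cref{lem:comp-labels} puts $\psi$ in a $2k$-cycle component $\psi_0=\psi,\psi_1,\dots$ of $H_\varphi$. \cref{lem:cyc-components-strong} then writes $\psi_j$ as $\cos$ or $\sin$ of $2\pi(\alpha+j\alpha_\varphi)$, up to sign, for some $\alpha$. Its hypothesis requires some vertex of the cycle to have large image size, and \cref{lem:large-comp} provides this once $k\geq 2^{14}$ is large enough. Comparing $\psi_0=\pm\cos(2\pi\alpha)$ with $\pm\cos(2\pi\gamma)$ via \cref{lem:trig-on-odd}(1),(3), and using that both $\alpha$ and $\gamma$ take values in odd-denominator rationals up to quarter shifts, forces $\alpha=\pm\gamma+t$ with $t\in\{0,1/4,1/2,3/4\}$. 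The neighbours $\psi_{\pm1}$ then give $\sin(2\pi(\gamma\pm\alpha_\varphi))$ or $\cos(2\pi(\gamma\pm\alpha_\varphi))$ as basis elements. Running along the cycle, the elements $\psi_{j}$ with $j\equiv k\pmod 2$ shifted appropriately give the complementary trigonometric function. This works because $\alpha+j\alpha_\varphi$ with $j$ and $j+k$ differ by $k\alpha_\varphi=0$, while the cos/sin parity flips since $k$ is odd. So both functions of $\gamma\pm\alpha_\varphi$ appear.

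The main obstacle is the identification $\alpha=\pm\gamma+t$ and the bookkeeping of the quarter shifts. The values of $\gamma$ are not all in $Q$: each $\alpha_{\varphi_i}$ has values in $Q$, so sums do too, which helps, but the cycle parametrization only determines $\alpha$ up to sign and quarter shift. The step I would check most carefully is that the parity trick in the previous paragraph (using $k$ odd) really produces both $\cos$ and $\sin$ of $2\pi(\gamma+\alpha_\varphi)$, rather than just one of them.
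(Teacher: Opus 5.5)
Your architecture matches the paper's: induction on the coefficients; the single-generator case via $T_m(\varphi)$, $T_{k-m}(\varphi)$ and $T_k(\varphi)=0$; then a split according to whether the relevant basis element lies in the $\1$-component or in a $2k$-cycle of $H_\varphi$, with neighbours read off from \cref{lem:cyc-components-strong}. Your parity trick is fine: $\psi_j$ and $\psi_{j+k}$ carry the same argument $\alpha+j\alpha_\varphi$ with opposite type because $k$ is odd, so $\psi_1$ and $\psi_{k+1}$ give both functions.

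\textbf{The gap.} You anchor everything at $\cos(2\pi\gamma)$, and cosine is even. An identity $\cos(2\pi\alpha)=\pm\cos(2\pi\gamma)$ therefore only yields $\alpha(v)\in\{\gamma(v),-\gamma(v)\}$ (or $\{\tfrac12+\gamma(v),\tfrac12-\gamma(v)\}$), with a sign that may vary from vertex to vertex. \cref{lem:trig-on-odd}(1) is an injectivity statement about sine and does not apply to cosines, and (3) at best excludes a global minus sign. Without a global relation $\alpha=\pm\gamma+t$, the neighbours $\psi_{\pm1}$ are not $\pm\sin(2\pi(\gamma\pm\alpha_\varphi))$.

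The same defect breaks your $\1$-component case. There $\cos(2\pi\gamma)=\pm T_m(\varphi)$ forces, by (2) and (3), $m$ even and $\cos(2\pi\gamma)=\cos(2\pi m\alpha_\varphi)$. This does not give $\gamma=\pm m\alpha_\varphi$ identically, so the reduction to the single-generator case is unjustified.

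\textbf{The fix.} The paper anchors at $\sin(2\pi\gamma)$ instead, which is nonzero unless $\gamma=0$.
\begin{itemize}
\item In the $\1$-component case, $\sin(2\pi\gamma)=\pm T_m(\varphi)$ forces $m$ odd by (2), so $\sin(2\pi\gamma)=\pm\sin(2\pi m\alpha_\varphi)$. Then (1) gives $\gamma=\pm m\alpha_\varphi$ identically. All arguments here are $Q$-valued, so this step is rigorous.
\item In the cycle case, index the cycle so that $\psi_k=\pm\sin(2\pi\gamma)$. Since $k$ is odd, \cref{lem:cyc-components-strong} gives $\psi_k=\pm\sin(2\pi\alpha)$, a single identity between sines. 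Then $\psi_1,\psi_{k+1}$ (or $\psi_{-1},\psi_{k-1}$) supply $\pm\sin$ and $\pm\cos$ of $2\pi(\gamma+\alpha_\varphi)$.
\end{itemize}

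\textbf{A further caution.} \cref{lem:cyc-components-strong} says nothing about the values of $\alpha$, so your claim that $\alpha$ is odd-denominator-valued up to quarter shifts needs its own argument. Even with the sine anchor, $\sin(2\pi\gamma)=\sin(2\pi\alpha)$ only gives $\alpha(v)\in\{\gamma(v),\tfrac12-\gamma(v)\}$ pointwise, and mixing of the two branches must be ruled out. The paper's one-line appeal to \cref{lem:trig-on-odd}(1) passes over this point as well. One way to close it: write $\psi_0=\pm\cos(2\pi\alpha)$ as $\pm\cos$ or $\pm\sin$ of a $Q$-valued function, which \cref{lem:basis-to-trig} provides under the hypothesis of \cref{lem:ab-gp}. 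Then (2) and (3) force a single branch globally.
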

\begin{proof}
    Since each $\alpha_\varphi$ has finite order as an element of $(\R/\Z)^{V(G)}$, it suffices to prove the result for $c_1,\ldots,c_m\geq 0$.
    We induct on $s:=c_1+\cdots+c_m$, with the base case of $s=0$ is trivial. To show the inductive step, we have some $\varphi\in\cB_0$ and some $f\colon V(G)\to Q$ for which $\sin(2\pi f)$ and $\cos(2\pi f)$ are both elements of $\{0\}\cup\cB\cup-\cB$, and we must show that the same holds of $\sin(2\pi(\alpha_\varphi+f))$ and $\cos(2\pi(\alpha_\varphi+f))$.

    Consider the component $C$ of $H_\varphi$ containing $\pm\sin(2\pi f)$. 
    (If $\sin(2\pi f)=0$, then take $C$ to be the $\1$-component.)
    If $C$ is the $\1$-component, then $\sin(2\pi f)=\pm T_m(\varphi)$ for some $m$. 
    This implies that $\sin(2\pi f)=\pm\cos(2\pi m\alpha_\varphi)$ or $\sin(2\pi f)=\pm\sin(2\pi m\alpha_\varphi)$ identically. 
    The former cannot occur by \cref{lem:trig-on-odd}(2). By \cref{lem:trig-on-odd}(1), we have $f=\pm m\alpha_\varphi$ identically.
    So, $\alpha_\varphi+f=n\alpha_\varphi$ for some integer $n$, which we can assume without loss of generality (because $k\alpha_\varphi=0)$ is positive and odd. The result follows from the identities
    \[\sin(2\pi n\alpha_\varphi)=(-1)^{(n-1)/2}T_n(\varphi)\quad\text{and}\quad\cos(2\pi n\alpha_\varphi)=(-1)^{(n+k)/2}T_{n+k}(\varphi)\]
    and \cref{lem:1-comp-cor}(1).\footnote{This is the statement that $T_n(\varphi)$ and $T_{n+k}(\varphi)$ both lie in $\cB\cup-\cB\cup\{0\}$, which is easy to obtain from \cref{lem:1-comp} and the fact that $T_k(\varphi)=0$.}

    If $C$ is not the $\1$-component, then by \cref{lem:comp-labels} it is a $2k$-cycle. Let $\psi_0,\ldots,\psi_{2k-1}$ be the vertices around the $2k$-cycle in that order, in such a way that $\sin(2\pi f)=\psi_k$. By \cref{lem:cyc-components-strong}, we can find some $\alpha\colon V(G)\to\R/\Z$ for which, up to sign,
    \[\psi_j=\begin{cases}\cos(2\pi(\alpha+j\alpha_\varphi))&\text{if $j$ is even}\\\sin(2\pi(\alpha+j\alpha_\varphi))&\text{if $j$ is odd}.\end{cases}\]
    In particular, $\sin(2\pi f)=\pm\sin(2\pi\alpha)$ identically. By \cref{lem:trig-on-odd}(1), we conclude either $f=\alpha$ or $f=-\alpha$ identically. In the former case, we have
    \[\cos(2\pi(\alpha_\varphi+f))=\psi_{k+1}\text{ and }\sin(2\pi(\alpha_\varphi+f))=\psi_1;\]
    the latter case is similar.
\end{proof}

We have now shown the first part of \cref{lem:ab-gp}; it remains only to show the second.

\begin{lemma}\label{lem:gamma-leq-N}
    Suppose that $\cB$ contains an element with image size at least $2^{27}$. 
    Then the map $\xi\colon V(G)\to\widehat\Gamma$ is injective.  
\end{lemma}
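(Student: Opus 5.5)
We need to show that if $\xi(u)=\xi(v)$ then $u=v$. Note that $\xi(u)=\xi(v)$ means $\gamma(u)=\gamma(v)$ for every $\gamma\in\Gamma$.

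The plan is to show that the eigenbasis $\cB$ then separates nothing between $u$ and $v$. Since $\cB$ is a basis of $\R^{V(G)}$, some basis function must distinguish two distinct vertices, so a failure to separate forces $u=v$.

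\textbf{Step 1.} Fix an element of image size at least $2^{27}$. Using \cref{lem:primitive}, pass to a primitive $\varphi$ with $\abs{\im\varphi}\ge 2^{27}$, so $\varphi\in\cB_0$.

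\textbf{Step 2.} Show that every basis element is a trigonometric function of $\Gamma$. This is the content of \cref{lem:basis-to-trig}, but that lemma assumes image size $2^{45}$, while here we only have $2^{27}$. So we redo its argument. Any $\psi\in\cB$ either lies in the $\1$-component of $H_\varphi$, in which case $\psi=\pm T_m(\varphi)$ is a trig function of $m\alpha_\varphi$, or lies in a $2k$-cycle.

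In the cycle case, \cref{lem:large-comp} gives a vertex $\psi_0$ of the cycle with image size at least $(k/5)^{1/3}\ge (2^{27}/5)^{1/3}>258$. Then \cref{lem:cyc-components-strong} (which needs $k\ge 2^{14}$, satisfied here) writes every $\psi_j$ as $\pm\cos$ or $\pm\sin$ of $2\pi(\alpha+j\alpha_\varphi)$.

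Next apply \cref{lem:primitive} to $\psi_0$ to get a primitive $\psi'$ with $\psi_0=\pm T_m(\psi')$ and $\abs{\im\psi'}\ge 258$. We need $\psi'\in\cB_0$, i.e. image size at least $2^{14}$. I expect this is the main obstacle, since $(k/5)^{1/3}$ with $k\ge 2^{27}$ gives only about $2^{8}$. Two fixes are possible. One is to observe that \cref{lem:comp-labels} already gives $\cD(\psi')=\cD^{\mathrm{odd}}_{2k'}$ for image size at least $258$, so $\alpha_{\psi'}$ is defined with odd-denominator values; then we enlarge the group under consideration, or check that $\alpha_{\psi'}$ lies in $\Gamma$ anyway. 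The cleaner fix is to avoid needing $\psi'\in\Gamma$ at all, which the next step does.

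\textbf{Step 3.} Argue directly with separation. Suppose $\xi(u)=\xi(v)$. Then $\alpha_\varphi(u)=\alpha_\varphi(v)$, so every $T_m(\varphi)$ agrees at $u$ and $v$, and this handles the $\1$-component. For a cycle component, we want to show $\alpha(u)=\alpha(v)$ up to the $\{0,1/4,1/2,3/4\}$ ambiguity, after which all $\psi_j$ agree at $u$ and $v$. To get this, induct over a chain of primitive elements: the relation $\alpha=m\alpha_{\psi'}+t$ from the proof of \cref{lem:basis-to-trig} reduces the claim to $\alpha_{\psi'}(u)=\alpha_{\psi'}(v)$ for primitive $\psi'$. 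When $\abs{\im\psi'}\ge2^{14}$ this holds because $\alpha_{\psi'}\in\Gamma$. Otherwise we apply the same analysis with $\psi'$ in place of $\varphi$. The image-size thresholds in \cref{lem:large-comp}, chained once ($2^{27}\to 2^{9}$-ish), are what the constant $2^{27}$ is designed to support. I would check the exact constants at this point.

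\textbf{Conclusion.} Once every element of $\cB$ takes equal values at $u$ and $v$, the basis spans only functions that agree at $u$ and $v$. Since $\cB$ spans $\R^{V(G)}$, which includes the indicator of $u$, this forces $u=v$.
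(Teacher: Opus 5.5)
Your skeleton is the paper's proof. The paper takes a basis element with $\varphi(u)\neq\varphi(v)$, writes it as $\pm\sin(2\pi\gamma)$ or $\pm\cos(2\pi\gamma)$ with $\gamma\in\Gamma$ via \cref{lem:basis-to-trig}, and reads off $\gamma(u)\neq\gamma(v)$; your Conclusion paragraph is this step. You are right that \cref{lem:basis-to-trig} is stated under the hypothesis $2^{45}$, not $2^{27}$, and the paper cites it anyway. This is harmless in context, because the lemma is only invoked inside \cref{lem:ab-gp}, which assumes $2^{45}$. Under that hypothesis your Step 2 can simply cite \cref{lem:basis-to-trig}, and you are done. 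The trouble is your attempt to manage with $2^{27}$: Step 3 does not prove anything.

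Here is why Step 3 fails.
\begin{itemize}
\item The reduction ``$\alpha=m\alpha_{\psi'}+t$, so it suffices that $\alpha_{\psi'}(u)=\alpha_{\psi'}(v)$'' trades one unknown for another of exactly the same kind, and nothing makes the iteration terminate.
\item Suppose you locate $\psi'$ in $H_\varphi$ and repeat. Either $\psi'$ is in the $\1$-component, which is fine, or it lies in a $2k$-cycle $C'$. In the latter case $\psi'$ itself qualifies as the large-image vertex of $C'$, since its image size is at least $(k/5)^{1/3}$. Because $\psi'$ is already primitive, the procedure can hand you back $\psi'$. No quantity pushes the image size toward $2^{14}$: with $k\geq 2^{27}$ you are only guaranteed $(k/5)^{1/3}\geq 299$. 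That is enough for \cref{lem:comp-labels}, so $\alpha_{\psi'}$ exists with odd-denominator values, but not for $\psi'\in\cB_0$.
\item Suppose instead you rerun the argument inside $H_{\psi'}$. Its starting point is the analogue of ``$\alpha_\varphi(u)=\alpha_\varphi(v)$ because $\alpha_\varphi\in\Gamma$'', which for $\psi'$ is exactly the claim in question. Moreover, \cref{lem:cyc-components-strong} is unavailable there. It needs \cref{lem:large-comp} to produce a vertex of image size at least $13$ in every cycle, and $(k'/5)^{1/3}<13$ when $k'$ is a few hundred.
\item Your other suggestion, enlarging the group, also fails. For $\Gamma'\supseteq\Gamma$, the map $\xi$ is $\xi'$ followed by restriction $\widehat{\Gamma'}\to\widehat\Gamma$, so injectivity of $\xi'$ says nothing about $\xi$.
\end{itemize}

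The missing ingredient is precisely the content of \cref{lem:basis-to-trig}: every basis element factors through $\xi$. Under the $2^{45}$ hypothesis, \cref{lem:large-comp} gives each cycle a vertex of image size $(k/5)^{1/3}\geq 2^{14}$. Its primitive root then lies in $\cB_0$ directly, and no chaining is needed. To genuinely work with only $2^{27}$, you would need a new lower bound on image sizes within the cycles of $H_\varphi$, and the proposal does not provide one.
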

\begin{proof}
    Take two distinct vertices $u,v\in V(G)$ and consider some basis element $\varphi$ for which $\varphi(u)\neq\varphi(v)$.
    By \cref{lem:basis-to-trig}, there exists some $\gamma\in\Gamma$ for which $\varphi=\pm\sin(2\pi\gamma)$ or $\varphi=\pm\cos(2\pi\gamma)$. In any case, we have $\gamma(u)\neq\gamma(v)$. This implies that $\xi(u)\neq\xi(v)$ (as these two elements of $\widehat\Gamma$ do not agree on $\gamma$), as desired.
\end{proof}

\begin{lemma}\label{lem:gamma-geq-N}
    We have $\abs{\Gamma}\leq\abs{V(G)}$.
\end{lemma}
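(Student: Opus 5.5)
Since $\Gamma$ is a finite group of functions $V(G)\to\R/\Z$, the plan is to show that the characters of $\Gamma$ evaluated at vertices are linearly independent functions. Concretely, I would show that the map $\gamma\mapsto e(\gamma):=e^{2\pi i\gamma(\cdot)}\in\C^{V(G)}$ is injective and that its image is a linearly independent set in $\C^{V(G)}$; this immediately gives $\abs{\Gamma}\leq\dim_\C\C^{V(G)}=\abs{V(G)}$. Injectivity is automatic, since $\gamma$ is recovered from $e(\gamma)$ pointwise (the values lie in $\R/\Z$).

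For linear independence I would use the standard character argument, but on the vertex set rather than on a group. Consider the uniform probability measure on $V(G)$ and the inner product $\langle f,g\rangle=\frac1{\abs{V(G)}}\sum_v f(v)\overline{g(v)}$. For $\gamma\neq\gamma'$ in $\Gamma$ we have $\langle e(\gamma),e(\gamma')\rangle=\EE_v e((\gamma-\gamma')(v))$. So it suffices to show that, for every nonzero $\delta\in\Gamma$, the average $\EE_v e(\delta(v))$ vanishes. Equivalently, both $\cos(2\pi\delta)$ and $\sin(2\pi\delta)$ have mean zero. Then the $e(\gamma)$ form an orthonormal family, and orthonormal families are linearly independent.

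The mean-zero claim is where the earlier work enters. By \cref{lem:trig-to-basis}, for any $\delta=\sum c_i\alpha_{\varphi_i}\in\Gamma$, the functions $\cos(2\pi\delta)$ and $\sin(2\pi\delta)$ are each, up to sign, either a basis element or $0$. Every basis element other than $\1$ is orthogonal to $\1$, so it has mean zero. It remains to rule out the possibility that $\cos(2\pi\delta)=\pm\1$ or $\sin(2\pi\delta)=\pm\1$ when $\delta\neq0$.

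I expect this to be the only real obstacle, and it is handled by the odd-denominator structure. Every element of $\Gamma$ takes values in $Q$. The function $\sin(2\pi\delta)$ can never be $\pm1$ at any vertex by \cref{lem:trig-on-odd}(2), since $\pm1$ lie in $\cos(2\pi Q)$. If $\cos(2\pi\delta)=\pm\1$, then $\delta(v)\in\{0,1/2\}$ for every $v$, and the odd-denominator condition forces $\delta\equiv0$. So for $\delta\neq0$ neither function is constant, hence both lie in $\pm\cB\cup\{0\}$ minus $\pm\1$, and both are orthogonal to $\1$.

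This gives orthonormality of $\{e(\gamma)\}_{\gamma\in\Gamma}$ and hence the bound. One should check that \cref{lem:trig-to-basis} needs no extra hypothesis beyond the standing setup; as stated, it applies to arbitrary elements of $\cB_0$, which is all we use.
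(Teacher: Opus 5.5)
Your proof is correct, and it takes a genuinely different route from the paper's.

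\textbf{The paper's route.} The paper counts elements of $\cB\cup-\cB$. Using that $\abs{\Gamma}$ is odd, it pairs $\gamma$ with $-\gamma$ and sends each pair to the quadruple $\{\pm\cos(2\pi\gamma),\pm\sin(2\pi\gamma)\}$. It then shows these quadruples consist of four distinct non-constant elements of $\cB\cup-\cB$ and are pairwise disjoint. Disjointness needs all three parts of \cref{lem:trig-on-odd}. It also needs a separate argument in the case $\cos(2\pi\gamma)=\cos(2\pi\gamma')$: there the identity $1+\cos(4\pi\gamma)=\cos(2\pi(\gamma+\gamma'))+\cos(2\pi(\gamma-\gamma'))$ is played against uniqueness of basis expansions.

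\textbf{Your route.} You only need the paper's first claim, that $\cos(2\pi\delta)$ and $\sin(2\pi\delta)$ are non-constant for $\delta\neq0$. Together with \cref{lem:trig-to-basis}, this gives $\sum_{v\in V(G)}e^{2\pi i\delta(v)}=0$ for every $\delta\neq 0$. So the characters $e^{2\pi i\gamma}$ are orthonormal in $\C^{V(G)}$, and the dimension bound finishes the proof. There is no case analysis, and you never need to pair $\gamma$ with $-\gamma$.

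\textbf{What each buys.} Your orthogonality statement in fact gives a bit more. It says the pushforward under $\xi$ of the uniform measure on $V(G)$ has vanishing nontrivial Fourier coefficients on $\widehat\Gamma$, so it is uniform. Hence $\xi$ is surjective, and $\abs{\Gamma}$ divides $\abs{V(G)}$. The paper's route shows directly that distinct pairs $\{\gamma,-\gamma\}$ give distinct basis elements. That fact can also be recovered afterwards by counting, once $\abs{\Gamma}=\abs{V(G)}$ and \cref{lem:ab-gp} are available.

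\textbf{One small slip.} You say $\pm1$ lie in $\cos(2\pi Q)$, where $Q$ is the set of odd-denominator rationals. In fact $-1\notin\cos(2\pi Q)$, because $1/2\notin Q$. So \cref{lem:trig-on-odd}(2) directly rules out only $\sin(2\pi\delta(v))=1$. To exclude the value $-1$, apply (2) to $-\delta(v)\in Q$, or note that $\sin(2\pi x)=\pm1$ forces $x\in\{1/4,3/4\}$.
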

\begin{proof}
    We will prove this inequality by showing that the functions $\cos(2\pi\gamma)$ and $\sin(2\pi\gamma)$ cannot collide in ``unexpected'' ways as $\gamma$ ranges through $\Gamma$.
    
    \begin{claim}\label{cl:non-const}
        For nontrivial $\gamma\in\Gamma$, the functions $\sin(2\pi\gamma)$ and $\cos(2\pi\gamma)$ are nonconstant.
    \end{claim}
    \begin{proofofclaim}
        \cref{lem:trig-to-basis} implies that any such constant function must be identically $0$ or $\pm1$. Both are ruled out by the assumption that $\gamma\colon V(G)\to\R/\Z$ is not identically zero and the fact that $\gamma$ cannot take rationals with even denominators as values.
    \end{proofofclaim}

    Now, recall that $\Gamma$ has odd order. 
    The set $\Gamma$ can be partitioned into the identity element $\{0\}$ and $(\abs{\Gamma}-1)/2$ pairs $\{\gamma,-\gamma\}$. 
    There is a map $\rho$ from pairs $\{\gamma,-\gamma\}$ to quadruplets
    \[\{\cos(2\pi\gamma),\sin(2\pi\gamma),-\cos(2\pi\gamma),-\sin(2\pi\gamma)\}\]
    of non-constant elements of $\cB$.
    \begin{claim}\label{cl:rho-inj}
        The quadruplet $\rho(\{\gamma,-\gamma\})$ consists of four distinct elements of $\cB\cup-\cB$. 
    \end{claim}
    \begin{proofofclaim}
        We cannot have $\cos(2\pi\gamma)=-\cos(2\pi\gamma)$ or $\sin(2\pi\gamma)=-\sin(2\pi\gamma)$ identically, as such an identity would contradict \cref{cl:non-const}. 
        If $\pm\cos(2\pi\gamma)=\sin(2\pi\gamma)$, then $\cos(2\pi\gamma)^2=1/2$ identically. This implies that the image of $\gamma$ is contained in $\{1/8,3/8,5/8,7/8\}$, which contradicts the fact that no element of $\Gamma$ takes as a value a rational with even denominator.
    \end{proofofclaim}

    \begin{claim}
        For distinct pairs $\{\gamma,-\gamma\}$ and $\{\gamma',-\gamma'\}$, the quadruplets $\rho(\{\gamma,-\gamma\})$ and $\rho(\{\gamma',-\gamma'\})$ are disjoint.
    \end{claim}
    \begin{proofofclaim}
        Up to the signs and swapping of $\gamma$ and $\gamma'$, there are four cases:
        \begin{align}
            \label{eq:ss}\sin(2\pi\gamma)&=\sin(2\pi\gamma')\\
            \label{eq:sc}\sin(2\pi\gamma)&=\cos(2\pi\gamma')\\   
            \label{eq:cmc}\cos(2\pi\gamma)&=-\cos(2\pi\gamma')\\
            \label{eq:cpc}\cos(2\pi\gamma)&=\cos(2\pi\gamma').
        \end{align}
        By \cref{lem:trig-on-odd}(1), \eqref{eq:ss} implies that $\gamma(v)=\gamma'(v)$ for every $v\in V(G)$. This contradicts the assumption $\gamma\neq\gamma'$. Cases \eqref{eq:sc} and \eqref{eq:cmc} cannot occur by \cref{lem:trig-on-odd}(2 and 3).
        
        Finally, if \eqref{eq:cpc} holds, then we must have $\gamma'(v)\in\{\gamma(v),-\gamma(v)\}$ for every $v\in V(G)$. Consider the identity
        \begin{equation}\label{eq:trig-two-decomp}
            1+\cos(4\pi\gamma)=2\cos(2\pi\gamma)^2=2\cos(2\pi\gamma)\cos(2\pi\gamma')=\cos(2\pi(\gamma+\gamma'))+\cos(2\pi(\gamma-\gamma')).
        \end{equation}
        By \cref{lem:trig-to-basis}, the identity \eqref{eq:trig-two-decomp} witnesses two decompositions of the same function into linear combinations of elements of $\cB$. 
        Therefore, at least one of $\{\cos(4\pi\gamma),\cos(2\pi(\gamma+\gamma')),\cos(2\pi(\gamma-\gamma'))\}$ is constant.
        By \cref{cl:non-const}, we either have $2\gamma=0$ or $\gamma=\pm\gamma'$ identically, neither of which can hold by the fact that $\abs{\Gamma}$ is odd and our assumption that $\{\gamma,-\gamma\}\neq\{\gamma',-\gamma'\}$.
    \end{proofofclaim}

    We conclude that $\cB\cup-\cB$ has at least $2\abs{\Gamma}-2$ non-constant elements. Since the number of such elements is exactly $2\abs{V(G)}-2$, we obtain the desired conclusion.
\end{proof}

\cref{lem:ab-gp} follows from \cref{lem:trig-to-basis,lem:basis-to-trig,lem:gamma-geq-N,lem:gamma-leq-N}. 

\subsection{Finishing the proof in the graph setting}\label{sec:graph-finish}

We are now ready to prove \cref{thm: 2PP iff Cayley} assuming \cref{lem:1-comp,lem:large-comp,lem:primitive,lem:comp-labels}.

\begin{proof}[Proof of \cref{thm: 2PP iff Cayley}]
    Suppose that $G$ admits a $2$-product eigenbasis, and that $\abs{V(G)}$ is odd and sufficiently large in terms of the maximum degree of $G$.
    We must show that $G$ is either a Cayley graph or a Cayley sum graph on an abelian group.

    Let $\cB$ be a $2$-product eigenbasis.
    By \cref{prop:find-varied}, we may assume that $\cB$ contains some element with image size at least $2^{27}$.
    So, we may apply \cref{lem:ab-gp} to find an abelian group $\Gamma$, which we view as a group of functions $V(G)\to\R/\Z$, such that (1) there is a bijection $\xi\colon V(G)\to \widehat\Gamma$ satisfying $(\xi(v))(\gamma)=\gamma(v)$ and (2) the elements of $\cB$ are exactly, up to sign, the trigonometric functions
    \[v\mapsto \sin(2\pi \gamma(v))\quad\text{and}\quad v\mapsto\cos(2\pi\gamma(v))\]
    for $\gamma\in\Gamma$.

    Let $\Gamma_0$ be a subset of $\Gamma$ of size $\frac{\abs{\Gamma}-1}2$ consisting of an arbitrary choice of one element of $\{\gamma,-\gamma\}$ for each $\gamma\in\Gamma\setminus\{0\}$. For each $\gamma\in\Gamma_0$, let $\mu_\gamma^{\cos}$ and $\mu_\gamma^{\sin}$ be the eigenvalues of the Laplacian $\Delta$ corresponding to $\cos(2\pi\gamma)$ and $\sin(2\pi\gamma)$. 
    The spectral theorem tells us that, for $u,v\in V(G)$, we have
    \begin{align*}
        \Delta_{uv}
        &=\frac 2N\sum_{\gamma\in\Gamma_0}\mu_\gamma^{\cos}2\cos(2\pi\gamma(u))\cos(2\pi\gamma(v))+\mu_\gamma^{\sin}2\sin(2\pi\gamma(u))\sin(2\pi\gamma(v))\\
        &=\frac 2N\sum_{\gamma\in\Gamma_0}(\mu_\gamma^{\cos}+\mu_\gamma^{\sin})\cos\big(2\pi(\gamma(u)-\gamma(v))\big)+\frac 2N\sum_{\gamma\in\Gamma_0}(\mu_\gamma^{\cos}-\mu_\gamma^{\sin})\cos\big(2\pi(\gamma(u)+\gamma(v))\big)\\
        &=\frac 2N\sum_{\gamma\in\Gamma_0}(\mu_\gamma^{\cos}+\mu_\gamma^{\sin})\cos\big(2\pi(\xi(u)-\xi(v))(\gamma)\big)+\frac 2N\sum_{\gamma\in\Gamma_0}(\mu_\gamma^{\cos}-\mu_\gamma^{\sin})\cos\big(2\pi(\xi(u)+\xi(v))(\gamma))\big).
    \end{align*}
    In particular, there exist even functions $f,g\colon\widehat\Gamma\to\R$ for which
    \[\Delta_{uv}=f(\xi(u)-\xi(v))+g(\xi(u)+\xi(v))\]
    for every $u,v\in V(G)$.
    
    Since $\Gamma$ (and thus $\widehat\Gamma$) is of odd order and $\xi$ is a bijection, we can find for every $x,y\in\widehat\Gamma$ some $u,v\in\Gamma$ for which $x=\xi(u)-\xi(v)$ and $y=\xi(u)+\xi(v)$.
    Since $G$ is an unweighted graph, the entries of $\Delta$ are integers, and the non-diagonal entries of $\Delta$ all lie in the set $\{-1,0\}$.
    Letting $U_f:=\{f(x):x\in\Gamma\setminus\{0\}\}$ and $U_g:=\{g(y):y\in\Gamma\}$, we have $U_f+U_g\subset\{-1,0\}$. Therefore, either $U_f$ or $U_g$ (or both) is a singleton. By replacing $(f,g)$ with $(f+a,g-a)$ for some real $a$, we can assume $U_f=\{0\}$ (and hence $U_g\subset\{-1,0\}$) or $U_g=\{0\}$ (and hence $U_f\subset\{-1,0\}$).

    If $U_f=\{0\}$, let $S\subset\widehat\Gamma$ be such that $U_g=-\1_S$. 
    Since $g$ is even, $S$ is symmetric. 
    Since $\Delta\1=0$, we have $f(0)=\abs{S}$.
    We conclude that $G\cong\Cay^+(\widehat\Gamma,S)$. If, alternatively, $U_g=\{0\}$, let $S\subset\widehat\Gamma$ be such that $U_f=-\1_S$. 
    Since $f$ is even, $S$ is symmetric. 
    We conclude that $G\cong\Cay(\widehat\Gamma,S)$.
\end{proof}

\vspace{3em}

\section{The details of the graph setting}\label{sec:graph-detail}

In this section, we finish the proof of \cref{thm: 2PP iff Cayley} by proving \cref{lem:1-comp,lem:large-comp,lem:primitive,lem:comp-labels}.

\subsection{The moment problem}

To prove some parts of \cref{lem:1-comp}, we will need to be able to compute the distribution of eigenfunctions in both the graph and manifold settings. We will access such data by first computing the moments of such functions and then arguing that these moments uniquely determine the distribution.

\begin{theorem}[{Carleman's criterion; see \cite[Section~4.2]{moments}}]\label{thm:carleman}
    Let $X$ and $Y$ be real-valued random variable with the same finite moments $m_n:=\EE X^n=\EE Y^n$.
    If the series
    \[\sum_{n=0}^\infty m_{2n}^{-1/(2n)}\]
    diverges, then $X$ and $Y$ have the same distribution.
\end{theorem}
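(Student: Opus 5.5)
The plan is to go through characteristic functions and the Denjoy--Carleman theorem on quasi-analytic classes. Let $\chi_X(t)=\EE e^{itX}$ and $\chi_Y(t)=\EE e^{itY}$. Since all moments are finite, both are $C^\infty$ on $\R$. Differentiating under the expectation gives $\chi_X^{(n)}(t)=\EE\big[(iX)^ne^{itX}\big]$, so $\chi_X^{(n)}(0)=i^nm_n=\chi_Y^{(n)}(0)$. Hence $h:=\chi_X-\chi_Y$ is smooth and vanishes to infinite order at $t=0$. By L\'evy's uniqueness theorem it suffices to show $h\equiv0$ on $\R$.

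First I will put $h$ in a Denjoy--Carleman class with uniform bounds on all of $\R$. Set $M_n:=\EE\abs{X}^n=\EE\abs{Y}^n$. These agree for even $n$, where they equal $m_n$. For odd $n$ I will use Lyapunov's inequality $\EE\abs{X}^n\le m_{n+1}^{n/(n+1)}$ and define $M_n$ by this common bound instead. Then $\sup_{t\in\R}\abs{h^{(n)}(t)}\le 2M_n$ for every $n$, and by Lyapunov (H\"older) the sequence $M_n$ is log-convex. The divergence hypothesis $\sum_n m_{2n}^{-1/(2n)}=\infty$ should transfer to $\sum_n M_n^{-1/n}=\infty$. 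The even terms are exactly the given series. The odd terms are comparable to neighbouring even terms, since $M_{2n+1}^{-1/(2n+1)}\ge m_{2n+2}^{-1/(2n+2)}$, so adding them only helps divergence. If $m_{2n}$ grows too slowly for the inequality to be meaningful (for example $X$ bounded), $\chi_X$ is entire or analytic and the argument is easier. The normalisation $m_0=1$ causes no issue.

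Next I will invoke the Denjoy--Carleman theorem. For a log-convex sequence $(M_n)$, the class $C\{M_n\}$ of smooth functions on an interval with $\abs{f^{(n)}}\le C A^n M_n$ is quasi-analytic if and only if $\sum_n M_n^{-1/n}=\infty$. Here quasi-analytic means that a member vanishing to infinite order at one point vanishes identically on the interval. Our bound holds with $A=1$ uniformly on all of $\R$, so applying the theorem on $[-R,R]$ for each $R$ gives $h\equiv0$ on $\R$. Hence $\chi_X=\chi_Y$, and $X$ and $Y$ have the same distribution.

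The main obstacle is the Denjoy--Carleman theorem itself. If I must prove the needed direction rather than cite it, I would follow Carleman's route via the function-theoretic estimate. Suppose $f$ vanishes to infinite order at $0$. Taylor's theorem with remainder gives $\abs{f(t)}\le M_n\abs{t}^n/n!$ near $0$. Iterating this estimate along a chain of points with step sizes $\delta_n\approx M_{n-1}/M_n$ pushes the vanishing a distance $\sum\delta_n$. By Carleman's inequality and log-convexity, $\sum\delta_n$ dominates a constant multiple of $\sum M_n^{-1/n}$, which diverges. Checking that these step sizes really accumulate to cover all of $\R$ is the delicate bookkeeping step, and I expect it to take most of the effort.
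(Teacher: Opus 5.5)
The paper gives no proof of this statement; it quotes Carleman's criterion from the moment-problem literature and uses it as a black box. Your route is the standard proof of Carleman's criterion and works in outline: characteristic functions, $h=\chi_X-\chi_Y$ vanishing to infinite order at $0$, Denjoy--Carleman, then L\'evy uniqueness. However, one step is false as written.

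Your odd-index choice $M_{2k+1}:=m_{2k+2}^{(2k+1)/(2k+2)}$ does not give a log-convex sequence. Log-convexity at index $2k+1$ means $M_{2k+1}^2\le M_{2k}M_{2k+2}$. This unwinds to $m_{2k+2}^{k/(k+1)}\le m_{2k}$, i.e.\ $m_{2k+2}^{1/(2k+2)}\le m_{2k}^{1/(2k)}$. That is the \emph{reverse} of Lyapunov's inequality, so it fails unless $\abs{X}$ is a.s.\ constant. For a standard Gaussian, $M_3^2=3^{3/2}>3=M_2M_4$. So the log-convex form of Denjoy--Carleman you quote cannot be applied to your sequence.

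The repair is easy. Take instead $M_n:=\max(\mathbb{E}\abs{X}^n,\mathbb{E}\abs{Y}^n)$.
\begin{itemize}
\item Each of $n\mapsto\mathbb{E}\abs{X}^n$ and $n\mapsto\mathbb{E}\abs{Y}^n$ is log-convex by Cauchy--Schwarz, and a pointwise maximum of log-convex sequences is log-convex.
\item $M_{2k}=m_{2k}$, and $\abs{h^{(n)}}\le 2M_n$ on all of $\R$.
\item Divergence of $\sum_n M_n^{-1/n}$ follows from the even terms alone.
\end{itemize}
Two other fixes also work. You can set $M_{2k+1}:=\sqrt{m_{2k}m_{2k+2}}$, which bounds both odd absolute moments by Cauchy--Schwarz and is log-convex. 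Or you can keep your sequence and use the form of Denjoy--Carleman without log-convexity, stated with $L_n=\inf_{k\ge n}M_k^{1/k}$; your $M_n^{1/n}$ is nondecreasing, so $L_n=M_n^{1/n}$ and $\sum 1/L_n=\infty$.

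Two minor remarks. First, your hedge about slowly growing moments is unnecessary, since Denjoy--Carleman covers bounded $X$ as well. The only degenerate case is $m_2=0$, where $X=Y=0$ a.s. Second, your last paragraph is only a sketch of a Bang-type real-variable proof of Denjoy--Carleman. Citing that theorem is acceptable here, just as the paper cites Carleman's criterion itself.
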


\begin{lemma}\label{lem:moment-method}
    Let $X$ be a random variable. Suppose that
    \[\EE T_j(X)=0\]
    for every positive integer $j$. Then $X$ has the same distribution as $\cos(2\pi Y)$ with $Y\sim\operatorname{Unif}(\R/\Z)$.
\end{lemma}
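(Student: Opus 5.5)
We will show that the hypothesis pins down every moment of $X$. We then match these moments with those of $Z:=\cos(2\pi Y)$ and conclude using \cref{thm:carleman}.

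First we identify the moments of $Z$. Since $T_j(\cos x)=\cos(jx)$, we have $T_j(Z)=\cos(2\pi jY)$. For $j\geq 1$ this has mean zero because $Y$ is uniform on $\R/\Z$. So $\EE T_j(Z)=0=\EE T_j(X)$ for all $j\geq 1$. Trivially $\EE T_0(Z)=\EE T_0(X)=1$.

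Next we transfer this to ordinary moments. The polynomial $T_j$ has degree exactly $j$ with leading coefficient $2^{j-1}$ for $j\geq1$. Hence $\{T_0,\dots,T_n\}$ is a basis of the polynomials of degree at most $n$, and we can write $x^n=\sum_{j=0}^n c_{n,j}T_j(x)$ with coefficients $c_{n,j}$ independent of the random variable. We need the moments of $X$ to be finite for this, and that is implicit in the hypothesis. Indeed, finiteness of $\EE T_j(X)$ for all $j$ gives finiteness of $\EE X^n$ by induction on $n$: write $x^n = 2^{1-n}T_n(x) - (\text{lower-degree terms})$ and use integrability of the lower-degree terms. In our applications $X$ is bounded anyway. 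Linearity then gives
\[
\EE X^n=\sum_j c_{n,j}\EE T_j(X)=c_{n,0}=\EE Z^n
\]
for every $n$.

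Finally, $Z$ is bounded by $1$, so $m_{2n}=\EE Z^{2n}\leq 1$. Therefore $m_{2n}^{-1/(2n)}\geq 1$, and the series in \cref{thm:carleman} diverges. Carleman's criterion now yields that $X$ and $Z$ have the same distribution. No step here is a genuine obstacle. The only point needing care is the finiteness and integrability bookkeeping that lets us expand $x^n$ in Chebyshev polynomials, and when $X$ is bounded even that is automatic.
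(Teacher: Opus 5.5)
Your proof is correct and follows the paper's argument: you match the Chebyshev moments, convert them to ordinary moments using that the $T_j$ span $\R[x]$, and then invoke Carleman's criterion. The differences are cosmetic. You get divergence of the Carleman series from the bound $m_{2n}\le 1$ rather than from the explicit formula $2^{-j}\binom{j}{j/2}$, and you also spell out the integrability bookkeeping that the paper leaves implicit.
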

\begin{proof}
    We first note that
    \begin{equation}\label{eq:moment-id}
    \EE T_j(\cos(2\pi Y))=\EE \cos(2\pi jY)=\int_0^1\cos(2\pi jy)dy=0=\EE T_j(X).
    \end{equation}
    for every positive integer $j$.
    Since the polynomials $T_j(x)$ span $\R[x]$, we conclude from taking linear combinations of \eqref{eq:moment-id} that
    \[\EE X^j=\EE \cos(2\pi Y)^j\]
    for every nonnegative integer $j$. (We also use here the trivial fact that $X$ and $\cos(2\pi Y)$ have the same zero-th moment.)
    We may explicitly compute
    \[\EE\cos(2\pi Y)^j=\begin{cases}2^{-j}\binom j{j/2}&\text{if }2\mid j\\0&\text{otherwise.}\end{cases}\]
    \cref{thm:carleman} thus tells us that these moments uniquely determine the distribution $\cos(2\pi Y)$, and the result follows.
\end{proof}

\begin{lemma}\label{lem:discrete-moment-method}
    Let $S$ be a finite set and let $X$ and $Y$ be two probability distributions whose supports are contained in $S$.
    Suppose that
    \[\EE T_j(X)=\EE T_j(Y)\]
    for each $1\leq j<\abs{S}$.
    Then $X$ and $Y$ have the same distribution.
\end{lemma}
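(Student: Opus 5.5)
The plan is to reduce the statement to linear algebra on the finite-dimensional space of functions on $S$. Write $s=\abs{S}$ and let $p_X,p_Y\colon S\to[0,1]$ be the probability mass functions of $X$ and $Y$. Set $q=p_X-p_Y$. We aim to show $q\equiv 0$.

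First we record what is known about $q$. The zeroth Chebyshev polynomial is $T_0=1$, and both distributions have total mass $1$, so $\sum_{x\in S}q(x)T_0(x)=0$. The hypothesis gives $\sum_{x\in S}q(x)T_j(x)=\EE T_j(X)-\EE T_j(Y)=0$ for each $1\leq j\leq s-1$. Since $T_j$ has degree exactly $j$ with nonzero leading coefficient $2^{j-1}$ (for $j\geq1$), the polynomials $T_0,\ldots,T_{s-1}$ span the space $\R[x]_{<s}$ of polynomials of degree less than $s$. We conclude that $\sum_{x\in S}q(x)P(x)=0$ for every polynomial $P$ of degree at most $s-1$.

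Next we apply Lagrange interpolation. For each $x_0\in S$, take $P$ to be the interpolating polynomial $\prod_{y\in S\setminus\{x_0\}}(x-y)$. It has degree $s-1$, vanishes on $S\setminus\{x_0\}$, and is nonzero at $x_0$. Plugging it into the identity above gives $q(x_0)P(x_0)=0$, so $q(x_0)=0$. Equivalently, the Vandermonde matrix $(x^j)_{x\in S,\,0\leq j<s}$ is invertible, so the linear functionals $q\mapsto\sum_x q(x)x^j$ for $0\leq j<s$ determine $q$.

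Hence $p_X=p_Y$, and $X$ and $Y$ have the same distribution. There is no real obstacle here. The only points needing care are that the $j=0$ condition, which is not among the hypotheses, comes for free from normalization, and that the range $1\leq j<\abs{S}$ together with $T_0$ gives exactly the $\abs{S}$ conditions that the interpolation count requires.
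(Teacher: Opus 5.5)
Your proof is correct and is essentially the paper's argument: use that $T_0,\ldots,T_{\abs{S}-1}$ span the polynomials of degree less than $\abs{S}$ to get equality of the moments $\EE X^j=\EE Y^j$ for $0\leq j<\abs{S}$, then conclude from invertibility of the Vandermonde matrix. Your Lagrange-interpolation phrasing is just an explicit form of that invertibility. You also make explicit that the $j=0$ condition comes from normalization, which the paper uses only implicitly.
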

\begin{proof}
    Let $n=\abs{S}$ and enumerate $S=\{x_1,\ldots,x_n\}$.
    For $1\leq i\leq n$, let $a_i$ (resp.\ $b_i$) be the probability that $X=x_i$ (resp.\ $Y=x_i$). 
    Since the polynomials $T_0,T_1,\ldots,T_{n-1}$ span the space of polynomials of degree at most $n-1$, we have
    \[\sum_{i=1}^na_ix_i^j=\EE X^j=\EE Y^j=\sum_{i=1}^nb_ix_i^j\]
    for each $0\leq j<n$. Since the Vandermonde matrix $(x_i^j)_{1\leq i\leq n,0\leq j<n}$ is non-singular, we conclude that $a_i=b_i$ for all $i$, as desired.
\end{proof}

We will apply \cref{lem:discrete-moment-method} to $Y\sim\cD_n^{\mathrm p}$ for various $n$ and $\mathrm p$; to this end, we compute its ``Chebyshev moments.'' 

\begin{lemma}\label{lem:nice-moments}
    The distributions $\cD_n^{\mathrm p}$ satisfy
    \[\EE_{X\sim\cD_n^{\mathrm p}}T_j(X)=0\]
    for each $1\leq j<n$.
\end{lemma}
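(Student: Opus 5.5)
The plan is to compute each Chebyshev moment directly from the defining property $T_j(\cos x)=\cos(jx)$ and reduce it to a geometric sum of roots of unity. In both cases $X=\cos(\theta_t)$ with $\theta_t$ an arithmetic progression of step $2\pi/n$ indexed by $t\in\Z/n\Z$:
\begin{itemize}
  \item for $\mathrm p=\mathrm{even}$, $\theta_t=2\pi t/n$;
  \item for $\mathrm p=\mathrm{odd}$, $\theta_t=\pi(2t+1)/n=\pi/n+2\pi t/n$.
\end{itemize}
We write $\theta_t=c+2\pi t/n$ with $c\in\{0,\pi/n\}$. Note that $\cos(\theta_t)$ is well defined on $\Z/n\Z$. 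For $\mathrm{odd}$, replacing $t$ by $t+n$ shifts the angle by $2\pi$, so no ambiguity arises.

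With this, for $1\le j<n$ we would write
\[
\EE_{X\sim\cD_n^{\mathrm p}}T_j(X)=\frac1n\sum_{t\in\Z/n\Z}\cos(j\theta_t)=\frac1n\,\Re\Big(e^{ijc}\sum_{t=0}^{n-1}\omega^{jt}\Big),\qquad \omega=e^{2\pi i/n}.
\]
Since $1\le j<n$, the number $\omega^j$ is an $n$-th root of unity different from $1$. Hence $\sum_{t=0}^{n-1}\omega^{jt}=\frac{\omega^{jn}-1}{\omega^j-1}=0$, and the moment vanishes.

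There is essentially no obstacle here. The only point needing care is the justification that the sum over $t\in\Z/n\Z$ is well defined. In the $\mathrm{odd}$ case, $j\theta_{t+n}=j\theta_t+2\pi j$, so $\cos(j\theta_t)$ depends only on $t \bmod n$. This lets us sum over the representatives $t=0,\dots,n-1$. The condition $j<n$ is exactly what excludes $\omega^j=1$; indeed for $j=n$ the moment is $\cos(nc)$, which equals $1$ in the even case and $-1$ in the odd case, consistent with the stated range.
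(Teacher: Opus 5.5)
Your proof is correct and is essentially the paper's argument. The paper writes $X$ as $\Re Z$ with $Z$ uniform among the $n$ complex solutions of $z^n=(-1)^{\mathrm p}$, so that $T_j(X)$ is distributed as $\Re Z^j$, and then uses invariance of $Z$ under rotation by $e^{2\pi i/n}$ to get $\mathbb{E}Z^j=0$ for $1\le j<n$; your explicit geometric sum $\sum_t\omega^{jt}=0$ is the same fact computed directly rather than via symmetry.
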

\begin{proof}
    Let $Z$ denote a uniformly random complex solution to $z^n=(-1)^{\mathrm p}$. We have $X\sim\Re Z$. Moreover, since $Z$ is supported on the unit circle, we have $T_j(X)\sim\Re Z^j$. Since the distribution of $Z$ is symmetric under rotation by $e^{2\pi i/n}$, we have $\EE Z^j=0$ for all $1\leq j<n$, from which the result follows.
\end{proof}

\subsection{The \texorpdfstring{$\1$}{1}-component}

Here, we prove \cref{lem:1-comp}.
We begin by providing a simple observation about functions with distribution $\cD_n^{\mathrm p}$ which will enable us to utilize the assumption that $\abs{V(G)}$ is odd.

\begin{obs}\label{obs:div}
    If there is a function $f\colon V(G)\to\R$ for which $\cD(f)=\cD_n^{\mathrm{even}}$, then $n$ divides $\abs{V(G)}$. 

    If, instead, there is a function $f\colon V(G)\to\R$ for which $\cD(f)=\cD_n^{\mathrm{odd}}$, then $n$ odd implies that $n$ divides $\abs{V(G)}$, while $n$ even implies that $n/2$ divides $\abs{V(G)}$.
\end{obs}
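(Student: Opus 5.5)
The statement is a counting fact, so I will simply compute how many vertices $v$ have $f(v)$ equal to each point of the support, and use that these counts are integers. Write $N=\abs{V(G)}$. If $\cD(f)=\cD_n^{\mathrm p}$, then for each real number $x$ we have
\[\#\{v\in V(G): f(v)=x\}=N\cdot\Pr_{X\sim\cD_n^{\mathrm p}}[X=x].\]
Here $X$ is distributed as $\cos(\theta)$ with $\theta$ uniform on the $n$ points $\pi(2t)/n$ (even case) or $\pi(2t+1)/n$ (odd case), for $t\in\Z/n\Z$. The left-hand side is a nonnegative integer. So it is enough to find, in each case, one support point $x$ whose probability is $1/n$ (forcing $n\mid N$) or $2/n$ (forcing $(n/2)\mid N$).

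\emph{Even case.} The angle $\theta=0$ (take $t=0$) is the only angle in the set with $\cos\theta=1$. Hence $\Pr[X=1]=1/n$, and $N/n$ must be an integer, so $n\mid N$.

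\emph{Odd case, $n$ odd.} Take $t=(n-1)/2$, which gives $\theta=\pi$. The value $-1$ is attained only at $\theta=\pi$, so $\Pr[X=-1]=1/n$ and again $n\mid N$.

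\emph{Odd case, $n$ even.} Neither $0$ nor $\pi$ lies among the angles $\pi(2t+1)/n$. The map $\theta\mapsto-\theta$ is a fixed-point-free involution on this angle set, since $\theta\equiv-\theta$ would force $\theta\in\{0,\pi\}$. On $[0,\pi]$ the cosine is injective, so each value of $X$ is attained by exactly two angles. Therefore every support point has probability exactly $2/n$. This gives $2N/n\in\Z$, that is, $(n/2)\mid N$.

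There is no real obstacle here. The only point needing care is checking the multiplicity of each cosine value, namely whether $\theta=0$ or $\theta=\pi$ belongs to the angle set, and this depends on the parities of $n$ and $\mathrm p$.
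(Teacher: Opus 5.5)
Your counting argument is correct and complete. The paper states this observation without any proof, and the justification it leaves implicit is exactly yours: in the even case the value $1$, and in the odd case with $n$ odd the value $-1$, has mass $1/n$ under $\cD_n^{\mathrm p}$, while in the odd case with $n$ even every support point has mass $2/n$; then $\abs{V(G)}$ times each of these masses must be an integer.
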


Let $\varphi\in\mathcal B\setminus\{\1\}$ be an element of a $2$-product eigenbasis taking $k$ values, where we write $k=\infty$ in the manifold setting. 
(Since our manifolds are assumed to be connected and all eigenfunctions are continuous, every eigenfunction besides the constant function $\1$ in the manifold setting takes infinitely many values.)
Recall that we have assumed $k\geq 7$, and wish to describe the $\1$-component of the auxiliary graph $H_\varphi$.

\begin{proofwithclaims}{lem:1-comp}
    By \cref{obs:max-deg-2}, the $\1$-component of $H_\varphi$ is either a finite path with no self-loops, a finite path with a self-loop at one end, or a uni-infinite path with no self-loop at the finite end.

    One end of this path (without self-loop) is the vertex $\varphi_0:=\1$; let the other vertices be $\varphi=\varphi_1,\ldots,\varphi_\ell$ in order, normalized such that $\norm{\varphi_j}_{L^2}^2=1/2$ for each $1\leq j\leq \ell$. (By abuse of notation, we write $\ell=\infty$ if the $\1$-component is a uni-infinite path.
    
    For some $a_1,\ldots,a_{\ell-1}\in\R\setminus\{0\}$ and $a_\ell\in\R$, we have
    \begin{align}
        \label{eq:phi-0}
        2\varphi\varphi_0&=2\varphi\\
        \label{eq:phi-1}
        2\varphi\varphi_1&=1+a_1\varphi_2\\
        \label{eq:phi-2}
        2\varphi\varphi_2&=a_1\varphi_1+a_2\varphi_3\\
        \notag&\vdots\\
        \label{eq:phi-ell-1}
        2\varphi\varphi_{\ell-1}&=a_{\ell-2}\varphi_{\ell-2}+a_{\ell-1}\varphi_\ell\\
        \label{eq:phi-ell}2\varphi\varphi_\ell&=a_{\ell-1}\varphi_{\ell-1}+a_\ell\varphi_\ell.
    \end{align}
    By successively swapping the signs of $\varphi_j$ for $j=2,\ldots,\ell$, we can enforce that $a_1,a_2,\ldots,a_{\ell-1}>0$. By induction on $j$, we see that $\varphi_j$ is a polynomial in $\varphi$ of degree exactly $j$.

    We first observe that we can compute $\ell$.
    
    \begin{claim}\label{cl:1-comp-len}
        If $k<\infty$, then $\ell=k-1$.
        If $k=\infty$, then $\ell=\infty$.
    \end{claim}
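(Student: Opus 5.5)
We prove \cref{cl:1-comp-len} by comparing the dimension of $\R[\varphi]$ with the number of vertices in the $\1$-component. The vertices $\varphi_0,\varphi_1,\ldots,\varphi_\ell$ are nonzero, pairwise orthogonal, and each lies in $\R[\varphi]$. The relations \eqref{eq:phi-0}--\eqref{eq:phi-ell} show that their span is closed under multiplication by $\varphi$. Since it contains $\1$, this span is exactly $\R[\varphi]$. Hence
\[\ell+1=\dim\R[\varphi]\]
when $\ell<\infty$. When $\ell=\infty$, the span contains polynomials of every degree, so $\R[\varphi]$ is infinite-dimensional.

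It then suffices to compute $\dim\R[\varphi]$ as a space of functions. In the graph case, a polynomial $p$ vanishes as a function on $V(G)$ exactly when $p$ vanishes on $\im\varphi$. So $\R[\varphi]$ is isomorphic to the quotient of $\R[x]$ by the ideal of polynomials vanishing on a set of $k$ points, and $\dim\R[\varphi]=k$. This gives $\ell+1=k$, i.e.\ $\ell=k-1$.

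The case $\ell=\infty$ cannot occur when $k<\infty$: the $\varphi_j$ would then be infinitely many linearly independent elements of a $k$-dimensional space. Alternatively, if $\ell=\infty$, the polynomial $\varphi_k$ of degree exactly $k$ lies in $\R[\varphi]$, which is already spanned by $\varphi_0,\ldots,\varphi_{k-1}$, contradicting orthogonality.

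In the manifold case, $\varphi$ is continuous and nonconstant on a connected space, so $\im\varphi$ is an infinite interval. No nonzero polynomial vanishes on $\im\varphi$, so $\R[\varphi]$ is infinite-dimensional. The span of finitely many $\varphi_j$ is finite-dimensional, so $\ell$ cannot be finite, and therefore $\ell=\infty$.

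The key step is the identification of $\operatorname{span}\{\varphi_0,\ldots,\varphi_\ell\}$ with $\R[\varphi]$. This needs closure under multiplication by $\varphi$, which holds because the $\1$-component is a connected component of $H_\varphi$. In particular, in \eqref{eq:phi-ell} the term $a_\ell\varphi_\ell$ (the possible self-loop) keeps the product inside the span, and no vertex outside the component appears.
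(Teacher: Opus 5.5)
Your proof is correct and is essentially the paper's argument. The paper gets the lower bound $k\geq\ell+1$ from linear independence of $\varphi_0,\ldots,\varphi_\ell$ and the upper bound from the degree-$(\ell+1)$ polynomial relation \eqref{eq:phi-ell}; your identification of $\operatorname{span}\{\varphi_0,\ldots,\varphi_\ell\}$ with $\R[\varphi]$, together with $\dim\R[\varphi]=\abs{\im\varphi}$, is the same computation phrased in terms of dimension.
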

    \begin{proofofclaim}
    Since $\varphi_0,\ldots,\varphi_\ell$ are linearly independent polynomials in $\varphi$ of degree $0,1,\ldots,\ell$, the function $\varphi$ does not satisfy a polynomial equation of degree strictly less than $\ell+1$. Conversely, \eqref{eq:phi-ell} gives a polynomial equation of degree $\ell+1$ which $\varphi$ satisfies, so $\varphi$ can take at most $\ell+1$ values.
    \end{proofofclaim}

    \begin{claim}\label{cl:1-comp-a}
        Suppose that $\ell\geq 6$.
        Then we have $a_1=\cdots=a_{\ell-2}=1$.
    \end{claim}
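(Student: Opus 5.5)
The plan is to squeeze the constraint out of the $2$-product property applied to products $\varphi_i\varphi_j$ with $i,j\geq 2$. All of these lie in $\R[\varphi]$, so they can be expanded along the three-term recurrence \eqref{eq:phi-1}--\eqref{eq:phi-ell}. The convention is $a_0:=1$, so \eqref{eq:phi-1} reads $2\varphi\varphi_1=a_0\varphi_0+a_1\varphi_2$.

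Two facts are used throughout. First, $\varphi_j$ is a polynomial in $\varphi$ of degree exactly $j$. Second, for indices $j+1\leq \ell$ the recurrence is the symmetric one, so the self-loop coefficient $a_\ell$ never enters. This holds because every computation below only reaches $\varphi_m$ with $m\leq 6\leq \ell$, through equations \eqref{eq:phi-0}--\eqref{eq:phi-ell-1}.

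\emph{Step 1: a local relation.} From \eqref{eq:phi-1} we get $\varphi_2=(2\varphi^2-1)/a_1$. Fix $j$ with $2\leq j\leq \ell-2$. Applying the recurrence twice gives
\[2\varphi^2\varphi_j=\tfrac12\big(a_{j-2}a_{j-1}\varphi_{j-2}+(a_{j-1}^2+a_j^2)\varphi_j+a_ja_{j+1}\varphi_{j+2}\big),\]
where $\varphi_{j-2}$ is to be read as $\varphi_0=\1$ when $j=2$. Hence
\[a_1\varphi_2\varphi_j=\tfrac12 a_{j-2}a_{j-1}\varphi_{j-2}+\big(\tfrac12(a_{j-1}^2+a_j^2)-1\big)\varphi_j+\tfrac12 a_ja_{j+1}\varphi_{j+2}.\]
Since $j+1\leq \ell-1$, the outer coefficients are nonzero. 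The basis elements $\varphi_{j-2},\varphi_j,\varphi_{j+2}$ are distinct, so the $2$-product hypothesis forces the middle coefficient to vanish:
\[a_{j-1}^2+a_j^2=2\qquad(2\leq j\leq \ell-2).\]
Consequently the squares alternate: $a_1^2=a_3^2=\cdots=x$ and $a_2^2=a_4^2=\cdots=2-x$, for all indices up to $\ell-2$.

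\emph{Step 2: closing the alternation.} This step needs a relation linking an odd-indexed $a$ to an even-indexed one, and it is where $\ell\geq 6$ is used. I would expand $\varphi_3^2$ using $\varphi_3=(2\varphi\varphi_2-a_1\varphi_1)/a_2$, which gives $a_2\varphi_3^2=2\varphi\,\varphi_2\varphi_3-a_1\varphi_1\varphi_3$. By Step 1 with $j=3$,
\[\varphi_2\varphi_3=\tfrac{a_2}{2}\varphi_1+\tfrac{a_3a_4}{2a_1}\varphi_5 .\]
Together with $2\varphi_1\varphi_3=a_2\varphi_2+a_3\varphi_4$, this expresses $\varphi_3^2$ in terms of $\varphi_0,\varphi_2,\varphi_4,\varphi_6$. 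The $\varphi_2$-coefficients cancel. The constant term is $\langle\varphi_3^2,\1\rangle=\tfrac12\neq0$. The $\varphi_6$-coefficient is a nonzero multiple of $a_3a_4a_5$, and it is nonzero because $\ell\geq6$. So the $2$-product property forces the $\varphi_4$-coefficient
\[\frac1{a_2}\Big(\frac{a_3a_4^2}{2a_1}-\frac{a_1a_3}{2}\Big)\]
to vanish. That gives $a_4^2=a_1^2$, i.e.\ $2-x=x$, so $x=1$. Since every $a_i$ is positive by our sign normalization, $a_1=\cdots=a_{\ell-2}=1$.

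The main obstacle is Step 2. Step 1 alone only yields the one-parameter family of alternating sequences with $a_{\rm odd}^2+a_{\rm even}^2=2$, and a further product has to be found that sees across parities. The other thing to double-check is the index bookkeeping: each nonzero-coefficient claim must use only indices where $a_i\neq0$ is guaranteed, namely $i\leq \ell-1$.
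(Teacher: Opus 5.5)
Your proof is correct and follows the paper's argument essentially step for step. You get $a_{j-1}^2+a_j^2=2$ from the $\varphi_j$-coefficient of $\varphi_2\varphi_j$ for $2\leq j\leq \ell-2$, and the cross-parity relation $a_4^2=a_1^2$ from the $\varphi_4$-coefficient of $\varphi_3^2$, where $\ell\geq 6$ makes the $\varphi_6$-term nonzero. One cosmetic slip: Step 1 reaches $\varphi_{j+2}$ with $j+2$ as large as $\ell$, not only $m\leq 6$. Your actual justification still holds, since only $2\varphi\varphi_i$ with $i\leq \ell-1$ is used and $a_\ell$ never enters.
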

    \begin{proofofclaim}
        Let $j$ be a positive integer. Using \eqref{eq:phi-1}, we have $\varphi_2=a_1^{-1}(2\varphi^2-1)$. Let $3\leq j\leq\ell-2$. We may compute
        \begin{align*}
        2a_1\varphi_2\varphi_j
        =2(2\varphi^2-1)\varphi_j
        &=2\varphi(2\varphi\varphi_j)-2\varphi_j\\
        &=2\varphi(a_{j-1}\varphi_{j-1}+a_j\varphi_{j+1})-2\varphi_j\\
        &=a_{j-1}(a_{j-2}\varphi_{j-2}+a_{j-1}\varphi_j)+a_j(a_j\varphi_j+a_{j+1}\varphi_{j+2})-2\varphi_j\\
        &=a_{j-1}a_{j-2}\varphi_{j-2}+(a_{j-1}^2+a_j^2-2)\varphi_j+a_ja_{j+1}\varphi_{j+2}.
        \end{align*}
        We can perform a similar computation for $j=2$; the result reads
        \[2a_1\varphi_2^2=a_1+(a_1^2+a_2^2-2)\varphi_2+a_2a_3\varphi_4.\]
        Applying the $2$-product property of $\cB$ to each of these identities, we must have $a_{j-1}^2+a_j^2=2$ for each $2\leq j\leq\ell-2$. In particular, since $a_j>0$ for each $1\leq j<\ell$, we have $a_1=a_3=a_5=\cdots$ and $a_2=a_4=\cdots$. We may also compute
        \begin{align*}
        2a_1a_2\varphi_3^2
        &=2a_1(2\varphi\varphi_2-a_1\varphi)\varphi_3\\
        &=2\varphi(2a_1\varphi_2\varphi_3)-a_1^2(2\varphi\varphi_3)\\
        &=2\varphi(a_1a_2\varphi+a_3a_4\varphi_5)-a_1^2(a_2\varphi_2+a_3\varphi_4)\\
        &=a_1a_2(1+a_1\varphi_2)+a_3a_4(a_4\varphi_4+a_5\varphi_6)-a_1^2a_2\varphi_2-a_1^2a_3\varphi_4\\
        &=a_1a_2+(a_3a_4^2-a_1^2a_3)\varphi_4+a_3a_4a_5\varphi_6.
        \end{align*}
        Applying the $2$-product property of $\cB$, we obtain $a_3a_4^2=a_1^2a_3$, and thus $a_1=a_4$. We conclude that $a_1=a_2=\cdots=a_{\ell-2}=1$, as desired.
    \end{proofofclaim}

    Using \cref{cl:1-comp-a}, we can show by a simple induction that $\varphi_j=T_j(\varphi)$ for each $1\leq j\leq\ell-1$.
    In the manifold setting, this is enough to conclude the proof of \cref{lem:1-comp}; the random variable $X=\varphi(x)$ for $x$ chosen according the probability measure arising from the volume form satisfies $\EE T_j(X)=0$ for every $j\geq1$, and so the fact that $\cD(\varphi)=\cD^{\mathrm{cont}}$ follows from \cref{lem:moment-method}.
    
    In the graph setting, however, there is more to do. In particular, we must understand $a_{\ell-1}$ and $a_\ell$. We compute, similarly to the proof of \cref{cl:1-comp-a}, that
    \begin{align*}
    2\varphi_2\varphi_{\ell-1}
    &=a_{\ell-3}a_{\ell-2}\varphi_{\ell-3}+(a_{\ell-2}^2+a_{\ell-1}^2-2)\varphi_{\ell-1}+a_{\ell-1}a_\ell\varphi_\ell\\
    &=\varphi_{\ell-3}+(a_{\ell-1}^2-1)\varphi_{\ell-1}+a_{\ell-1}a_\ell\varphi_\ell.
    \end{align*}
    The $2$-product property thus implies that either $a_{\ell-1}=1$ or $a_\ell=0$. We additionally have
    \begin{align*}
        2\varphi_2\varphi_\ell
        &=2\varphi(2\varphi\varphi_\ell)-2\varphi_\ell\\
        &=2\varphi(a_{\ell-1}\varphi_{\ell-1}+a_\ell\varphi_\ell)-2\varphi_\ell\\
        &=a_{\ell-1}(a_{\ell-2}\varphi_{\ell-2}+a_{\ell-1}\varphi_\ell)+a_\ell(a_{\ell-1}\varphi_{\ell-1}+a_\ell\varphi_\ell)-2\varphi_\ell\\
        &=a_{\ell-1}a_{\ell-2}\varphi_{\ell-2}+a_{\ell-1}a_\ell\varphi_{\ell-1}+(a_{\ell-1}^2+a_\ell^2-2)\varphi_\ell.
    \end{align*}
    The $2$-product property here implies that $a_\ell=0$ or $a_{\ell-1}^2+a_\ell^2=2$. 
    Putting these together, we conclude that either $a_\ell=0$ or that $a_{\ell-1}=1$ and $a_\ell=\pm 1$. If $a_\ell=0$, write $b=a_{\ell-1}^2-2$ for notational simplicity. We compute
    \begin{align*}
    2\varphi_3\varphi_{\ell-1}
    &=2\varphi(2\varphi_2\varphi_{\ell-1}-\varphi_{\ell-1})\\
    &=2\varphi(\varphi_{\ell-3}+b\varphi_{\ell-1})\\
    &=a_{\ell-4}\varphi_{\ell-4}+(a_{\ell-3}+a_{\ell-2}b)\varphi_{\ell-2}+a_{\ell-1}b\varphi_\ell\\
    &=\varphi_{\ell-4}+(b+1)\varphi_{\ell-2}+a_{\ell-1}b\varphi_\ell.
    \end{align*}
    The $2$-product property thus gives that $b\in\{-1,0\}$, whence $a_{\ell-1}\in\{1,\sqrt2\}$. We conclude that
    \[(a_{\ell-1},a_\ell)\in\{(1,0),(\sqrt2,0),(1,1),(1,-1)\}.\]
    Each of these four cases will yield one of the four cases in the lemma statement.
    \begin{enumerate}[(a)]
        \item Suppose $a_{\ell-1}=1$ and $a_\ell=0$. Then $\varphi_\ell=T_\ell(\varphi)$, and \eqref{eq:phi-ell} gives that
        \[2\varphi T_\ell(\varphi)=T_{\ell-1}(\varphi)\implies T_{\ell+1}(\varphi)=0.\]
        By \cref{cl:1-comp-len}, this is equivalent to $T_k(\varphi)=0$.

        To show the distributional claim, we note that $\cD(\varphi)$ is supported on the zero set of $T_k$. This set of roots is exactly the support of $\cD_{2k}^{\mathrm{odd}}$. Moreover, for each $1\leq j<k$, we have
        \[\int T_j(\varphi)=\langle T_j(\varphi),\1\rangle=0.\]
        We conclude from \cref{lem:discrete-moment-method,lem:nice-moments} that $\cD(\varphi)=\cD_{2k}^{\mathrm{odd}}$.

        \item Suppose $a_{\ell-1}=a_\ell=1$. Then $\varphi_\ell=T_\ell(\varphi)$, and \eqref{eq:phi-ell} gives that
        \[T_{\ell-1}(\varphi)+T_{\ell+1}(\varphi)=2\varphi T_\ell(\varphi)=T_{\ell-1}(\varphi)+T_\ell(\varphi),\]
        and so $T_{\ell+1}(\varphi)=T_\ell(\varphi)$. 
        By \cref{cl:1-comp-len}, this is equivalent to $T_k(\varphi)=T_{k-1}(\varphi)$.
        
        We thus have that $\cD(\varphi)$ is supported on the zero set of $T_k-T_{k-1}$, which is exactly the set of values $\cos(2\pi t/(2k-1))$ for $0\leq t<2k-1$. This set of roots is exactly the support of $\cD_{2k-1}^{\mathrm{even}}$. Moreover, we have $\int T_j(\varphi)=0$ for all $1\leq j<k$.
        We conclude from \cref{lem:discrete-moment-method,lem:nice-moments} that $\cD(\varphi)=\cD_{2k-1}^{\mathrm{even}}$.

        \item Suppose $a_{\ell-1}=1$ and $a_\ell=-1$. Then $\varphi_\ell=T_\ell(\varphi)$, and \eqref{eq:phi-ell} gives that
        \[T_{\ell-1}(\varphi)+T_{\ell+1}(\varphi)=2\varphi T_\ell(\varphi)=T_{\ell-1}(\varphi)-T_\ell(\varphi),\]
        and so $T_k(\varphi)=-T_{k-1}(\varphi)$.

        We thus have that $\cD(\varphi)$ is supported on the zero set of $T_k+T_{k-1}$. If we had instead run the proof for $-\varphi$ instead of $\varphi$, we would instead have ended up in the previous case. Applying the conclusion in that case and negating $\varphi$ gives the conclusion in this case.

        \item Suppose $a_{\ell-1}=\sqrt2$ and $a_\ell=0$. Then $\varphi_\ell=(1/\sqrt2)T_\ell(\varphi)$, and \eqref{eq:phi-ell} gives that
        \[T_{\ell+1}(\varphi)+T_{\ell-1}(\varphi)=2\varphi T_\ell(\varphi)=2\sqrt2\varphi\varphi_\ell=2T_{\ell-1}(\varphi),\]
        and so $T_k(\varphi)=T_{k-2}(\varphi)$.

        We thus have that $\cD(\varphi)$ is supported on the zero set of $T_k-T_{k-2}$, which is exactly the set of values $\cos(2\pi t/(2k-2))$ for $0\leq t<2k-2$. Moreover, we have $\int T_j(\varphi)=0$ for all $1\leq j<k$. We conclude from \cref{lem:discrete-moment-method,lem:nice-moments} that $\cD(\varphi)\sim\cD_{2k-2}^{\mathrm{even}}$. Since $\abs{V(G)}$ is odd, \cref{obs:div} implies that this cannot occur. \qedhere
    \end{enumerate}
\end{proofwithclaims}

We now record some additional information about the distributions $\cD_n^{\mathrm p}$, and the corresponding consequences for eigenfunctions with large image size, which will prove useful later.

\begin{lemma}\label{lem:1-comp-cor}
    Suppose we are in the graph setting, and let $\varphi\in\cB$ have image size $k\geq 7$. 
    Then the following hold:
    \begin{enumerate}
        \item For each integer $m$, the function $T_m(\varphi)$ is either identically zero, a basis element, or the negation of a basis element.

        \item The maximum point probability of $\cD(\varphi)$ is at most $1/(k-1)$. 
    \end{enumerate}
\end{lemma}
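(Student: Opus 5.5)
The plan is to read both parts off \cref{lem:1-comp}. That lemma applies because $k\geq 7$. It tells us that, up to sign, the $\1$-component of $H_\varphi$ consists of $T_0(\varphi),\dots,T_{k-1}(\varphi)$. It also tells us that exactly one of three relations holds: (a) $T_k(\varphi)=0$; (b) $T_k(\varphi)=T_{k-1}(\varphi)$; or (c) $T_k(\varphi)=-T_{k-1}(\varphi)$.

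For part (1), the case $0\leq m\leq k-1$ is immediate, since $T_m(\varphi)$ is a vertex of the $\1$-component. The case $m<0$ reduces to $m\geq 0$ via $T_{-m}=T_m$. For general $m$, I would work with the values. By \cref{lem:1-comp}, $\varphi$ takes values $\cos\theta$ with $\theta$ ranging over
\begin{itemize}
\item $\pi(2t+1)/(2k)$ in case (a),
\item $2\pi t/(2k-1)$ in case (b),
\item $\pi(2t+1)/(2k-1)$ in case (c).
\end{itemize}
At such a point $T_m(\varphi)=\cos(m\theta)$.

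\begin{itemize}
\item In case (b), $\cos(m\theta)$ depends only on $m$ modulo $2k-1$ and is even in $m$. So $T_m(\varphi)=T_{m'}(\varphi)$ for the unique $m'\in\{0,\dots,k-1\}$ with $m\equiv\pm m'\pmod{2k-1}$.
\item In case (c), replacing $m$ by $m+(2k-1)$ multiplies $\cos(m\theta)$ by $\cos((2k-1)\theta)=-1$, and evenness still holds. So $T_m(\varphi)=\pm T_{m'}(\varphi)$ with $m'\in\{0,\dots,k-1\}$.
\item In case (a), shifting $m$ by $2k$ multiplies by $\cos(2k\theta)=-1$, and $T_{2k-m}(\varphi)=-T_m(\varphi)$. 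Hence $T_m(\varphi)\in\{\pm T_{m'}(\varphi)\}$ for some $m'\in\{0,\dots,k\}$, and $T_k(\varphi)=0$ covers the remaining index $m'=k$.
\end{itemize}

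These period and reflection identities hold pointwise on the support of $\cD(\varphi)$, hence identically as functions on $V(G)$. This gives (1).

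For part (2), I would compute point probabilities of the explicit distributions $\cD_n^{\mathrm p}$. A value $\cos\theta$ is attained by at most two residues $t$, namely $\theta$ and $-\theta$, so each atom has mass at most $2/n$. Here $n\in\{2k,2k-1\}$. So the maximum atom is at most $2/(2k-1)$. This is at most $1/(k-1)$, since $2(k-1)\leq 2k-1$; in case (a) one even gets $1/k$.

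The main obstacle is the bookkeeping in (1): checking that the reflection maps each residue into the range $0,\dots,k-1$ (or $k$ in case (a)) with the correct sign, especially in case (c). I would verify this directly from the explicit angle sets above rather than through recurrences.
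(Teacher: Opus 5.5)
Your proposal is correct and follows essentially the same route as the paper: use \cref{lem:1-comp} to write $\varphi=\cos\theta$ over the explicit angle sets, reduce $m$ by (anti)periodicity and evenness of $\cos(m\theta)$ to the range $0\le m\le k$ covered by that lemma, and bound each atom of $\cD_n^{\mathrm p}$ by $2/n$. The only difference is that the paper does the reduction uniformly, noting that $[-k,k]$ contains a complete residue system modulo $n\le 2k$, whereas you split into the three cases (a), (b), (c).
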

\begin{proof}
    To prove (1), let $\mathrm p\in\{\mathrm{odd},\mathrm{even}\}$ and $n\in\{2k-2,2k-1,2k\}$ be such that $\cD(\varphi)=\cD_n^{\mathrm p}$, guaranteed to exist by \cref{lem:1-comp}(2). We have some function
    \[f\colon V(G)\to\{j\in[0,2n):j\text{ of parity }\mathrm p\}\]
    such that $\varphi(v)=\cos(\pi f(v)/n)$.

    For each positive integer $m$, we have
    \begin{equation}\label{eq:iter-cos}
        (T_m\varphi)(v)=\cos(\pi mf(v)/n).
    \end{equation}
    If $\mathrm p=\mathrm{even}$, the right side of \eqref{eq:iter-cos} is periodic in $m$ with period $n$, while if $\mathrm p=\mathrm{odd}$, the right side of \eqref{eq:iter-cos} is periodic in $m$ with period $2n$ and is negated by adding $n$ to $m$. 
    So, in either case, it suffices to prove (1) for one value $m$ in each residue class modulo $n$.
    
    From \cref{lem:1-comp}(1), the conclusion (1) holds for $0\leq m<k$, and by \cref{lem:1-comp}(2), which writes $T_k(\varphi)$ in the requisite form, the conclusion (1) holds for $m=k$ as well.
    Moreover, we have that $T_m\varphi=T_{-m}\varphi$. 
    So, we have (1) for $-k\leq m\leq k$. Since $n\leq 2k$, this interval covers a complete residue system modulo $n$.

    Part (2) follows from the explicit descriptions of $\cD_n^{\mathrm p}$ and the fact that $t\mapsto \cos(t\pi/n)$ takes each value at most twice as $t$ ranges through a residue class modulo $2n$.
\end{proof}

\subsection{Degenerate possibilities}

Here, we prove \cref{lem:large-comp}.
This subsection is distinguished from the following subsection in that the arguments are primarily ``soft'' and do not require much of the global structure of the components of $H_\varphi$.
Given $T_\ell(\varphi)\in\cB$ for each $\ell<\abs{\im\varphi}$ the arguments in this section run without issue (albeit with worse quantitative behavior) when generalized to the $N$-product setting.

We begin by dispatching with the manifold case of \cref{lem:large-comp}, which is fairly simple and contains some of the ideas used in the remainder of the section.

\begin{proof}[Proof of \cref{lem:large-comp}, manifold case]
    We must show that all components of $H_\varphi$ are infinite. Suppose for the sake of contradiction that $H_\varphi$ has a finite component with vertices $\psi_1,\ldots,\psi_\ell$.
    Pointwise multiplication by $\varphi$ is a linear map preserving the vector space $\R\psi_1\oplus\cdots\oplus\R\psi_\ell$.
    We conclude that, for some polynomial $Q$ of degree $\ell$ (the characteristic polynomial of this linear map), we have $Q(\varphi)\psi_1=0$ identically, so either $Q(\varphi)$ or $\psi_1$ must vanish on some open set.
    The function $\psi_1$, as it is a nonzero eigenfunction, cannot vanish on an open set, so we have $Q(\varphi)=0$. Since the image of $\varphi$ is a closed interval, we thus have that $\varphi$ is constant. This cannot occur, and so we are done.
\end{proof}

We now restrict attention to the graph case.
We proceed with the help of some auxiliary lemmas.

\begin{lemma}\label{lem:varied-or-large}
    Let $\varphi\in\cB$ have image size $k$.
    Let $m\geq2$ be a positive integer and suppose that $k>4m^3+1$. Then each connected component of $H_\varphi$ is either a singleton or contains some eigenfunction of image size at least $m$.
\end{lemma}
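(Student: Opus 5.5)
The approach is to fix a non-singleton component $C$ of $H_\varphi$, assume for contradiction that every $\psi\in C$ has $\abs{\im\psi}<m$, and derive a contradiction from the Chebyshev moments of a suitable measure. The ingredients are the $1$-component description (\cref{lem:1-comp}, \cref{lem:1-comp-cor}), the $2$-product property applied to $\psi^2$, and a polynomial-interpolation argument in the spirit of \cref{lem:discrete-moment-method}.

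\emph{Step 1: $\varphi$ takes few values on the support of $\psi$.} Since $C$ is not a singleton, pick $\psi\in C$ incident to an edge; a lone self-loop also counts. Then $2\varphi\psi=a\psi'+b\psi''$ with $\psi',\psi''\in C$, where one coefficient may be zero or $\psi'=\psi$ in the self-loop case. At every $v$ with $\psi(v)\neq0$,
\[\varphi(v)=\frac{a\psi'(v)+b\psi''(v)}{2\psi(v)},\]
and there are fewer than $m^3$ possible triples $(\psi(v),\psi'(v),\psi''(v))$. Hence $\varphi$ takes a set $S$ of $s<m^3$ values on $\operatorname{supp}\psi$. Consequently the measure $\nu$ obtained by pushing $\psi^2\,d(\text{uniform})$ forward under $\varphi$ is a positive measure supported on $S$. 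Its total mass is $\int\psi^2=1/2$.

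\emph{Step 2: almost all Chebyshev moments of $\nu$ vanish.} By the $2$-product property, $\psi^2=\tfrac12\1+c\chi$ for a single basis element $\chi$ and some $c\in\R$. For $1\le j<k$, the function $T_j(\varphi)$ is $\pm$ a basis element orthogonal to $\1$, by \cref{lem:1-comp}. Therefore
\[\int T_j\,d\nu=\int T_j(\varphi)\psi^2=c\int T_j(\varphi)\chi,\]
which is nonzero for at most one index $j_0$, namely when $\chi=\pm T_{j_0}(\varphi)$. So $\int T_j\,d\nu=0$ for all $1\le j<k$ with $j\neq j_0$.

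\emph{Step 3: interpolation contradiction.} Fix $x_1\in S$. I want a polynomial $p$ of degree $<k$ with the following properties:
\begin{enumerate}
\item $p(x_1)=1$ and $p=0$ on $S\setminus\{x_1\}$;
\item the Chebyshev coefficients of $p$ at $T_0$ and at $T_{j_0}$ both vanish.
\end{enumerate}
Then $\nu(\{x_1\})=\int p\,d\nu=0$. Since $x_1$ was arbitrary, $\nu=0$, contradicting total mass $1/2$.

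To build $p$, start from the Lagrange interpolant $p_0$ of degree $<s$. Correct it by $R\,q$, where $R=\prod_{x\in S}(x-x_0)$ has degree $s$ and $\deg q<k-s$. It suffices that the two linear functionals $c_0(Rq)$ and $c_{j_0}(Rq)$ are linearly independent on the space of such $q$, which has dimension $k-s>3m^3\ge 3$.

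\emph{Main obstacle.} The independence in Step 3 is the step I expect to be hardest. Suppose some combination $\lambda c_0+\mu c_{j_0}$ vanished on all of $R\cdot\R[x]_{<k-s}$. Writing $c_j(f)$ as an integral of $fT_j$ against the arcsine measure, this says the polynomial $\lambda+\mu T_{j_0}$ is orthogonal to $R\cdot\R[x]_{<k-s}$ in that inner product. I would try to rule this out by degree counting.
\begin{itemize}
\item If $j_0<k-s$, take $q=\lambda+\mu T_{j_0}$ itself times a suitable positive factor, so that the integrand is $R$ times a square, and use sign considerations. Alternatively, choose $q$ of degree $j_0-s$ so that $Rq$ has nonzero leading Chebyshev coefficient at $T_{j_0}$.
\item If $j_0\ge k-s$, the margin $k>4m^3+1>4s+1$ should leave enough room to pick $q$ making $c_{j_0}(Rq)=0$ while $c_0(Rq)\neq0$, and vice versa.
\end{itemize}
If this becomes messy, the fallback is to replace $\psi^2$ by a product that the $2$-product property forces to have even fewer nonvanishing moments.
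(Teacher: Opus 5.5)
Your route differs from the paper's and can be completed, but the step you flagged is where your sketch goes wrong. Write $\lambda c_0+\mu c_{j_0}=\ip{\cdot,L}$ in the arcsine inner product, with $L:=\lambda+2\mu T_{j_0}$. In your case $j_0<k-s$, taking $q$ to be a positive multiple of $L$ gives $\int RL^2$. The roots of $R$ are the points of $S\subset[-1,1]$, so $R$ in general changes sign there, and no sign argument is available.

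The correct split is at $2s$, using $k>4s$, which you have since $s\le(m-1)^3$.
\begin{itemize}
\item If $j_0\le 2s$, then $s+j_0<k-s$. So $q=RL$ is admissible, and the vanishing of the functional gives $\norm{RL}^2=0$, forcing $\lambda=\mu=0$.
\item If $j_0>2s$, use two test polynomials. First, $q=R$ gives $c_{j_0}(R^2)=0$, because $\deg R^2=2s<j_0$, while $c_0(R^2)>0$. Second, $q=T_{j_0-s}$ is admissible because $j_0<k$. Here $RT_{j_0-s}$ has Chebyshev support in $[j_0-2s,j_0]\subset[1,k)$, so $c_0=0$ while $c_{j_0}=r_s/2\neq0$, where $r_s$ is the top Chebyshev coefficient of $R$.
\end{itemize}
If no $j_0$ exists, $q=R$ alone suffices. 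With this repair, your Steps 1--3 form a complete proof.

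The paper argues differently. It uses the point-mass bound \cref{lem:1-comp-cor}(2) to show that every vertex of $C$ vanishes off a set of measure at most $m^3/(k-1)$. For an adjacent pair it then writes $2\psi_0\psi_1=a_1\varphi+a'\varphi'$ and bounds $\norm{\varphi'}_\infty\le1$ via \cref{lem:1-comp}, since $\varphi'$ takes at least $k/m^2$ values. Finally it compares $\abs{a_1}/2=\abs{\int\varphi\cdot 2\psi_0\psi_1}$ with $\frac{m^3}{k-1}(\abs{a_1}+\abs{a'})$, together with the mirror inequality.

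That is a sparsity-plus-$L^\infty$ argument. It needs an edge between distinct vertices, hence the non-singleton hypothesis, but involves no polynomial algebra. Your moment argument, in the spirit of \cref{lem:discrete-moment-method}, uses only the path structure of the $\1$-component and the two-term decomposition of $\psi^2$. It needs just one vertex and its neighbours, and never uses that $C$ has an edge, so it also excludes singleton components directly, consistent with \cref{lem:large-comp-spec}. The price is the interpolation and independence bookkeeping in Step 3.
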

\begin{proof}
    Suppose that we have a non-singleton component $C$ in which every vertex has image size strictly less than $m$.
    Let $\psi_0$ be any such vertex and write $2\varphi\psi_0=a_1\psi_1+a_{-1}\psi_{-1}$. Suppose that $\psi_{-1},\psi_0,\psi_1$ each take at most $m$ values. For each $v\in V(G)$, we either have $\psi_0(v)=0$ or
    \[\varphi(v)=\frac{a_1\psi_1(v)+a_{-1}\psi_{-1}(v)}{2\psi_0(v)}.\]
    In particular, on the set $\{v:\psi_0(v)\neq 0\}$, the function $\varphi$ takes at most distinct $m^3$ values. \cref{lem:1-comp-cor}(2) implies that, for $v\in V(G)$ chosen uniformly at random, $\psi_0(v)=0$ with probability at least $1-m^3/(k-1)$. 
    This holds for any $\psi_0\in V(C)$.

    Now, we may assume that $a_1\neq 0$ and $\psi_1\neq\psi_0$, since our component is not a singleton. 
    Since $\int4\varphi\psi_0\psi_1=a_1\neq 0$, the $2$-product property implies that
    \[2\psi_0\psi_1=a_1\varphi+a'\varphi'\]
    for some basis element $\varphi'\neq\varphi$ and some $a'\in\R$. The function $2\psi_0\psi_1$ takes at most $m^2$ distinct values. Since $\varphi=a_1^{-1}(a'\varphi'-2\psi_0\psi_1)$ has image size $k$, the function $a'\varphi'$ takes at least $k/m^2>6$ distinct values. (In particular, $a'\neq 0$.) \cref{lem:1-comp} thus implies that $\cD(\varphi')\sim\cD_n^{\mathrm p}$ for some $n$ and $\mathrm p$. In particular, $\norm{\varphi'}_\infty\leq 1$. This implies that
    \begin{equation}\label{eq:psi-prod-infty}
        \norm{2\psi_0\psi_1}_\infty\leq \abs{a_1}+\abs{a'}.
    \end{equation}

    However, $(2\psi_0\psi_1)(v)\neq0$ with probability at most $m^3/(k-1)$. So, \eqref{eq:psi-prod-infty} implies that
    \[\frac{\abs{a_1}}2=\abs*{\int \varphi(2\psi_0\psi_1)}\leq\frac{m^3}{k-1}\norm{2\psi_0\psi_1}_\infty\leq\frac{m^3}{k-1}(\abs{a_1}+\abs{a'}).\]
    Using $a'=\int 4\varphi'\psi_0\psi_1$, we obtain an analogous inequality with $a_1$ and $a'$ swapped. Adding the two and recalling that $a_1\neq 0$, we have
    \[\frac{\abs{a_1}+\abs{a'}}2\leq \frac{2m^3}{k-1}(\abs{a_1}+\abs{a'})\implies k-1\leq 4m^3,\]
    a contradiction. The result follows.
\end{proof}

\begin{lemma}\label{lem:no-zero-product}
    Let $\varphi\in\cB$ have image size $k\geq 7$, and let $\psi$ be another basis element. 
    Suppose that, for some positive integer $\ell$ and some $b\in\{-1,0,1\}$, we have $T_\ell(\varphi)\psi=b\psi$ identically.
    \begin{enumerate}
        \item If $b\neq 0$, we have $T_{2\ell}(\varphi)=bT_\ell(\varphi)$ identically; if $b=0$, we have $T_{3\ell}(\varphi)=0$ identically.

        \item Write $2\varphi\psi=a_1\psi_1+a_{-1}\psi_{-1}$, where $\psi_1,\psi_{-1}$ are basis elements and $a_1,a_{-1}\in\R$ are not necessarily nonzero. Suppose additionally that $k\geq 258$ and that $T_\ell(\varphi)\psi_1=b\psi_1$ and $T_\ell(\varphi)\psi_{-1}=b\psi_{-1}$. Then in fact $T_\ell(\varphi)=b$ identically.
    \end{enumerate}
\end{lemma}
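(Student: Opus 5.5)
The plan is to prove (1) by computing $\psi^2$ exactly, and then to use that formula in (2) to show that the support of $\psi$ cannot be small. Write $\operatorname{supp}\psi$ for the set of vertices where $\psi$ is nonzero, and write $T_\ell$ for $T_\ell(\varphi)$.

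For (1), recall that $\norm{\psi}_2^2=1/2$. The $2$-product property therefore gives
\[\psi^2=\tfrac12\1+c\psi'\]
for a single basis element $\psi'$ and some $c\in\R$. By \cref{lem:1-comp-cor}(1), each $T_m$ is $0$ or $\pm$ a basis element. If it is nonconstant, it has $L^2$ norm squared equal to $1/2$. The hypothesis says that $T_\ell=b$ on $\operatorname{supp}\psi$, so pointwise
\[T_\ell\psi^2=b\psi^2 \qquad\text{and}\qquad T_{2\ell}\psi^2=(2b^2-1)\psi^2 .\]

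\emph{Case $b\neq0$.} If $T_\ell$ is constant, it equals $b$ and the claim is trivial. Otherwise $\ip{T_\ell,\psi^2}=b\ip{\1,\psi^2}=b/2\neq0$. Since $\psi^2$ has only one nonconstant basis component, that component must be $\pm T_\ell$. Computing its coefficient gives
\[\psi^2=\tfrac12+bT_\ell .\]
Multiply this by $T_\ell$ and use $T_\ell^2=\tfrac12(1+T_{2\ell})$ and $b^2=1$. Comparing with $T_\ell\psi^2=b\psi^2=\tfrac b2+T_\ell$ yields $\tfrac12T_\ell=\tfrac b2T_{2\ell}$, that is, $T_{2\ell}=bT_\ell$.

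\emph{Case $b=0$.} Now $\ip{T_{2\ell},\psi^2}=-\ip{\1,\psi^2}=-1/2$. If $T_{2\ell}$ were constant, it would be $\pm1$; the value $-1$ would contradict the fact that $T_{2\ell}$ has mean zero, and the value $+1$ would contradict $-1/2$. So the same argument gives
\[\psi^2=\tfrac12-T_{2\ell}.\]
Multiplying by $T_\ell$ and using $T_\ell\psi^2=0$ gives $\tfrac12T_\ell=T_\ell T_{2\ell}=\tfrac12(T_{3\ell}+T_\ell)$. Hence $T_{3\ell}=0$.

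For (2), suppose $T_\ell$ is not identically $b$. The computation in (1) applies to each $\chi\in\{\psi,\psi_1,\psi_{-1}\}$. In both cases it gives the same function: $\chi^2=g$, with $g=\tfrac12+bT_\ell$ or $g=\tfrac12-T_{2\ell}$. Hence $\psi,\psi_1,\psi_{-1}$ all have common absolute value $\sqrt g$ pointwise.

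On the set $\{g\neq0\}$, the identity $2\varphi\psi=a_1\psi_1+a_{-1}\psi_{-1}$ becomes
\[2\varphi=a_1\varepsilon_1+a_{-1}\varepsilon_{-1}\]
for signs $\varepsilon_{\pm1}(v)\in\{\pm1\}$. So $\varphi$ takes at most $4$ values on $\{g\neq0\}$. By \cref{lem:1-comp-cor}(2), this set then has probability at most $4/(k-1)$.

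On the other hand, $g$ has mean $\ip{\1,g}=1/2$, and $\norm{g}_\infty\le 3/2$ because $\norm{T_m}_\infty\le1$. Therefore $\{g\neq0\}$ has probability at least $1/3$. Comparing, $4/(k-1)\ge1/3$, which contradicts $k\ge258$.

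The main obstacle is the bookkeeping in (1). One must check that $T_\ell$ or $T_{2\ell}$ is a nonzero, nonconstant $\pm$ basis element with the expected norm, so that the single-component argument is valid, including the small edge cases where $T_\ell$ or $T_{2\ell}$ is constant. Once the exact formula $\psi^2=g$ is in hand, (2) is a short counting argument. If the uniform formula for $g$ fails in some degenerate case, the fallback is to apply \cref{lem:varied-or-large}-style support estimates directly.
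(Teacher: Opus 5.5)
Your proof is correct. Part (1) is essentially the paper's argument. The paper also extracts $2\psi^2-1=2bT_\ell(\varphi)$ from the single nonconstant basis component of $\psi^2$ and reads off $T_{2\ell}=bT_\ell$. For $b=0$ it reduces to the case $(2\ell,-1)$, obtaining $T_{4\ell}=-T_{2\ell}$ and hence $T_{3\ell}=0$; this is the same computation as your direct identity $\psi^2=\tfrac12-T_{2\ell}$.

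Part (2) is where you genuinely differ. The paper combines $\psi^2=\tfrac12+bT_\ell$ with $T_\ell\psi=b\psi$ to get $(2\psi^2-3)\psi=0$, so $\psi$ takes at most three values. It then implicitly propagates the hypothesis $T_\ell\chi=b\chi$ along the whole component of $H_\varphi$ and invokes \cref{lem:varied-or-large} with $m=4$. That step is the source of the threshold $k\geq 258$; the case $b=0$ is again handled by passing to $T_{2\ell}$ with $b=-1$.

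You instead note that $\psi^2=\psi_1^2=\psi_{-1}^2=g$. So the ratios $\psi_{\pm1}/\psi$ are signs, and $\varphi$ takes at most four values on $\{g\neq0\}$. Meanwhile the exact formula for $g$ forces this set to have mass at least $1/3$ (in fact exactly $1/3$, since $g\in\{0,3/2\}$). This is a sharpened, specialized form of the proof of \cref{lem:varied-or-large}. It is local, using only $\psi,\psi_{\pm1}$ and one product relation. It treats $b=0$ and $b\neq0$ uniformly and does not need the component to be a non-singleton. It also already works for $k\geq 14$. The paper's route has the advantage of reusing \cref{lem:varied-or-large}, which it needs anyway for \cref{lem:large-comp}, at the cost of a far worse constant.

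One small correction concerns the case $b=0$. Your reason for excluding a constant $T_{2\ell}$ is wrong, because $T_{2\ell}(\varphi)$ need not have mean zero. For instance, in case (a) of \cref{lem:1-comp} with $\ell=k$ one has $T_\ell(\varphi)\equiv0$ and $T_{2\ell}(\varphi)\equiv-1$. The case is harmless: $T_{2\ell}\equiv-1$ forces $T_\ell\equiv0$ and hence $T_{3\ell}\equiv0$. In part (2) it is exactly the case $T_\ell\equiv b$ you are assuming away, so there $T_{2\ell}$ is indeed nonconstant. Similarly, if one of $\psi,\psi_{\pm1}$ were $\1$, then $T_\ell\equiv b$ at once, so you may take them nonconstant when writing $\chi^2=\tfrac12+c\chi'$.
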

\begin{proof}
    We first treat the case $b\in\{-1,1\}$.
    Since $\psi$ is not identically zero, it must be that $T_\ell(\varphi)$ attains the value $b$. 

    Given $T_\ell(\varphi)\psi=b\psi$, we have
    \[b=\int2b\psi^2=\int T_\ell(\varphi)(2\psi^2)=\int T_\ell(\varphi)+\int T_\ell(\varphi)T_2(\psi).\]
    If $\int T_\ell(\varphi)\neq 0$, then \cref{lem:1-comp-cor}(1) implies that $T_\ell(\varphi)$ is constant, and thus it must be identically $b$. Otherwise, we have $\norm{T_\ell(\varphi)}_{L^2}^2=1/2$ and $\int T_\ell(\varphi)T_2(\psi)=b$.
    Since $T_2(\psi)$ must be identically zero or a scalar multiple of a basis element, we conclude that
    \[2\psi^2-1=T_2(\psi)=2bT_{\ell}(\varphi).\]
    In particular,
    \begin{align}
        \label{eq:psi-few}0&=(2bT_\ell(\varphi)-2b^2)\psi=(2\psi^2-3)\psi\\
        \label{eq:Tell-few}0&=(T_\ell(\varphi)-b)(2\psi^2)=(T_\ell(\varphi)-b)(1+2bT_\ell(\varphi)).
    \end{align}
    The identity \eqref{eq:Tell-few} is enough to prove (1).

    To prove (2), we note that, since \eqref{eq:psi-few} applies to every vertex of our component of $H_\varphi$, no vertex of our component of $H_\varphi$ may take more than $3$ distinct values. We conclude from \cref{lem:varied-or-large} that $k\leq1+4\cdot 4^3=257$, contradicting our assumption on $k$.

    Finally, we treat the case $b=0$. In this case, the identity $T_\ell(\varphi)\psi=0$ implies $T_{2\ell}(\varphi)\psi=-\psi$. 
    Part (2) applied to $T_{2\ell}(\varphi)$ and $b=-1$ gives us that $T_{2\ell}(\varphi)=-1$ identically, and so $T_\ell(\varphi)=0$ identically. 
    For part (1), we can apply (1) with $b=-1$ to $T_{2\ell}(\varphi)$ to obtain that
    \[T_{4\ell}(\varphi)=-T_{2\ell}(\varphi).\]
    This implies that $2T_\ell(\varphi)T_{3\ell}(\varphi)=0$, and so (since $T_\ell(x)=0$ implies $T_{3\ell}(x)=0$) we have $T_{3\ell}(\varphi)=0$ identically.
\end{proof}

\begin{lemma}\label{lem:large-comp-spec}
    Each connected component of $H_\varphi$ has at least $k/3$ vertices.
\end{lemma}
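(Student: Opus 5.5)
Let $C$ be a connected component of $H_\varphi$ with vertex set $\psi_1,\ldots,\psi_\ell$, and suppose for contradiction that $\ell<k/3$. If $C$ is the $\1$-component, \cref{lem:1-comp} gives $\ell=k$ and we are done, so assume $C$ is another component. The plan is to run the manifold argument algebraically. Multiplication by $\varphi$ preserves $W:=\R\psi_1\oplus\cdots\oplus\R\psi_\ell$, and the operator is symmetric with respect to the $L^2$ inner product, since $\langle\varphi\psi,\psi'\rangle=\langle\psi,\varphi\psi'\rangle$. Hence $W$ decomposes into eigenvectors $w$ of multiplication by $\varphi$, with at most $\ell$ distinct eigenvalues $c$. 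An eigenvector satisfies $\varphi(v)w(v)=c\,w(v)$ pointwise, so $w$ is supported on the level set $\{\varphi=c\}$.

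Consequently every $\psi\in C$ is supported on $S:=\varphi^{-1}(\{c_1,\ldots,c_{\ell'}\})$ with $\ell'\le\ell$. The aim is then to show that some basis element supported on a set this small leads to a contradiction through the Chebyshev structure of $\varphi$. By \cref{lem:1-comp}, $\cD(\varphi)=\cD_n^{\mathrm p}$ for some $n\in\{2k-1,2k\}$, so $\varphi=\cos(\pi f/n)$ with $f$ of fixed parity. I would produce a Chebyshev polynomial $T_m(\varphi)$ that is constant equal to some $b\in\{-1,0,1\}$ on $S$: choose $m$ so that $\cos(\pi m f/n)$ takes a single value $\pm1$ or $0$ on the at most $\ell$ relevant residues of $f$. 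Note that the values of $\cos(\pi f/n)$ at the at most $\ell$ points $c_i$ correspond to at most $2\ell$ residues of $f$ modulo $2n$.

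Once such an $m$ is found, we get $T_m(\varphi)\psi=b\psi$ for every $\psi\in C$, including both neighbours. \cref{lem:no-zero-product}(2) then forces $T_m(\varphi)=b$ identically; if $k<258$ we can use part (1) instead. Either way, the degree bound in \cref{cl:1-comp-len} says $\varphi$ satisfies no nontrivial polynomial relation of degree below $k$, so we need $m$ small, roughly $3m<k$. This is consistent with the conclusion $T_{3m}(\varphi)=0$ in the case $b=0$.

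The main obstacle is the simultaneous choice of $m$, together with a small $m$ that kills all of $S$ at once. A pigeonhole or Dirichlet argument on the at most $2\ell$ residues handles this only up to polynomial losses.

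An alternative I would try first is to count directly. On $S$, $\varphi$ takes at most $\ell$ values, and each value of $\cD_n^{\mathrm p}$ has probability about $1/k$ by \cref{lem:1-comp-cor}(2). So $|S|\lesssim \ell|V(G)|/(k-1)$, and every $\psi\in C$ is supported on $S$. The $2$-product property applied to $\psi^2=\tfrac12+\tfrac12T_2(\psi)$ (with $\|\psi\|_2^2=1/2$) forces $T_2(\psi)$ to be a multiple of a basis element with mean zero. But $2\psi^2$ vanishes off $S$, which forces $T_2(\psi)=-1$ there. Since $T_2(\psi)$ lies, up to scaling, in the $2$-product eigenbasis, it has bounded sup norm if it has many values, by \cref{lem:1-comp}. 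This bounds $|S|$ from below by a constant fraction of $|V(G)|$, which contradicts $\ell<k/3$.

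Which route is cleaner depends on constants; I expect the second, combined with \cref{lem:varied-or-large} for the few-valued case, to give the $k/3$ bound.
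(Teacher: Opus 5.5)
Your support observation is correct: every $\psi$ in the component vanishes off $S=\{Q(\varphi)=0\}$, where $Q$ is the minimal polynomial of multiplication by $\varphi$ on $W$. Hence $\abs{S}\le \ell\abs{V(G)}/(k-1)$. The gap is in what you then know about the basis element $\chi$ with $T_2(\psi)=c\chi$.

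Your count only gives a contradiction when $\abs{\im\chi}\ge 7$, because only then do \cref{lem:1-comp,lem:1-comp-cor} control $\cD(\chi)$. Even in that case, the clean argument is the point-mass bound: $\chi$ is constant on $V(G)\setminus S$, which has proportion more than $1/2$. The sup norm of $c\chi$ is not bounded until you control $c$, for example via $T_2(\psi)\ge -1$.

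If $\chi$ takes at most $6$ values, nothing in your argument stops it from having a large atom at $-1/c$. Your patch via \cref{lem:varied-or-large} needs $k>4m^3+1$ with $m$ around $12$, so $k$ in the thousands. It also says nothing about singleton components. But the bound is needed for every $k\ge 7$: through \cref{lem:large-comp} it is used with $k=13,15,21$ in \cref{lem:doubling,lem:doubly-internal,lem:triply-internal}.

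Your first route cannot fill this gap either. Suppose $\psi$ is supported on a single level set $\{\varphi=\cos(\pi j/n)\}$ with $\gcd(j,n)=1$. Then $T_m(\varphi)\in\{-1,0,1\}$ there forces $n\mid 2m$, so $m$ is of order $k$, not below $k/3$.

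The missing idea is to identify $\chi$ as a Chebyshev polynomial in $\varphi$ itself.

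1. Fix $\psi_1$ in the component. The $\ell+1$ functions $T_j(\varphi)\psi_1$, $0\le j\le\ell$, lie in the $\ell$-dimensional space $W$.
2. None of them vanishes: $T_j(\varphi)\psi_1=0$ would give $T_{3j}(\varphi)=0$ by \cref{lem:no-zero-product}(1), which is impossible since $3j<k$.
3. So two of them, with indices $i<j$, are not orthogonal. That is, $\int (T_{i+j}(\varphi)+T_{j-i}(\varphi))\,2\psi_1^2\neq 0$.
4. Since $1\le j-i\le i+j<k$, these Chebyshev polynomials are mean-zero basis elements. The $2$-product property then forces $2\psi_1^2=1+cT_m(\varphi)$ for some $c\neq 0$ and $1\le m\le 2\ell-1$.
5. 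Your support observation now finishes the proof. Off $S$ we have $T_m(\varphi)=-1/c$, so $\varphi$ takes at most $m$ values there. On $S$ it takes at most $\ell$ values. So $\varphi$ takes at most $3\ell-1<k$ values overall, a contradiction.

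This is the paper's argument, which phrases the last step as $Q(\varphi)(1+cT_m(\varphi))=2Q(\varphi)\psi_1^2=0$.
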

\begin{proof}
    Suppose a component of $H_\varphi$ has $\ell$ vertices $\psi_1,\ldots,\psi_\ell$, where $\ell<k/3$.
    For any polynomial $P$, the function $P(\varphi)\psi_1$ is a linear combination of $\psi_1,\ldots,\psi_\ell$. In particular, the $\ell+1$ functions $\{T_j(\varphi)\psi_1:0\leq j\leq\ell\}$ are linearly dependent.
    If any of these functions are identically zero, then \cref{lem:no-zero-product}(1) gives that $T_{3j}(\varphi)=0$ identically for some $0\leq j\leq\ell$, which cannot occur since $\ell<k/3$.    
    We conclude (1) that there exists some polynomial $Q$ of degree at most $\ell$ for which $Q(\varphi)\psi_1=0$, and (2) that the inner product
    \begin{equation}\label{eq:ip-TiTj}
        \ip{T_i(\varphi)\psi_1,T_j(\varphi)\psi_1}
    \end{equation}
    is nonzero for some $0\leq i<j\leq\ell$. 
    
    We next expand the inner products in \eqref{eq:ip-TiTj}. For $0\leq i<j\leq\ell$,
    \begin{equation}\label{eq:small-comp-ip}
    4\int(T_i(\varphi)\psi_1)(T_j(\varphi)\psi_1)=\int(T_{i+j}(\varphi)+T_{j-i}(\varphi))(2\psi_1^2).
    \end{equation}
    Since $1\leq i+j<2\ell\leq k-1$, \cref{lem:1-comp} implies that $T_{j-i}(\varphi)$ and $T_{i+j}(\varphi)$ are basis elements. So, if \eqref{eq:small-comp-ip} is nonzero for some $0\leq i<j\leq \ell$, then either $T_{i+j}(\varphi)$ or $T_{i-j}(\varphi)$ is a component of the decomposition of $2\psi_1^2-1$. We conclude that, for some $1\leq m\leq 2\ell-1$ and some nonzero $c\in\R$, we have
    \[2\psi_1^2=1+cT_m(\varphi).\]
    We now use (1). We have
    \[0=2Q(\varphi)\psi_1^2=Q(\varphi)(1+cT_m(\varphi)).\]
    We conclude that $\varphi$ takes at most $\deg Q+m\leq 3\ell-1$ values. This contradicts the assumption that $\ell< k/3$.
\end{proof}

\cref{lem:large-comp} in the graph case now follows from combining \cref{lem:varied-or-large,lem:large-comp-spec}.

We conclude this section with an additional observation, in the spirit of the contents of this section, which will later enable us to rule out some potential components of $H_\varphi$.

\begin{lemma}\label{lem:T2-not-orth}
    Let $\varphi,\psi\in\mathcal B$, and suppose that $T_2\varphi\not\perp T_2\psi$ and $\abs{\im\varphi}\geq 14$. Then $T_2\varphi=T_2\psi$ or $T_2\varphi=-T_2\psi$.
\end{lemma}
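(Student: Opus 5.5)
The plan is to reduce the statement to a scalar identity $T_2\psi=c\,T_2\varphi$ and then pin down $c$ by comparing $L^2$ norms, using \cref{lem:1-comp} for both $\varphi$ and $\psi$.

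\emph{Step 1: $T_2\varphi$ is a basis element.} Since $k:=\abs{\im\varphi}\geq 14\geq 7$, \cref{lem:1-comp} gives $\ell=k-1\geq 2$. It also gives $a_1=1$ (via \cref{cl:1-comp-a}, or directly since $\ell-2\geq 1$). So, up to sign, $T_2(\varphi)=2\varphi^2-1$ is the basis element $\varphi_2$, with $\norm{T_2\varphi}_{L^2}^2=1/2$.

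\emph{Step 2: $T_2\psi$ is a multiple of a single basis element.} By the $2$-product property, $\psi^2$ is a combination of at most two basis elements. Since $\int\psi^2=1/2$, one of them is $\1$ with coefficient $1/2$. Hence $T_2\psi=2\psi^2-1$ is either $0$ or a scalar multiple of one basis element; this is the same observation already used in the proof of \cref{lem:no-zero-product}. The hypothesis $T_2\varphi\not\perp T_2\psi$ excludes $0$. Orthogonality of $\cB$ then forces that basis element to be $\pm T_2\varphi$. So
\[2\psi^2-1=c\,(2\varphi^2-1)\]
for some real $c\neq 0$, and it remains to show $c=\pm1$.

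\emph{Step 3: $\psi$ has large image.} The function $\psi^2$ is an affine image of $T_2\varphi$, so it takes exactly as many values as $T_2(\varphi)$. By \cref{lem:1-comp}, $\varphi$ takes values of the form $\cos(\pi f(v)/n)$, and $T_2$ is at most two-to-one on $[-1,1]$. Hence $T_2(\varphi)$ takes at least $k/2\geq 7$ values, and therefore $\abs{\im\psi}\geq\abs{\im\psi^2}\geq 7$.

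\emph{Step 4: conclude.} Apply \cref{lem:1-comp} (with \cref{cl:1-comp-a}) to $\psi$. Since $\abs{\im\psi}\geq 7$, its $\1$-component has length at least $2$, so $T_2(\psi)$ is, up to sign, itself a normalized basis element, and $\norm{T_2\psi}_{L^2}^2=1/2$. Comparing norms in $T_2\psi=c\,T_2\varphi$ gives $c^2=1$, which is the claim.

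The only step needing care is Step 3, namely checking that $\psi$ takes enough values for \cref{lem:1-comp} to apply. The hypothesis $\abs{\im\varphi}\geq 14$ is exactly what makes the count $k/2\geq 7$ go through. If the two-to-one bound for $T_2$ on the support of $\cD_n^{\mathrm p}$ needed more justification, I would read it off from the explicit description $\varphi=\cos(\pi f/n)$, under which $T_2\varphi=\cos(2\pi f/n)$.
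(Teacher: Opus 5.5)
Your proposal is correct and follows essentially the same route as the paper. You write $T_2\psi=c\,T_2\varphi$ using the $2$-product property and non-orthogonality, use $\abs{\im\varphi}\geq 14$ to get $\abs{\im\psi}\geq 7$, and then compare the $L^2$ norms supplied by \cref{lem:1-comp} to force $c=\pm1$. (Two small remarks: the two-to-one property of $T_2$ needs no input from \cref{lem:1-comp}, since $T_2$ is a quadratic; and the tacit assumption $\psi\neq\1$ in Step~2 is harmless, because $T_2\1=\1\perp T_2\varphi$.)
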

\begin{proof}
    Since $\varphi,\psi\in\mathcal B$, the fact that $\cB$ is a $2$-product eigenbasis implies that both $T_2\varphi$ and $T_2\psi$ are scalar multiples of elements of $\cB$. 
    In particular, for some constant $c\neq 0$, we have $T_2\psi=cT_2\varphi$. 
    Since $\abs{\im\varphi}\geq 14$, we have $\abs{\im(T_2\varphi)}\geq 7$, and so $\abs{\im\psi}\geq 7$. We conclude from \cref{lem:1-comp} that $T_2\varphi$ and $T_2\psi$ both have $L^2$ norm $1/\sqrt2$. This implies $c=\pm 1$, as desired.
\end{proof}

\subsection{Identifying the components}

Here we prove \cref{lem:primitive,lem:comp-labels}.

\begin{proof}[Proof of \cref{lem:primitive}]
    Take $\varphi=\varphi_0\in\cB$.
    For each $i\geq 0$, let $\varphi_{i+1}\in\cB$ be such that $\R[\varphi_i]\subsetneq\R[\varphi_{i+1}]$; if no such $\varphi_{i+1}$ exists, then we have found some primitive $\varphi':=\varphi_i$ for which $\varphi\in\R[\varphi']$.

    If we are in the graph setting, then the finite-dimensionality of each $\R[\varphi_i]$ is enough to prove that the sequence $\{\varphi_i\}$ eventually terminates in a primitive eigenfunction.
    In the manifold setting, by \cref{lem:1-comp}, we can find some $m_i\in\N$ such that $\varphi_i=\pm T_{m_i}(\varphi_{i+1})$ for some $m_0,m_1,\ldots>1$.
    Since $M$ is compact, $M$ may be covered by finitely many connected sets $S_1,\ldots,S_n$ such that $\varphi(S_i)$ is an interval of length at most $1$ for each $1\leq i\leq n$. 

    Given a positive integer $p$, let $g_p$ be the largest gap between the elements of $\{\cos(\pi r/p):r\in\Z\}$. As $p$ grows, $g_p$ tends to zero. Let $q$ be such that $g_p<1/(4n)$ for every $p>q$. Any sub-interval of $[-1,1]$ with length $1/(2n)$ will contain, for each $p>q$, some two values $\cos(\pi r/p)$ and $\cos(\pi (r+1)/p)$ for integer $r$. Now, select $k$ such that $p:=m_0\cdots m_k>q$. On any sub-interval of $[-1,1]$ with length $1/(2n)$, the function $T_p(x)$ takes all values in $[-1,1]$.
    
    We also have $T_{p}(\varphi_k)=\pm\varphi_0$.
    For each $i$, the image $\varphi_k(S_i)$ is some interval in $[-1,1]$ by \cref{lem:1-comp}. 
    (We are using that the image of $\varphi_k$ is exactly the support of the probability distribution $\cD(\varphi_k)$. Indeed, for any $x\in M$, any small ball in $M$ around $x$ has positive volume and is mapped near $\varphi_k(x)$ by $\varphi_k$, and so $\cD(\varphi_k)$ assigns positive mass to some neighborhood of $\varphi_k(x)$.)
    If this interval has length at least $1/(2n)$, then it contains both $\cos(\pi r/p)$ and $\cos(\pi(r+1)/p)$ for some integer $r$. This implies that $\varphi_0(S_i)=\pm T_p(\varphi_k(S_i))$ contains both $1$ and $-1$, which contradicts our definition of $S_i$. Now,
    \[\varphi_k(M)=\bigcup_{i=1}^n\varphi_k(S_i)\]
    is an interval which is a union of $n$ intervals of length at most $1/(2n)$. In particular, it is an interval of length at most $1$. This contradicts \cref{lem:1-comp} and finishes the proof.
\end{proof}

Note that, in the manifold case, the same result above may also be obtained by proving that if a polynomial of an eigenfunction is itself an eigenfunction, then its eigenvalue is at least that of the original eigenfunction. This is immediate after a Rayleigh quotient computation and an integration by parts.

We next prove \cref{lem:comp-labels}. 
The following simple lemma removes much of the complexity from the classification of the components of $H_\varphi$ and is the main reason we specialize to primitive eigenfunctions.

\begin{lemma}\label{lem:no-loop}
    Suppose that $\varphi$ is primitive. Then no component of $H_\varphi$, except possibly for the $\1$-component, has a self-loop.
\end{lemma}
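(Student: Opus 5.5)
Let $\psi$ be a vertex of $H_\varphi$ carrying a self-loop. The plan is to show that $\psi$ must then lie in the $\1$-component. The argument turns the self-loop into the algebraic relation $\varphi\in\R[\psi]$, uses primitivity to get $\psi\in\R[\varphi]$, and then identifies $\R[\varphi]$ with the span of the $\1$-component.

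\emph{Step 1 (self-loop gives $\varphi\in\R[\psi]$).} A self-loop at $\psi$ means $\ip{4\varphi\psi,\psi}\neq 0$, that is, $\int\varphi\psi^2\neq 0$. By the $2$-product property and orthonormality (with our normalization), $2\psi^2=\1+T_2(\psi)$. Here $T_2(\psi)$ is orthogonal to $\1$, so it is a scalar multiple of a single basis element (possibly zero). Since $\int\varphi\cdot 2\psi^2=\int\varphi T_2(\psi)\neq0$ and $\varphi\perp\1$, that basis element must be $\varphi$. Hence $T_2(\psi)=c\varphi$ with $c\neq 0$, so $\varphi=c^{-1}(2\psi^2-1)\in\R[\psi]$. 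If $\psi=\1$, the vertex already lies in the $\1$-component, so we may assume $\psi\neq\1$.

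\emph{Step 2 (primitivity).} By \cref{def:primitive}, Step 1 yields $\psi\in\R[\varphi]$. Thus $\psi=P(\varphi)$ for some polynomial $P$ of some finite degree $d$.

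\emph{Step 3 ($\R[\varphi]$ is spanned by the $\1$-component).} Enumerate the $\1$-component as the path $\varphi_0=\1,\varphi_1=\varphi,\varphi_2,\ldots$, as in the proof of \cref{lem:1-comp}. By the recurrences \eqref{eq:phi-1}--\eqref{eq:phi-ell-1}, each $\varphi_j$ is a polynomial in $\varphi$ of degree exactly $j$. This part of the argument uses only \cref{obs:max-deg-2} and the nonvanishing of the labels $a_j$ for $j<\ell$; it does not need the hypothesis $\abs{\im\varphi}\ge7$. The two settings then proceed as follows.
\begin{itemize}
\item In the manifold case, the path is infinite, as noted in the proof of \cref{cl:1-comp-len} (every nonconstant eigenfunction takes infinitely many values). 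So $\varphi_0,\ldots,\varphi_d$ span all polynomials in $\varphi$ of degree at most $d$.
\item In the graph case, \cref{cl:1-comp-len} gives $\ell+1=k=\dim\R[\varphi]$. So $\varphi_0,\ldots,\varphi_\ell$ span $\R[\varphi]$.
\end{itemize}
In either case, $\psi$ lies in the span of finitely many elements of the orthogonal set $\cB$.

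\emph{Step 4 (conclusion).} Since $\psi$ is itself in $\cB$, its expansion in $\cB$ is just $\psi$. Comparing with the expansion from Step 3, $\psi=\pm\varphi_j$ for some $j$. Hence $\psi$ lies in the $\1$-component, which proves the lemma.

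The step needing the most care is Step 3. We must make sure the identification of $\R[\varphi]$ with the span of the $\1$-component is justified without invoking the image-size hypothesis of \cref{lem:1-comp}. For this we will rely only on the recurrence structure of the path and on \cref{cl:1-comp-len}, whose proof does not use $k\ge7$.
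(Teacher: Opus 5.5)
Your proposal is correct and follows the paper's proof. The paper also shows that a self-loop forces $T_2(\psi)=c\varphi$ with $c\neq0$, and then uses primitivity to get $\psi\in\R[\varphi]$. Your Steps 3--4 make explicit what the paper only asserts with ``that is, $\psi$ is in the $\1$-component'': that $\R[\varphi]$ is spanned by the $\1$-component, via the path recurrence and \cref{cl:1-comp-len}, without needing $\abs{\im\varphi}\geq 7$.
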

\begin{proof}
    Suppose that some vertex $\psi$ of $H_\varphi$ has a self-loop. We have $\int2\varphi\psi^2\neq 0$. So, the basis decomposition of $2\psi^2$ consists of $\1$ (since $\norm{\psi}_{L^2}^2=1/2$) and some scalar multiple $c\varphi$ of $\varphi$. We conclude that
    \[T_2(\psi)=2\psi^2-\1=c\varphi.\]
    We conclude from the assumption that $\varphi$ is primitive that $\psi\in\R[\varphi]$; that is, $\psi$ is in the $\1$-component of $H_\varphi$.
\end{proof}

Combining \cref{lem:no-loop} with \cref{obs:max-deg-2,lem:large-comp-spec} implies that, if $\abs{\im\varphi}\geq 7$, then every component of $H_\varphi$ is a path without self-loops (which may be finite, uni-infinite, or bi-infinite) or a finite cycle.
We will describe the possibilities for such components first ``locally'' and then ``globally'' (the latter will only be relevant in the graph setting).

For a positive integer $m$, we say a vertex $\psi\in\cB$ is \emph{$m$-internal} if all leaves of $H_\varphi$ have distance at least $m$ from $\psi$. (A $1$-internal vertex is exactly a non-leaf, and a $2$-internal vertex is exactly a vertex neither of whose neighbors are leaves, for example.)  

\begin{lemma}\label{lem:doubly-internal}
    Suppose $k:=\abs{\im\varphi}\geq 15$. If $\psi$ is a $2$-internal vertex of $H_\varphi$ which lies outside the $\1$-component, then the squares of the labels of the edges incident to $\psi$ sum to $2$.    
\end{lemma}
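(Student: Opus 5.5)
The plan is to imitate the computation in \cref{cl:1-comp-a}: expand $T_2(\varphi)\psi$ in two steps using the neighbours of $\psi$ in $H_\varphi$. The $2$-product property then forces the coefficient of $\psi$ in this expansion to vanish, and that coefficient is exactly $a_1^2+a_{-1}^2-2$.

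\emph{Step 1 (setup and expansion).} Since $\psi$ is $2$-internal it is not a leaf, so it has degree $2$. Write
\[2\varphi\psi=a_1\psi_1+a_{-1}\psi_{-1}\]
with $a_{\pm1}\neq0$ and $\psi_1\neq\psi_{-1}$ the two neighbours, so that $a_{\pm 1}$ are the labels in question. Neither neighbour is a leaf, so each has a second neighbour, and we may write
\[2\varphi\psi_1=a_1\psi+b_1\psi_2,\qquad 2\varphi\psi_{-1}=a_{-1}\psi+b_{-1}\psi_{-2}\]
with $b_{\pm1}\neq0$ and $\psi_{\pm2}\neq\psi$. By \cref{lem:1-comp}, since $k\geq7$, the function $T_2(\varphi)=2\varphi^2-1$ is (up to sign) a basis element. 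Hence $2T_2(\varphi)\psi$ is a combination of at most two basis elements. On the other hand,
\[2T_2(\varphi)\psi=2\varphi(2\varphi\psi)-2\psi=(a_1^2+a_{-1}^2-2)\psi+a_1b_1\psi_2+a_{-1}b_{-1}\psi_{-2}.\]

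\emph{Step 2 (distinctness).} It remains to check that $\psi_2$ and $\psi_{-2}$ are distinct basis elements, both different from $\psi$. Once this holds, $\psi_2$ and $\psi_{-2}$ already use up the two allowed terms, so the coefficient of $\psi$ must vanish, giving $a_1^2+a_{-1}^2=2$.

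The tool is \cref{lem:large-comp-spec}: the component containing $\psi$ has at least $k/3\geq5$ vertices. If $\psi_2=\psi_{-2}$, or $\psi_2=\psi_{-1}$, or $\psi_{-2}=\psi_1$, then the component closes up on at most four vertices among $\psi,\psi_{\pm1},\psi_{\pm2}$, because every vertex has degree at most $2$. This contradicts the lower bound. A self-loop at $\psi_1$ (that is, $\psi_2=\psi_1$) would similarly make the component $\{\psi_{-2},\psi_{-1},\psi,\psi_1\}$ or smaller.

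\emph{Step 3 (self-loop at $\psi$).} The only delicate point is the degenerate configuration where $\psi$ itself carries a self-loop: $\psi$ is the looped end of a path, so $\psi_{-1}=\psi$. I would treat this separately with the same computation. Here
\[2T_2(\varphi)\psi=(a_1^2+a_{-1}^2-2)\psi+2a_1a_{-1}\psi_1+a_1b_1\psi_2,\]
where $\psi,\psi_1,\psi_2$ are distinct by the same size bound, and $a_1a_{-1}\neq0$ and $b_1\neq0$. Again the coefficient of $\psi$ must vanish.

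I expect this bookkeeping of coincidences among the neighbours to be the main obstacle. The algebra itself is routine, and it is the size bound from \cref{lem:large-comp-spec} that rules out every coincidence.
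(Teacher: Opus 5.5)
Your proof follows the paper's argument. You expand $2T_2(\varphi)\psi=2\varphi(2\varphi\psi)-2\psi$ through the neighbours of $\psi$, use the component-size bound to keep $\psi$ separate from the two outer terms (in the manifold case use the infinitude of components from \cref{lem:large-comp} instead of \cref{lem:large-comp-spec}), and read off $a_1^2+a_{-1}^2=2$ from the $2$-product property. Your separate treatment of self-loops is more careful than the paper, which tacitly ignores them; that is harmless there, since the lemma is only applied to primitive $\varphi$, where \cref{lem:no-loop} excludes loops outside the $\1$-component.

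There are two small slips:
\begin{itemize}
\item A self-loop at $\psi_1$ does not confine the component to $\{\psi_{-2},\psi_{-1},\psi,\psi_1\}$, because the path may continue past $\psi_{-2}$. You do not need to exclude this case: $\psi_2=\psi_1$ is still distinct from $\psi$ and from $\psi_{-2}$, since otherwise $\psi_1$ would have degree $3$.
\item In Step~3 the coefficient of $\psi_1$ is $a_1a_{-1}$, not $2a_1a_{-1}$. This does not affect the conclusion.
\end{itemize}
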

\begin{proof}
    Write $\psi_0:=\psi$ and let $\psi_{-2},\psi_{-1},\psi_0,\psi_1,\psi_2$ be vertices in $H_\varphi$ connected in that order, in such a way that
    \begin{align*}
        2\varphi\psi_{-1}&=a_{-1}\psi_0+a_{-2}\psi_{-2}\\
        2\varphi\psi_0&=a_0\psi_1+a_{-1}\psi_{-1}\\
        2\varphi\psi_1&=a_1\psi_2+a_0\psi_0.
    \end{align*}
    Since $\abs{\im\varphi}\geq 15$, \cref{lem:large-comp} implies that the component of $H_\varphi$ containing $\psi_0$ has cardinality at least $5$. In particular, $\psi_{-2},\ldots,\psi_2$ are distinct.
    Now, we may compute
    \[2T_2(\varphi)\psi_0=a_{-1}a_{-2}\psi_{-2}+(a_{-1}^2+a_0^2-2)\psi_0+a_0a_1\psi_2.\]
    Since $T_2(\varphi)$ is a nonzero scalar multiple of a basis element by our normalization $\norm{\varphi}_{L^2}^2=1/2$ and each $a_i$ is nonzero, the $2$-product property implies $a_{-1}^2+a_0^2=2$, as desired.
\end{proof}

\begin{lemma}\label{lem:triply-internal}
    Suppose $k:=\abs{\im\varphi}\geq 21$. If $\psi$ is a $3$-internal vertex of $H_\varphi$ which lies outside the $\1$-component, then the labels of the edges incident to $\psi$ are both $\pm 1$.
\end{lemma}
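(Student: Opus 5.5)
The plan is to mimic the proof of \cref{lem:doubly-internal}, but multiply by $T_3(\varphi)$ instead of $T_2(\varphi)$. Label a stretch of the component around $\psi=\psi_0$ as $\psi_{-3},\ldots,\psi_3$, connected in that order, with
\[2\varphi\psi_j=a_j\psi_{j+1}+a_{j-1}\psi_{j-1},\qquad -2\le j\le 2 .\]
Since $\psi_0$ is $3$-internal, each of $\psi_{-2},\ldots,\psi_2$ is a non-leaf. By \cref{lem:large-comp} with $k\ge 21$, the component has at least $7$ vertices, so these seven vertices are distinct. (If the component is a cycle of length at least $7$, distinctness still holds.)

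The vertices $\psi_{-1},\psi_0,\psi_1$ are $2$-internal. Applying \cref{lem:doubly-internal} to each of them gives
\[a_{-2}^2+a_{-1}^2=2,\qquad a_{-1}^2+a_0^2=2,\qquad a_0^2+a_1^2=2 .\]
Hence $a_{-2}^2=a_0^2$ and $a_{-1}^2=a_1^2$. This alone does not force the labels to be $\pm1$, so one more relation is needed.

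To get it, I would use $2T_3(\varphi)=2\varphi\,(2T_2(\varphi))-2\varphi$ and expand $2T_3(\varphi)\psi_0$ via the recurrences. This parallels the $\varphi_3^2$ computation in \cref{cl:1-comp-a}. Expanding three times, $8\varphi^3\psi_0$ is supported on $\psi_{-3},\psi_{-1},\psi_1,\psi_3$, with
\begin{align*}
[\psi_3]&:\ a_0a_1a_2,\\
[\psi_1]&:\ a_0\bigl(a_0^2+a_1^2+a_{-1}^2\bigr),
\end{align*}
and symmetric coefficients on $\psi_{-1}$ and $\psi_{-3}$. Subtracting $6\varphi\psi_0=3a_0\psi_1+3a_{-1}\psi_{-1}$ gives $4T_3(\varphi)\psi_0=8\varphi^3\psi_0-6\varphi\psi_0$. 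Its $\psi_1$-coefficient is therefore $a_0\bigl(a_0^2+a_1^2+a_{-1}^2-3\bigr)$, and similarly its $\psi_{-1}$-coefficient is $a_{-1}\bigl(a_{-1}^2+a_0^2+a_{-2}^2-3\bigr)$.

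Now $T_3(\varphi)$ is, up to sign, a basis element by \cref{lem:1-comp}, since $3<k-1$. So the $2$-product property says at most two of these four coefficients are nonzero. The outer coefficients $a_0a_1a_2$ and $a_{-1}a_{-2}a_{-3}$ are nonzero because all labels on path edges are nonzero. Therefore
\[a_0^2+a_1^2+a_{-1}^2=3 .\]
Combined with $a_0^2+a_1^2=2$, this gives $a_{-1}^2=1$, and then $a_0^2=1$ from $a_{-1}^2+a_0^2=2$. This is exactly the claim.

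The main obstacle is the bookkeeping. I need to be sure the $\psi_{\pm3}$ terms genuinely survive: this requires $\psi_{\pm3}\neq\psi_{\pm1}$ and $\psi_3\neq\psi_{-3}$, which follows from distinctness. I also need $\psi_{\pm2}$ to have degree $2$ so that the recurrence applies, which is where $3$-internality is used. If the component is a short cycle, $\psi_3$ could coincide with $\psi_{-3}$. In that case I would invoke the size bound from \cref{lem:large-comp} (at least $k/3\ge7$ vertices) to exclude this.
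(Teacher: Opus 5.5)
Your proof is correct and is essentially the paper's argument: you expand $T_3(\varphi)\psi_0$ along the path, observe that the $\psi_{\pm3}$ coefficients are nonzero products of edge labels, conclude that the $\psi_{\pm1}$ coefficients must vanish, and combine this with \cref{lem:doubly-internal}. The paper first simplifies using $a_{-1}^2+a_0^2=2$, so its inner coefficients read $a_{-1}(a_{-2}^2-1)$ and $a_0(a_1^2-1)$, which agree with yours. One harmless slip: $8\varphi^3\psi_0-6\varphi\psi_0$ equals $2T_3(\varphi)\psi_0$, not $4T_3(\varphi)\psi_0$, which does not affect which coefficients vanish.
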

\begin{proof}
    Adopt the notation in the previous lemma and define $\psi_{-3},\psi_3,a_{-3},a_2$ accordingly. 
    We can compute
    \begin{align*}
    2T_3(\varphi)\psi_0
    &=a_{-2}a_{-1}(2\varphi\psi_{-2})-2\varphi\psi_0+a_0a_1(2\varphi\psi_2)\\
    &=a_{-3}a_{-2}a_{-1}\psi_{-3}+a_{-1}(a_{-2}^2-1)\psi_{-1}+a_0(a_1^2-1)\psi_1+a_0a_1a_2\psi_3.
    \end{align*}
    The property $k\geq 21$ is enough to imply, using \cref{lem:large-comp}, that $\{\psi_{-3},\ldots,\psi_3\}$ are all distinct.
    We thus have $a_{-2}^2=a_1^2=1$. 
    \cref{lem:doubly-internal} applied to $\psi_{-1}$ and $\psi_0$ gives the result.
\end{proof}

\begin{lemma}\label{lem:leaf}
    Let $\varphi$ be a primitive eigenfunction with $k:=\abs{\im\varphi}\geq 21$.
    Suppose that $\psi$, not in the $\1$-component of $H_\varphi$, is a leaf of $H_\varphi$ without a self-loop, and let $\psi_1$ be the unique neighbor of $\psi$ in $H_\varphi$.
    Then the label of the edge $(\psi,\psi_1)$ is in the set $\{-\sqrt2,-1,1,\sqrt2\}$.
    If this label is $\pm 1$, then $T_2\psi=-T_2\varphi$, while if this label is $\pm\sqrt2$ then $T_2\psi_1=T_2\varphi$.
\end{lemma}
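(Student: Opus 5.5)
The plan is to extract everything from the two local product relations at the leaf and then to apply \cref{lem:T2-not-orth}. Since $\psi$ is a leaf without self-loop, we have $2\varphi\psi=a\psi_1$ for some nonzero label $a$. By \cref{lem:large-comp}, the component of $\psi$ has at least $k/3\geq 7$ vertices, and by \cref{lem:no-loop} it is a path or cycle without self-loops, so $\psi_1$ is not a leaf. Hence $2\varphi\psi_1=a\psi+b\psi_2$ with $b\neq 0$ and $\psi_2\notin\{\psi,\psi_1\}$. Using $T_2(\varphi)=2\varphi^2-1$, I will compute
\[
2T_2(\varphi)\psi=2\varphi(a\psi_1)-2\psi=(a^2-2)\psi+ab\,\psi_2 .
\]
Pairing with $\psi$ and using $\norm{\psi}_{L^2}^2=1/2$ gives $\int T_2(\varphi)T_2(\psi)=\int T_2(\varphi)(2\psi^2)=(a^2-2)/2$.

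\emph{Case $a^2\neq 2$.} Then $T_2\varphi\not\perp T_2\psi$, and $\abs{\im\varphi}\geq 21\geq 14$. So \cref{lem:T2-not-orth} gives $T_2\psi=\pm T_2\varphi$, and $\norm{T_2\varphi}_{L^2}^2=1/2$ (from \cref{lem:1-comp}) forces $(a^2-2)/2=\pm 1/2$. Thus either $a^2=1$ with $T_2\psi=-T_2\varphi$, which is the claimed conclusion, or $a^2=3$ with $T_2\psi=T_2\varphi$.

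\emph{Case $a^2=2$.} I will run the same computation one step further at $\psi_1$. Writing $2\varphi\psi_2=b\psi_1+c\psi_3$, we get
\[
2T_2(\varphi)\psi_1=2\varphi(a\psi+b\psi_2)-2\psi_1=(a^2+b^2-2)\psi_1+bc\,\psi_3=b^2\psi_1+bc\,\psi_3 ,
\]
so $\int T_2(\varphi)T_2(\psi_1)=b^2/2\neq 0$. Applying \cref{lem:T2-not-orth} again yields $T_2\psi_1=T_2\varphi$ and $b^2=1$; the sign is $+$ because $b^2>0$. This is the $\pm\sqrt2$ conclusion.

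The main obstacle is excluding $a^2=3$, $T_2\psi=T_2\varphi$. Here $\psi^2=\varphi^2$ pointwise, so $\psi=\varepsilon\varphi$ for some sign function $\varepsilon\colon V(G)\to\{\pm1\}$. Then $\psi_1=(2/a)\varepsilon\varphi^2$ takes values $\pm\tfrac{2}{\sqrt3}\varphi(v)^2$. This has image size at least roughly $k/2\geq 7$, so \cref{lem:1-comp} applies to $\psi_1$ and gives $\norm{\psi_1}_\infty\leq1$. On the other hand, $\cD(\varphi)$ is one of the $\cD_n^{\mathrm p}$ with $n\geq 2k-2$, so $\varphi$ attains a value of absolute value at least $\cos(\pi/n)$. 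For $k\geq 21$ this gives $\tfrac{2}{\sqrt3}\cos^2(\pi/n)>1$, a contradiction. So only the cases $a^2\in\{1,2\}$ survive, which is the lemma.
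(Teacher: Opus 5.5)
Your argument is correct, and its backbone matches the paper's. Your identity $\int T_2(\varphi)T_2(\psi)=(a^2-2)/2$ is the paper's $\tfrac{a_0^2}{2}=1+\int T_2\varphi\cdot T_2\psi$, which the paper obtains by expanding $\int(2\varphi\psi)^2$. Both proofs then conclude via \cref{lem:T2-not-orth}.

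The genuine difference is how the label $b$ on $(\psi_1,\psi_2)$ is treated. The paper fixes $b=\pm1$ in advance by applying \cref{lem:triply-internal} at $\psi_3$ and \cref{lem:doubly-internal} at $\psi_2$; this is what the hypotheses $k\geq21$ and ``at least $7$ vertices'' are for. It then intersects $a^2\in\{1,2,3\}$ with $a^2+1\in\{1,2,3\}$. You keep $b$ free, so you never need the internal-vertex lemmas, and you get $b^2=1$ as a by-product in the $\sqrt2$ case.

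The cost is your separate sup-norm argument against $a^2=3$. It is valid, but it leans on the explicit distributions of \cref{lem:1-comp}. As written with $\cD_n^{\mathrm p}$ it only covers the graph setting. The lemma is also used in the manifold case of \cref{lem:comp-labels}; there you would use $\cD^{\mathrm{cont}}$, so that $\varphi$ attains $\pm1$.

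That detour is in fact unnecessary. Your second identity $\int T_2(\varphi)T_2(\psi_1)=(a^2+b^2-2)/2$ holds for every value of $a$, and \cref{lem:T2-not-orth} forces it into $\{-\tfrac12,0,\tfrac12\}$, so $a^2+b^2\in\{1,2,3\}$. Together with $a^2\in\{1,2,3\}$ and $b\neq0$, this gives $a^2\in\{1,2\}$ at once. This uses nothing beyond the fact that $\psi_1$ is not a leaf, the normalization from \cref{lem:1-comp}, and \cref{lem:T2-not-orth}. The stated conclusions then follow directly: $T_2\psi=-T_2\varphi$ when $a^2=1$, and $T_2\psi_1=T_2\varphi$ when $a^2=2$.
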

\begin{proof}
    Let $a_0$ be the edge label in question, so that $2\varphi\psi=a_0\psi_1$.
    We have
    \[\frac{a_0^2}2=\int (a_0\psi_1)^2=\int(2\varphi\psi)^2=\int(1+T_2\varphi)(1+T_2\psi)=1+\int T_2\varphi\cdot T_2\psi.\]
    Now, let $\psi_2$ and $\psi_3$ be the next two vertices along the path $\psi,\psi_1,\ldots$. 
    By \cref{lem:large-comp}, the component of $H_\varphi$ has at least $7$ vertices. In particular, $\psi_3$ is $3$-internal. Applying \cref{lem:triply-internal} to $\psi_3$ and then \cref{lem:doubly-internal} to $\psi_2$, the edge connecting $\psi_1$ and $\psi_2$ has label $\pm 1$. By swapping the sign of $\psi_2$, we can assume this edge has label $1$. Therefore we have
    $2\varphi\psi_1=a_0\psi_0+\psi_2$.
    This enables us to compute
    \[\frac{a_0^2+1}2=\int(a_0\psi+\psi_2)^2=\int(2\varphi\psi_1)^2=\int(1+T_2\varphi)(1+T_2\psi_1)=1+\int T_2\varphi\cdot T_2\psi_1.\]
    Finally, \cref{lem:T2-not-orth} implies that both $2\int T_2\varphi\cdot T_2\psi$ and $2\int T_2\varphi\cdot T_2\psi_1$ are elements of $\{-1,0,1\}$. We conclude that $a_0^2\in\{1,2,3\}\cap\{0,1,2\}=\{1,2\}$, as desired.
\end{proof}

\begin{proofwithclaims}{lem:comp-labels}
    We first treat the manifold case.
    By \cref{lem:large-comp}, all components of $H_\varphi$ are infinite, and so by \cref{obs:max-deg-2,lem:no-loop} all components are either bi-infinite paths or uni-infinite paths with no self-loop. 
    Every vertex of a bi-infinite path is $3$-internal, and so \cref{lem:triply-internal} implies that all labels of such a path are $\pm 1$.
    For a uni-infinite path which is not the $\1$-component, label the vertices $\psi_0,\psi_1,\ldots$ along the path. The edges $(\psi_i,\psi_{i+1})$ for $i\geq 2$ have labels $\pm 1$ by \cref{lem:triply-internal}. Using this fact for $i=2$ and \cref{lem:doubly-internal}, the edge $(\psi_1,\psi_2)$ has label $1$ as well. 
    Finally, \cref{lem:leaf} implies that the label $a_0$ of the edge $(\psi_0,\psi_1)$ squares to $1$ or $2$. If $a_0^2=2$ then $T_2\psi_1=T_2\varphi$, which implies $\varphi^2=\psi_1^2$. We obtain that $\varphi=\psi_1$ or $\varphi=-\psi_1$ identically, which cannot occur since we have assumed the component in question is not the $\1$-component. If $a_0^2=1$ then $T_2\psi_0=-T_2\varphi$. 
    This can only occur for one component: otherwise, we would have $\psi_0^2=(\psi_0')^2$ for distinct $\psi_0,\psi_0'\in\cB$, which cannot occur.
    We conclude the result in the manifold setting.

    We now come to the graph setting. By \cref{obs:max-deg-2,lem:large-comp,lem:no-loop}, any component must be a cycle or a finite path with no self-loops.
    \begin{itemize}
        \item In the former case, all edge labels are $\pm1$ by \cref{lem:triply-internal}. 
        Negating any individual vertex negates both of the edge labels incident to it. 
        There is some sequence of such operations which ensures that all edge labels are $1$, or that all edge labels are $1$ except one which is $-1$.
    
        \item In the latter case, as in the manifold-setting argument in the previous case, all but possibly the two edges incident to the leaves have labels $\pm1$. (Here we are using \cref{lem:large-comp} and the fact that $\abs{\im\varphi}$ is large enough to obtain that every component has a $3$-internal vertex.) The two remaining edges have labels in $\{\pm1,\pm\sqrt2\}$.
        By negating vertices if necessary, we can enforce that all edge labels are positive.
    \end{itemize}
    
    We thus have five types of potential components:
    \begin{enumerate}[(I)]
        \item cycles with all edge labels $1$,
        
        \item cycles with all edge labels $1$ except one $-1$,
        
        \item paths with all labels $1$, 
        
        \item paths with all labels $1$ except one $\sqrt2$, 
        
        \item and paths with all labels $1$ except two $\sqrt2$.
    \end{enumerate}

    Our next step is to constrain the lengths of such components using the classification of potential $\varphi$ provided by \cref{lem:1-comp} and using \cref{lem:no-zero-product}.
    Enumerate the vertices of some component of size $\ell$ as $\psi_0,\ldots,\psi_{\ell-1}$. Multiplication by $\varphi$ acts as a linear transformation on space $\R\psi_0+\cdots+\R\psi_{\ell-1}$.
    If the characteristic polynomial of this action is $Q(\varphi)$, we have $Q(\varphi)\psi_i=0$ for each $0\leq i<\ell$. We may explicitly compute the polynomials $Q$ for each potential component, obtaining
    \begin{align}
        Q(\varphi)&=T_\ell(\varphi)-1,\tag{in case (I)}\\
        Q(\varphi)&=T_\ell(\varphi)+1,\tag{in case (II)}\\
        Q(\varphi)&=U_\ell(\varphi),\tag{in case (III)}\\
        Q(\varphi)&=T_\ell(\varphi),\tag{in case (IV)}\\
        Q(\varphi)&=T_\ell(\varphi)-T_{\ell-2}(\varphi)\tag{in case (V)},
    \end{align}
    where $U_\ell$ is the degree-$\ell$ Chebyshev polynomial of the second kind.
    We note the polynomial divisibilities
    \[U_\ell\mid T_{2\ell+2}-1,\qquad T_\ell-T_{\ell-2}\mid T_{2\ell-2}-1.\]
    Now, \cref{lem:no-zero-product}(2) allows us to upgrade a statement of the form ``$T_n(\varphi)\psi_i=b\psi_i$ for all $i$'' for some positive integer $n$ and some $b\in\{-1,0,1\}$ to a statement of the form ``$T_n(\varphi)=b$ identically.'' 

    We can also upper-bound the lengths of potential components:
    \begin{claim}\label{cl:comp-small}
        In the cycle case, we have $\ell\leq 2k$. In the path case, we have $\ell\leq k$.
    \end{claim}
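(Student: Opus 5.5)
The plan is to use the characteristic-polynomial identity $Q(\varphi)\psi_i=0$ together with \cref{lem:no-zero-product}(2). Together they convert the structure of a component into a global polynomial identity for $\varphi$. Since $\varphi$ takes exactly $k$ values, any nonzero polynomial that vanishes identically on $\varphi$ has degree at least $k$. The component itself imposes no such degree bound, so we need a polynomial of degree roughly $\ell/2$ (cycle case) or $\ell$ (path case) that kills $\varphi$ outright.

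\emph{Cycle case.} Let $\ell$ be even with $\ell=2m$ in case (I) or (II). Then $Q=T_\ell\mp1$. We use $T_{2m}-1=2T_m^2-2=2(T_m-1)(T_m+1)$ and $T_{2m}+1=2T_m^2$. In case (II) this gives $T_m(\varphi)^2\psi_i=0$. Multiplying by $\psi_i$ and considering values pointwise yields $T_m(\varphi)\psi_i=0$ for every $i$, and \cref{lem:no-zero-product}(2) with $b=0$ then gives $T_m(\varphi)=0$ identically. In case (I), $(T_m(\varphi)-1)(T_m(\varphi)+1)\psi_i=0$ alone does not force a single value of $b$. Instead I would use $T_{2m}(\varphi)\psi_i=\psi_i$ directly, i.e.\ $b=1$ with $n=\ell$, to get $T_\ell(\varphi)=1$. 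That yields only degree $\ell$, so I would refine it: $T_\ell(\varphi)=1$ means $\varphi$ takes values $\cos(2\pi j/\ell)$. Combining this with \cref{lem:1-comp}, $\varphi$ takes exactly $k$ values, all of the form $\cos(\pi t/n)$ with $n\in\{2k-1,2k\}$. Matching the two descriptions of the value set should give $\ell\le 2k$, since $\ell>2k$ would mean that the values $\cos(2\pi j/\ell)$ cannot accommodate $k$ distinct values under these parity constraints. For odd $\ell$ I would argue the same way using $T_\ell(\varphi)=\pm1$. This is the crude form; the bound $\ell\le 2k$ follows because a degree-$\ell$ identity $T_\ell(\varphi)=\pm1$ has only about $\ell/2+1$ distinct roots, and these must cover all $k$ values.

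\emph{Path case.} Cases (III)--(V) go the same way using the listed divisibilities. In case (IV), $T_\ell(\varphi)\psi_i=0$, so \cref{lem:no-zero-product}(2) gives $T_\ell(\varphi)=0$; since $T_\ell$ has $\ell$ distinct roots and $\varphi$ takes exactly $k$ values, we would want $\ell\ge k$. That inequality points the wrong way, so the bound must come from elsewhere. Here I would use the dimension count instead: the component spans an $\ell$-dimensional space invariant under multiplication by $\varphi$, and multiplication by $\varphi$ is diagonalizable with at most $k$ eigenvalues. In the path case the tridiagonal matrix with nonzero off-diagonal entries has simple spectrum, so its $\ell$ eigenvalues are distinct values in $\im\varphi$, giving $\ell\le k$. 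In the cycle case every eigenvalue of the adjacency-type matrix has multiplicity at most $2$, giving $\ell\le 2k$.

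This spectral-multiplicity argument is actually the cleanest route for both cases, and I would lead with it. Multiplication by $\varphi$ on $\R^{V(G)}$ has eigenvalues exactly the $k$ values of $\varphi$. The component matrix (a Jacobi matrix for a path, a periodic Jacobi matrix for a cycle) is a restriction of this operator to an invariant subspace, so its eigenvalues lie in $\im\varphi$. A Jacobi matrix with nonzero off-diagonal entries has simple eigenvalues, hence $\ell\le k$; a periodic Jacobi matrix has eigenvalue multiplicities at most $2$, hence $\ell\le 2k$. The main point to check is the invariance and self-adjointness of the restriction, which follows from orthogonality of the $\psi_i$ and the symmetric labelling of the edges.
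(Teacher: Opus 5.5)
Your closing spectral-multiplicity argument is correct and is essentially the paper's proof in different language. The paper notes that, since all edge labels are nonzero, the three-term relations place every $\psi_i$ in $\R[\varphi]\psi_0$ (path) or in $\R[\varphi]\psi_0+\R[\varphi]\psi_1$ (cycle), with $\dim\R[\varphi]=k$. That is the same recurrence-propagation fact you use to bound eigenvalue multiplicities by $1$ (resp.\ $2$) for an operator whose spectrum lies in the $k$-element set $\im\varphi$.

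You should drop the earlier polynomial-identity attempts, including those in the cycle case. Identities such as $T_m(\varphi)=0$ or $T_\ell(\varphi)=\pm1$, or asking that roughly $\ell/2+1$ roots cover the $k$ values of $\varphi$, only give $k\le\ell/2+1$. That is a lower bound on $\ell$, not an upper bound. In fact the paper needs this claim precisely to supply the upper bound $n\le 2k+2$ before such identities are exploited.
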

    \begin{proofofclaim}
        We first treat the cycle case. By induction on $i$, each $\psi_i$ lies in $\R[\varphi]\psi_0+\R[\varphi]\psi_1$. Since the $\psi_i$ are linearly independent and $\dim\R[\varphi]=k$, we conclude that $\ell\leq 2k$.

        In the path case, each $\psi_i$ lies in $\R[\varphi]\psi_0$, and so we conclude $\ell\leq k$ for the same reason.
    \end{proofofclaim}

    Let $n$ and $b$ be such that $T_n(\varphi)=b$ identically. In cases (I)--(V), respectively, we have $(n,b)=(\ell,1),(\ell,-1),(2\ell+2,1),(\ell,0),(2\ell-2,1)$. By \cref{cl:comp-small} we have $n\leq 2k+2$.
    
    Also, let $m,\mathrm p$ be such that $\cD(\varphi)=\cD_m^{\mathrm p}$, so that $m\in\{2k-1,2k\}$ and $\mathrm p\in\{\mathrm{even},\mathrm{odd}\}$, given by \cref{lem:1-comp}.

    \begin{enumerate}[(a)]
        \item Suppose $m=2k$ and $\mathrm p=\mathrm{odd}$.
        There exists some $v\in V(G)$ for which $\varphi(v)=\cos(\pi/(2k))$. Therefore $b=T_n(\varphi(v))=\cos(\pi n/(2k))$. 
        We conclude that either $n=k$ and $b=0$ or $n=2k$ and $b=-1$. 
        This allows for type (II) components of size $\ell=2k$ or type (IV) components of size $\ell=k$.

        \item Suppose $m=2k-1$ and $\mathrm p=\mathrm{even}$. 
        There exists some $v\in V(G)$ for which $\varphi(v)=\cos(2\pi/(2k-1))$.
        Therefore $b=T_n(\varphi(v))=\cos(2\pi n/(2k-1))$. 
        We conclude that $n=2k-1$ and $b=1$. 
        This allows for type (I) components of size $\ell=2k-1$.

        \item Suppose $m=2k-1$ and $\mathrm p=\mathrm{odd}$. 
        There exists some $v\in V(G)$ for which $\varphi(v)=\cos(\pi/(2k-1))$.
        Therefore $b=T_n(\varphi(v))=\cos(\pi n/(2k-1))$. 
        We conclude that $n=2k-1$ and $b=-1$. 
        This allows for type (II) components of size $\ell=2k-1$.
    \end{enumerate}

    In cases (b) or (c), \cref{obs:div} implies $2k-1\mid \abs{V(G)}$. 
    In particular, $H_\varphi$ possesses some component distinct from the $\1$-component whose order is not a multiple of $2k-1$.
    Therefore, these cases cannot occur.
    We conclude that $m=2k$ and $\mathrm p=\mathrm{odd}$.
    Using \cref{obs:div}, this implies that $k$ is odd.
    What remains is to exclude the type (IV) components of size $\ell=k$.
    Any such component has some edge label $\sqrt2$, and so \cref{lem:leaf} implies that (up to relabeling by writing our path in the opposite order) we have $\psi_1^2=\varphi^2$. We obtain
    \[0=2\varphi^2\psi_0^2-\psi_1^2=\varphi^2(2\psi_0^2-1).\]
    We conclude that $\varphi\cdot T_2(\psi_0)=0$. 

    Now, $2\psi_0^2-1$ is either identically zero or a scalar multiple of a basis element. If it is identically zero, then $\int\psi_0=0$ implies that $\cD(\psi_0)$ is uniformly distributed between $1/\sqrt2$ and $-1/\sqrt2$. In particular, this implies that $\abs{V(G)}$ is even, a contradiction.

    On the other hand, if $2\psi_0^2-1$ is a scalar multiple of a nontrivial basis element, then \cref{lem:no-zero-product}(1) with $\ell=1$ and $b=0$ implies that $T_3(\varphi)=0$, which cannot occur since $k\geq 7$. The result follows.
\end{proofwithclaims}

\vspace{3em}

\appendix

\section{Reconstruction Theorems}
\label{sec: reconstruction}

We now present proofs of Theorems \ref{thm: recover M from triple prod} and \ref{thm: graph reconstruction}. We treat the manifold case first, relying on the algebraic perspective developed in  detail in \cite{Nestruev2003}. Note that this statement also appears in \cite{Schaefer2026}. The argument proceeds in three stages: from the given data we successively recover the ring $C^\infty(M)$, the topological and smooth structure of $M$, and finally the metric. We then explain the reconstruction of weighted graphs from eigenvalues and triple product constants, which, to the best of our knowledge, has never appeared in print.

To address the manifold case, we fix an orthonormal basis $\{\phi_k\}_{k=0}^\infty$ of $L^2(M)$ consisting of Laplace eigenfunctions with eigenvalues $0=\lambda_0<\lambda_1\leq \dots\to \infty$.

\begin{proof}[Proof of \cref{thm: recover M from triple prod}]
     Extract the dimension $d$ of $M$ from the spectrum using the Weyl law. Recall that the Sobolev spaces on $M$ can be described as
    \[
        H^s(M)=\left\{\sum_{k=0}^\infty a_k\phi_k\st \sum_{k=0}^\infty (1+k)^{2s/d}a_k^2<\infty\right\},
    \]
and that the space of smooth functions is given by
    \[
        C^\infty(M)=\bigcap_{s\in \R}H^s(M).
    \]
    Smooth functions are therefore characterized in terms of the rapid decay of their Fourier coefficients. Furthermore, on the Fourier side, multiplication on the space of smooth functions is described precisely by the triple product coefficients. We conclude that, from the spectrum and the triple product constants, we recover $C^\infty(M)$ as an algebra.

    From $C^\infty(M)$, we can reconstruct the manifold $M$. This is the content of Chapter 7 of \cite{Nestruev2003}, specifically of Theorems 7.2 and 7.7. The main idea is that maximal ideals of $C^\infty(M)$ are precisely the sets of functions vanishing at a fixed point of $M$. All that remains is therefore to recover the metric.

    To that end, recall that the map
    \[
        (\phi_1,\dots, \phi_n)\colon M\to \R^n
    \]
    is an embedding when $n$ is sufficiently large \cite{BerardBessonGallot1994}. Observe
    \begin{equation}
    \label{eq: recover metric}
        \langle\grad \phi_j,\grad \phi_k\rangle_g=\frac{1}{2}\big(\phi_j\Delta \phi_k+\phi_k\Delta \phi_j-\Delta(\phi_j\phi_k)\big).
    \end{equation}
    The right hand side is computable from the triple product constants and the eigenvalues. Since the differentials $d\phi_k(p)$, $1\leq k\leq n$ span $T_p^*M$, Equation \eqref{eq: recover metric} in fact determines $g^*_p$, and hence $g_p$ on $T_pM$. This recovers $g$ pointwise, completing the proof.
\end{proof}

We now turn to the case of weighted graphs. Let $G=(V,E,w)$ be a simple weighted graph on $|V|=n$ vertices, where $w\st E\to (0,\infty)$. The weighted graph Laplacian $L=L_G$ acts on $\R^V$ by
\[
    (Lf)(u)=\sum_{v\sim u}w(uv)\big(f(u)-f(v)\big),
\]
and is symmetric and positive semidefinite. Its eigenvalues $0=\lambda_1\leq \cdots\leq \lambda_n$ together with an orthogonal eigenbasis $v_1,\dots,v_n\in \R^V$ (with respect to the standard inner product) give rise to triple product constants
\[
    c_{ijk}=\sum_{\ell\in V}(v_i)_\ell(v_j)_\ell(v_k)_\ell.
\]

\begin{proof}[Proof of \cref{thm: graph reconstruction}]
    We can assume that our eigenvectors $v_1,\ldots,v_n$ are normalized with respect to the $L^2$ norm.
	Consider the $\R$-algebra $A$ generated by $v_1,\ldots,v_n$ with multiplication
    \[
        v_iv_j=\sum_kc_{ijk}v_k.
    \]
    This algebra is isomorphic to $\R^V$ via some map $f\colon A\to\R^V$. 
    The set of idempotents of $\R^V$ is $\{0,1\}^{V}$, and so the set $S$ of idempotents of $A$ has cardinality exactly $2^{\abs{V}}$. 
    Let $W$ be the set of idempotents $w$ of $A$, distinct from $0$, which satisfy $wS\subset\{0,w\}$. 
    The elements $f(W)$ of $\R^V$ are exactly the vectors which assign $1$ to some vertex of $V$ and $0$ elsewhere. Therefore, $W$ forms a basis of $A$ as an $\R$-vector space. Enumerate $W=\{w_1,\ldots,w_n\}$, and let $b_{ij}$ be such that $v_i=\sum_{j=1}^nb_{ij}w_j$. 
    
    The vectors $(b_{ij})_{j=1}^n\in\R^n$ are the eigenvectors of the Laplacian of $G$, and so we can recover the Laplacian of $G$ as $n^{-1}B^\intercal\Lambda (B^\intercal)^{-1}$ where $\Lambda=\operatorname{diag}(\lambda_1,\ldots,\lambda_n)$.
\end{proof}

\bibliography{bib.bib}{}
\bibliographystyle{plain}

\end{document}